\theoremstyle{plain}
\newtheorem{thm}{\protect\theoremname}[section]
\theoremstyle{definition}
\newtheorem{defn}[thm]{\protect\definitionname}
\theoremstyle{plain}
\newtheorem{prop}[thm]{\protect\propositionname}
\theoremstyle{remark}
\newtheorem{rem}[thm]{\protect\remarkname}
\theoremstyle{definition}
\newtheorem{xca}[thm]{\protect\exercisename}
\theoremstyle{plain}
\newtheorem{lem}[thm]{\protect\lemmaname}
\theoremstyle{plain}
\newtheorem{conjecture}[thm]{\protect\conjecturename}
\theoremstyle{definition}
\newtheorem{example}[thm]{\protect\examplename}
\newenvironment{lyxlist}[1]
	{\begin{list}{}
		{\settowidth{\labelwidth}{#1}
		 \setlength{\leftmargin}{\labelwidth}
		 \addtolength{\leftmargin}{\labelsep}
		 }}
	{\end{list}}
\theoremstyle{plain}
\newtheorem{cor}[thm]{\protect\corollaryname}
\newcommand{\etalchar}[1]{$^{#1}$}
\providecommand{\bysame}{\leavevmode\hbox to3em{\hrulefill}\thinspace}
\providecommand{\MR}{\relax\ifhmode\unskip\space\fi MR }
\providecommand{\href}[2]{#2}
\numberwithin{thm}{subsection}
\newcommand{\bR}{\ensuremath{\mathbf{R}}\xspace}
\newcommand{\sX}{\ensuremath{\mathcal{X}}\xspace}
\newcommand{\sY}{\ensuremath{\mathcal{Y}}\xspace}
\renewcommand{\ge}{\geqslant}
\renewcommand{\le}{\leqslant}
\DeclareMathOperator*{\fibprod}{\times}
\DeclareMathOperator{\codim}{\operatorname{codim}}
\newcommand{\virdim}{\operatorname{vdim}}
\newcommand{\fibprodR}{\fibprod^\bR}
\newcommand{\cl}{{\mathrm{cl}}}
\DeclarePairedDelimiter\abs{\lvert}{\rvert}
\providecommand{\conjecturename}{Conjecture}
\providecommand{\corollaryname}{Corollary}
\providecommand{\definitionname}{Definition}
\providecommand{\examplename}{Example}
\providecommand{\exercisename}{Exercise}
\providecommand{\lemmaname}{Lemma}
\providecommand{\propositionname}{Proposition}
\providecommand{\remarkname}{Remark}
\providecommand{\theoremname}{Theorem}
\begin{document}
\title{Beijing notes on the categorical local Langlands conjecture}
\author{David Hansen\\
with an appendix by Adeel Khan}

\maketitle
\noindent\begin{minipage}[t]{1\columnwidth}%
\begin{flushright}
\emph{A fool can ask more questions in a minute$\;\;\;\;\;\;$ }\\
\emph{than a wise man can answer in an hour.}
\par\end{flushright}%
\end{minipage}

\tableofcontents{}

\subsection*{Acknowledgements}

These are expanded notes written for a lecture series at the Morningside
Center of Mathematics in July 2023. I'm very grateful for the opportunity
to give these lectures, which significantly pushed me to clarify a
number of vague ideas and write them out carefully. Hopefully this
written account will be helpful to others in the field. On a personal
level, I'm especially grateful to Shizhang Li for his extremely kind
and generous hospitality during my visit to Beijing.

It's a pleasure to thank Pramod Achar, Alexander Bertoloni Meli, Jean-Fran\c{c}ois
Dat, Laurent Fargues, Jessica Fintzen, Wee Teck Gan, Linus Hamann,
Eugen Hellmann, Christian Johansson, Tasho Kaletha, Adeel Khan, Teruhisa
Koshikawa, Lucas Mann, Sam Raskin, Peter Scholze, Sug Woo Shin, Sandeep
Varma, Eva Viehmann, Xinwen Zhu, Yihang Zhu, and Konrad Zou for some
very helpful and inspiring discussions. I'd particularly like to thank
Adeel Khan for proving Proposition \ref{prop:soap} and agreeing to
include it as an appendix; Linus Hamann for teaching me about Eisenstein
series and helping me get the signs right; and Alexander Bertoloni
Meli for many discussions and valuable encouragement as I worked out
these ideas. I also got crucial inspiration from beautiful lectures
by Ian Gleason, Teruhisa Koshikawa,  Jo\~{a}o Louren\c{c}o, and Xinwen Zhu.
Finally, Christian Johansson pointed out in January 2023 that the
paper \cite{Bez} might be worth looking at; as always, his advice
was on the mark.

\subsection*{What these notes are trying to do}

Fix a prime $p$, and a nonarchimedean local field $E$ with uniformizer
$\varpi$ and residue field $\mathbf{F}_{q}$ of characteristic $p$.
Let $G/E$ be a connected reductive group, which we assume for simplicity
is quasisplit. The \emph{local Langlands conjecture, }in its vaguest
form, seeks to parametrize the irreducible smooth representations
of $G(E)$ in terms of Galois-theoretic data. A huge amount of effort
has been expended on making this conjecture precise, and proving it
in many cases. In its modern form, the conjecture proposes that irreducible
smooth representations of $G(E)$ can be parametrized by (suitable)
pairs $(\phi,\rho)$ where $\phi:W_{E}\times\mathrm{SL}_{2}\to\phantom{}^{L}G$
is an \emph{L-}parameter and $\rho$ is an irreducible algebraic representation
of a group closely related to the centralizer group $S_{\phi}=\mathrm{Cent}_{\hat{G}}(\phi)$.
Moreover, this parametrization should depend only on a choice of Whittaker
datum for $G$, and should satisfy many good properties. We refer
to \cite{KalN} for a precise formulation.

In recent years, the subject has undergone a major transformation,
thanks to Fargues's amazing discovery that ideas from \emph{geometric
Langlands }can be imported into the study of local Langlands, by reinterpreting
the basic structures in local Langlands in terms of $G$-bundles on
the Fargues--Fontaine curve \cite{Far,FarguesMSRI}. The foundations
for this rebuilding of the field were then laid in the revolutionary
manuscripts \cite{FS,SchD}. Among other things, they succeed in constructing
a canonical map $\pi\mapsto\varphi_{\pi}$ from irreducible smooth
representations towards semisimple \emph{L}-parameters, which should
be the semisimplification of the ``true'' local Langlands parametrization.

In a parallel stream, the idea emerged that the derived category of
all smooth $G(E)$-representations should embed fully faithfully into
ind-coherent sheaves on the stack of \emph{L}-parameters. This was
made precise (independently) by Hellmann \cite{Hel}, Zhu \cite{Zhu},
and Ben-Zvi--Chen--Helm--Nadler. It is then natural to conjecture
that some form of the geometric Langlands conjecture holds in this
setting, namely that the category of all sheaves on $\mathrm{Bun}_{G}$
should be equivalent to the category of ind-coherent sheaves on the
stack of \emph{L}-parameters. A formal conjecture along these lines
was proposed by Fargues--Scholze \cite[Conjecture I.10.2]{FS}, without
however pinning down the functor which should give the equivalence.
An alternative formulation was proposed by Zhu, working instead with
sheaves on the stack of $G$-isocrystals \cite{Zhu}.

These notes should be regarded as a minor supplement to the works
mentioned above. More concretely, we are guided by the following motivating
questions, in the setting of Fargues-Scholze:
\begin{enumerate}
\item Can we give an unconditional formulation of the categorical local
Langlands conjecture?
\item What properties and compatibilities should the categorical conjecture
enjoy? What additional conjectures do these properties suggest?
\item How does the categorical conjecture encode the classical local Langlands
conjecture?
\item How do the non-basic strata in $\mathrm{Bun}_{G}$ fit into the picture?
\end{enumerate}
These notes are \emph{not }intended as a detailed introduction to
the structures required to formulate these questions. More specifically,
we will assume the reader is somewhat familiar with the modern expectations
regarding the local Langlands correspondence, as outlined in (say)
\cite{KalN} and \cite{KalICM}. We will also assume some familiarity
with \cite{FS}, and with the philosophy of the classical geometric
Langlands program.

There are exercises scattered throughout the text. They vary widely
in difficulty, but none of them are open problems. Many of the harder
exercises will be fully solved in \cite{HM}.

\subsection*{Notation and conventions}

We will introduce various notations throughout the text. Here we briefly
mention several perhaps non-standard conventions. If $X$ is a disjoint
union of finite type (derived) Artin stacks, we write $\mathrm{Coh}(X)$
for the category of bounded coherent complexes on $X$ with quasicompact
support. On the other hand, we write $\mathrm{Perf}(X)$ for all dualizable
objects in $\mathrm{QCoh}(X)$, so there is no support condition.
This leads to the slightly unfortunate fact that $\mathrm{Perf}$
may not be contained in $\mathrm{Coh}$.

If $X$ is an object in a triangulated or stable $\infty$-category
$\mathcal{D}$, we say $X$ admits a filtration with graded pieces
$A_{1},\dots,A_{n}$ if there exists a sequence of maps $0=X_{0}\to X_{1}\to X_{2}\to\cdots\to X_{n}=X$
with $A_{j}\simeq\mathrm{cone}(X_{j-1}\to X_{j})$ for all $1\leq j\leq n$.
Similar remarks apply to functors between stable $\infty$-categories.

\section{The categorical conjecture}

Fix a prime $p$, and a nonarchimedean local field $E$ with uniformizer
$\varpi$ and residue field $\mathbf{F}_{q}$ of char. $p$. Fix also
a prime $\ell\neq p$. Let $\Lambda$ be a $\mathbf{Z}_{\ell}$-algebra
containing a fixed choice of $\sqrt{q}$.

Let $G$ be a connected reductive group over $E$. We often assume
that $G$ is quasisplit, both for simplicity and because the categorical
conjecture requires this assumption. Let $Q$ be the maximal finite
quotient of $W_{E}$ which acts faithfully on $\hat{G},$ and put
$^{L}G=\hat{G}\rtimes Q$, which we regard as a linear algebraic group
over $\mathbf{Z}_{\ell}[\sqrt{q}]$. We write $\mathrm{Rep}_{\Lambda}(^{L}G)$
and $\mathrm{Rep}_{\Lambda}(\hat{G})$ for the evident tensor categories
of algebraic representations on finite projective $\Lambda$-modules. 

\subsection{The automorphic side}

The basic geometric object is the stack $\mathrm{Bun}_{G}$ of $G$-bundles
on the Fargues-Fontaine curve. We will not review this in detail here,
since it is now a classical object.\footnote{I am joking.} The key
feature of its geometry is that it has a Harder-Narasimhan stratification
by locally closed substacks $\mathrm{Bun}_{G}^{b}$ indexed by elements
of the Kottwitz set $b\in B(G)$ \cite{FarguesGtorsor}. Each stratum
is a classifying stack for a certain group v-sheaf $\tilde{G}_{b}$,
which is an extension of the constant group sheaf $\underline{G_{b}(E)}$
by a connected unipotent group. By a deep theorem of Viehmann \cite{Vie},
the topology on $|\mathrm{Bun}_{G}|\cong B(G)$ is exactly the Newton
partial order topology on $B(G)$. We write $b\preceq b'$ if $b'\in\overline{\{b\}}$,
so the minimal elements are exactly the basic $b$.

On the automorphic side, the key player is the category $D(\mathrm{Bun}_{G},\Lambda)=D_{\mathrm{lis}}(\mathrm{Bun}_{G},\Lambda)$
of sheaves on $\mathrm{Bun}_{G}$. This is glued semi-orthogonally
from analogous categories of sheaves attached to each Harder-Narasimhan
stratum, which turn out to be much simpler. More precisely, for each
stratum we have a canonical t-exact tensor equivalence
\[
D(\mathrm{Bun}_{G}^{b},\Lambda)\cong D(G_{b}(E),\Lambda),
\]
where the right-hand side is simply the derived category of the abelian
category of smooth $\Lambda[G_{b}(E)]$-modules. This equivalence
is induced by the functors $s_{b\natural}$ and $s_{b}^{\ast}$ associated
with the tautological map $s_{b}:[\ast/\underline{G_{b}(E)}]\to\mathrm{Bun}_{G}^{b}$,
which turn out to be mutually inverse t-exact tensor equivalences.
We will \emph{always }identify $D(\mathrm{Bun}_{G}^{b},\Lambda)$
and $D(G_{b}(E),\Lambda)$ in this way.

For any $b\in B(G)$, there are functors $i_{b}^{\ast}:D(\mathrm{Bun}_{G},\Lambda)\to D(G_{b}(E),\Lambda)$
and $i_{b!}:D(G_{b}(E),\Lambda)\to D(\mathrm{Bun}_{G},\Lambda)$,
which do ``exactly what you think they do.'' The functor $i_{b!}$
has a right adjoint $i_{b}^{!}$, while $i_{b}^{\ast}$ has a right
adjoint $i_{b\ast}$ and also a left adjoint $i_{b\sharp}$. This
left adjoint is a special feature of the situation. The three pushforwards
are linked by natural transformations
\[
i_{b\sharp}\to i_{b!}\to i_{b\ast}
\]
which induce the identity after applying $i_{b}^{\ast}$. Note that
$i_{b\ast}A$ can only have nonzero stalks at points which are specializations
of $b$, while (more strangely) $i_{b\sharp}A$ can only have nonzero
stalks at points which are generizations of $A$.

Let us briefly recall the construction of these functors, especially
the pushforwards $i_{b\sharp}$ and $i_{b!}$ and the natural transformations
mentioned above. For any $b\in B(G)$, Fargues-Scholze construct an
auxiliary diagram of small v-stacks
\[
\xymatrix{\mathcal{M}_{b}\ar[d]^{q_{b}}\ar[r]^{\pi_{b}} & \mathrm{Bun}_{G}\\{}
[\ast/\underline{G_{b}(E)}]
}
\]
where $\pi_{b}$ is cohomologically smooth and surjects onto the open
substack $\mathrm{Bun}_{G}^{\preceq b}$ of points which are ``more
semistable'' than $b$. Moreover, the map $q_{b}$ has a canonical
section given by a closed immersion $[\ast/\underline{G_{b}(E)}]\to\mathcal{M}_{b}$.
Puncturing $\mathcal{M}_{b}$ along this section we get an auxiliary
diagram
\[
\xymatrix{\mathcal{M}_{b}^{\circ}\ar[d]^{q_{b}^{\circ}}\ar[r]^{\pi_{b}^{\circ}} & \mathrm{Bun}_{G}\\{}
[\ast/\underline{G_{b}(E)}]
}
\]
together with a natural open immersion $j_{b}:\mathcal{M}_{b}^{\circ}\to\mathcal{M}_{b}$.
The functor $i_{b\sharp}$ turns out to be given by the formula $i_{b\sharp}=\pi_{b\natural}q_{b}^{\ast}$.
The natural transformation $j_{b\natural}j_{b}^{\ast}\to\mathrm{id}$
then induces a natural transformation $\pi_{b\natural}^{\circ}q_{b}^{\circ\ast}\to\pi_{b\natural}q_{b}^{\ast}$,
and we define $i_{b!}$ as the cofiber of this map.\footnote{This definition may seem surprisingly complicated, but the issue is
that in the $D_{\mathrm{lis}}$ formalism, we only have a priori access
to $\natural$-pushforwards along cohomologically smooth maps. Consequently,
we are forced to construct $i_{b!}$ indirectly since $i_{b}$ itself
is not cohomologically smooth.} From this construction the map $i_{b\sharp}\to i_{b!}$ is tautological.
An easy application of base change shows that $i_{b}^{\ast}i_{b!}\cong\mathrm{id}$,
so by adjunction we get the claimed transformation $i_{b!}\to i_{b\ast}$.
Finally, $i_{b}^{!}$ is defined as the right adjoint of $i_{b!}$.

For our purposes, it will be very convenient to ``renormalize''
these functors. To make the relevant definition, we note that $G_{b}$
is an inner form of a Levi subgroup $M_{\{b\}}\subset G$, namely
the centralizer of the Newton cocharacter $\nu_{b}$. Let $P_{\{b\}}$
be the dynamic parabolic of $\nu_{b}$, and let $\delta_{b}:M_{\{b\}}(E)\to q^{\mathbf{Z}}$
be the usual modulus character. Since $G_{b}$ and $M_{\{b\}}$ have
canonically isomorphic cocenters, we can regard $\delta_{b}$ as a
cocharacter $G_{b}(E)\to q^{\mathbf{Z}}$. Since we have fixed a choice
of $\sqrt{q}$ in our coefficient ring, we can take the square root
to get a character $\delta_{b}^{1/2}:G_{b}(E)\to\Lambda^{\times}$.
\begin{defn}[Renormalized functors]
For $?\in\{\sharp,!,\ast\}$, we define $i_{b?}^{\mathrm{ren}}:D(G_{b}(E),\Lambda)\to D(\mathrm{Bun}_{G},\Lambda)$
by the formula
\[
i_{b?}^{\mathrm{ren}}A=i_{b?}(A\otimes\delta_{b}^{-1/2})[-\left\langle 2\rho_{G},\nu_{b}\right\rangle ].
\]
Similarly, for $?\in\{\ast,!\}$ we define $i_{b}^{?\mathrm{ren}}A=(\delta_{b}^{1/2}\otimes i_{b}^{?}A)[\left\langle 2\rho_{G},\nu_{b}\right\rangle ]$.
\end{defn}

Note that by design, we still have adjunctions $i_{b\sharp}^{\mathrm{ren}}\vdash i_{b}^{\ast\mathrm{ren}}\vdash i_{b\ast}^{\mathrm{ren}}$
and $i_{b!}^{\mathrm{ren}}\vdash i_{b}^{!\mathrm{ren}}$, as well
as natural transformations $i_{b\sharp}^{\mathrm{ren}}\to i_{b!}^{\mathrm{ren}}\to i_{b\ast}^{\mathrm{ren}}$
which induce the identity after applying $i_{b}^{\ast\mathrm{ren}}$.
Note also that when $b$ is basic, we have not changed the functors
at all. The importance of this renormalization will become clear later.
For now let us just remark that the renormalized functors interact
cleanly with duality (Proposition \ref{prop:dualitypush}), have good
(semi)perversity properties (Exercise \ref{exer:perversesemiexact}),
are closely related to geometric Eisenstein series (Remark \ref{rem:Eisensteinpush}),
and preserve \emph{L-}parameters (Theorem \ref{thm:ibcompatibleparameters}).
\begin{prop}
\label{prop:easyvanishing}One has the following a priori vanishing
results:

\emph{i. }$R\mathrm{Hom}(i_{b\sharp}A,i_{b'\sharp}A')=0$ unless $b\preceq b'$,

\emph{ii. }$R\mathrm{Hom}(i_{b!}A,i_{b'!}A')=0$ unless $b'\preceq b$,

\emph{iii. }$R\mathrm{Hom}(i_{b\sharp}A,i_{b'!}A')=0$ unless $b=b'$,

\emph{iv. $R\mathrm{Hom}(i_{b\sharp}A,i_{b'\ast}A')=0$ }unless\emph{
$b'\preceq b$.}\\
The obvious variants hold for renormalized pushforwards.
\end{prop}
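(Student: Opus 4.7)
The strategy is uniform: each part follows by applying an appropriate adjunction to rewrite the $R\mathrm{Hom}$ as a computation on a single stratum, then invoking support properties of the pushforwards. The renormalizations differ only by a twist and a shift, which commute past all functors, so the renormalized variants follow immediately.

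For parts (i), (iii), and (iv) I would use the adjunction $(i_{b\sharp}, i_b^\ast)$ to obtain
\[
R\mathrm{Hom}(i_{b\sharp}A, i_{b'?}A') \cong R\mathrm{Hom}_{G_b(E)}(A, i_b^\ast i_{b'?}A')
\]
for $? \in \{\sharp, !, \ast\}$, and then verify that the relevant $i_b^\ast i_{b'?}A'$ vanishes. Part (i) is then immediate from the support property of $i_{b'\sharp}$ recorded in the excerpt: the $i_{b''}^\ast$-restriction is nonzero only at $b'' \preceq b'$. Part (iv) follows dually from the parallel statement for $i_{b'\ast}$. For part (iii) I would need the sharper fact that $i_{b'!}A'$ has nonzero $i_{b''}^\ast$-restriction only at the stratum $b'' = b'$ itself, not at any other point of its closure. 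To establish this, factor the locally closed immersion $i_{b'} = k_{b'} \circ j_{b'}$ through the open substack $\mathrm{Bun}_G^{\preceq b'}$, with $k_{b'}$ an open immersion and $j_{b'}$ a closed one; then $i_{b'!} = k_{b'!}\, j_{b'\ast}$, so the $k_{b'!}$-step kills all strata outside $\mathrm{Bun}_G^{\preceq b'}$ while the $j_{b'\ast}$-step kills all remaining strata other than $\mathrm{Bun}_G^{b'}$ itself.

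Part (ii) is the main obstacle, because the functor $i_b^!$ does not vanish merely because a sheaf has zero stalk at $b$: it is sensitive to nearby-cycles-type data. Applying $(i_{b!}, i_b^!)$ reduces the problem to showing $i_b^! i_{b'!}A' = 0$ when $b' \not\preceq b$. Here the crucial input is Viehmann's theorem identifying the topology on $|\mathrm{Bun}_G|$ with the Newton partial order: the sheaf $i_{b'!}A'$ is supported on the closed substack $\overline{\mathrm{Bun}_G^{b'}} = \{b'' : b' \preceq b''\}$ and hence vanishes on the open complement $V = \mathrm{Bun}_G \setminus \overline{\mathrm{Bun}_G^{b'}}$. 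When $b' \not\preceq b$, the stratum $\mathrm{Bun}_G^b$ lies entirely inside $V$, so factoring $i_b = v \circ i_b^V$ through the open immersion $v : V \hookrightarrow \mathrm{Bun}_G$ yields $i_b^! i_{b'!}A' = (i_b^V)^!\, v^\ast i_{b'!}A' = 0$, using $v^! = v^\ast$ for the open immersion and the vanishing of $v^\ast i_{b'!}A'$ by support.

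The underlying technical machinery — the six functors and base change for $D_{\mathrm{lis}}$ on small v-stacks, together with the standard factorization of Newton strata through the opens $\mathrm{Bun}_G^{\preceq b}$ — is provided by \cite{FS}.
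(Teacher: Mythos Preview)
Your proof is correct. For parts i., iii., and iv.\ your approach coincides with the paper's: adjoin out the $i_{b\sharp}$ and invoke the stated support properties of $i_{b'\sharp}$, $i_{b'!}$, $i_{b'\ast}$.

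For part ii.\ you take a genuinely different route. The paper deduces ii.\ from i.\ by Bernstein--Zelevinsky duality, using the later compatibility $\mathbf{D}_{\mathrm{BZ}}\,i_{b!}^{\mathrm{ren}}A \simeq i_{b\sharp}^{\mathrm{ren}}\,\mathbf{D}_{\mathrm{coh}}A$ to convert $R\mathrm{Hom}(i_{b!}A,i_{b'!}A')$ into an instance of i. Your argument instead adjoins via $(i_{b!},i_b^!)$ and then factors $i_b$ through the open complement of $\overline{\mathrm{Bun}_G^{b'}}$, where $i_{b'!}A'$ vanishes. This is more elementary: it avoids the forward reference to Proposition~\ref{prop:dualitypush}, and it works for arbitrary $A,A'$ rather than requiring compactness for $\mathbf{D}_{\mathrm{BZ}}$ to be an anti-equivalence. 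The paper's route is shorter once the duality machinery is in place, and it illustrates the symmetry between $\sharp$ and $!$ under $\mathbf{D}_{\mathrm{BZ}}$, but your direct argument is the more self-contained one. One minor caveat: your factorization $i_{b'!} = k_{b'!}\,j_{b'\ast}$ in the justification of iii.\ is morally right but, as the paper's footnote on the construction of $i_{b!}$ warns, the $D_{\mathrm{lis}}$ formalism does not give you $j_!=j_\ast$ for closed immersions for free; the vanishing $i_b^\ast i_{b'!}=0$ for $b\neq b'$ is nonetheless secured by the construction via $\mathcal{M}_{b'}$.
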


\begin{proof}
Parts i., iii. and iv. are exercises using various adjunctions together
with the support properties of $i_{b\ast}$, $i_{b!}$ and $i_{b\sharp}$
mentioned above, and ii. can be deduced from i. by duality.
\end{proof}

\subsubsection{Finiteness and duality}

There are two natural finiteness conditions one can impose on objects
of $D(G_{b}(E),\Lambda)$. On one hand, we can consider the \emph{compact}
\emph{objects} in this category. It turns out that any compact object
is built from basic compact objects, i.e. representations of the form
$\mathrm{ind}_{K}^{G_{b}(E)}\Lambda$ where $K\subset G_{b}(E)$ is
a pro-$p$ compact open subgroup and $\mathrm{ind}_{K}^{G_{b}(E)}$
denotes compact induction, by finitely many shifts, cones and retracts.
It is not hard to see that $D(G_{b}(E),\Lambda)$ is compactly generated. 
\begin{rem}
When $\Lambda$ is a field of characteristic zero, a difficult theorem
of Bernstein asserts that $A\in D(G_{b}(E),\Lambda)$ is compact iff
it has bounded cohomological amplitude and each $H^{n}(A)$ is a finitely
generated representation. As a particular consequence of this theorem,
we note that the standard truncation functors on $D(G_{b}(E),\Lambda)$
preserve compact objects when $\Lambda$ is a field of characteristic
zero. We will use this fact later in the construction of the hadal
t-structure.
\end{rem}

On the other hand, we can consider \emph{ULA objects}, namely those
objects $A\in D(G_{b}(E),\Lambda)$ for which $A^{K}$ is a perfect
complex of $\Lambda$-modules for all pro-$p$ open compacts $K$.
Note that if $\Lambda$ is a field and $A$ has bounded cohomological
amplitude, $A$ is ULA exactly when each $H^{n}(A)$ is an admissible
representation in the usual sense. One should think of the ULA condition
as a very mild variant on admissibility, which is somehow more conceptual.

These two finiteness conditions come with two associated dualities.
On the subcategory $D(G_{b}(E),\Lambda)^{\mathrm{ULA}}$ of ULA objects,
we have the operation of \emph{smooth duality} $\mathbf{D}_{\mathrm{sm}}$
sending $A$ to $R\mathscr{H}\mathrm{om}(A,\Lambda)$, where $R\mathscr{H}\mathrm{om}(-,\Lambda)$
is the internal hom towards the trivial representation in $D(G_{b}(E),\Lambda)$.
This operation makes sense for any $A$, but on the subcategory of
ULA objects it restricts to an involutive anti-equivalence. When $\Lambda$
is a field and $A=\pi$ is an admissible representation concentrated
in degree zero, $\mathbf{D}_{\mathrm{sm}}\pi$ is just the usual smooth
dual $\pi^{\vee}$, i.e. the smooth vectors in the abstract dual $\mathrm{Hom}_{\Lambda}(\pi,\Lambda)$. 

On the subcategory $D(G_{b}(E),\Lambda)^{\omega}$ of compact objects,
we also have Bernstein's \emph{cohomological duality $\mathbf{D}_{\mathrm{coh}}$},
which by definition sends $A$ to the total external derived hom $R\mathrm{Hom}(A,\mathcal{C}_{c}^{\infty}(G_{b}(E),\Lambda))$.
It is a fun exercise to check that $\mathbf{D}_{\mathrm{coh}}$ sends
any basic compact object $\mathrm{ind}_{K}^{G_{b}(E)}\Lambda$ to
itself, but noncanonically so (the isomorphism depending on a choice
of $\Lambda$-valued Haar measure on $G_{b}(E)$), and therefore defines
an involutive anti-equivalence on compact objects.

These finiteness conditions and their associated dualities have perfect
adaptations to sheaves on $\mathrm{Bun}_{G}$. Namely, smooth duality
matches with Verdier duality, which is given by exactly the same formula,
namely $\mathbf{D}_{\mathrm{Verd}}(A)=R\mathscr{H}\mathrm{om}(A,\Lambda)$
where the internal hom is now computed in $D(\mathrm{Bun}_{G},\Lambda)$.
This operation makes sense for all sheaves, and on the subcategory
$D(\mathrm{Bun}_{G},\Lambda)^{\mathrm{ULA}}$ of ULA sheaves it induces
an involutive antiequivalence. We omit the \emph{definition }of ULA
sheaves on $\mathrm{Bun}_{G}$, and simply note that they are (non-definitionally)
characterized by the property that their $\ast$-restriction to each
stratum is ULA in $D(G_{b}(E),\Lambda)$.

Cohomological duality matches with the more exotic operation of \emph{Bernstein-Zelevinsky
duality}, denoted $\mathbf{D}_{\mathrm{BZ}}$. This is only defined
on compact sheaves, where it again yields an involutive anti-equivalence.
This duality is characterized by the formula $R\mathrm{Hom}(\mathbf{D}_{\mathrm{BZ}}(A),B)=\pi_{\natural}(A\otimes B)$
where $\pi:\mathrm{Bun}_{G}\to\ast$ is the structure map and $\pi_{\natural}$
is the left adjoint of $\pi^{\ast}$. Compact sheaves also have a
useful concrete characterization, by the property that their $\ast$-restriction
to each stratum is compact and is identically zero on all but finitely
many strata.

The finiteness conditions and dualities on $\mathrm{Bun}_{G}$ and
on the individual strata are linked as follows.
\begin{prop}
\emph{\label{prop:dualitypush}o. }The functors $i_{b!}$, $i_{b\sharp}$,
and $i_{b}^{\ast}$ preserve compact objects. The functors $i_{b!}$,
$i_{b\ast}$, and $i_{b}^{!}$ preserve ULA objects. Likewise for
the renormalized functors.

\emph{i. }For any $A\in D(G_{b}(E),\Lambda)$ we have $\mathbf{D}_{\mathrm{Verd}}i_{b!}^{\mathrm{ren}}A\simeq i_{b\ast}^{\mathrm{ren}}\mathbf{D}_{\mathrm{sm}}A$,
and if $A$ is ULA we also have $\mathbf{D}_{\mathrm{Verd}}i_{b\ast}^{\mathrm{ren}}A\simeq i_{b!}^{\mathrm{ren}}\mathbf{D}_{\mathrm{sm}}A$.

\emph{ii. }For any $A\in D(G_{b}(E),\Lambda)^{\omega}$ we have $\mathbf{D}_{\mathrm{BZ}}i_{b!}^{\mathrm{ren}}A\simeq i_{b\sharp}^{\mathrm{ren}}\mathbf{D}_{\mathrm{coh}}A$
and $\mathbf{D}_{\mathrm{BZ}}i_{b\sharp}^{\mathrm{ren}}A\simeq i_{b!}^{\mathrm{ren}}\mathbf{D}_{\mathrm{coh}}A$.

\emph{iii. }For any $B\in D(\mathrm{Bun}_{G},\Lambda)$ we have $\mathbf{D}_{\mathrm{sm}}i_{b}^{\ast\mathrm{ren}}B\simeq i_{b}^{!\mathrm{ren}}\mathbf{D}_{\mathrm{Verd}}B$,
and if $B$ is ULA we also have $\mathbf{D}_{\mathrm{sm}}i_{b}^{!\mathrm{ren}}B\simeq i_{b}^{\ast\mathrm{ren}}\mathbf{D}_{\mathrm{Verd}}B$
\end{prop}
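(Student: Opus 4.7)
The plan splits into the finiteness claim (o) and the two duality interchanges (i), (ii). Throughout, the renormalization has been arranged so that $i_{b?}^{\mathrm{ren}}$ differs from $i_{b?}$ only by the t-exact equivalences of tensoring with the character $\delta_b^{1/2}$ and shifting by $\langle 2\rho_G,\nu_b\rangle$. Consequently every finiteness assertion for the renormalized functors reduces to the unrenormalized case, and once a duality formula is established in one normalization the other follows by bookkeeping.

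For (o), compact preservation follows from the presentations $i_{b\sharp}=\pi_{b\natural}q_b^{*}$ and $i_{b!}=\mathrm{cofib}\bigl(\pi^{\circ}_{b\natural}q_b^{\circ*}\to\pi_{b\natural}q_b^{*}\bigr)$: a $\natural$-pushforward along a cohomologically smooth map always preserves compacts (its right adjoint $\pi_b^{*}$ commutes with all colimits), and $q_b^{*}$ does so trivially. Compact preservation under $i_b^{*}$ is then checked on the generating family $i_{b'!}A'$ via base change along the auxiliary $\mathcal{M}_{b'}$-diagrams. For the ULA claims I would use the non-definitional characterization that $A$ is ULA iff each $i_{b'}^{*}A$ is ULA on the corresponding stratum, and verify $i_{b'}^{*}i_{b!}A$, $i_{b'}^{*}i_{b\ast}A$, and $i_{b'}^{*}i_b^{!}A$ directly via base change against the $\pi_b,q_b$ diagrams together with their known cohomological smoothness and properness features.

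For (i) the formal half is the identity $\mathbf{D}_{\mathrm{Verd}}\circ i_{b!}\simeq i_{b\ast}\circ\mathbf{D}_{\mathrm{Verd}}$, which falls out of the six-functor formalism on $\mathrm{Bun}_G$. The geometric half is the identification of Verdier duality on the stratum: writing $\mathrm{Bun}_G^{b}\cong[\ast/\tilde{G}_b]$ with $\tilde{G}_b$ an extension of $\underline{G_b(E)}$ by a connected unipotent v-sheaf of ``dimension'' $\langle 2\rho_G,\nu_b\rangle$, one computes the dualizing complex of the stratum. The unipotent direction contributes the shift by $-2\langle 2\rho_G,\nu_b\rangle$, while the conjugation action on the unipotent radical of the dynamic parabolic $P_{\{b\}}$ contributes the twist by $\delta_b^{-1}$, so that under the tautological equivalence Verdier duality on the stratum is $A\mapsto\mathbf{D}_{\mathrm{sm}}(A)\otimes\delta_b^{-1}[-2\langle 2\rho_G,\nu_b\rangle]$. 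The half-twist and half-shift in the definition of $i_{b?}^{\mathrm{ren}}$ are precisely calibrated to absorb this discrepancy, yielding $\mathbf{D}_{\mathrm{Verd}}i_{b!}^{\mathrm{ren}}A\simeq i_{b\ast}^{\mathrm{ren}}\mathbf{D}_{\mathrm{sm}}A$; the second isomorphism follows by applying $\mathbf{D}_{\mathrm{Verd}}$ and using involutivity on ULA objects. For (ii), I would test both sides against an arbitrary $B\in D(\mathrm{Bun}_G,\Lambda)$. The left side unfolds, using the defining formula for $\mathbf{D}_{\mathrm{BZ}}$ and the projection formula for $i_{b!}$, to $\pi_{b,\natural}(A\otimes i_b^{*\mathrm{ren}}B)$ after identifying $\pi_\natural i_{b!}^{\mathrm{ren}}$ with $\pi_{b,\natural}$ (where $\pi_b$ is the structure map of $[\ast/\underline{G_b(E)}]$); the right side unfolds to the same expression via the renormalized adjunction $i_{b\sharp}^{\mathrm{ren}}\dashv i_b^{*\mathrm{ren}}$ together with the defining formula for $\mathbf{D}_{\mathrm{coh}}$. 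The second BZ isomorphism drops out by applying $\mathbf{D}_{\mathrm{BZ}}$ and invoking involutivity.

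The main obstacle will be the explicit computation of the dualizing complex of $\mathrm{Bun}_G^{b}$ in step (i), i.e., identifying Verdier duality on $[\ast/\tilde{G}_b]$ in representation-theoretic terms. Extracting both the shift $-2\langle 2\rho_G,\nu_b\rangle$ and the modulus twist $\delta_b^{-1}$ requires careful tracking of the geometry of the unipotent direction of $\tilde{G}_b$ as a v-sheaf and its interaction with the profinite part $\underline{G_b(E)}$. Once this geometric input is in place, the companion identification $\pi_\natural\circ i_{b!}^{\mathrm{ren}}\simeq\pi_{b,\natural}$ needed in (ii) is an analogous but simpler computation driven by the cofiber presentation of $i_{b!}$ and the cohomological smoothness of $\pi_b$ and $\pi_b^{\circ}$, and the remaining assembly is purely formal manipulation of adjunctions.
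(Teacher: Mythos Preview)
Your overall strategy matches the paper's: reduce everything to the computation of $i_b^{!}\Lambda\simeq f_b^{!}\Lambda$ on the stratum, which yields both the shift $-2\langle 2\rho_G,\nu_b\rangle$ and the modulus twist, and then let the renormalization absorb these. The paper's sketch is organized around exactly this diagram and this single key computation.

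The one place where you are more optimistic than the paper is the twist. You write that ``the conjugation action on the unipotent radical of the dynamic parabolic $P_{\{b\}}$ contributes the twist by $\delta_b^{-1}$,'' suggesting a direct geometric extraction. The paper explicitly flags that while the shift is easy, pinning down the twist is \emph{much harder}: the author states that the only argument he knows is an \emph{indirect} one via \cite[Proposition IX.5.3 and Theorem IX.7.2]{FS}, and that a direct proof covering all cases is due to separate (at the time unpublished) work of Hamann--Imai. So your heuristic for the twist is correct in spirit, but you should not expect to carry it out by a naive inspection of the $\tilde G_b$-geometry; plan instead to invoke the cited Fargues--Scholze results (or the Hamann--Imai computation) at that point. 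Everything else in your outline---the formal six-functor identity $\mathbf{D}_{\mathrm{Verd}}\circ i_{b!}\simeq i_{b\ast}\circ\mathbf{D}_{\mathrm{Verd}}$, the Yoneda-style unfolding for (ii), and the compactness/ULA bookkeeping in (o)---is the expected ``calisthenics'' the paper alludes to.
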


Note that the duality compatibilies here would be much uglier to state
with the naive pushforwards.
\begin{proof}[Sketch]
 The key point, via some calesthenics with the appropriate diagram
\[
\xymatrix{[\ast/\underline{G_{b}(E)}]\ar[d]^{s_{b}}\ar[dr]^{i_{b}'}\\
\mathrm{Bun}_{G}^{b}\ar[r]^{i_{b}}\ar[d]^{f_{b}} & \mathrm{Bun}_{G}\ar[dl]^{\pi}\\
\ast
}
\]
is the observation that $i_{b}^{!}\Lambda\simeq f_{b}^{!}\Lambda\simeq\delta_{b}^{-1}[-2\left\langle 2\rho_{G},\nu_{b}\right\rangle ]$.
Getting the shift correct here is not hard, but pinning down the twist
is much harder, and I originally figured this out by a very indirect
argument using \cite[Proposition IX.5.3 and Theorem IX.7.2]{FS}.
Subsequently, Hamann and Imai gave a very nice direct proof which
covers all cases \cite{HI}.
\end{proof}
\begin{rem}
\label{rem:dualitycomparisons}It is instructive to recall that if
$\Lambda$ is a field of characteristic zero and $A=\pi$ is an irreducible
admissible $G(E)$-representation concentrated in degree zero, then
$\pi$ is both compact \emph{and }ULA, so both dualities make sense.\footnote{Again, compactness is not obvious, and follows from the theorem of
Bernstein mentioned earlier.} The smooth dual of $\pi$ is again an irreducible admissible representation,
and the operation of smooth duality is expected to interact very cleanly
with the local Langlands parametrization \cite{KalGen}. The effect
of cohomological duality is much less obvious: it follows from deep
work of Aubert/Bernstein/Schneider-Stuhler/Zelevinsky that $\mathbf{D}_{\mathrm{coh}}(\pi)\simeq\mathrm{Zel}(\pi)[-d_{\pi}]$
for some nonnegative integer $d_{\pi}$ and some irreducible admissible
representation $\mathrm{Zel}(\pi)$. The operation $\pi\mapsto\mathrm{Zel}(\pi)$
is the Aubert-Zelevinsky involution.\footnote{Some authors would say that $\mathrm{Zel}(\pi)^{\vee}$ is the Aubert-Zelevinsky
involution.} The integer $d_{\pi}$ is just the dimension of the component of
the Bernstein variety containing $\pi$, but the representation $\mathrm{Zel}(\pi)$
is mysterious in general, and it is hard to say much about it beyond
the simple observation that it has the same supercuspidal support
as $\pi^{\vee}$.\footnote{It is at least true that $\mathrm{Zel}(\pi)=\pi^{\vee}$ when $\pi$
is supercuspidal.} 

As a sample of what can happen, we recall that $\mathrm{Zel}(-)$
interchanges the trivial representation $\mathbf{1}$ with the Steinberg
representation $\mathrm{St}$. Although $\mathrm{St}$ has the same
semisimple $L$-parameter as the trivial representation, the associated
monodromy operators are very different (in fact maximally different).
In particular, cohomological duality interacts in a somewhat mysterious
way with the local Langlands parametrization. Nevertheless, it will
transpire that the categorical local Langlands equivalence should
intertwine Bernstein-Zelevinsky duality on $\mathrm{Bun}_{G}$ with
(twisted) Grothendieck-Serre duality on the stack of \emph{L}-parameters.
\end{rem}

\begin{xca}
Show that in parts i. and iii. of Proposition \ref{prop:dualitypush},
the ULA condition in the second claim can be dropped. (Hint: Use \cite[Prop. VII.7.7]{FS}.)
\end{xca}

\subsection{t-structures}

First we briefly recall the perverse t-structure, which makes sense
for any coefficient ring $\Lambda$. The existence of this t-structure
has been well-understood by experts for years (see e.g. \cite[Remark I.10.3]{FS}),
although it doesn't seem to be recorded in the literature.
\begin{prop}
There is a perverse t-structure $^{p}D^{\leq0},\,^{p}D^{\geq0}$ on
$D(\mathrm{Bun}_{G},\Lambda)$, with abelian heart denoted $\mathrm{Perv}(\mathrm{Bun}_{G},\Lambda)$,
uniquely characterized by the condition that $A\in D(\mathrm{Bun}_{G},\Lambda)$
lies in $^{p}D^{\leq0}$ (resp. $^{p}D^{\geq0}$) if $i_{b}^{\ast}A$
sits in cohomological degrees $\leq\left\langle 2\rho_{G},\nu_{b}\right\rangle $
for all $b$ (resp. if $i_{b}^{!}A$ sits in cohomological degrees
$\geq\left\langle 2\rho_{G},\nu_{b}\right\rangle $ for all $b$).
When $\Lambda$ is cohomologically regular in the sense that $\mathrm{Perf}(\Lambda)$
is preserved by standard truncation, the perverse truncation functors
preserve the ULA property. When $\Lambda$ is a field, Verdier duality
exchanges $^{p}D_{\mathrm{}}^{\leq0}\cap D^{\mathrm{ULA}}$ and $\,^{p}D^{\geq0}\cap D^{\mathrm{ULA}}$.
\end{prop}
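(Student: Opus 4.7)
The plan is Beilinson--Bernstein--Deligne-style gluing of t-structures along the Harder--Narasimhan stratification, combined with Proposition~\ref{prop:dualitypush} for the duality part.

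\emph{Existence.} On each stratum, the renormalized stalk conditions $i_b^{\ast\mathrm{ren}} A \in D^{\leq 0}$ and $i_b^{!\mathrm{ren}} A \in D^{\geq 0}$ just transport the standard t-structure on $D(G_b(E), \Lambda)$ through the identification. I would write $\mathrm{Bun}_G$ as a filtered union of the quasicompact opens $\mathrm{Bun}_G^{\preceq b}$, each of which has only finitely many HN-strata since the Newton poset is locally finite downward. On each such open, inductively opening up a minimal stratum and applying the standard BBD recollement gluing (whose six-functor ingredients are recalled before the proposition) constructs a t-structure with the asserted characterization. Compatibility under restriction to smaller opens, via the t-exactness of $j^\ast$ for these open immersions, lets one pass to the filtered colimit and obtain a global t-structure on $D(\mathrm{Bun}_G, \Lambda)$. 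The subtlety I would monitor is that enough of the recollement package indeed holds in the $D_{\mathrm{lis}}$ formalism for BBD's step-by-step gluing to go through.

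\emph{ULA preservation.} By a recollement induction over the finitely many strata of each quasicompact open, it suffices to show that standard truncation on $D(G_b(E), \Lambda)$ preserves ULA objects. For ULA $A$ and any pro-$p$ compact open $K \subset G_b(E)$, the invariants functor $(-)^K$ is t-exact and sends $A$ to a perfect complex of $\Lambda$-modules; cohomological regularity of $\Lambda$ then ensures that standard truncation preserves perfect complexes, so the truncations stay ULA. Gluing back through the recollement triangles (whose cofibers of ULA terms are ULA) transports the statement to all of $\mathrm{Bun}_G$.

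\emph{Duality.} Assume $\Lambda$ is a field. Starting from the formula $i_b^! \Lambda \simeq \delta_b[-2\langle 2\rho_G, \nu_b\rangle]$ together with the definitions of $i_b^{\ast\mathrm{ren}}$ and $i_b^{!\mathrm{ren}}$, a direct twist-and-shift computation parallel to the proof of Proposition~\ref{prop:dualitypush} gives
\[
i_b^{!\mathrm{ren}}\, \mathbf{D}_{\mathrm{Verd}} A \;\simeq\; \mathbf{D}_{\mathrm{sm}}\bigl(i_b^{\ast\mathrm{ren}} A\bigr)
\]
on ULA $A$. Smooth duality is anti-t-exact on $D(G_b(E), \Lambda)^{\mathrm{ULA}}$ over a field: the cohomology groups of a bounded-above ULA object are admissible, and the smooth dual of an admissible representation in degree $0$ sits in degree $0$, so $\mathbf{D}_{\mathrm{sm}}$ sends $D^{\leq 0, \mathrm{ULA}}$ to $D^{\geq 0, \mathrm{ULA}}$ and conversely. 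Combining these two facts gives the claimed exchange of $^p D^{\leq 0} \cap D^{\mathrm{ULA}}$ and $^p D^{\geq 0} \cap D^{\mathrm{ULA}}$ under Verdier duality. The main genuine obstacle throughout is the careful tracking of the $\delta_b^{1/2}$-twist across strata of varying Newton type, and this has essentially been absorbed into Proposition~\ref{prop:dualitypush}, which is why each of the three steps above reduces to short bookkeeping.
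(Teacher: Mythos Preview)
The paper does not actually supply a proof of this proposition: it is stated as a recollection of a fact ``well-understood by experts for years (see e.g.\ \cite[Remark I.10.3]{FS}), although it doesn't seem to be recorded in the literature,'' and the text moves directly to Exercise~\ref{exer:perversesemiexact} afterward. So there is nothing to compare your argument against.

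That said, your outline is exactly the standard argument one would expect and is what the phrase ``well-understood by experts'' is gesturing at: BBD gluing over the finitely many strata of each quasicompact open $\mathrm{Bun}_G^{\preceq b}$, then passing to the colimit; ULA preservation reduced stratum-by-stratum to the cohomological regularity hypothesis; and the duality statement deduced from the identification $i_b^!\Lambda\simeq\delta_b[-2\langle 2\rho_G,\nu_b\rangle]$ exactly as in the sketch of Proposition~\ref{prop:dualitypush}. Your caveat about verifying that enough of the recollement package survives in the $D_{\mathrm{lis}}$ formalism is the one genuine point requiring care, and the paper's earlier discussion of the functors $i_{b\sharp}, i_{b!}, i_{b\ast}, i_b^\ast, i_b^!$ is precisely meant to supply those ingredients.
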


Here it is again cleaner to think in terms of the renormalized functors:
$A\in D(\mathrm{Bun}_{G},\Lambda)$ lies in $^{p}D^{\leq0}$ (resp.
$^{p}D^{\geq0}$) if and only if $i_{b}^{\ast\mathrm{ren}}A$ (resp.
$i_{b}^{!\mathrm{ren}}A$) sits in cohomological degrees $\leq0$
(resp. $\geq0$) for all $b$.
\begin{xca}
\label{exer:perversesemiexact}i. Check that $i_{b!}^{\mathrm{ren}}$
sends $D^{\leq0}(G_{b}(E),\Lambda)$ into $^{p}D^{\leq0}(\mathrm{Bun}_{G},\Lambda)$,
and that $i_{b\ast}^{\mathrm{ren}}$ sends $D^{\geq0}(G_{b}(E),\Lambda)$
into $^{p}D^{\geq0}(\mathrm{Bun}_{G},\Lambda)$. Check that $^{p}D^{\leq0}(\mathrm{Bun}_{G},\Lambda)$
is generated by all $i_{b!}^{\mathrm{ren}}D^{\leq0}(G_{b}(E),\Lambda)$
under extensions and colimits.

ii. Prove that if $\Lambda$ is a field, there is a natural bijection
between irreducible objects $A\in\mathrm{Perv}(\mathrm{Bun}_{G},\Lambda)$
and pairs $(b,\pi)$ where $b\in B(G)$ and $\pi\in\Pi(G_{b})$ is
an irreducible smooth representation, defined by sending a pair $(b,\pi)$
to the intermediate extension sheaf
\[
i_{b!\ast}^{\mathrm{ren}}\pi\overset{\mathrm{def}}{=}\mathrm{im}\left(^{p}H^{0}(i_{b!}^{\mathrm{ren}}\pi)\to\,^{p}H^{0}(i_{b\ast}^{\mathrm{ren}}\pi)\right).
\]
Prove that $\mathbf{D}_{\mathrm{Verd}}(i_{b!\ast}^{\mathrm{ren}}\pi)\simeq i_{b!\ast}^{\mathrm{ren}}\pi^{\vee}$.
\end{xca}

Now we come to the first really new construction in these notes. 

\textbf{Warning. }In what follows, we assume that $\Lambda$ is a
field \emph{of characteristic zero}. However, the following construction
works verbatim for any coefficient ring $\Lambda$ with the property
that the standard truncation functors on $D(G_{b}(E),\Lambda)$ preserve
the subcategory of compact objects for all $b\in B(G)$.\footnote{For any fixed $G$, this condition holds with $\Lambda=\overline{\mathbf{F}_{\ell}}$
or $\overline{\mathbf{Z}_{\ell}}$ for all but finitely many $\ell$.
It would be interesting to explicate this finite set of bad primes
when $G=\mathrm{GL}_{n}$.}

Define $^{h}D^{\leq0}(\mathrm{Bun}_{G},\Lambda)^{\omega}$ to be the
full subcategory of $D(\mathrm{Bun}_{G},\Lambda)^{\omega}$ generated
under finite extensions by objects of the form $i_{b\sharp}^{\mathrm{ren}}A$
with $A\in D^{\leq0}(G_{b}(E),\Lambda)^{\omega}$. Likewise, define
$^{h}D^{\geq0}(\mathrm{Bun}_{G},\Lambda)^{\omega}$ to be the full
subcategory of $D(\mathrm{Bun}_{G},\Lambda)^{\omega}$ generated under
finite extensions by objects of the form $i_{b!}^{\mathrm{ren}}A$
with $A\in D^{\geq0}(G_{b}(E),\Lambda)^{\omega}$.
\begin{thm}
\label{thm:hadal-t-structure}The pair
\[
\left(^{h}D^{\leq0}(\mathrm{Bun}_{G},\Lambda)^{\omega},{}^{h}D^{\geq0}(\mathrm{Bun}_{G},\Lambda)^{\omega}\right)
\]
defines a t-structure on $D(\mathrm{Bun}_{G},\Lambda)^{\omega}$,
called the \emph{hadal}\footnote{Pronouned ``HEY-dull''. See \href{https://en.wikipedia.org/wiki/Hadal_zone}{https://en.wikipedia.org/wiki/Hadal\_zone}.
Unlike perverse sheaves, which propagate downwards from the maximal
points in their support, hadal sheaves begin their life deep in the
Newton strata of $\mathrm{Bun}_{G}$, and then bubble up to the surface
(i.e. the basic $b$'s).}\emph{ t-structure. }We write $^{h}\tau^{\leq n}$ and $^{h}\tau^{\geq n}$
for the truncation functors associated with this t-structure, and
we write $\mathrm{Had}(\mathrm{Bun}_{G},\Lambda)$ for the abelian
category of \emph{hadal sheaves }defined as its heart.
\end{thm}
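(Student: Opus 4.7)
The plan is to verify the three standard axioms of a $t$-structure on $D(\mathrm{Bun}_G,\Lambda)^\omega$: closure under shifts, orthogonality of connective and coconnective, and existence of truncation triangles.

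Axiom (i), closure under shifts (namely ${}^h D^{\le 0}[1]\subseteq {}^h D^{\le 0}$ and ${}^h D^{\ge 0}[-1]\subseteq {}^h D^{\ge 0}$), is immediate: the generating functors $i_{b\sharp}^{\mathrm{ren}}$ and $i_{b!}^{\mathrm{ren}}$ are exact, and $D^{\le 0}(G_b(E),\Lambda)^\omega[1]\subseteq D^{\le 0}$ (respectively $D^{\ge 0}[-1]\subseteq D^{\ge 0}$) for the standard $t$-structure. For axiom (ii), the orthogonality $R\mathrm{Hom}({}^h D^{\le -1},{}^h D^{\ge 0})=0$, the long exact sequence of $\mathrm{Ext}$ together with extension-closure of both generating classes reduces the check to generators: $R\mathrm{Hom}(i_{b\sharp}^{\mathrm{ren}} A, i_{b'!}^{\mathrm{ren}} A')=0$ for $A\in D^{\le -1}(G_b(E))^\omega$ and $A'\in D^{\ge 0}(G_{b'}(E))^\omega$. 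Proposition~\ref{prop:easyvanishing}(iii) kills the case $b\ne b'$; when $b=b'$, the adjunction $(i_{b\sharp}^{\mathrm{ren}}, i_b^{*\mathrm{ren}})$ together with the identity $i_b^{*\mathrm{ren}}i_{b!}^{\mathrm{ren}}\simeq\mathrm{id}$ (which follows from $i_b^*i_{b!}\simeq\mathrm{id}$, the renormalization twists cancelling) reduces the problem to $R\mathrm{Hom}(A,A')=0$, which holds by the standard $t$-structure on $D(G_b(E),\Lambda)$.

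For axiom (iii) I would prove existence of truncation triangles by induction on the cardinality $n:=|S|$ of the finite support $S=\{b : i_b^*X\ne 0\}$. In the inductive step, pick $b_0\in S$ maximal under the Newton order $\preceq$; then $\mathrm{Bun}_G^{b_0}$ is open in its closure $Z:=\overline{\mathrm{Bun}_G^{b_0}}$, and $X|_Z$ is supported entirely on $\mathrm{Bun}_G^{b_0}$ (no stratum $b'\succ b_0$ lies in $S$ by maximality). Excision for the open-closed decomposition $(j\colon U\hookrightarrow\mathrm{Bun}_G,\,i\colon Z\hookrightarrow\mathrm{Bun}_G)$ with $U=\mathrm{Bun}_G\setminus Z$ produces
\[
j_!j^*X \to X \to i_*i^*X \simeq i_{b_0!}^{\mathrm{ren}}A_0,
\]
where $A_0:=i_{b_0}^{*\mathrm{ren}}X$ (the renormalizing twists cancel in the composite $i_{b_0!}^{\mathrm{ren}}i_{b_0}^{*\mathrm{ren}}$), and $j^*X$ has support $S\setminus\{b_0\}$ of size $n-1$. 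The inductive hypothesis, applied to the analogously defined hadal $t$-structure on $D(U,\Lambda)^\omega$, supplies a truncation $A'\to j^*X\to B'$; pushing forward via $j_!$ (which carries $i_{b\sharp}^{U,\mathrm{ren}}$-generators to $i_{b\sharp}^{\mathrm{ren}}$-generators and similarly for $!$, since $i_b=j\circ i_b^U$ is an equality of left adjoints) yields a hadal decomposition of $j_!j^*X$. For the $b_0$-summand, apply the standard truncation $A_0^{\le 0}\to A_0\to A_0^{\ge 1}$—which preserves compactness by Bernstein's theorem, since $\Lambda$ is a field of characteristic zero—and compose with the natural transformation $i_{b_0\sharp}^{\mathrm{ren}}\to i_{b_0!}^{\mathrm{ren}}$ to produce a map $i_{b_0\sharp}^{\mathrm{ren}}A_0^{\le 0}\to i_{b_0!}^{\mathrm{ren}}A_0$. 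The octahedron axiom then stitches the two partial decompositions into a single triangle $A\to X\to B$ with $A\in {}^h D^{\le 0}$ and $B\in {}^h D^{\ge 1}$.

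The hardest point is controlling the cofiber $K:=\mathrm{cone}(i_{b_0\sharp}^{\mathrm{ren}}A_0^{\le 0}\to i_{b_0!}^{\mathrm{ren}}A_0^{\le 0})$, which is concentrated on proper generizations of $b_0$ (since $i_b^*i_{b_0!}^{\mathrm{ren}}=0$ for $b\ne b_0$ while $i_b^*i_{b_0\sharp}^{\mathrm{ren}}$ can be nonzero precisely on generizations): one must verify that $K\in {}^h D^{\ge 1}$. Working formally with the orthogonal description $Y\in {}^h D^{\ge 1}\iff i_b^{*\mathrm{ren}}Y\in D^{\ge 1}$ for all $b$ (a consequence of all three axioms, but usable in a bootstrap), this reduces to the semi-exactness assertion that $i_b^{*\mathrm{ren}}\circ i_{b_0\sharp}^{\mathrm{ren}}$ carries $D^{\le 0}$ into $D^{\ge 0}$ when $b\prec b_0$—a generization-side analog of the perverse semi-exactness of Exercise~\ref{exer:perversesemiexact}, expected from the cohomological smoothness of the charts $\pi_b\colon\mathcal{M}_b\to\mathrm{Bun}_G$. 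To sidestep the circularity, one may alternatively first construct an accessible $t$-structure on the full presentable category $D(\mathrm{Bun}_G,\Lambda)$ via Lurie (HA~1.4.4.11) using the same generators closed under shifts, extensions, and small colimits, verify the orthogonality at that level, and then show the resulting truncation functors preserve compact objects using Bernstein's theorem stratum-by-stratum, whence the $t$-structure restricts to give the hadal $t$-structure on compact objects.
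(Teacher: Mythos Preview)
Your verification of axioms (i) and (ii) is correct and essentially matches the paper (which leaves the orthogonality computation as an exercise, but your argument via Proposition~\ref{prop:easyvanishing}(iii) and the adjunction $(i_{b\sharp}^{\mathrm{ren}},i_b^{*\mathrm{ren}})$ is exactly the intended one).

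The gap is in axiom (iii). Even granting that the octahedral gluing can be made precise, you are left needing $K=\mathrm{cone}(i_{b_0\sharp}^{\mathrm{ren}}A_0^{\le 0}\to i_{b_0!}^{\mathrm{ren}}A_0^{\le 0})\in{}^hD^{\ge 1}$, and as you yourself observe this unwinds (via the orthogonality characterization and adjunction) to the assertion that $i_{b'}^{*\mathrm{ren}}\circ i_{b_0\sharp}^{\mathrm{ren}}$ carries $D^{\le 0}(G_{b_0}(E))^\omega$ into $D^{\ge 0}(G_{b'}(E))^\omega$ for every $b'\prec b_0$. This is a nontrivial statement about the geometry of the charts $\mathcal{M}_{b_0}$ which you do not prove and which is not established anywhere in the paper; invoking it is precisely the circularity you flag. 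Your proposed escape hatch through HA~1.4.4.11 does not help either: the accessible $t$-structure on the big category comes for free, but checking that its truncation functors preserve compact objects amounts to constructing the truncations explicitly on compacts, and ``Bernstein stratum-by-stratum'' is not enough because the hadal truncation genuinely mixes strata via $i_{b\sharp}^{\mathrm{ren}}$.

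The paper sidesteps this by processing the strata in the \emph{opposite order}. Working on a quasicompact open $U$ (by induction on the maximal chain length in $|U|$) with $b$ a \emph{closed} point of $U$ and $V=U\smallsetminus\{b\}$, one first forms the triangle $i_{b\sharp}^{\mathrm{ren}}\tau^{\le 0}i_b^{*\mathrm{ren}}A\to A\to B$ and \emph{then} applies the inductive hadal truncation on $V$ to $j^*B$, producing $j_!\,{}^h\tau_V^{\le 0}j^*B\to B\to C$. The connective part $E$ is the resulting extension, and one checks $C\in{}^hD^{\ge 1}(U)$ by testing against each generator $i_{b'\sharp}^{\mathrm{ren}}F$: for $b'=b$ one uses $i_b^*j_!=0$ together with $i_b^{*\mathrm{ren}}B\simeq\tau^{\ge 1}i_b^{*\mathrm{ren}}A$ (immediate from the $(\sharp,*)$-adjunction), while for $b'\in|V|$ one uses $j^*C\simeq{}^h\tau_V^{\ge 1}j^*B$. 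No amplitude estimate on $i_{b'}^{*\mathrm{ren}}i_{b\sharp}^{\mathrm{ren}}$ is ever required. The moral is that applying the $\sharp$-adjunction at the closed point \emph{before} inducting on the open complement makes the coconnectivity check a tautology; doing it afterward, as in your scheme, forces you to confront the cone $K$, whose coconnectivity is simply not available at this stage.
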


Of course we also write 
\[
^{h}H^{n}={}^{h}\tau^{\leq n}\circ{}^{h}\tau^{\geq n}\cong{}^{h}\tau^{\geq n}\circ{}^{h}\tau^{\leq n}:D(\mathrm{Bun}_{G},\Lambda)^{\omega}\to\mathrm{Had}(\mathrm{Bun}_{G},\Lambda)
\]
for the $n$th hadal cohomology functor.
\begin{proof}
If $U\subset\mathrm{Bun}_{G}$ is any open substack, set $D(U):=D(U,\Lambda)^{\omega}$
for brevity. Define $^{h}D^{\leq0}(U)$ and $^{h}D^{\geq0}(U)$ inside
$D(U)$ analogously with the definition for $U=\mathrm{Bun}_{G}$,
but only allowing $b\in|U|$ in the specification of the generators.
It is clear that if $j:U\to V$ is any inclusion of open substacks
of $\mathrm{Bun}_{G}$, $j_{!}$ carries $^{h}D^{\leq0}(U)$ fully
faithfully into $^{h}D^{\leq0}(V)$, and likewise for $^{h}D^{\geq0}$.
It is also clear from the definitions that $^{h}D^{\leq0}(U)[1]\subset{}^{h}D^{\leq0}(U)$
and $^{h}D^{\geq0}(U)[-1]\subset{}^{h}D^{\geq0}(U)$. Finally, we
observe that $^{h}D^{\geq1}(U)$ is exactly the right orthogonal of
$^{h}D^{\leq0}(U)$ inside $D(U)$. This is an easy but very enlightening
exercise with the definitions, which we leave to the reader.

It is now clearly enough to prove that if $U\subset\mathrm{Bun}_{G}$
is a \emph{quasicompact }open substack, the pair 
\[
\left(^{h}D^{\leq0}(U),{}^{h}D^{\geq0}(U)\right)
\]
defines a t-structure on $D(U)$. We will prove this by induction
on the maximal length of any chain of specializations inside $|U|$.
When there are no nontrivial such chains, $U$ is a finite disjoint
union of open strata $[\ast/\underline{G_{b}(E)}]$ with $b$ basic.
In this case the result is clear, since for basic $b$
\[
\left(^{h}D^{\leq0}([\ast/\underline{G_{b}(E)}]),{}^{h}D^{\geq0}([\ast/\underline{G_{b}(E)}])\right)
\]
is just the standard t-structure on $D([\ast/\underline{G_{b}(E)}])\cong D(G_{b}(E),\Lambda)^{\omega}$.

We now proceed by induction. Fix a quasicompact open substack $U$,
and let $b\in|U|$ be the closed point in a chain of maximal length.
Let $j:V\to U$ be the inclusion of the open substack with $|V|=|U|\smallsetminus b$,
and let $i_{b}:\mathrm{Bun}_{G}^{b}\to U$ be the inclusion of the
closed substack associated with $b$ as usual. By the induction hypothesis,
we already have access to the hadal t-structure and its truncation
functors $^{h}\tau_{V}^{\leq n}$ and $^{h}\tau_{V}^{\geq n}$ on
$D(V)$. By the observations in the first paragraph of the proof,
our only remaining task is to show that any given $A\in D(U)$ can
be fit into a distinguished triangle
\[
E\to A\to C\overset{[1]}{\to}
\]
with $E\in\,^{h}D^{\leq0}(U)$ and $C\in\,^{h}D^{\geq1}(U)$. For
this, we define objects $B,C\in D(U)$ inductively by requiring that
they sit in distinguished triangles
\[
i_{b\sharp}^{\mathrm{ren}}\tau^{\leq0}i_{b}^{\ast\mathrm{ren}}A\to A\to B\overset{[1]}{\to}
\]
and
\[
j_{!}\,^{h}\tau_{V}^{\leq0}j^{\ast}B\to B\to C\overset{[1]}{\to}.
\]
By the octahedral axiom, we then get an object $E\in D(U)$ together
with a diagram
\[
\xymatrix{i_{b\sharp}^{\mathrm{ren}}\tau^{\leq0}i_{b}^{\ast\mathrm{ren}}A\ar[d]\\
E\ar[d]\ar[r] & A\ar[r] & C\ar[r]^{[1]} & \phantom{}\\
j_{!}\,^{h}\tau_{V}^{\leq0}j^{\ast}B\ar[d]^{[1]}\\
\\
}
\]
where the row and column are both distinguished triangles. It is clear
that $E$ sits in $^{h}D^{\leq0}(U)$, since it is an extension of
two objects in this category, so we just need to check that $C$ sits
in $^{h}D^{\geq1}(U)$, which we've already noted is the right orthogonal
of $^{h}D^{\leq0}(U)$. It is therefore enough to check that $\mathrm{Hom}(i_{b'\sharp}^{\mathrm{ren}}F,C)=0$
for any $b'\in|U|$ and any $F\in D^{\leq0}(G_{b'}(E),\Lambda)^{\omega}$.
To check this we divide into two disjoint cases:

\textbf{Case 1:} $b'=b$. In this case
\begin{align*}
\mathrm{Hom}(i_{b\sharp}^{\mathrm{ren}}F,C) & =\mathrm{Hom}(i_{b\sharp}^{\mathrm{ren}}F,B)\\
 & \simeq\mathrm{Hom}(F,i_{b}^{\ast\mathrm{ren}}B)\\
 & =0
\end{align*}
where the first isomorphism follows from the triangle defining $C$
and the fact that $\mathrm{Hom}(i_{b\sharp}^{\mathrm{ren}}F,j_{!}-)=0$,
and the final vanishing follows from the fact that $i_{b}^{\ast\mathrm{ren}}B\simeq\tau^{\geq1}i_{b}^{\ast\mathrm{ren}}A$
by consideration of the triangle defining $B$.

\textbf{Case 2:} $b'\in|V|$. In this case $i_{b'\sharp}^{\mathrm{ren}}F=j_{!}i_{b'\sharp}^{\mathrm{ren}}F\in j_{!}\,^{h}D^{\leq0}(V)$,
with the evident abuse of notation, so
\[
\mathrm{Hom}_{U}(i_{b'\sharp}^{\mathrm{ren}}F,C)=\mathrm{Hom}_{V}(i_{b'\sharp}^{\mathrm{ren}}F,j^{\ast}C).
\]
But $j^{\ast}C\simeq\,^{h}\tau_{V}^{\geq1}j^{\ast}B$ by consideration
of the triangle defining $C$, and $i_{b'\sharp}^{\mathrm{ren}}F\in\,^{h}D^{\leq0}(V)$,
so we get the desired vanishing.
\end{proof}
Our next goal is Theorem \ref{thm:irreducible-hadal-sheaves}, which
gives an explicit classification of irreducible hadal sheaves. This
requires several preparatory lemmas.
\begin{lem}
\label{lem:hadal-sub-quotient-prep}Fix $b\in B(G)$, and let $\pi$
be any finitely generated smooth $G_{b}(E)$-representation.

\emph{i. }The hadal sheaf $^{h}H^{0}(i_{b\sharp}^{\mathrm{ren}}\pi)$
does not have any nonzero hadal quotient supported on $\mathrm{Bun}_{G}^{\prec b}$.

\emph{ii. }The hadal sheaf $^{h}H^{0}(i_{b!}^{\mathrm{ren}}\pi)$
does not have any nonzero hadal subobject supported on $\mathrm{Bun}_{G}^{\prec b}$.
\end{lem}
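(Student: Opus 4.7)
The plan is a parallel Hom-vanishing argument for both parts: reduce the Hom-question about $^{h}H^{0}$ to the analogous question about the untruncated object via the hadal truncation triangle, then conclude by adjunction and a support computation. The key structural input is that $i_{b\sharp}^{\mathrm{ren}}\pi\in{}^{h}D^{\leq 0}$ and $i_{b!}^{\mathrm{ren}}\pi\in{}^{h}D^{\geq 0}$: since $\Lambda$ is a field of characteristic zero and $\pi$ is finitely generated, Bernstein's theorem (invoked in the remark on compactness above) makes $\pi$ a compact object of $D(G_{b}(E),\Lambda)$ concentrated in degree zero, hence in both $D^{\leq 0}(G_{b}(E),\Lambda)^{\omega}$ and $D^{\geq 0}(G_{b}(E),\Lambda)^{\omega}$, which places $i_{b\sharp}^{\mathrm{ren}}\pi$ and $i_{b!}^{\mathrm{ren}}\pi$ in the requisite halves of the hadal t-structure by its very definition.

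For (i), let $Q$ be a hadal sheaf supported on $\mathrm{Bun}_{G}^{\prec b}$; it suffices to show $\mathrm{Hom}({}^{h}H^{0}(i_{b\sharp}^{\mathrm{ren}}\pi),Q)=0$, which forbids any nonzero quotient. Apply $\mathrm{Hom}(-,Q)$ to the truncation triangle ${}^{h}\tau^{\leq -1}(i_{b\sharp}^{\mathrm{ren}}\pi)\to i_{b\sharp}^{\mathrm{ren}}\pi\to{}^{h}H^{0}(i_{b\sharp}^{\mathrm{ren}}\pi)$. Both outer Hom groups vanish by the t-structure axioms (source in $^{h}D^{\leq -1}$, target in $^{h}D^{\geq 0}$), giving $\mathrm{Hom}({}^{h}H^{0}(i_{b\sharp}^{\mathrm{ren}}\pi),Q)\simeq\mathrm{Hom}(i_{b\sharp}^{\mathrm{ren}}\pi,Q)$. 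By the $(i_{b\sharp}^{\mathrm{ren}},i_{b}^{\ast\mathrm{ren}})$-adjunction this equals $\mathrm{Hom}(\pi,i_{b}^{\ast\mathrm{ren}}Q)$, which vanishes because $b\notin|\mathrm{Bun}_{G}^{\prec b}|$ forces $i_{b}^{\ast}Q=0$.

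Part (ii) is entirely symmetric. For a hadal sheaf $S$ supported on $\mathrm{Bun}_{G}^{\prec b}$, applying $\mathrm{Hom}(S,-)$ to the triangle ${}^{h}H^{0}(i_{b!}^{\mathrm{ren}}\pi)\to i_{b!}^{\mathrm{ren}}\pi\to{}^{h}\tau^{\geq 1}(i_{b!}^{\mathrm{ren}}\pi)$ reduces the problem to $\mathrm{Hom}(S,i_{b!}^{\mathrm{ren}}\pi)=0$. Since $\mathrm{Bun}_{G}^{\prec b}$ is the open complement of the closed stratum $\mathrm{Bun}_{G}^{b}$ inside the open substack $\mathrm{Bun}_{G}^{\preceq b}\subset\mathrm{Bun}_{G}$, it is open in $\mathrm{Bun}_{G}$; writing $j_{b}:\mathrm{Bun}_{G}^{\prec b}\hookrightarrow\mathrm{Bun}_{G}$ for the inclusion, the support hypothesis gives $S\simeq j_{b!}j_{b}^{\ast}S$, so $(j_{b!},j_{b}^{\ast})$-adjunction together with $j_{b}^{!}=j_{b}^{\ast}$ rewrites the Hom as $\mathrm{Hom}(j_{b}^{\ast}S,j_{b}^{\ast}i_{b!}^{\mathrm{ren}}\pi)$. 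It then suffices to show $j_{b}^{\ast}i_{b!}^{\mathrm{ren}}\pi=0$, which is built into the cofiber definition of $i_{b!}$: the canonical section $[\ast/\underline{G_{b}(E)}]\hookrightarrow\mathcal{M}_{b}$ lands in the preimage of $\mathrm{Bun}_{G}^{b}$, so pulling back to $\mathrm{Bun}_{G}^{\prec b}$ turns $\pi_{b\natural}^{\circ}q_{b}^{\circ\ast}\to\pi_{b\natural}q_{b}^{\ast}$ into an equivalence, and its cofiber vanishes there.

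No step is a real obstacle. The only mildly nontrivial inputs are the Bernstein compactness of $\pi$ (needed so that the hadal truncations act on $i_{b\sharp/!}^{\mathrm{ren}}\pi$ in the first place) and the support claim $j_{b}^{\ast}i_{b!}^{\mathrm{ren}}\pi=0$, both of which are essentially routine once the constructions are unpacked.
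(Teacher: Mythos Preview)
Your argument is correct and essentially identical to the paper's: both parts reduce $\mathrm{Hom}$ from/to $^{h}H^{0}$ to the untruncated object via right/left t-exactness of $i_{b\sharp}^{\mathrm{ren}}$/$i_{b!}^{\mathrm{ren}}$, then finish by an adjunction and the support vanishing $i_{b}^{\ast}j_{!}=0$ (resp.\ $j^{\ast}i_{b!}=0$). The paper states this more tersely (invoking right t-exactness directly rather than writing out the truncation triangle) and leaves (ii) as ``analogous,'' but the content is the same.
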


\begin{proof}
We prove the first claim; the second is analogous. Let $j:\mathrm{Bun}_{G}^{\prec b}\to\mathrm{Bun}_{G}$
be the evident open immersion. Let $F$ be a hadal sheaf supported
on $\mathrm{Bun}_{G}^{\prec b}$, so $F\cong j_{!}j^{\ast}F$. Since
$i_{b\sharp}^{\mathrm{ren}}$ is right t-exact for the hadal t-structure,
we compute that
\begin{align*}
\mathrm{Hom}({}^{h}H^{0}(i_{b\sharp}^{\mathrm{ren}}\pi),j_{!}j^{\ast}F) & \cong\mathrm{Hom}(i_{b\sharp}^{\mathrm{ren}}\pi,j_{!}j^{\ast}F)\\
 & \cong\mathrm{Hom}(\pi,i_{b}^{\ast\mathrm{ren}}j_{!}j^{\ast}F)\\
 & =0,
\end{align*}
since $i_{b}^{\ast}j_{!}=0$.
\end{proof}
\begin{lem}
\label{lem:hadal-t-exact-stalk-at-special-point}Let $A\in\mathrm{Had}(\mathrm{Bun}_{G},\Lambda)$
be a hadal sheaf, and let $b$ be a maximally special point in the
support of $A$, i.e. we assume that $i_{b}^{\ast}A\not\simeq0$ and
that $i_{b'}^{\ast}A=0$ for all $b\prec b'$ in the Newton partial
order. Then $i_{b}^{\ast\mathrm{ren}}A\in D(G_{b}(E),\Lambda)$ is
concentrated in degree zero.
\end{lem}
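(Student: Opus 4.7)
The plan is to establish $i_b^{*\mathrm{ren}} A \in D^{\le 0}(G_b(E),\Lambda)$ and $i_b^{*\mathrm{ren}} A \in D^{\ge 0}(G_b(E),\Lambda)$ separately, each by an adjunction argument paired with the right-orthogonality $^h D^{\ge 1} = (^h D^{\le 0})^{\perp}$ recorded in the proof of Theorem~\ref{thm:hadal-t-structure}.

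The upper bound $i_b^{*\mathrm{ren}} A \in D^{\ge 0}$ actually does not require the maximal-speciality hypothesis at all. Testing against any compact $G \in D^{\le -1}(G_b(E),\Lambda)^\omega$, the adjunction $i_{b\sharp}^{\mathrm{ren}} \dashv i_b^{*\mathrm{ren}}$ yields
\[
\mathrm{Hom}(G, i_b^{*\mathrm{ren}} A) \simeq \mathrm{Hom}(i_{b\sharp}^{\mathrm{ren}} G, A).
\]
By the defining generators of the hadal t-structure, $i_{b\sharp}^{\mathrm{ren}} G$ lies in $^h D^{\le -1}(\mathrm{Bun}_G,\Lambda)^\omega$, while $A \in {}^h D^{\ge 0}$; the pairing therefore vanishes, and since compact objects generate $D^{\le -1}(G_b(E),\Lambda)$ this forces $i_b^{*\mathrm{ren}} A$ into $D^{\ge 0}$.

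The lower bound $i_b^{*\mathrm{ren}} A \in D^{\le 0}$ is where the hypothesis on $b$ enters. Set $U := \mathrm{Bun}_G \setminus \mathrm{Bun}_G^{\succ b}$, with $j : U \hookrightarrow \mathrm{Bun}_G$ the open inclusion. The support condition forces $A \simeq j_! j^* A$, and inside $|U|$ the point $b$ becomes closed. Using that $j_!$ is fully faithful and preserves both halves of the hadal t-structure (as in the opening paragraphs of the proof of Theorem~\ref{thm:hadal-t-structure}), a right-orthogonality argument mirroring the one above shows $j^* A \in \mathrm{Had}(U)$; in particular $j^* A \in {}^h D^{\le 0}(U)$. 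For any compact $G \in D^{\ge 1}(G_b(E),\Lambda)^\omega$, the adjunction $i_b^{*\mathrm{ren}} \dashv i_{b*}^{\mathrm{ren}}$ combined with base change along $j$ gives
\[
\mathrm{Hom}(i_b^{*\mathrm{ren}} A, G) \simeq \mathrm{Hom}(A, i_{b*}^{\mathrm{ren}} G) \simeq \mathrm{Hom}_U(j^* A, i_{b*}^{U,\mathrm{ren}} G).
\]
Because $i_b : \mathrm{Bun}_G^b \to U$ is now a closed immersion, one has $i_{b!}^U \simeq i_{b*}^U$; hence $i_{b*}^{U,\mathrm{ren}} G \simeq i_{b!}^{U,\mathrm{ren}} G$, which lies in $^h D^{\ge 1}(U)$ by construction, and the pairing vanishes.

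The one non-formal input, and the step I expect to require the most care, is the identification $i_{b!}^U \simeq i_{b*}^U$ for the closed stratum inclusion inside $U$. In any reasonable six-functor formalism this is immediate from the properness of closed immersions, but in the v-stack setting of Fargues--Scholze it merits a short direct verification: both $i_{b!}^U F$ and $i_{b*}^U F$ vanish on $U \setminus \{b\}$ (the former by construction, the latter because $\{b\}$ is closed in $U$) and both restrict to $F$ under $i_b^{*}$, so the cone of the canonical comparison map is zero by the localization triangle on $U$.
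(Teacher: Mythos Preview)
Your proof is correct; the $D^{\ge 0}$ half is essentially the paper's argument verbatim. For the $D^{\le 0}$ half you take a genuinely different route.

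The paper argues constructively: it chooses a quasicompact open $U$ containing $\mathrm{supp}\,A$ with $b$ closed in $U$, sets $V=U\smallsetminus\mathrm{Bun}_G^b$, and then unwinds the explicit inductive recipe for ${}^h\tau_U^{\le 0}$ built in the proof of Theorem~\ref{thm:hadal-t-structure}. Since $A\simeq{}^h\tau_U^{\le 0}A$, that recipe produces a triangle
\[
i_{b\sharp}^{\mathrm{ren}}\tau^{\le 0} i_b^{*\mathrm{ren}} A \to A \to j_!\,{}^h\tau_V^{\le 0} j^* B \to,
\]
and applying $i_b^{*\mathrm{ren}}$ (using $i_b^{*\mathrm{ren}} j_! = 0$ and $i_b^{*\mathrm{ren}} i_{b\sharp}^{\mathrm{ren}} \cong \mathrm{id}$) yields $i_b^{*\mathrm{ren}} A \simeq \tau^{\le 0} i_b^{*\mathrm{ren}} A$ directly. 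Your argument instead runs the $(i_b^*,i_{b*})$ adjunction together with the identification $i_{b!}^U\simeq i_{b*}^U$ over the closed stratum, which is a clean soft argument that never revisits the truncation algorithm; your verification of that identification via joint conservativity of the stalk functors is fine in this formalism. The trade-off is that the paper's triangle is not a one-off: the same construction is rerun in Lemma~\ref{lem:hadal-H-zero-conservative} and feeds into Theorem~\ref{thm:irreducible-hadal-sheaves}, so proving it once pays dividends downstream, whereas your approach is more self-contained for this lemma alone.
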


\begin{proof}
Since $i_{b}^{\ast\mathrm{ren}}$ is left t-exact with respect to
the hadal t-structure on $\mathrm{Bun}_{G}$ and the standard t-structure
on $D(G_{b}(E),\Lambda)$, we already know that $i_{b}^{\ast\mathrm{ren}}A$
is concentrated in degrees $\geq0$. To see that it is also concentrated
in degrees $\leq0$, pick a quasicompact open substack $U\subset\mathrm{Bun}_{G}$
containing $\mathrm{supp}A$, and such that $b\in U$ is a closed
point in $U$. Write $V=U\smallsetminus\mathrm{Bun}_{G}^{b}$ as in
the proof of Theorem \ref{thm:hadal-t-structure}, whose notation
we will refer to in what follows. As in the proof of Theorem \ref{thm:hadal-t-structure},
we may regard $A$ as a hadal sheaf in $D(U)$. Since $A\simeq\,^{h}\tau_{U}^{\leq0}A$
by assumption, the constructions in that proof give a distinguished
triangle
\[
i_{b\sharp}^{\mathrm{ren}}\tau^{\leq0}i_{b}^{\ast\mathrm{ren}}A\to A\to j_{!}\,^{h}\tau_{V}^{\leq0}j^{\ast}B\overset{[1]}{\to},
\]
upon noting that the object $E$ we constructed there is exactly the
connective truncation of $A$. Then $i_{b}^{\ast\mathrm{ren}}j_{!}=0$
and $i_{b}^{\ast\mathrm{ren}}i_{b\sharp}^{\mathrm{ren}}\cong\mathrm{id}$,
so applying $i_{b}^{\ast\mathrm{ren}}$ to this triangle gives
\[
i_{b}^{\ast\mathrm{ren}}A\simeq\tau^{\leq0}i_{b}^{\ast\mathrm{ren}}A,
\]
as desired.
\end{proof}
\begin{lem}
\label{lem:hadal-H-zero-conservative}Fix any $b\in B(G)$. Then the
functor on finitely generated $G_{b}(E)$-representations given by
$M\mapsto i_{b}^{\ast\mathrm{ren}}\,^{h}H^{0}(i_{b\sharp}^{\mathrm{ren}}M)$
is naturally isomorphic to the identity functor.

In particular, if $M\to N$ is a nonzero map of finitely generated
$G_{b}(E)$-representations, the induced map $\,^{h}H^{0}(i_{b\sharp}^{\mathrm{ren}}M)\to\,^{h}H^{0}(i_{b\sharp}^{\mathrm{ren}}N)$
is necessarily nonzero.
\end{lem}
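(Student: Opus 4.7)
The plan is to show that $i_{b}^{*\mathrm{ren}}\bigl({}^{h}\tau^{\leq -1}(i_{b\sharp}^{\mathrm{ren}} M)\bigr)=0$, and then to apply $i_{b}^{*\mathrm{ren}}$ to the canonical distinguished triangle
\[
{}^{h}\tau^{\leq -1}(i_{b\sharp}^{\mathrm{ren}} M)\to i_{b\sharp}^{\mathrm{ren}} M\to {}^{h}H^{0}(i_{b\sharp}^{\mathrm{ren}} M)\overset{[1]}{\to},
\]
using the identification $i_{b}^{*\mathrm{ren}}i_{b\sharp}^{\mathrm{ren}}\cong\mathrm{id}$ (immediate from $i_{b}^{*}i_{b\sharp}\cong\mathrm{id}$ after cancelling the twists and shifts in the renormalizations) to read off the natural isomorphism $M\xrightarrow{\sim}i_{b}^{*\mathrm{ren}}\bigl({}^{h}H^{0}(i_{b\sharp}^{\mathrm{ren}} M)\bigr)$. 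The triangle makes sense because $M$ sits in degree $0$ and $i_{b\sharp}^{\mathrm{ren}}$ is tautologically right t-exact for the hadal t-structure, so $i_{b\sharp}^{\mathrm{ren}} M\in {}^{h}D^{\leq 0}$.

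To prove the vanishing, I would restrict to a quasicompact open substack $U\subseteq\mathrm{Bun}_G$ containing the support of $i_{b\sharp}^{\mathrm{ren}} M$ and in which $b$ is a closed point; for instance, $U=\mathrm{Bun}_{G}^{\preceq b}$ works. Writing $j:V:=U\smallsetminus\{b\}\hookrightarrow U$, I would run the octahedral recipe from the proof of Theorem \ref{thm:hadal-t-structure} on the shifted object $A:=(i_{b\sharp}^{\mathrm{ren}} M)[-1]|_{U}$. Its $i_{b}^{*\mathrm{ren}}$-stalk is $M[-1]$, concentrated in cohomological degree $1$, so $\tau^{\leq 0}(M[-1])=0$ and the "$b$-contribution" $i_{b\sharp}^{\mathrm{ren}}\tau^{\leq 0}i_{b}^{*\mathrm{ren}} A$ of the recipe vanishes. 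The recipe then collapses to the identification ${}^{h}\tau^{\leq 0}(A)=j_{!}\,{}^{h}\tau_{V}^{\leq 0}(j^{*}A)$, which is manifestly supported in $V$.

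Shifting by $[1]$ using the general identity ${}^{h}\tau^{\leq -1}(X)=({}^{h}\tau^{\leq 0}(X[-1]))[1]$ (valid for any t-structure) gives ${}^{h}\tau^{\leq -1}(i_{b\sharp}^{\mathrm{ren}} M)|_{U}=j_{!}\,{}^{h}\tau_{V}^{\leq -1}(j^{*}i_{b\sharp}^{\mathrm{ren}} M|_{U})$, which is supported in $V$ and hence killed by $i_{b}^{*\mathrm{ren}}$. Functoriality of the whole construction makes the resulting isomorphism natural in $M$. The "in particular" assertion is then immediate: for a nonzero map $f:M\to N$, the composition $i_{b}^{*\mathrm{ren}}\circ{}^{h}H^{0}\circ i_{b\sharp}^{\mathrm{ren}}$ sends $f$ to $f$ itself under the natural isomorphism, so ${}^{h}H^{0}(i_{b\sharp}^{\mathrm{ren}} f)$ cannot vanish.

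The main technical point is handling the recipe of Theorem \ref{thm:hadal-t-structure} carefully enough after the shift, and confirming that $j_{!}$ and hadal truncation commute with shifts in the expected way; both are routine but need to be tracked precisely.
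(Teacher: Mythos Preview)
Your proof is correct and follows essentially the same approach as the paper: both run the recipe from the proof of Theorem~\ref{thm:hadal-t-structure} with a shift by one to identify ${}^{h}\tau^{\leq -1}(i_{b\sharp}^{\mathrm{ren}}M)$ as $j_{!}\,{}^{h}\tau_{V}^{\leq -1}(j^{\ast}i_{b\sharp}^{\mathrm{ren}}M)$, observe that this is supported away from $b$, and conclude via the canonical triangle and $i_{b}^{\ast\mathrm{ren}}i_{b\sharp}^{\mathrm{ren}}\cong\mathrm{id}$. The only cosmetic difference is that the paper writes out the shifted recipe directly (with $\tau^{\leq -1}$ everywhere) and applies $i_{b}^{\ast\mathrm{ren}}$ to the two intermediate triangles separately, whereas you shift the input, apply the $\leq 0$ recipe, and shift back.
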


\begin{proof}
Write $U=\mathrm{Bun}_{G}^{\preceq b}$ and $V=\mathrm{Bun}_{G}^{\prec b}$
as in the proof of Theorem \ref{thm:hadal-t-structure}, whose notation
we will partly reuse. Since $A:=i_{b\sharp}^{\mathrm{ren}}M$ is connective
for the hadal t-structure, $^{h}H^{0}(A)\simeq\,^{h}\tau_{U}^{\geq0}A$.
Rerunning the proof of Theorem \ref{thm:hadal-t-structure} with the
appropriate shifts, we inductively define distinguished triangles
\[
i_{b\sharp}^{\mathrm{ren}}\tau^{\leq-1}i_{b}^{\ast\mathrm{ren}}A\to A\to B\overset{[1]}{\to}
\]
and
\[
j_{!}\,^{h}\tau_{V}^{\leq-1}j^{\ast}B\to B\to{}^{h}H^{0}(A)\overset{[1]}{\to}.
\]
Applying $i_{b}^{\ast\mathrm{ren}}$ to the second triangle we get
$i_{b}^{\ast\mathrm{ren}}{}^{h}H^{0}(A)\simeq i_{b}^{\ast\mathrm{ren}}B$,
so then using this and applying $i_{b}^{\ast\mathrm{ren}}$ to the
first triangle we get a distinguished triangle
\[
i_{b}^{\mathrm{ren}\ast}i_{b\sharp}^{\mathrm{ren}}\tau^{\leq-1}i_{b}^{\ast\mathrm{ren}}A\to i_{b}^{\ast\mathrm{ren}}A\to i_{b}^{\ast\mathrm{ren}}{}^{h}H^{0}(A)\overset{[1]}{\to}.
\]
But $i_{b}^{\ast\mathrm{ren}}A\cong M$, so the first term vanishes
identically, giving the result.
\end{proof}
\begin{thm}
\label{thm:irreducible-hadal-sheaves}For every pair $(b,\pi)$ with
$b\in B(G)$ and $\pi$ an irreducible smooth $G_{b}(E)$-representation,
there is a unique \emph{irreducible} hadal sheaf $\mathscr{G}_{b,\pi}$
characterized by the requirements that\emph{ $\mathrm{supp}\,\mathscr{G}_{b,\pi}\subseteq\mathrm{Bun}_{G}^{\preceq b}$}and
\emph{$i_{b}^{\ast\mathrm{ren}}\mathscr{G}_{b,\pi}\simeq\pi$}, and
given explicitly by the formula
\[
\mathscr{G}_{b,\pi}\simeq\mathrm{im}\left(\,^{h}H^{0}(i_{b\sharp}^{\mathrm{ren}}\pi)\to\,^{h}H^{0}(i_{b!}^{\mathrm{ren}}\pi)\right).
\]
Moreover, every irreducible hadal sheaf arises from a uniquely associated
pair $(b,\pi)$ in this way.
\end{thm}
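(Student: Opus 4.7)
The plan is to produce $\mathscr{G}_{b,\pi}$ by the image formula, check its stalk and irreducibility properties, and then show that every irreducible hadal sheaf arises in this way. The three preparatory lemmas do most of the work; the main thing to manage is how the partial t-exactness of $i_b^{\ast\mathrm{ren}}$ interacts with the hadal t-structure.

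Write $A := {}^h H^0(i_{b\sharp}^{\mathrm{ren}}\pi)$ and $B := {}^h H^0(i_{b!}^{\mathrm{ren}}\pi)$, and let $\mathscr{G}_{b,\pi}$ be the image of the induced map $A\to B$ in the hadal heart. Since $\mathrm{supp}\,A\subseteq\mathrm{Bun}_G^{\preceq b}$ and $\mathrm{supp}\,B$ lies in the specializations of $b$, we immediately get $\mathrm{supp}\,\mathscr{G}_{b,\pi}\subseteq\{b\}\subseteq\mathrm{Bun}_G^{\preceq b}$. For the stalk at $b$, Lemma~\ref{lem:hadal-H-zero-conservative} gives $i_b^{\ast\mathrm{ren}}A\simeq\pi$, while the triangle $B\to i_{b!}^{\mathrm{ren}}\pi \to {}^h\tau^{\geq 1}(i_{b!}^{\mathrm{ren}}\pi)$ together with the left t-exactness of $i_b^{\ast\mathrm{ren}}$ (right adjoint to the right t-exact $i_{b\sharp}^{\mathrm{ren}}$) forces $H^0(i_b^{\ast\mathrm{ren}}B)\simeq\pi$ via the natural map $i_b^{\ast\mathrm{ren}}B\to i_b^{\ast\mathrm{ren}}i_{b!}^{\mathrm{ren}}\pi=\pi$. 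The composite $\pi\simeq i_b^{\ast\mathrm{ren}}A \to i_b^{\ast\mathrm{ren}}B \to \pi$ equals the identity (since $i_{b\sharp}^{\mathrm{ren}}\to i_{b!}^{\mathrm{ren}}$ induces the identity after $i_b^{\ast}$), so $A\to B$ is nonzero, and the injection $i_b^{\ast\mathrm{ren}}\mathscr{G}_{b,\pi}\hookrightarrow\pi$ arising from $\mathscr{G}_{b,\pi}\hookrightarrow B$ via the long exact sequence --- noting that Lemma~\ref{lem:hadal-t-exact-stalk-at-special-point} forces the source into degree $0$ --- must be all of $\pi$ by irreducibility. For simplicity, any nonzero subobject $K\subseteq\mathscr{G}_{b,\pi}\subseteq B$ cannot be supported on $\mathrm{Bun}_G^{\prec b}$ by Lemma~\ref{lem:hadal-sub-quotient-prep}(ii), so $i_b^{\ast\mathrm{ren}}K$ is a nonzero subrepresentation of $\pi$, necessarily equal to $\pi$; applying $i_b^{\ast\mathrm{ren}}$ (exact on short exact sequences of hadals with support in $\mathrm{Bun}_G^{\preceq b}$, by Lemma~\ref{lem:hadal-t-exact-stalk-at-special-point}) then forces $\mathscr{G}_{b,\pi}/K$ to have trivial stalk at $b$, hence empty support, hence vanish.

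For uniqueness, let $\mathscr{F}$ be another irreducible hadal sheaf with support in $\mathrm{Bun}_G^{\preceq b}$ and $i_b^{\ast\mathrm{ren}}\mathscr{F}\simeq\pi$. The counit $i_{b\sharp}^{\mathrm{ren}}\pi\to\mathscr{F}$, together with $\mathscr{F}$ lying in the hadal heart, descends to a map $A\to\mathscr{F}$ which is the identity on stalks at $b$, hence surjective by simplicity of $\mathscr{F}$. Both $\mathscr{F}$ and $\mathscr{G}_{b,\pi}$ are thus simple quotients of $A$; if they were nonisomorphic, the diagonal $A\to\mathscr{F}\oplus\mathscr{G}_{b,\pi}$ would be surjective (since any subobject of a sum of two nonisomorphic simples in an abelian category that surjects onto both factors is the whole sum), and applying the exact functor $i_b^{\ast\mathrm{ren}}$ would produce a surjection $\pi\twoheadrightarrow\pi\oplus\pi$, impossible since $\pi$ has length one. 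The pair $(b,\pi)$ is recovered from $\mathscr{G}_{b,\pi}$ since $b$ is the unique maximally special point of its support and $\pi$ is the stalk there. For the classification, let $\mathscr{F}$ be any irreducible hadal sheaf; its support is a finite nonempty subset of $B(G)$ (compact sheaves have support in finitely many strata), so it admits a maximally special point $b$. Lemma~\ref{lem:hadal-t-exact-stalk-at-special-point} gives $\rho := i_b^{\ast\mathrm{ren}}\mathscr{F}$ as a nonzero smooth representation in degree $0$, and the same adjunction argument produces a surjection ${}^h H^0(i_{b\sharp}^{\mathrm{ren}}\rho)\twoheadrightarrow\mathscr{F}$. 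To see $\rho$ is irreducible, given a proper nonzero $\rho'\subsetneq\rho$, Lemma~\ref{lem:hadal-H-zero-conservative} produces a nonzero composite ${}^h H^0(i_{b\sharp}^{\mathrm{ren}}\rho')\to\mathscr{F}$, whose image is all of $\mathscr{F}$ by simplicity; applying $i_b^{\ast\mathrm{ren}}$ then forces the inclusion $\rho'\hookrightarrow\rho$ to be surjective, contradicting $\rho'\subsetneq\rho$. Hence $\mathscr{F}\simeq\mathscr{G}_{b,\rho}$ by the uniqueness just established.

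The main subtlety throughout is that $i_b^{\ast\mathrm{ren}}$ is only \emph{left} t-exact between the hadal and standard t-structures, so it is not a priori exact on short exact sequences in $\mathrm{Had}(\mathrm{Bun}_G,\Lambda)$; the rescue is Lemma~\ref{lem:hadal-t-exact-stalk-at-special-point}, which forces $i_b^{\ast\mathrm{ren}}$-stalks into degree $0$ on all hadal sheaves for which $b$ is a maximally special point of the support --- exactly the class of sheaves that appears throughout the argument.
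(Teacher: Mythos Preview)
There is a genuine error in your support analysis. You claim that $\mathrm{supp}\,B$ lies in the specializations of $b$, whence $\mathrm{supp}\,\mathscr{G}_{b,\pi}\subseteq\{b\}$; but this is false. While $i_{b!}^{\mathrm{ren}}\pi$ itself has support $\{b\}$, hadal truncation can spread the support towards \emph{generizations}: since $j_!$ from $U=\mathrm{Bun}_G^{\preceq b}$ is hadal t-exact and $i_{b!}^{\mathrm{ren}}\pi$ is already $j_!$-extended from $U$, one only obtains $\mathrm{supp}\,B\subseteq\mathrm{Bun}_G^{\preceq b}$. Crucially, the sheaves $\mathscr{G}_{b,\pi}$ are \emph{not} supported at a single point in general---the paper's own $\mathrm{PGL}_2$ computation at the trivial parameter exhibits $\mathscr{G}_{b_n,\pi_n}=\mathscr{F}_n$ with support $\{b_n,b_{n-2}\}$ for $n\geq 2$.

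This false claim is used twice in your irreducibility argument. First, from ``$K$ not supported on $\mathrm{Bun}_G^{\prec b}$'' you conclude $i_b^{\ast\mathrm{ren}}K\neq 0$; but Lemma~\ref{lem:hadal-sub-quotient-prep}(ii) only says the support of $K$ is not \emph{contained} in $\mathrm{Bun}_G^{\prec b}$, and without first knowing $\mathrm{supp}\,K\subseteq\mathrm{Bun}_G^{\preceq b}$ you cannot deduce $b\in\mathrm{supp}\,K$. Second, from ``trivial stalk at $b$'' you jump to ``empty support''---with the correct bound this only gives support in $\mathrm{Bun}_G^{\prec b}$. The repair is straightforward: one checks that quotients in $\mathrm{Had}(\mathrm{Bun}_G)$ preserve the condition ``support contained in an open $U$'' (combine Lemma~\ref{lem:hadal-t-exact-stalk-at-special-point} at a maximally special point of the quotient with the stalkwise characterization of ${}^hD^{\geq 0}$), so $\mathscr{G}_{b,\pi}/K$ and then $K$ have support in $\mathrm{Bun}_G^{\preceq b}$; the vanishing of $\mathscr{G}_{b,\pi}/K$ then follows from Lemma~\ref{lem:hadal-sub-quotient-prep}(i), which you never invoke. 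The paper sidesteps this support bookkeeping by arguing in the reverse order: it first extracts $(b,\pi)$ from a given irreducible and pins down $b$ uniquely via Proposition~\ref{prop:easyvanishing}(iii), and only afterwards proves each $\mathscr{G}_{b,\pi}$ is simple by reducing any nonzero map from a simple to a nonzero map $i_{b'\sharp}^{\mathrm{ren}}\pi'\to i_{b!}^{\mathrm{ren}}\pi$, again forcing $(b',\pi')=(b,\pi)$.
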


We will sometimes write $i_{b\sharp!}^{\mathrm{ren}}\pi$ for the
sheaf $\mathscr{G}_{b,\pi}$, in analogy with the notation $i_{b!\ast}^{\mathrm{ren}}$
for intermediate extension of perverse sheaves.
\begin{proof}
Let $A$ be an irreducible hadal sheaf. Pick $b$ a maximally special
point in the support of $A$, so $i_{b}^{\ast\mathrm{ren}}A$ is concentrated
in degree zero by Lemma \ref{lem:hadal-t-exact-stalk-at-special-point}.
Then $i_{b\sharp}^{\mathrm{ren}}i_{b}^{\ast\mathrm{ren}}A$ is connective
for the hadal t-structure, so via the adjunction $i_{b\sharp}^{\mathrm{ren}}i_{b}^{\ast\mathrm{ren}}\to\mathrm{id}$
we get a distinguished triangle
\[
i_{b\sharp}^{\mathrm{ren}}i_{b}^{\ast\mathrm{ren}}A\to A\to K\overset{[1]}{\to}
\]
of connective objects, such that $b\notin\mathrm{supp}K$. Since the
hadal truncation functors are compatible with $!$-extension along
open substacks, taking hadal cohomology gives a long exact sequence
\[
0\to\,^{h}H^{-1}(K)\to\,^{h}H^{0}(i_{b\sharp}^{\mathrm{ren}}i_{b}^{\ast\mathrm{ren}}A)\overset{\alpha}{\to}A\to\,^{h}H^{0}(K)\to0
\]
of hadal sheaves whose outer terms have support disjoint from $b$.
Then $\alpha$ is nonzero (e.g. by noting that it induces an isomorphism
between nonzero objects after applying $i_{b}^{\ast\mathrm{ren}}$),
so the irreducibility hypothesis on $A$ implies that $\alpha$ is
surjective. Since the source of $\alpha$ has support contained in
$\mathrm{Bun}_{G}^{\preceq b}$, we then deduce additionally that
$\mathrm{supp}A\subseteq\mathrm{Bun}_{G}^{\preceq b}$.

Next, we show that the point $b$ is uniquely determined. Let $b'$
be any other maximally special point in the support of $A$. By a
dual version of the argument in the first paragraph of the proof,
we get an injective map
\[
A\overset{\alpha'}{\to}\,^{h}H^{0}(i_{b'!}^{\mathrm{ren}}i_{b'}^{\ast\mathrm{ren}}A).
\]
Composing this with the surjective map $\alpha$ constructed in the
first paragraph, we get a nonzero map
\[
\,^{h}H^{0}(i_{b\sharp}^{\mathrm{ren}}i_{b}^{\ast\mathrm{ren}}A)\overset{\alpha'\circ\alpha}{\longrightarrow}\,^{h}H^{0}(i_{b'!}^{\mathrm{ren}}i_{b'}^{\ast\mathrm{ren}}A).
\]
But $i_{b\sharp}^{\mathrm{ren}}i_{b}^{\ast\mathrm{ren}}A$ is connective
and $i_{b'!}^{\mathrm{ren}}i_{b'}^{\ast\mathrm{ren}}A$ is coconnective,
by two applications of Lemma \ref{lem:hadal-t-exact-stalk-at-special-point},
so this is the same as the datum of a nonzero map
\[
i_{b\sharp}^{\mathrm{ren}}i_{b}^{\ast\mathrm{ren}}A\to i_{b'!}^{\mathrm{ren}}i_{b'}^{\ast\mathrm{ren}}A.
\]
But if such a nonzero map exists, then necessarily $b=b'$ by Proposition
\ref{prop:easyvanishing}.iii.

Next we show that $i_{b}^{\ast\mathrm{ren}}A$, which is concentrated
in degree zero by Lemma \ref{lem:hadal-t-exact-stalk-at-special-point},
is actually an irreducible smooth representation. Let $B\subseteq i_{b}^{\ast\mathrm{ren}}A$
be any subrepresentation. Consider the maps
\[
\,^{h}H^{0}(i_{b\sharp}^{\mathrm{ren}}B)\overset{\beta}{\to}\,^{h}H^{0}(i_{b\sharp}^{\mathrm{ren}}i_{b}^{\ast\mathrm{ren}}A)\overset{\alpha}{\to}A.
\]
By the surjectivity of $\alpha$ and some general nonsense, we get
an exact sequence
\[
0\to\ker\beta\to\ker\alpha\circ\beta\to\ker\alpha\to\mathrm{coker}\,\beta\to\mathrm{coker}\,\alpha\circ\beta\to0.
\]
From the first paragraph of the proof, we already know that $\ker\alpha$
has support contained in $\mathrm{Bun}_{G}^{\prec b}$, so then also
any quotient of $\ker\alpha$ has the same support property. Now,
since $A$ is irreducible, $\mathrm{coker}\,\alpha\circ\beta$ is
either $\simeq0$ or $\simeq A$. If it is $\simeq0$, then 
\[
\mathrm{coker}\beta\simeq\,^{h}H^{0}(i_{b\sharp}^{\mathrm{ren}}(i_{b}^{\ast\mathrm{ren}}A/B))
\]
is also quotient of $\ker\alpha$ and thus is supported in $\mathrm{Bun}_{G}^{\prec b}$,
so it must vanish identically, which then forces $B=i_{b}^{\ast\mathrm{ren}}A$
by Lemma \ref{lem:hadal-H-zero-conservative}. If it is $\simeq A$,
then $\mathrm{im}\beta\subseteq\ker\alpha$, and we get an exact sequence
\[
0\to\ker\beta\to\,^{h}H^{0}(i_{b\sharp}^{\mathrm{ren}}B)\overset{\tau}{\to}\ker\alpha\to\mathrm{coker}\,\beta\to A\to0.
\]
But we already know that $\ker\alpha$ is supported in $\mathrm{Bun}_{G}^{\prec b}$,
while $\,^{h}H^{0}(i_{b\sharp}^{\mathrm{ren}}B)$ cannot have any
quotient with this support property by Lemma \ref{lem:hadal-sub-quotient-prep}.
This forces $\tau=0$ and then $\ker\beta\overset{\sim}{\to}\,^{h}H^{0}(i_{b\sharp}^{\mathrm{ren}}B)$,
so then $\beta$ is the zero map. But then the natural inclusion map
$B\to i_{b}^{\ast\mathrm{ren}}A$ giving rise to $\beta$ is also
the zero map by Lemma \ref{lem:hadal-H-zero-conservative}, and thus
$B\simeq0$. Therefore, either $B=0$ or $B=i_{b}^{\ast\mathrm{ren}}A$,
so $i_{b}^{\ast\mathrm{ren}}A$ is irreducible.

Summarizing our efforts so far, we have produced from the irreducible
hadal sheaf $A$ a canonical pair $(b,\pi)$ as in the statement of
the theorem, with $b$ the unique maximally special point in the support
of $A$, and with $\pi=i_{b}^{\ast\mathrm{ren}}A$ irreducible. To
reconstruct $A$ from this datum, observe that in the course of our
arguments, we obtained maps
\[
\,^{h}H^{0}(i_{b\sharp}^{\mathrm{ren}}\pi)\twoheadrightarrow A\hookrightarrow\,^{h}H^{0}(i_{b!}^{\mathrm{ren}}\pi)
\]
whose composite is the canonical map induced by the natural transformation
$i_{b\sharp}^{\mathrm{ren}}\to i_{b!}^{\mathrm{ren}}$. This implies
that the association
\[
(b,\pi)\mapsto\mathscr{G}_{b,\pi}\overset{\mathrm{def}}{=}\mathrm{im}\left(\,^{h}H^{0}(i_{b\sharp}^{\mathrm{ren}}\pi)\to\,^{h}H^{0}(i_{b!}^{\mathrm{ren}}\pi)\right)
\]
defines an inverse to our recipe for extracting the pair $(b,\pi)$
from $A$. 

Finally, we need to see that for \emph{every} pair $(b,\pi)$, the
sheaf $\mathscr{G}_{b,\pi}$ defined in the previous paragraph is
irreducible. Fix any such pair. Pick any irreducible hadal sheaf $A$
together with a nonzero map $f:A\to\mathscr{G}_{b,\pi}$. By our arguments
so far, we already know that $A\simeq\mathscr{G}_{b',\pi'}$ for some
pair $(b',\pi')$. Composing the maps
\[
\,^{h}H^{0}(i_{b'\sharp}^{\mathrm{ren}}\pi')\twoheadrightarrow\mathscr{G}_{b',\pi'}\simeq A\overset{f}{\to}\mathscr{G}_{b,\pi}\hookrightarrow\,^{h}H^{0}(i_{b!}^{\mathrm{ren}}\pi)
\]
gives a nonzero map $\,^{h}H^{0}(i_{b'\sharp}^{\mathrm{ren}}\pi')\to\,^{h}H^{0}(i_{b!}^{\mathrm{ren}}\pi)$.
Arguing as in the second paragraph of the proof, the datum of such
a nonzero map is equivalent to the datum of a nonzero map $i_{b'\sharp}^{\mathrm{ren}}\pi'\to i_{b!}^{\mathrm{ren}}\pi$.
But if such a nonzero map exists, then necessarily $b=b'$ by Proposition
\ref{prop:easyvanishing}.iii. The various adjunctions easily imply
a general isomorphism $\mathrm{Hom}(i_{b\sharp}^{\mathrm{ren}}A,i_{b!}^{\mathrm{ren}}B)\cong\mathrm{Hom}(A,B)$,
so now we also get a nonzero map $\pi'\to\pi$, and thus an isomorphism
$\pi\simeq\pi'$. Therefore, $\mathscr{G}_{b,\pi}\simeq\mathscr{G}_{b',\pi'}$
is irreducible.
\end{proof}
\begin{rem}
Just as the perverse t-structure interacts well with Verdier duality,
the hadal t-structure should interact cleanly with Bernstein-Zelevinsky
duality. However, the precise statement is subtle, and we refer to
Remark \ref{rem:bzdual-hadal-interaction} for a more detailed explanation
of what we expect in this direction. 

We also remark that one major advantage of the hadal t-structure over
the perverse t-structure is that for any basic $b$, $i_{b!}$ gives
an exact embedding of the abelian category $\mathrm{Rep}_{\mathrm{sm}}(G_{b}(E),\Lambda)_{\mathrm{fg}}$
into $\mathrm{Had}(\mathrm{Bun}_{G},\Lambda)$, and in fact realizes
it as a Serre subcategory of the latter.
\end{rem}

\begin{xca}
Check that the hadal t-structure on $D(\mathrm{Bun}_{G},\Lambda)^{\omega}$
is bounded and nondegenerate.
\end{xca}

\begin{conjecture}
The natural realization functor $D^{b}\mathrm{Had}(\mathrm{Bun}_{G},\Lambda)\to D(\mathrm{Bun}_{G},\Lambda)^{\omega}$
is an equivalence of categories.
\end{conjecture}

\subsection{The spectral side}

Here we give a very brief recollection on the stack of \emph{L-}parameters
and its coarse moduli, mostly just to set notation; we regard this
as the ``easy'' side of the categorical conjecture. For a much more
detailed treatment, see \cite{DHKM}, \cite{Zhu}, or \cite[Chapter VIII]{FS}.

Let $\Lambda$ be any $\mathbf{Z}_{\ell}[\sqrt{q}]$-algebra. We write
$\mathrm{Par}_{G,\Lambda}=Z^{1}(W_{E},\hat{G})_{\Lambda}/\hat{G}$
for the stack of $\ell$-adically continuous \emph{L}-parameters regarded
as an Artin stack locally of finite type over $\mathrm{Spec}\,\Lambda$,
with its tautological map $\tau_{G}:\mathrm{Par}_{G,\Lambda}\to B\hat{G}_{\Lambda}$.
Note that we can define an analogous stack for any linear algebraic
group, but for non-reductive groups the correct object turns out to
be a \emph{derived }Artin stack (see \cite[Section 2.3]{Zhu}). We
will only need this extra generality for parabolic subgroups, in the
definition of spectral Eisenstein series. 

We will primarily be interested in the case when $\Lambda=\overline{\mathbf{Q}_{\ell}}$,
in which case we will drop $\Lambda$ from the notation. When $\Lambda=\overline{\mathbf{Q}_{\ell}}$
we write $X_{G}^{\mathrm{spec}}$ for the coarse quotient of $\mathrm{Par}_{G}$,
with its canonical map $q:\mathrm{Par}_{G}\to X_{G}^{\mathrm{spec}}$.
The $\overline{\mathbf{Q}_{\ell}}$-points of $\mathrm{Par}_{G}$
parametrize isomorphism classes of $\ell$-adically continuous \emph{L-}parameters
$\phi:W_{E}\to\,^{L}G(\overline{\mathbf{Q}_{\ell}})$, which we will
simply call \emph{L}-parameters. If $\phi$ is any \emph{L}-parameter,
we write $S_{\phi}=\mathrm{Cent}_{\hat{G}}(\phi)$ for its centralizer
group, and we write $i_{\phi}:BS_{\phi}\to\mathrm{Par}_{G}$ for the
associated locally closed immersion. Note that by \cite[Lemma 3.1.8]{Zhu},
we can also regard $\mathrm{Par}_{G}$ as a moduli stack of Weil-Deligne
parameters.

The closed points of $X_{G}^{\mathrm{spec}}$ are in canonical bijection
with \emph{semisimple} \emph{L}-parameters $W_{E}\to\,^{L}G(\overline{\mathbf{Q}_{\ell}})$.
Here we say an \emph{L-}parameter is semisimple if it is Frobenius-semisimple
and has open kernel. Note that $X_{G}^{\mathrm{spec}}$ is a disjoint
union of affine varieties over $\mathrm{Spec}\,\overline{\mathbf{Q}_{\ell}}$,
and $\mathcal{O}(X_{G}^{\mathrm{spec}})\cong\mathcal{O}(\mathrm{Par}_{G})$.
Moreover, each component of $X_{G}^{\mathrm{spec}}$ is a quotient
of a torus by a finite group, and in particular is Cohen-Macaulay.
Note that any \emph{L}-parameter has a unique semisimplification,
corresponding to the image of the point $\phi\in\mathrm{Par}_{G}$
along $q$. Conversely, if $\phi$ is a semisimple parameter, the
fiber of $q$ over the associated closed point $x_{\phi}\in X_{G}^{\mathrm{spec}}$
is a moduli space of $L$-parameters with constant semisimplification
$\phi$. Each such fiber contains a unique \emph{closed} $\overline{\mathbf{Q}_{\ell}}$-point,
corresponding to the actual parameter $\phi$. In particular, $q$
induces a bijection from the \emph{closed }$\overline{\mathbf{Q}_{\ell}}$-points
of $\mathrm{Par}_{G}$ onto the closed points of $X_{G}^{\mathrm{spec}}$.

It is instructive to understand the fibers of $q$ more explicitly.
\begin{prop}
Fix a semisimple L-parameter $\phi$. The reduced fiber of $q$ over
the associated closed point $x_{\phi}\in X_{G}^{\mathrm{spec}}$ admits
the explicit presentation
\[
q^{-1}(x_{\phi})^{\mathrm{red}}\simeq\left\{ (u,N)\in\mathcal{U}_{S_{\phi}}\times\mathfrak{g}^{\mathrm{ad}\phi(I_{E})}\mid\mathrm{ad}\phi(\mathrm{Fr})\cdot N=q^{-1}N,\mathrm{ad}u\cdot N=N\right\} /S_{\phi}
\]
where $S_{\phi}$ acts by simultaneous conjugation.
\end{prop}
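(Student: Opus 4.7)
The plan is to pass to Zhu's Weil--Deligne description of $\mathrm{Par}_G$ (\cite[Lemma 3.1.8]{Zhu}), under which a $\overline{\mathbf{Q}_\ell}$-point corresponds to a pair $(\rho, N)$ with $\rho : W_E \to \hat G$ continuous (open kernel on inertia) and $N \in \hat{\mathfrak{g}}$ nilpotent satisfying $\mathrm{Ad}(\rho(w))N = |w|N$. The semisimplification operation Frobenius-semisimplifies $\rho(\mathrm{Fr})$ and leaves $\rho|_{I_E}$ untouched, so the reduced fiber of $q$ over $x_\phi$ is the $\hat G$-quotient of the space of Weil--Deligne pairs whose semisimplification is $\hat G$-conjugate to $\phi$.

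The first step is to normalize representatives. After $\hat G$-conjugation, any such pair has a representative with $\rho|_{I_E} = \phi|_{I_E}$ and $\rho(\mathrm{Fr}) = \phi(\mathrm{Fr}) \cdot u$, where $u$ is unipotent and commutes with $\phi(\mathrm{Fr})$ (the multiplicative Jordan decomposition). Applying the conjugation identity $\rho(\mathrm{Fr})\rho(w)\rho(\mathrm{Fr})^{-1} = \rho(\mathrm{Fr}\,w\,\mathrm{Fr}^{-1})$ for $w \in I_E$ and using $\rho|_{I_E} = \phi|_{I_E}$ forces $u$ to commute with $\phi(I_E)$ as well. Combined with the commutation with $\phi(\mathrm{Fr})$ this gives $u \in \mathcal{U}_{S_\phi}$, and conversely every such $u$ does define a valid Weil-group homomorphism.

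Next I would translate the monodromy condition $\mathrm{Ad}(\rho(w))N = |w|N$. For $w \in I_E$ it reads $\mathrm{Ad}(\phi(w))N = N$, i.e.\ $N \in \hat{\mathfrak{g}}^{\mathrm{Ad}\,\phi(I_E)}$, and for $w = \mathrm{Fr}$ it reads $\mathrm{Ad}(\phi(\mathrm{Fr})u)N = q^{-1}N$. The main technical step is to verify that this single Frobenius equation is equivalent to the conjunction $\mathrm{Ad}(\phi(\mathrm{Fr}))N = q^{-1}N$ and $\mathrm{Ad}(u)N = N$. Since $\mathrm{Ad}(\phi(\mathrm{Fr}))$ is semisimple, $\mathrm{Ad}(u)$ is unipotent, and they commute on $\hat{\mathfrak{g}}$, one decomposes $N$ into $\mathrm{Ad}(\phi(\mathrm{Fr}))$-eigenspaces (which are preserved by $\mathrm{Ad}(u)$) and compares eigenvalues: any contribution from an eigenspace with eigenvalue $\lambda \neq q^{-1}$ would demand $\mathrm{Ad}(u)$ act with eigenvalue $q^{-1}/\lambda \neq 1$, contradicting unipotence. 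Hence $N$ lies in the $q^{-1}$-eigenspace and is $\mathrm{Ad}(u)$-fixed.

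Finally, I would identify the residual symmetry. The normalization $\rho|_{I_E} = \phi|_{I_E}$ is preserved by $\hat G$-conjugation only under $\mathrm{Cent}_{\hat G}(\phi(I_E))$, and requiring in addition that the semisimple part of $\rho(\mathrm{Fr})$ equal $\phi(\mathrm{Fr})$ cuts this down to $S_\phi$, acting on $(u, N)$ by simultaneous conjugation. Passing to stack quotients yields exactly the asserted presentation. The only real obstacle is the eigenspace decoupling in Step 3; the other steps are formal manipulations with Jordan decomposition and the universal property of the coarse moduli.
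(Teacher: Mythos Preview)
The paper does not supply a proof of this proposition; it is stated and immediately used. Later, for the closely related closed immersion $\nu_\phi:\mathcal{N}_{S_\phi}\hookrightarrow\mathrm{Par}_G$, the author remarks only that the argument is ``an easy exercise with the Jordan--Chevalley decomposition,'' which is exactly the strategy you carry out.

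Your argument is correct and is the expected one. The normalization step (writing $\rho(\mathrm{Fr})=\phi(\mathrm{Fr})\cdot u$ with $u$ unipotent and then deducing $u\in S_\phi$ from the conjugation relation on inertia) and the eigenspace decoupling of $\mathrm{Ad}(\phi(\mathrm{Fr})u)N=q^{-1}N$ into its semisimple and unipotent parts are both clean and complete. One minor point worth making explicit: as written, your argument produces a bijection on $\overline{\mathbf{Q}_\ell}$-points together with the correct identification of stabilizers; to upgrade this to an isomorphism of reduced stacks you should observe that the forward assignment $(u,N)\mapsto(\phi\cdot u,N)$ is visibly a morphism of finite-type stacks and then invoke reducedness of the target. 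This is routine and does not affect the substance of your proof.
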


Here $\mathcal{U}_{S_{\phi}}$ denotes the variety of unipotent elements
in $S_{\phi}$. Note that the closed substack cut out by $u=1$ is
exactly the Vogan variety
\[
V_{\phi}=\left\{ N\in\mathfrak{g}^{\mathrm{ad}\phi(I_{E})}\mid\mathrm{ad}\phi(\mathrm{Fr})\cdot N=q^{-1}N\right\} /S_{\phi}
\]
parametrizing Frobenius-semisimple \emph{L}-parameters with semisimplification
$\phi$. We also note that the associated closed immersion $V_{\phi}\to q^{-1}(x_{\phi})^{\mathrm{red}}$
has a natural retraction, given by forgetting $u$. In some cases,
e.g. when $\phi(\mathrm{Fr})$ is regular semisimple, the Vogan variety
is the entire fiber. At the other extreme, if $\phi$ is the trivial
\emph{L}-parameter (so $S_{\phi}=\hat{G}$), then only $N=0$ can
occur, but \emph{any} $u$ can occur, and the fiber is the entire
quotient $\mathcal{U}_{\hat{G}}/\hat{G}$. In general, the geometry
of the fiber involves variation of both $u$ and $N$. Note that in
the ``classical'' local Langlands correspondence, only Frobenius-semisimple
\emph{L}-parameters are relevant.
\begin{xca}[Hellmann]
 Take $G=\mathrm{GL}_{4}$, and let $\phi$ be the semsimple parameter
which is trivial on inertia and with $\phi(\mathrm{Fr})=\mathrm{diag}(1,q,q,q^{2})$.
Explicate the finite topological space $|q^{-1}(x_{\phi})|$ as a
set, and draw all of the nontrivial specializations within it.
\end{xca}

\subsection{Categorical conjecture}

We can now start to put the two sides together. For simplicitly, let
$L$ be an algebraic extension of $\mathbf{Q}_{\ell}(\sqrt{q})$,
and let $\Lambda\in\left\{ L,\mathcal{O}_{L}\right\} $. In all that
follows, we assume either that $\Lambda=L$ or that $\ell$ is a very
good prime for $G$ in the sense of \cite{FS}.

The essential carriers of information in Fargues-Scholze are the Hecke
operators, which we briefly recall (see \cite[Section IX.2]{FS}).
\begin{thm}
For any $V\in\mathrm{Rep}_{\Lambda}(^{L}G)$, there is a naturally
associated functor
\[
T_{V}:D(\mathrm{Bun}_{G},\Lambda)\to D(\mathrm{Bun}_{G},\Lambda)^{BW_{E}}
\]
where $D(\mathrm{Bun}_{G},\Lambda)^{BW_{E}}$ denotes the appropriate
category of $W_{E}$-equivariant objects in $D(\mathrm{Bun}_{G},\Lambda)$.
More generally, for any finite set $I$ and any $V\in\mathrm{Rep}_{\Lambda}((^{L}G)^{I})$,
there is a naturally associated functor
\[
T_{V}:D(\mathrm{Bun}_{G},\Lambda)\to D(\mathrm{Bun}_{G},\Lambda)^{BW_{E}^{I}}.
\]
Composing with the forgetful functor to $D(\mathrm{Bun}_{G},\Lambda)$,
the induced endofunctor
\[
T_{V}:D(\mathrm{Bun}_{G},\Lambda)\to D(\mathrm{Bun}_{G},\Lambda)
\]
depends only on the restriction of $V$ to the diagonally embedded
copy of $\hat{G}$ in $^{L}G^{I}$. As an endofunctor of $D(\mathrm{Bun}_{G},\Lambda)$,
$T_{V}$ preserves compact and ULA objects, and has left and right
adjoint given by $T_{V^{\vee}}$.
\end{thm}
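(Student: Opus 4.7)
The construction follows the Fargues--Scholze framework in \cite[Chapters VI and IX]{FS}. The plan is to build $T_V$ by pull--tensor--push along the local Hecke correspondence
$$\mathrm{Bun}_G \overset{h_1}{\longleftarrow} \mathcal{H}ck_{G,I} \overset{(h_2,\pi_I)}{\longrightarrow} \mathrm{Bun}_G \times \mathrm{Div}^I,$$
where $\mathcal{H}ck_{G,I}$ parametrizes modifications of $G$-bundles along $I$ Cartier divisors on the Fargues--Fontaine curve, the maps $h_1, h_2$ record the source and target $G$-bundle of a modification, and $\pi_I$ records the $I$-tuple of divisors. Since $(h_2, \pi_I)$ is cohomologically smooth in the appropriate ind-sense, the $\natural$-pushforward native to the $D_{\mathrm{lis}}$ formalism is available, and the construction stays inside this formalism.

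The essential input is the geometric Satake equivalence of \cite[Chapter VI]{FS}: for each $V \in \mathrm{Rep}_{\Lambda}(({}^{L}G)^I)$ it produces a relatively ULA Satake sheaf $\mathcal{S}_V$ on $\mathcal{H}ck_{G,I}$ over $\mathrm{Div}^I$, functorially and symmetric-monoidally in $V$. I would set
$$T_V(A) := R(h_2, \pi_I)_{\natural}\bigl(h_1^{*} A \otimes \mathcal{S}_V\bigr) \in D(\mathrm{Bun}_G \times \mathrm{Div}^I, \Lambda).$$
The $W_{E}^I$-equivariance is extracted from the fact that $\mathrm{Div}^1$ is the Frobenius quotient of $\mathrm{Spd}\,\breve E$, whose profinite étale fundamental group at a geometric point is $W_E$, so any lisse object on $\mathrm{Div}^I$ trivializes after pullback to a geometric base point and acquires a continuous $W_{E}^I$-action. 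The claim that the composite with the forgetful functor only depends on the restriction of $V$ to the diagonal $\hat G \hookrightarrow {}^{L}G^I$ is exactly the fusion property of geometric Satake: pulling $\mathcal{S}_V$ back along the diagonal $\Delta : \mathrm{Div}^1 \hookrightarrow \mathrm{Div}^I$ produces the Satake sheaf attached to the $\hat G$-representation $V|_{\hat G}$, which by construction computes the same endofunctor of $D(\mathrm{Bun}_G, \Lambda)$.

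Preservation of ULA and compact objects reduces, via base change and the smoothness of $(h_2,\pi_I)$, to the facts that $\mathcal{S}_V$ is relatively ULA with quasicompact support over $\mathrm{Div}^I$; these are part of the Satake package, and they make the convolution behave like a proper-smooth correspondence up to twist. The adjunctions $T_{V^\vee} \dashv T_V$ and $T_V \dashv T_{V^\vee}$ then follow formally from Verdier and naive duality applied to the Hecke correspondence (which is symmetric in $h_1$ and $h_2$), combined with the self-duality identification $\mathbf{D}\,\mathcal{S}_V \simeq \mathcal{S}_{V^\vee}$ up to an explicit shift and twist intrinsic to Satake. The true obstacle is not this formal bookkeeping but the geometric Satake equivalence itself: upgrading the Tannakian symmetry group of the Satake category from $\hat G$ to the full $L$-group ${}^{L}G$ requires the delicate monodromy and Frobenius-weight arguments of \cite[Chapter VI]{FS}, and that is where all of the real content sits.
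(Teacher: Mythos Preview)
The paper does not prove this theorem; it is stated as a recollection of results from \cite[Section IX.2]{FS}. Your proposal correctly sketches the Fargues--Scholze construction via the Hecke correspondence, geometric Satake, and the fusion/duality formalism, so it aligns with the source the paper is citing.
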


The first key construction in Fargues-Scholze linking the the spectral
and automorphic worlds is a natural map
\[
\mathcal{O}(\mathrm{Par}_{G,\Lambda})\to\mathfrak{Z}(D(\mathrm{Bun}_{G},\Lambda))
\]
which they construct using the Hecke operators together with V. Lafforgue's
excursion operator formalism \cite[Theorem IX.5.2]{FS}. Note that
for any $b\in B(G)$, the functor $i_{b!}^{\mathrm{ren}}$ defines
a fully faithful embedding $D(G_{b}(E),\Lambda)\to D(\mathrm{Bun}_{G},\Lambda)$,
which induces a map in the other direction on Bernstein centers. Post-composing
with the above map, we obtain a canonical map $\mathcal{O}(\mathrm{Par}_{G,\Lambda})\overset{\Psi_{G}^{b}}{\to}\mathfrak{Z}(G_{b}(E),\Lambda)$.\footnote{We write $\mathfrak{Z}(G_{b}(E),\Lambda)$ for the center of the category
of smooth $\Lambda[G_{b}(E)]$-modules. When $\Lambda=\overline{\mathbf{Q}_{\ell}}$,
this is the usual Bernstein center, and we drop $\Lambda$ from the
notation. In that case we also write $X_{G_{b}}$ for the Bernstein
variety, so $\mathcal{O}(X_{G_{b}})\cong\mathfrak{Z}(G_{b}(E))$.} When $b=1$, we simply write $\Psi_{G}$ for this map. The following
is a renormalized version of \cite[Theorem IX.7.2]{FS}.
\begin{thm}
\label{thm:ibcompatibleparameters}For any $G$ and any $b\in B(G)$,
the diagram
\[
\xymatrix{\mathcal{O}(\mathrm{Par}_{G,\Lambda})\ar[d]\ar[r]^{\Psi_{G}^{b}} & \mathfrak{Z}(G_{b}(E),\Lambda)\\
\mathcal{O}(\mathrm{Par}_{G_{b},\Lambda})\ar[ur]_{\Psi_{G_{b}}}
}
\]
commutes. Here the left vertical arrow is induced by the canonical
finite map $\mathrm{Par}_{G_{b},\Lambda}\to\mathrm{Par}_{G,\Lambda}$
associated with the canonical L-embedding $^{L}G_{b}\to\,^{L}G$.
\end{thm}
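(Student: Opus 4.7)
The plan is to deduce this from the un-renormalized version \cite[Theorem IX.7.2]{FS} by carefully tracking how the renormalization modifies the induced map on Bernstein centers on both sides. In outline, the shift contributes nothing, the modulus twist produces a translation on $\mathrm{Par}_{G_{b}}$, and this translation is exactly what is needed to convert the twisted $L$-embedding implicit in the Fargues--Scholze version into the canonical one.

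First I would observe that the cohomological shift $[-\langle 2\rho_{G},\nu_{b}\rangle]$ in the definition of $i_{b!}^{\mathrm{ren}}$ contributes nothing to the induced map on centers: shifting is an auto-equivalence commuting with every natural transformation $\mathrm{id} \Rightarrow F$, so the action of any excursion-operator class $z \in \mathfrak{Z}(D(\mathrm{Bun}_{G},\Lambda))$ is the same whether one tests against $i_{b!}$ or against its shift. Consequently the substantive difference between $i_{b!}$ and $i_{b!}^{\mathrm{ren}}$ is the tensor-product auto-equivalence $\Phi \colon A \mapsto A \otimes \delta_{b}^{1/2}$ of $D(G_{b}(E),\Lambda)$, and the induced maps on Bernstein centers differ precisely by pullback along the corresponding automorphism of $\mathfrak{Z}(G_{b}(E),\Lambda)$.

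Next I would translate this auto-equivalence onto the spectral side. Since $\delta_{b}^{1/2}$ factors through the cocenter of $G_{b}(E)$, it has a canonical unramified $L$-parameter $\phi_{\delta_{b}^{1/2}} \colon W_{E} \to Z(\hat{G}_{b})$, and the functorial properties of $\Psi_{G_{b}}$ identify the $\Phi$-induced automorphism of $\mathfrak{Z}(G_{b}(E),\Lambda)$ with pullback along the translation automorphism $\sigma$ of $\mathrm{Par}_{G_{b},\Lambda}$ sending $\psi \mapsto \psi \otimes \phi_{\delta_{b}^{1/2}}$. Thus the renormalized $\Psi_{G}^{b}$ factorizes through $\mathrm{Par}_{G_{b},\Lambda}$ as required if and only if the un-renormalized map $\Psi_{G}^{b,\mathrm{naive}}$ factorizes through $\mathrm{Par}_{G_{b},\Lambda} \xrightarrow{\sigma} \mathrm{Par}_{G_{b},\Lambda} \to \mathrm{Par}_{G,\Lambda}$, with the last arrow the canonical $L$-embedding. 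This second factorization is precisely the content of \cite[Theorem IX.7.2]{FS}, where the $L$-embedding appearing in the statement is twisted by exactly the unramified central character $\phi_{\delta_{b}^{1/2}}$ — the twist arising from the cohomological shifts $\langle 2\rho_{G},\nu_{b}\rangle$ built into geometric Satake on the non-basic strata. The two twists cancel, yielding the canonical $L$-embedding.

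The main obstacle is pure bookkeeping: one must verify that the unramified twist implicit in \cite[Theorem IX.7.2]{FS} is genuinely $\phi_{\delta_{b}^{1/2}}$, and not some variant differing by a sign, by half/full modulus, or by replacing $G_{b}$ with the quasisplit Levi $M_{\{b\}}$. This requires matching conventions for the Newton cocharacter, for the chosen $\sqrt{q} \in \Lambda$, for the dualizing complex on the strata, and for the geometric Satake functor underlying the Hecke action — but once these normalizations are aligned, the cancellation is formal.
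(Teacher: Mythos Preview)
Your proposal is correct and matches the paper's approach: the paper does not give a standalone proof but simply declares the theorem to be ``a renormalized version of \cite[Theorem IX.7.2]{FS},'' noting afterwards that the renormalization replaces the twisted $L$-embedding of Fargues--Scholze by the canonical one. Your argument spells out precisely this reduction --- the shift is irrelevant on centers, and the $\delta_{b}^{1/2}$-twist induces the translation on $\mathrm{Par}_{G_{b}}$ that cancels the twist in the Fargues--Scholze embedding --- so you have supplied the details the paper leaves implicit.
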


Note that for non-basic $b$, our definition of $\Psi_{G}^{b}$ differs
from that of Fargues-Scholze, and is in fact simpler to use, as reflected
in the fact that we have the canonical \emph{L-}embedding appearing
in the previous theorem rather than the twisted embedding used in
Fargues-Scholze.
\begin{xca}
Check that the map $\Psi_{G}^{b}$ is unaltered upon replacing $i_{b!}^{\mathrm{ren}}$
by $i_{b\sharp}^{\mathrm{ren}}$ in its definition.
\end{xca}

When $\Lambda=\overline{\mathbf{Q}_{\ell}}$, it can be helpful to
think more geometrically: the datum of the map $\Psi_{G}$ is equivalent
to the datum of a map of (ind-)varieties $\Psi_{G}^{\mathrm{geom}}:X_{G}\to X_{G}^{\mathrm{spec}}$.
From this perspective, the Fargues-Scholze construction of \emph{L}-parameters
is extremely transparent: any irreducible smooth representation $\pi$
determines a closed point in $X_{G}$, and thus a closed point in
$X_{G}^{\mathrm{spec}}$ via the map $\Psi_{G}^{\mathrm{geom}}$.
But closed points in $X_{G}^{\mathrm{spec}}$ correspond exactly to
semisimple \emph{L-}parameters.

However, Fargues-Scholze go much further than this, and prove the
following substantial upgrade of the above construction.
\begin{thm}
There is a canonical $\Lambda$-linear $\otimes$-action of $\mathrm{Perf}(\mathrm{Par}_{G,\Lambda})$
on $D(\mathrm{Bun}_{G},\Lambda)$ compatible with the action of Hecke
operators and preserving the subcategory $D(\mathrm{Bun}_{G},\Lambda)^{\omega}$
of compact objects.
\end{thm}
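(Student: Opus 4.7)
The plan is to bootstrap the previously constructed map $\mathcal{O}(\mathrm{Par}_{G,\Lambda}) \to \mathfrak{Z}(D(\mathrm{Bun}_G,\Lambda))$ up to an action of the entire category $\mathrm{Perf}(\mathrm{Par}_{G,\Lambda})$, with the Hecke operators playing the role of the essential additional input. The overarching principle is that perfect complexes on $\mathrm{Par}_{G,\Lambda}$ are generated by tautological vector bundles coming from representations of $\hat{G}$, suitably enriched by the $W_E$-equivariance that is built into the defining cocycle condition.

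First I would fix a convenient set of generators. For each $V \in \mathrm{Rep}_\Lambda(\hat{G})$, the pullback $\tau_G^{\ast}V$ along $\tau_G : \mathrm{Par}_{G,\Lambda} \to B\hat{G}_\Lambda$ is a vector bundle on $\mathrm{Par}_{G,\Lambda}$ with a tautological $W_E$-equivariant structure; more generally, for finite sets $I$ and $V \in \mathrm{Rep}_\Lambda((^{L}G)^I)$ one obtains vector bundles with $W_E^I$-equivariant structure. Using the presentation $\mathrm{Par}_{G,\Lambda} = Z^1(W_E,\hat{G})_\Lambda/\hat{G}$ and smooth descent for perfect complexes, I would check that these vector bundles, together with their duals, shifts, cones, retracts, and tensor products with $\mathcal{O}(\mathrm{Par}_{G,\Lambda})$, generate all of $\mathrm{Perf}(\mathrm{Par}_{G,\Lambda})$ as a stable category.

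Next I would assign to each such generator an endofunctor of $D(\mathrm{Bun}_G,\Lambda)$: to $\tau_G^{\ast}V$ one assigns the Hecke operator $T_V$, using its intrinsic $W_E^I$-equivariance to match the equivariant data on the vector bundle. Compatibility with the already-constructed $\mathcal{O}(\mathrm{Par}_{G,\Lambda})$-action is enforced by V. Lafforgue's excursion formalism: an excursion datum $(I,V,x,\xi,(\gamma_i))$ determines simultaneously a function on $\mathrm{Par}_{G,\Lambda}$ (a matrix coefficient of the universal cocycle) and a natural transformation of the identity built from $T_V$ and the $W_E^I$-action, and these must match under $\Psi_G$. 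The fusion and duality structure of the Hecke operators then enforces compatibility with tensor products and duals on the representation side.

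The main obstacle is the coherence problem: a symmetric monoidal $\Lambda$-linear action of $\mathrm{Perf}(\mathrm{Par}_{G,\Lambda})$ is an infinite hierarchy of higher homotopies, not merely a functor on objects. The essential work is to assemble the Hecke operators $T_V$ for varying $I$ and $V$, together with the excursion transformations, into a genuine symmetric monoidal functor from $\mathrm{Perf}(\mathrm{Par}_{G,\Lambda})$ into the endofunctor category of $D(\mathrm{Bun}_G,\Lambda)$, working directly on the stack rather than on an atlas. Once this is in place, preservation of compact objects is automatic: each $T_V$ preserves compact objects by the Hecke operator theorem stated above, tensoring by elements of $\mathcal{O}(\mathrm{Par}_{G,\Lambda})$ preserves compacts trivially, and the class of compacts is closed under the remaining constructions used to reach all of $\mathrm{Perf}(\mathrm{Par}_{G,\Lambda})$.
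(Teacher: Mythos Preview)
The paper does not supply its own proof of this theorem: it is stated as a result of Fargues--Scholze \cite{FS} and used as a black box throughout. So there is nothing in the paper to compare your argument against directly.

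That said, your sketch is in the right spirit and broadly aligns with the actual argument in \cite[Chapter X]{FS}. The main difference is in how the coherence problem is handled. You propose to first assign endofunctors to a generating set of perfect complexes and then extend; this is the step you correctly flag as the ``main obstacle,'' and as written it is only a plan, not an argument. Fargues--Scholze avoid this extension problem entirely by proving a reconstruction theorem: they show that $\mathrm{Perf}(\mathrm{Par}_{G,\Lambda})$ is \emph{freely generated}, as a symmetric monoidal $\Lambda$-linear idempotent-complete stable $\infty$-category, by the system of objects $\tau_G^{\ast}V$ for $V \in \mathrm{Rep}_{\Lambda}((^{L}G)^{I})$ together with their $W_E^I$-equivariance and fusion data (this uses a presentation of $\mathrm{Par}_{G,\Lambda}$ as a colimit over discretizations of $W_E$). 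Given this universal property, the Hecke operators $T_V$ with their already-constructed $W_E^I$-equivariance and fusion compatibilities supply exactly the input needed, and the symmetric monoidal action drops out without any further coherence work. Your instinct that the excursion formalism is what ties the $\mathcal{O}(\mathrm{Par}_{G,\Lambda})$-action to the Hecke action is correct, and your argument for preservation of compact objects is fine.
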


We write $C\ast(-)\circlearrowright D(\mathrm{Bun}_{G},\Lambda)$
for the spectral action of $C\in\mathrm{Perf}(\mathrm{Par}_{G,\Lambda})$.
Compatibility with Hecke operators means that the diagram
\[
\xymatrix{ & \mathrm{End}\left(D(\mathrm{Bun}_{G},\Lambda)\right)\\
\mathrm{Rep}_{\Lambda}(^{L}G)\ar[r]\ar[ur]^{V\mapsto T_{V}} & \mathrm{Rep}_{\Lambda}(\hat{G})\ar[u]_{V\mapsto\tau_{G}^{\ast}V\ast(-)}
}
\]
commutes.

Fargues-Scholze then formulate a deep conjectural refinement of these
constructions. To state this, we need to assume that $G$ is quasisplit.
We also choose a Whittaker datum, i.e. a Borel subgroup $B\subset G$
and a generic character $\psi:U(E)\to\Lambda^{\times}$, where $U\subset B$
is the unipotent radical. We will typically use $\psi$ as shorthand
for the choice of Whittaker datum. We may then define the space
\[
W_{\psi}=\mathrm{ind}_{U}^{G}(\psi)
\]
of compactly supported Whittaker functions with coefficients in $\Lambda$.
In other words, $W_{\psi}\subset\mathcal{C}(G(E),\Lambda)$ is the
space of functions such that $f(ug)=\psi(u)f(g)$ for all $u\in U(E)$
and $g\in G(E)$, $f$ is right-invariant by some open compact subgroup
of $G(E)$, and the support of $f$ has compact image in $U(E)\backslash G(E)$.
Note that $W_{\psi}$ is a ``large'' $G(E)$-representation, but
nonetheless has excellent properties (which we will recall in Appendix
A).
\begin{conjecture}
\label{conj:categoricalLLCmain}Let $\Lambda$ be as above, and containing
all $p$-power roots of unity. Then there is a natural $\Lambda$-linear
equivalence of categories
\[
\mathbf{L}_{\mathfrak{\psi}}:D(\mathrm{Bun}_{G},\Lambda)^{\omega}\overset{\sim}{\to}\mathrm{Coh}_{\mathrm{Nilp}}(\mathrm{Par}_{G,\Lambda})
\]
which is compatible with the spectral action, and which (after ind-completion)
sends the Whittaker sheaf $i_{1!}W_{\psi}$ to the structure sheaf
$\mathcal{O}_{\mathrm{Par}_{G,\Lambda}}$.
\end{conjecture}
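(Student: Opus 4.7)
\medskip

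\noindent\textbf{Proof proposal.} Given the spectral action of $\mathrm{Perf}(\mathrm{Par}_{G,\Lambda})$ on $D(\mathrm{Bun}_G,\Lambda)^\omega$ already in hand from Fargues--Scholze, the natural candidate functor is determined by its behavior on the structure sheaf: the plan is to define
\[
\mathbf{L}^G_\psi(C) \overset{\mathrm{def}}{=} C \ast (i_{1!}W_\psi)
\]
first on $\mathrm{Perf}(\mathrm{Par}_{G,\Lambda})$ and then extend (after passing to Ind-completions) to $\mathrm{Coh}_{\mathrm{Nilp}}(\mathrm{Par}_{G,\Lambda})$. The compatibility with the spectral action is then tautological, and the normalization $\mathcal{O}_{\mathrm{Par}_{G,\Lambda}} \mapsto i_{1!}W_\psi$ holds by construction. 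With the candidate in hand, the remaining work splits cleanly into three tasks: (a) showing that the formula actually descends through the Nilpotent singular support condition and lands in compact objects, (b) showing full faithfulness, and (c) showing essential surjectivity.

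For (a), I would try to identify the singular support of $i_{1!}W_\psi$ with the zero section of the global nilpotent cone on $\mathrm{Par}_{G,\Lambda}$, so that $C \ast (i_{1!}W_\psi)$ depends on $C$ only modulo the ideal generated by objects with disjoint singular support; compactness of the output should follow from the fact that $\mathrm{Coh}_{\mathrm{Nilp}}$ objects have bounded-below singular support, together with ULA-preservation of Hecke operators and the ULA-ness of $W_\psi$ (Appendix A). For (b), the key computation is
\[
\mathrm{RHom}_{D(\mathrm{Bun}_G)}\bigl(i_{1!}W_\psi,\, T_V (i_{1!}W_\psi)\bigr) \;\cong\; \mathcal{O}(\mathrm{Par}_{G,\Lambda}) \otimes_{\mathrm{Rep}(\hat G)} V
\]
for all $V \in \mathrm{Rep}_\Lambda({}^LG)$, which reduces by spectral action to $\mathrm{RHom}(W_\psi,W_\psi) \cong \mathcal{O}(\mathrm{Par}_{G,\Lambda})$. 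This is a Whittaker-coefficient-computation: one pairs $W_\psi$ against itself on the Fargues--Fontaine curve and identifies the result with Fargues--Scholze's excursion algebra (\cite[Theorem IX.5.2]{FS} and the full spectral action should furnish both the algebra map and the freeness statement, in parallel with V.~Lafforgue's global computation). Once full faithfulness is known on the subcategory generated by $i_{1!}W_\psi$ under the spectral action, it propagates automatically to the whole of $\mathrm{Coh}_{\mathrm{Nilp}}$ because the source is generated by structure sheaves of closed substacks, all of which are images of Hecke-deformations of $\mathcal{O}$.

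For (c), one must prove that $i_{1!}W_\psi$ generates $D(\mathrm{Bun}_G,\Lambda)^\omega$ under the spectral action. The strategy is stratum by stratum: using the semi-orthogonal decomposition along Harder--Narasimhan strata (and the fact that $i_{b!}$ preserves compacts by Proposition~\ref{prop:dualitypush}), it suffices to hit every compact object of $D(G_b(E),\Lambda)$ on each stratum. On the basic stratum $b=1$, this reduces via Hecke operators to a theorem of Bernstein-type describing the category of smooth $G(E)$-representations in terms of Whittaker functions (an enhanced version of the Bernstein-center statement, producing \emph{all} blocks rather than just their centers). For general $b$, one would propagate using the geometric Eisenstein series construction together with Theorem~\ref{thm:ibcompatibleparameters} to relate the spectral action on $\mathrm{Bun}_G^b$ to the spectral action on $\mathrm{Bun}_{G_b}$, inductively reducing to the basic case for smaller groups $M_{\{b\}}$.

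The main obstacle is, unsurprisingly, essential surjectivity for non-basic strata and the generation statement at $b=1$: the former requires a precise understanding of how the spectral action interacts with the renormalized pushforwards $i_{b!}^{\mathrm{ren}}$ and with Eisenstein series (the subject of later sections of these notes), while the latter is essentially a geometric enhancement of the classical theorem that every irreducible smooth representation is a quotient of a parabolically induced Whittaker-generic representation. Both of these, as well as even the full-faithfulness input, depend on a version of geometric Satake with Whittaker coefficients that is sensitive enough to produce the entire endomorphism algebra, and this is where I would expect the argument to run into fundamental difficulties in its current form.
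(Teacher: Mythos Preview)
The statement you are trying to prove is a \emph{conjecture}; the paper does not prove it and explicitly treats it as open throughout. So there is no ``paper's own proof'' to compare against. That said, the paper does discuss your candidate functor in detail (in \S1.7, over $\overline{\mathbf{Q}_\ell}$, where it is denoted $a_\psi$), and explains a concrete reason why your step~(a) fails.

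The issue is the following. After ind-completion, the formula $C\mapsto C\ast i_{1!}W_\psi$ defines a functor $a_\psi:\mathrm{QCoh}(\mathrm{Par}_G)\to D(\mathrm{Bun}_G)$, and under the hoped-for equivalence $D(\mathrm{Bun}_G)\simeq\mathrm{IndCoh}(\mathrm{Par}_G)$ it should correspond to the canonical functor $\Xi:\mathrm{QCoh}\to\mathrm{IndCoh}$. But $\Xi$ does \emph{not} send $\mathrm{Coh}$ into the compact objects of $\mathrm{IndCoh}$: the image $\Xi(\mathrm{Coh})$ is a ``phantom'' copy whose overlap with the native (compact) copy of $\mathrm{Coh}$ is only $\mathrm{Perf}$. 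Translating back, for any coherent but non-perfect $C$ on $\mathrm{Par}_G$, the sheaf $a_\psi(C)=C\ast i_{1!}W_\psi$ is expected \emph{not} to be compact. So your extension of the formula from $\mathrm{Perf}$ to $\mathrm{Coh}_{\mathrm{Nilp}}$ cannot land in $D(\mathrm{Bun}_G)^\omega$, and the argument for~(a) breaks down. (Your justification via ULA-ness of $W_\psi$ is also incorrect: $W_\psi$ is projective but not ULA---it is a ``large'' representation, and only its depth-$\le r$ summands $e_rW_\psi$ are finitely generated.)

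The paper's remedy is to pass to the right adjoint $c_\psi$ of $a_\psi$, which is the ``enhanced Whittaker coefficient'' functor. Under the conjectural equivalence, $c_\psi$ should correspond to $\Psi:\mathrm{IndCoh}\to\mathrm{QCoh}$, and $\Psi$ \emph{does} restrict to an equivalence from the native copy of $\mathrm{Coh}$ to $\mathrm{Coh}\subset\mathrm{QCoh}$. This leads to the unconditional reformulation (Conjecture~\ref{conj:categorical-cpsi-unconditional}) that $c_\psi$ restricts to an equivalence $D(\mathrm{Bun}_G)^\omega\overset{\sim}{\to}\mathrm{Coh}(\mathrm{Par}_G)$. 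But even this remains conjectural: the paper does not claim to prove full faithfulness, essential surjectivity, or even that $c_\psi$ carries compact objects to coherent complexes.
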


Compatibility with the spectral action means that we should have $\mathbf{L}_{\psi}(\mathcal{F}\ast A)\simeq\mathcal{F}\otimes\mathbf{L}_{\psi}(A)$
for all $\mathcal{F}\in\mathrm{Perf}(\mathrm{Par}_{G,\Lambda})$ and
$A\in D(\mathrm{Bun}_{G},\Lambda)^{\omega}$. Applying this compatibility
with $\mathcal{F}\in\mathrm{Perf}(\mathrm{Par}_{G,\Lambda})$ and
$A=i_{1!}W_{\psi}$, in combination with the expectation that $\mathbf{L}_{\psi}(i_{1!}W_{\psi})=\mathcal{O}_{\mathrm{Par}_{G},\Lambda}$,
we deduce that necessarily we should have $\mathbf{L}_{\psi}(\mathcal{F}\ast i_{1!}W_{\psi})=\mathcal{F}$
for $\mathcal{F}\in\mathrm{Perf}(\mathrm{Par}_{G,\Lambda})$.\footnote{Fargues likes to advocate the perspective that $\mathcal{F}\mapsto\mathcal{F}\ast i_{1!}W_{\psi}$
is a kind of non-abelian Fourier transform, and $\mathbf{L}_{\psi}^{G}$
should be some kind of ``continuous'' extension of it from $\mathrm{Perf}$
to $\mathrm{Coh}$.} In particular, since $\mathrm{Perf}\cap\mathrm{Coh_{\mathrm{Nilp}}}=\mathrm{Perf^{qc}}$,
we see that $\mathcal{F}\mapsto\mathcal{F}\ast i_{1!}W_{\psi}$ should
map $\mathrm{Perf^{qc}}(\mathrm{Par}_{G,\Lambda})$ towards \emph{compact
}objects in $D(\mathrm{Bun}_{G},\Lambda)$. This is not obvious! When
$\Lambda$ is a field, this is closely related to showing that the
map $\pi\mapsto\varphi_{\pi}$ has finite fibers. We also see that
$\mathcal{F}\mapsto\mathcal{F}\ast i_{1!}W_{\psi}$ should be \emph{fully
faithful }as a functor from $\mathrm{Perf}$ towards $D(\mathrm{Bun}_{G},\Lambda)$,
which again is far from obvious.

What further conditions do we expect $\mathbf{L}_{\psi}$ to satisfy?

\subsubsection*{Compatibility with the central grading}

Using the inclusion $Z(\hat{G})^{\Gamma}\subset\hat{G}$, we get a
decomposition
\[
\mathrm{Coh_{Nilp}}(\mathrm{Par}_{G,\Lambda})\cong\oplus_{\chi\in X^{\ast}(Z(\hat{G})^{\Gamma})}\mathrm{Coh_{Nilp}}(\mathrm{Par}_{G,\Lambda})^{Z(\hat{G})^{\Gamma}=\chi}.
\]
On the other hand, by a classic theorem of Kottwitz we have a canonical
bijection
\begin{align*}
\pi_{0}(\mathrm{Bun}_{G})=\pi_{1}(G)_{\Gamma} & \cong X^{\ast}(Z(\hat{G})^{\Gamma})\\
\alpha & \mapsto\chi_{\alpha}
\end{align*}
which induces a corresponding decomposition
\[
D(\mathrm{Bun}_{G},\Lambda)^{\omega}\cong\oplus_{\alpha\in\pi_{1}(G)_{\Gamma}}D(\mathrm{Bun}_{G}^{\alpha},\Lambda)^{\omega}.
\]
Writing $\mathrm{Bun}_{G}=\coprod_{\alpha\in\pi_{1}(G)_{\Gamma}}\mathrm{Bun}_{G}^{\alpha}$,
we expect that $\mathbf{L}_{\mathfrak{\psi}}$ should restrict to
compatible equivalences
\[
D(\mathrm{Bun}_{G}^{\alpha},\Lambda)^{\omega}\overset{\sim}{\to}\mathrm{Coh_{Nilp}}(\mathrm{Par}_{G,\Lambda})^{Z(\hat{G})^{\Gamma}=\chi_{\alpha}}
\]
for all $\alpha\in\pi_{1}(G)_{\Gamma}$.

\subsubsection*{Compatibility with duality}

We let $\mathbf{D}_{\mathrm{GS}}=R\mathscr{H}\mathrm{om}(-,\omega)$
denote Grothendieck-Serre duality functor on $\mathrm{Coh}(\mathrm{Par}_{G,\Lambda})$.
(We note for later use that $\omega=\mathcal{O}_{\mathrm{Par}_{G,\Lambda}}$
canonically.) Let $c:\mathrm{Par}_{G,\Lambda}\overset{\sim}{\to}\mathrm{Par}_{G,\Lambda}$
be the involution defined by composition with the Chevalley involution
at the level of \emph{L-}parameters. Note that $\mathbf{D}_{\mathrm{GS}}$
and $c^{\ast}$ commute, so \emph{twisted Grothendieck-Serre duality
}
\[
\mathbf{D}_{\mathrm{tw.GS}}=c^{\ast}\mathbf{D}_{\mathrm{GS}}
\]
still defines an involutive anti-equivalence on $\mathrm{Coh}(\mathrm{Par}_{G,\Lambda})$.
The compatibility of categorical local Langlands with duality can
now be formulated as follows.
\begin{conjecture}
There is a natural equivalence of functors
\[
\mathbf{L}_{\psi^{-1}}\circ\mathbf{D}_{\mathrm{BZ}}\simeq\mathbf{D}_{\mathrm{tw.GS}}\circ\mathbf{L}_{\psi}
\]
from $D(\mathrm{Bun}_{G},\Lambda)^{\omega}$ towards $\mathrm{Coh}_{\mathrm{Nilp}}(\mathrm{Par}_{G,\Lambda})$.
\end{conjecture}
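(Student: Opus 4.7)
My plan is to reduce the identity of functors to a single check at the structure sheaf, by exploiting compatibility of both sides with the spectral action. Both $\Phi_{1} := \mathbf{L}_{\psi^{-1}}^{G} \circ \mathbf{D}_{\mathrm{twGS}}$ and $\Phi_{2} := \mathbf{D}_{\mathrm{BZ}} \circ \mathbf{L}_{\psi}^{G}$ are contravariant, and I want to show they intertwine the $\mathrm{Perf}(\mathrm{Par}_{G,\Lambda})$-module structures on source and target in the \emph{same} twisted manner. On the coherent side this is formal: using $\omega \simeq \mathcal{O}$ and the fact that $c^{*}$ commutes with internal hom, one has
\[
\mathbf{D}_{\mathrm{twGS}}(A \otimes F) \simeq (c^{*}A)^{\vee} \otimes \mathbf{D}_{\mathrm{twGS}}(F)
\]
for $A \in \mathrm{Perf}$, after which the spectral-action compatibility of $\mathbf{L}_{\psi^{-1}}^{G}$ from Conjecture~\ref{conj:categoricalLLCmain} gives $\Phi_{1}(A \otimes F) \simeq (c^{*}A)^{\vee} \ast \Phi_{1}(F)$. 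The analogue for $\Phi_{2}$ is the independent identity
\[
\mathbf{D}_{\mathrm{BZ}}(A \ast E) \simeq (c^{*}A)^{\vee} \ast \mathbf{D}_{\mathrm{BZ}}(E),
\]
which I would establish by unwinding the Fargues--Scholze spectral action through Hecke operators, combining the defining formula $R\mathrm{Hom}(\mathbf{D}_{\mathrm{BZ}}(E), F) = \pi_{\natural}(E \otimes F)$ with the biadjointness $T_{V} \dashv T_{V^{\vee}}$. The Chevalley involution enters because $c$ realizes $V \simeq V^{\vee}$ as abstract representations of $\hat{G}$, so passing between ``dual in $\mathrm{Perf}$'' and ``adjoint in the Hecke category'' naturally produces the twist by $c^{*}$.

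Granted both twisted-linearity statements, the whole conjecture reduces to the check that $\Phi_{1}(\mathcal{O}) \simeq \Phi_{2}(\mathcal{O})$. Since $c^{*}\mathcal{O} \simeq \mathcal{O}$ and $\mathbf{D}_{\mathrm{GS}}(\mathcal{O}) \simeq \omega \simeq \mathcal{O}$, the defining property of $\mathbf{L}$ gives $\Phi_{1}(\mathcal{O}) = \mathbf{L}_{\psi^{-1}}^{G}(\mathcal{O}) = i_{1!}W_{\psi^{-1}}$. On the other side, Proposition~\ref{prop:dualitypush}(ii) yields $\Phi_{2}(\mathcal{O}) = \mathbf{D}_{\mathrm{BZ}}(i_{1!}W_{\psi}) \simeq i_{1\sharp}^{\mathrm{ren}} \mathbf{D}_{\mathrm{coh}}(W_{\psi})$; but $1 \in B(G)$ is basic with $\nu_{1} = 0$, so the renormalization is trivial and $i_{1\sharp} = i_{1!}$ (the ``puncture'' $\mathcal{M}_{1}^{\circ}$ is empty). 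Thus $\Phi_{2}(\mathcal{O}) \simeq i_{1!}\mathbf{D}_{\mathrm{coh}}(W_{\psi})$, and the entire conjecture collapses to the purely representation-theoretic identity
\[
\mathbf{D}_{\mathrm{coh}}(W_{\psi}) \simeq W_{\psi^{-1}}.
\]
I would prove this by Frobenius reciprocity: $\mathbf{D}_{\mathrm{coh}}(\mathrm{ind}_{U}^{G}\psi) = R\mathrm{Hom}_{U}(\psi, \mathcal{C}_{c}^{\infty}(G(E), \Lambda))$, after which unwinding the right $U$-action on $\mathcal{C}_{c}^{\infty}(G(E), \Lambda)$ recovers $\mathrm{ind}_{U}^{G}(\psi^{-1})$ up to a Haar-measure choice.

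The principal obstacle is the second twisted-linearity property, namely the interaction of $\mathbf{D}_{\mathrm{BZ}}$ with the spectral action and the specific appearance of the Chevalley involution. On Hecke eigensheaves the heuristic is clean---a sheaf with parameter $\phi$ should have Bernstein--Zelevinsky dual with parameter $c\phi$, consistent with the discussion in Remark~\ref{rem:dualitycomparisons} of how cohomological duality fails to preserve $L$-parameters---but promoting this observation on eigensheaves to a structural statement covering the action of arbitrary perfect complexes requires a careful reworking of the excursion-operator construction, and is not formal. I expect this step to be the technical heart of the proof; once it is in hand, the reduction to a single generator and the Frobenius-reciprocity computation for $W_{\psi}$ are routine.
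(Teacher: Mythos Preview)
The statement you are trying to prove is labelled a \emph{Conjecture} in the paper, and the paper does not prove it. Immediately after stating it, the text remarks: ``Note that this is again a meta-conjecture, since we have not actually given a candidate for the functor $\mathbf{L}_{\psi}^{G}$.'' There is therefore no proof in the paper to compare against. What the paper \emph{does} prove is the weaker Proposition~\ref{prop:actiondualitycompatible}, namely $\mathbf{D}_{\mathrm{BZ}}\circ a_{\psi}\simeq a_{\psi^{-1}}\circ\mathbf{D}_{\mathrm{twGS}}$ on $\mathrm{Perf}^{\mathrm{qc}}(\mathrm{Par}_{G})$, and its proof is exactly your strategy: the twisted-linearity identity $\mathbf{D}_{\mathrm{BZ}}(\mathcal{F}\ast A)\simeq\mathbf{D}_{\mathrm{twGS}}(\mathcal{F})\ast\mathbf{D}_{\mathrm{BZ}}(A)$ (deduced from \cite[Thm.~VIII.5.1, Thm.~IX.2.2]{FS}) together with the computation of $\mathbf{D}_{\mathrm{coh}}(W_{\psi})$ carried out in Theorem~\ref{thm:Wpsinice}.

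Your argument has a genuine gap as a proof of the full conjecture: the ``reduction to the structure sheaf'' via twisted $\mathrm{Perf}$-linearity only determines $\Phi_{1}$ and $\Phi_{2}$ on objects of the form $A\otimes\mathcal{O}$ with $A\in\mathrm{Perf}^{\mathrm{qc}}$, i.e.\ on $\mathrm{Perf}^{\mathrm{qc}}$ itself. A general object of $\mathrm{Coh}_{\mathrm{Nilp}}$ is not perfect, and $\mathrm{Perf}$-linearity plus the value at $\mathcal{O}$ does not pin down a functor on all of $\mathrm{Coh}$; this is precisely why the paper treats the full statement as a conjecture and only proves the $\mathrm{Perf}^{\mathrm{qc}}$ version. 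A secondary issue: $W_{\psi}$ is not compact (and $\mathcal{O}$ does not lie in $\mathrm{Coh}$, only in $\mathrm{QCoh}$), so $\mathbf{D}_{\mathrm{coh}}(W_{\psi})\simeq W_{\psi^{-1}}$ is not literally well-posed; the paper establishes the correct depth-bounded and Bernstein-componentwise versions in Appendix~A via Rodier approximation and Bushnell--Henniart finiteness, not by a one-line Frobenius reciprocity.
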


Note that this is again a meta-conjecture, since we have not actually
given a candidate for the functor $\mathbf{L}_{\psi}$.

\subsubsection*{Compatibility with Eisenstein series}

To explain this, we need to first formulate our expectations for Eisenstein
series. A much more detailed discussion of these constructions appears
in \cite{HHS}.

\textbf{Expectation.} For any parabolic $P=MU\subset G$, there is
a canonically associated functor
\[
\mathrm{Eis}_{P}=\mathrm{Eis}_{P}^{G}:D(\mathrm{Bun}_{M},\Lambda)\to D_{\mathrm{}}(\mathrm{Bun}_{G},\Lambda)
\]
with the following properties.

1. There is a natural equivalence of functors $\mathrm{Eis}_{P}\circ i_{1!}^{M}\simeq i_{1!}\circ i_{P}^{G}$,
where $i_{1!}$ and $i_{1!}^{M}:D(M(E),\Lambda)\to D(\mathrm{Bun}_{M},\Lambda)$
are the appropriate extension by zero functors, and $i_{P}^{G}$ denotes
the functor of normalized parabolic induction. More generally, if
$b\in B(M)$ is any element whose image in $B(G)$ is basic, we expect
that $\mathrm{Eis}_{P}\circ i_{b!}^{M}\simeq i_{b!}\circ i_{P_{b}}^{G_{b}}$.

2. $\mathrm{Eis}_{P}$ is compatible with composition: for any inclusion
of parabolics $P_{1}=M_{1}U_{1}\subset P_{2}=M_{2}U_{2}$, $P_{1}\cap M_{2}$
is a parabolic in $M_{2}$ with Levi $M_{1}$, and there should be
a natural equivalence $\mathrm{Eis}_{P_{1}}^{G}\simeq\mathrm{Eis}_{P_{2}}^{G}\circ\mathrm{Eis}_{P_{1}\cap M_{2}}^{M_{2}}$.
This equivalence should be compatible with triple composition in the
evident sense.

3. $\mathrm{Eis}_{P}$ is compatible with any extension of scalars
$\Lambda\to\Lambda'$.

4. $\mathrm{Eis}_{P}$ commutes with all direct sums, and preserves
compact objects.

5. $\mathrm{Eis}_{P}$ preserves ULA objects with quasicompact support.

6. When $\Lambda$ is killed by a power of $\ell$ (so $D_{\mathrm{lis}}=D_{\mathrm{\acute{e}t}}$),
$\mathrm{Eis}_{P}$ is the functor $p_{!}(\mathrm{IC}_{\mathrm{Bun}_{P},\mathbf{Z}_{\ell}[\sqrt{q}]}\otimes_{\mathbf{Z}_{\ell}[\sqrt{q}]}q^{\ast}(-))$,
where
\[
\xymatrix{\mathrm{Bun}_{P}\ar[d]^{q}\ar[r]^{p} & \mathrm{Bun}_{G}\\
\mathrm{Bun}_{M}
}
\]
is the usual diagram, and $\mathrm{IC}_{\mathrm{Bun}_{P},\mathbf{Z}_{\ell}[\sqrt{q}]}\in D_{\mathrm{\acute{e}t}}(\mathrm{Bun}_{P},\mathbf{Z}_{\ell}[\sqrt{q}])$
is a certain explicit invertible object (a square root of the dualizing
complex on $\mathrm{Bun}_{P}$, which can be described explicitly
\cite{HI}).

Of course, when $\Lambda$ is a torsion ring, the formula in 6. can
and should be taken as the definition of $\mathrm{Eis}_{P}$. With
this definition, properties 1.-3. are relatively easy, but 4. and
5. seem to lie significantly deeper. The formula in 6. should in fact
be applicable for any coefficient ring, once the sheaf-theoretic machinery
is sufficiently developed.

We now expect the following compatibility with categorical local Langlands.
\begin{conjecture}
\label{conj:eiscategoricalmatch}For all standard parabolics $P=MU\subset G$,
there is an equivalence $\mathrm{Eis}_{P}^{\mathrm{spec}}\circ\mathbf{L}_{\psi_{M}}\simeq\mathbf{L}_{\psi}\circ\mathrm{Eis}_{P}$,
where $\mathrm{Eis}_{P}^{\mathrm{spec}}=p_{\ast}^{\mathrm{spec}}\circ q^{\mathrm{spec}\ast}:\mathrm{Coh_{Nilp}}(\mathrm{Par}_{M,\Lambda})\to\mathrm{Coh_{Nilp}}(\mathrm{Par}_{G,\Lambda})$
is the spectral Eisenstein functor associated with the diagram
\[
\xymatrix{\mathrm{Par}_{P,\Lambda}\ar[d]^{q^{\mathrm{spec}}}\ar[r]^{p^{\mathrm{spec}}} & \mathrm{Par}_{G,\Lambda}\\
\mathrm{Par}_{M,\Lambda}
}
\]
of (derived) Artin stacks. This equivalence is compatible with composition
in the evident sense.
\end{conjecture}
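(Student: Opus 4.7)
The plan is to deduce the conjecture from the characterizing properties of $\mathbf{L}_\psi^G$ and $\mathbf{L}_{\psi_M}^M$---their compatibility with the spectral action together with the Whittaker normalizations $\mathbf{L}_\psi^G(\mathcal{O}) \simeq i_{1!}W_\psi$ and $\mathbf{L}_{\psi_M}^M(\mathcal{O}) \simeq i_{1!}^M W_{\psi_M}$---by promoting both composites $\mathbf{L}_\psi^G \circ \mathrm{Eis}_P^{\mathrm{spec}}$ and $\mathrm{Eis}_P \circ \mathbf{L}_{\psi_M}^M$ to functors equivariant under the parabolic correspondence $\mathrm{Par}_{M,\Lambda} \xleftarrow{q^{\mathrm{spec}}} \mathrm{Par}_{P,\Lambda} \xrightarrow{p^{\mathrm{spec}}} \mathrm{Par}_{G,\Lambda}$ and then checking the resulting single-object identity at the unit $\mathcal{O}_{\mathrm{Par}_{M,\Lambda}}$.

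The key intermediate step, which is also the main obstacle, is a local Hecke--Eisenstein compatibility asserting that $\mathrm{Eis}_P$ intertwines the Hecke action of $\mathrm{Rep}_\Lambda(^{L}M)$ on $D(\mathrm{Bun}_M,\Lambda)$ with the Hecke action of $\mathrm{Rep}_\Lambda(^{L}G)$ on $D(\mathrm{Bun}_G,\Lambda)$ via the restriction-and-projection functor $\mathrm{Rep}_\Lambda(^{L}G) \to \mathrm{Rep}_\Lambda(^{L}M)$ induced by $^{L}P \hookrightarrow {}^{L}G$ and $^{L}P \twoheadrightarrow {}^{L}M$. Combined with the Fargues--Scholze spectral action on both sides, this upgrades to a $\mathrm{Perf}(\mathrm{Par}_{P,\Lambda})$-linear structure on $\mathrm{Eis}_P \circ \mathbf{L}_{\psi_M}^M$, matching the analogous structure on $\mathbf{L}_\psi^G \circ \mathrm{Eis}_P^{\mathrm{spec}}$ that comes from the projection formula for the pair $(q^{\mathrm{spec}}, p^{\mathrm{spec}})$. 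This Hecke--Eisenstein compatibility is a local analogue of a classical theorem of Braverman--Gaitsgory; given the definition of $\mathrm{Eis}_P$ via $p_!(\mathrm{IC}_{\mathrm{Bun}_P} \otimes q^*(-))$ in property 6, it should reduce to a geometric Satake--type analysis of the IC sheaf on the Drinfeld compactification $\overline{\mathrm{Bun}_P}$ in relation to the Hecke correspondences for $G$ and $M$. The technology for this is presumably developed in \cite{HHS}, which I would invoke as a black box.

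Granted the $\mathrm{Perf}(\mathrm{Par}_{P,\Lambda})$-linearity, both composites are determined up to canonical isomorphism by their value at the unit $\mathcal{O}_{\mathrm{Par}_{M,\Lambda}}$. On the automorphic side, $\mathrm{Eis}_P(\mathbf{L}_{\psi_M}^M(\mathcal{O})) \simeq \mathrm{Eis}_P(i_{1!}^M W_{\psi_M}) \simeq i_{1!}(i_P^G W_{\psi_M})$ by property 1 of Eisenstein. A Rodier-type computation---identifying the $(U,\psi)$-twisted coinvariants of $i_P^G \sigma$ with the $(U_M,\psi_M)$-twisted coinvariants of $\sigma$ for any smooth $M$-representation $\sigma$, specialising to $\sigma = W_{\psi_M}$, and transferring via Yoneda together with Bernstein's second adjointness---yields a canonical $i_P^G W_{\psi_M} \simeq W_\psi$, so the automorphic value is $i_{1!}W_\psi$. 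On the spectral side, one then needs $\mathbf{L}_\psi^G(p^{\mathrm{spec}}_* \mathcal{O}_{\mathrm{Par}_{P,\Lambda}}) \simeq i_{1!}W_\psi$; by fully faithfulness of the categorical equivalence this is equivalent to the derived-geometric identity $p^{\mathrm{spec}}_* \mathcal{O}_{\mathrm{Par}_{P,\Lambda}} \simeq \mathcal{O}_{\mathrm{Par}_{G,\Lambda}}$. Though this fails on the classical underlying stacks (since $p^{\mathrm{spec}}$ is not surjective for proper parabolics), the derived enhancement of $\mathrm{Par}_{P,\Lambda}$ from \cite[Section 2.3]{Zhu} should absorb the discrepancy: concretely, $\mathrm{Par}_{P,\Lambda} \to \mathrm{Par}_{M,\Lambda}$ is a derived affine bundle whose fibers compute $W_E$-cohomology of the unipotent radical, and the combined pushforward to $\mathrm{Par}_{G,\Lambda}$ should collapse to $\mathcal{O}$ via a Koszul-type argument.

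Finally, the composition compatibility asserted in the conjecture follows formally from the associativity of $\mathrm{Eis}_P$ (property 2 of the automorphic Eisenstein expectation) together with base change for the tower of derived stacks $\mathrm{Par}_{P_1,\Lambda} \to \mathrm{Par}_{P_2,\Lambda} \to \mathrm{Par}_{G,\Lambda}$ on the spectral side, once the basic equivalence has been established uniformly for all parabolics.
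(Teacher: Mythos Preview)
The statement you are attempting to prove is \emph{Conjecture}~\ref{conj:eiscategoricalmatch}, and the paper does not prove it. It is presented as one of the expected compatibilities of the conjectural equivalence $\mathbf{L}_\psi^G$ from Conjecture~\ref{conj:categoricalLLCmain}; the paper explicitly calls the closely related duality statement a ``meta-conjecture'' precisely because $\mathbf{L}_\psi^G$ has not even been constructed. So there is no proof in the paper to compare your proposal against.

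That said, several of the ingredients you invoke are themselves open or highly nontrivial. The ``Hecke--Eisenstein compatibility'' you need on the automorphic side --- that $\mathrm{Eis}_P$ is $\mathrm{Perf}(\mathrm{Par}_{P,\Lambda})$-linear --- is strictly stronger than the filtered commutation stated as Conjecture~\ref{conj:Eisheckefiltered}, and the paper treats even that weaker statement as conjectural in general. Your key spectral identity $p^{\mathrm{spec}}_*\mathcal{O}_{\mathrm{Par}_{P,\Lambda}} \simeq \mathcal{O}_{\mathrm{Par}_{G,\Lambda}}$ is not obvious either: restricted to the fiber over the trivial parameter it becomes $\pi_*\mathcal{O}_{\tilde{\mathcal{N}}} \simeq \mathcal{O}_{\mathcal{N}}$ (true, by rationality of the Springer resolution), but globally on $\mathrm{Par}_G$ you would need to control the derived structure of $\mathrm{Par}_P$ everywhere, whereas Proposition~\ref{prop:ssgennotderived} only handles the semisimple-generic locus. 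Finally, the step reducing the comparison to the value at $\mathcal{O}$ requires that $\mathrm{Coh}_{\mathrm{Nilp}}(\mathrm{Par}_{M,\Lambda})$ be generated over $\mathrm{Perf}(\mathrm{Par}_{P,\Lambda})$ by $\mathcal{O}$, which is false in general since non-perfect coherent sheaves are not in the $\mathrm{Perf}$-orbit of the structure sheaf. Your outline is a reasonable heuristic for why the conjecture should hold, but it does not constitute a proof, and the paper makes no claim to have one.
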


Note: Zhu defines the spectral Eisenstein functor by the formula $p_{\ast}^{\mathrm{spec}}\circ q^{\mathrm{spec}!}$.
However, there is an isomorphism $q^{\mathrm{spec}!}\simeq q^{\mathrm{spec}\ast}$
because $q$ is quasismooth, hence Gorenstein by \cite[Corollary 2.2.7]{AG},
so $q^{\mathrm{spec}!}\mathcal{F}\simeq q^{\mathrm{spec\ast}}\mathcal{F}\otimes q^{\mathrm{spec}!}\mathcal{O}$
\cite[Remark 7.2.7]{Gai}, and one can show that $q^{\mathrm{spec}!}\mathcal{O}\simeq\mathcal{O}$
(see \cite{Zhu}, Remark 2.3.8 and the comments after the proof of
Proposition 2.3.9).
\begin{rem}
\label{rem:Eisensteinpush}The functors $i_{b!}^{\mathrm{ren}}$ and
$i_{b\sharp}^{\mathrm{ren}}$ are very closely related to Eisenstein
series. More precisely, let $b\in B(G)$ be any element, and let $M\subset G$
be the centralizer of its Newton point $\nu_{b}$; we may assume $b\in M(\breve{E})$,
so $M_{b}=G_{b}$. Let $P$ and $\overline{P}$ be the attracting
and repelling dynamic parabolics associated with $\nu_{b}$. It is
then true that (for any coefficient ring) that there are natural isomorphisms
\[
i_{b\sharp}^{\mathrm{ren}}\simeq\mathrm{Eis}_{P}^{G}\circ i_{b!}^{M}
\]
and
\[
i_{b!}^{\mathrm{ren}}\simeq\mathrm{Eis}_{\overline{P}}^{G}\circ i_{b!}^{M}
\]
as functors $D(M_{b}(E),\Lambda)\to D_{\mathrm{}}(\mathrm{Bun}_{G},\Lambda)$.
When $\Lambda$ is a torsion ring, this is an easy exercise (modulo
the identification of $\mathrm{IC}_{\mathrm{Bun}_{P},\mathbf{Z}_{\ell}[\sqrt{q}]}$).
See \cite[Example V.3.4]{FS} for a hint and \cite{HHS} for a full
discussion.
\end{rem}

\subsection{More conjectures}

In this section we illustrate how the categorical conjecture leads
naturally to various additional conjectures.

Let us begin with the following question: How do Eisenstein series
interact with Hecke operators? More precisely, we could ask: for fixed
$P=MU\subset G$ and $V\in\mathrm{Rep}(^{L}G)$, is there an intellegent
way of rewriting the composite functor $T_{V}\mathrm{Eis}_{P}$? This
seems rather difficult at first glance, but we can ask the same question
on the spectral side. Here things become simpler, since the spectral
analogue of $T_{V}$ is tensoring with the tautological vector bundle
$\tau_{G}^{\ast}V$. We are now asking whether the functor $\tau_{G}^{\ast}V\otimes\mathrm{Eis}_{P}^{\mathrm{spec}}$
can be rewritten in an intelligent way. This turns out to have a very
satisfying answer.
\begin{prop}
Choose a finite filtration $0=V_{0}\subset V_{1}\subset\cdots\subset V_{m}=V|\hat{P}_{\Lambda}$
such that the $\hat{U}_{\Lambda}$-action on each graded piece $W_{i}=V_{i}/V_{i-1}$
is trivial, i.e. such that each $W_{i}$ is naturally inflated from
$\mathrm{Rep}(\hat{M}_{\Lambda})$. Then $\tau_{G}^{\ast}V\otimes\mathrm{Eis}_{P}^{\mathrm{spec}}(-)$
admits a corresponding finite filtration with graded pieces $\mathrm{Eis}_{P}^{\mathrm{spec}}(\tau_{M}^{\ast}W_{i}\otimes-)$.
\end{prop}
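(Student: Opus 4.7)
The plan is a direct formal calculation using the projection formula together with the functoriality of the tautological map $\tau$. Let $\mathcal{F} \in \mathrm{Coh}_{\mathrm{Nilp}}(\mathrm{Par}_{M,\Lambda})$. First I would apply the projection formula for $p^{\mathrm{spec}}$ to rewrite
\[
\tau_G^\ast V \otimes \mathrm{Eis}_P^{\mathrm{spec}}(\mathcal{F}) \;\simeq\; p^{\mathrm{spec}}_\ast\bigl(p^{\mathrm{spec}\ast}\tau_G^\ast V \otimes q^{\mathrm{spec}\ast}\mathcal{F}\bigr).
\]
Writing $\iota\colon B\hat{P}_\Lambda \to B\hat{G}_\Lambda$ and $\pi\colon B\hat{P}_\Lambda \to B\hat{M}_\Lambda$ for the maps induced by $\hat{P} \hookrightarrow \hat{G}$ and $\hat{P} \twoheadrightarrow \hat{M}$, the evident commutative squares $\tau_G \circ p^{\mathrm{spec}} = \iota \circ \tau_P$ and $\pi \circ \tau_P = \tau_M \circ q^{\mathrm{spec}}$ identify $p^{\mathrm{spec}\ast}\tau_G^\ast V \simeq \tau_P^\ast(V|\hat{P}_\Lambda)$ and, for any $W \in \mathrm{Rep}(\hat{M}_\Lambda)$ inflated to $\hat{P}_\Lambda$, identify $\tau_P^\ast W \simeq q^{\mathrm{spec}\ast}\tau_M^\ast W$.

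Next, I would propagate the chosen filtration. Since $\tau_P^\ast$ is symmetric monoidal and exact, the filtration $0 = V_0 \subset \cdots \subset V_m = V|\hat{P}_\Lambda$ pulls back to a finite filtration of $\tau_P^\ast(V|\hat{P}_\Lambda)$ with graded pieces $\tau_P^\ast W_i$. Tensoring with $q^{\mathrm{spec}\ast}\mathcal{F}$ preserves cofiber sequences, producing a filtration of $p^{\mathrm{spec}\ast}\tau_G^\ast V \otimes q^{\mathrm{spec}\ast}\mathcal{F}$ with graded pieces $\tau_P^\ast W_i \otimes q^{\mathrm{spec}\ast}\mathcal{F}$. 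By the second identification above and the monoidality of $q^{\mathrm{spec}\ast}$, each such graded piece simplifies to $q^{\mathrm{spec}\ast}(\tau_M^\ast W_i \otimes \mathcal{F})$.

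Finally, the functor $p^{\mathrm{spec}}_\ast$ is exact between stable $\infty$-categories and hence preserves the filtration. Combined with the projection-formula identification from the first step, this yields the desired finite filtration of $\tau_G^\ast V \otimes \mathrm{Eis}_P^{\mathrm{spec}}(\mathcal{F})$ with graded pieces
\[
p^{\mathrm{spec}}_\ast q^{\mathrm{spec}\ast}(\tau_M^\ast W_i \otimes \mathcal{F}) \;=\; \mathrm{Eis}_P^{\mathrm{spec}}(\tau_M^\ast W_i \otimes \mathcal{F}).
\]

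The main obstacle is the projection formula invoked in the very first step; everything else is formal symmetric monoidal algebra in stable $\infty$-categories. Since $\tau_G^\ast V$ is the pullback of a finite-dimensional algebraic representation and is therefore a perfect (in particular dualizable) object of $\mathrm{QCoh}(\mathrm{Par}_{G,\Lambda})$, the projection formula for the representable morphism $p^{\mathrm{spec}}$ of (derived) Artin stacks is standard in this generality, so no substantive obstacle remains.
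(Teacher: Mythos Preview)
Your proposal is correct and follows essentially the same approach as the paper: use the projection formula for $p^{\mathrm{spec}}$, identify $p^{\mathrm{spec}\ast}\tau_G^\ast V$ with $\tau_P^\ast(V|\hat{P}_\Lambda)$ via the commutative square of tautological maps, propagate the filtration, and use that $q^{\mathrm{spec}\ast}$ is symmetric monoidal to rewrite the graded pieces. The paper's proof is slightly terser (it records the full commutative diagram of stacks and classifying stacks at the outset rather than naming the individual squares), but the logic is identical; your added remark justifying the projection formula via dualizability of $\tau_G^\ast V$ is a welcome clarification.
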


\begin{proof}
To begin, observe that we have a commutative diagram
\[
\xymatrix{\mathrm{Par}_{M,\Lambda}\ar[d]^{\tau_{M}} & \mathrm{Par}_{P,\Lambda}\ar[d]^{\tau_{P}}\ar[l]_{q^{\mathrm{spec}}}\ar[r]^{p^{\mathrm{spec}}} & \mathrm{Par}_{G,\Lambda}\ar[d]^{\tau_{G}}\\
B\hat{M}_{\Lambda} & B\hat{P}_{\Lambda}\ar[l]\ar[r] & B\hat{G}_{\Lambda}
}
\]
of derived Artin stacks. We can now use the projection formula to
write 
\begin{align*}
\tau_{G}^{\ast}V\otimes\mathrm{Eis}_{P}^{\mathrm{spec}}(-) & =\tau_{G}^{\ast}V\otimes p_{\ast}^{\mathrm{spec}}q^{\mathrm{spec}\ast}(-)\\
 & \simeq p_{\ast}^{\mathrm{spec}}(p^{\mathrm{spec}\ast}\tau_{G}^{\ast}V\otimes q^{\mathrm{spec}\ast}(-))\\
 & \simeq p_{\ast}^{\mathrm{spec}}(\tau_{P}^{\ast}(V|\hat{P}_{\Lambda})\otimes q^{\mathrm{spec}\ast}(-)).
\end{align*}
Then by assumption $\tau_{P}^{\ast}(V|\hat{P}_{\Lambda})$ has a finite
filtration with graded pieces $q^{\mathrm{spec}\ast}\tau_{M}^{\ast}W_{i}$,
so the functor $p_{\ast}^{\mathrm{spec}}(\tau_{P}^{\ast}(V|\hat{P}_{\Lambda})\otimes q^{\mathrm{spec}\ast}(-))$
acquires a finite filtration with graded pieces
\begin{align*}
p_{\ast}^{\mathrm{spec}}(q^{\mathrm{spec}\ast}\tau_{M}^{\ast}W_{i}\otimes q^{\mathrm{spec}\ast}(-)) & \simeq p_{\ast}^{\mathrm{spec}}q^{\mathrm{spec}\ast}(\tau_{M}^{\ast}W_{i}\otimes-)\\
 & =\mathrm{Eis}_{P}^{\mathrm{spec}}(\tau_{M}^{\ast}W_{i}\otimes-),
\end{align*}
using that $q^{\mathrm{spec}\ast}$ is symmetric monoidal.
\end{proof}
But now we can turn this into a conjecture on the automorphic side.
\begin{conjecture}
\label{conj:Eisheckefiltered}Choose a finite filtration $0=V_{0}\subset V_{1}\subset\cdots\subset V_{m}=V|\hat{P}_{\Lambda}$
such that the $\hat{U}_{\Lambda}$-action on each graded piece $W_{i}=V_{i}/V_{i-1}$
is trivial, i.e. such that each $W_{i}$ is naturally inflated from
$\mathrm{Rep}(\hat{M}_{\Lambda})$. Then $T_{V}\mathrm{Eis}_{P}^{\mathrm{}}(-)$
admits a corresponding finite filtration with graded pieces $\mathrm{Eis}_{P}(T_{W_{i}}-)$.
\end{conjecture}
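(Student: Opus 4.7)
The plan is to derive this as a formal consequence of the categorical conjecture and its compatibility with Hecke operators and with parabolic induction (Conjecture \ref{conj:eiscategoricalmatch}), using the preceding spectral-side proposition as the engine. The strategy is simply to transport the filtration already exhibited on the spectral side across the categorical equivalence.

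First, I would translate the composite $T_{V} \circ \mathrm{Eis}_{P}$ to the spectral side. Compatibility of $\mathbf{L}_{\psi}^{G}$ with the spectral action identifies $T_{V}$ on $D(\mathrm{Bun}_{G},\Lambda)^{\omega}$ with the endofunctor $\tau_{G}^{\ast}V \otimes (-)$ on $\mathrm{Coh}_{\mathrm{Nilp}}(\mathrm{Par}_{G,\Lambda})$, and analogously on the Levi side $T_{W_{i}}$ corresponds to $\tau_{M}^{\ast}W_{i} \otimes (-)$. Conjecture \ref{conj:eiscategoricalmatch} identifies $\mathrm{Eis}_{P}$ with $\mathrm{Eis}_{P}^{\mathrm{spec}}$ up to the equivalences $\mathbf{L}_{\psi}^{G}$ and $\mathbf{L}_{\psi_{M}}^{M}$. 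Combining these two identifications, the composite functor $T_{V} \circ \mathrm{Eis}_{P} \circ \mathbf{L}_{\psi_{M}}^{M}$ becomes equivalent to $\mathbf{L}_{\psi}^{G} \circ \bigl(\tau_{G}^{\ast}V \otimes \mathrm{Eis}_{P}^{\mathrm{spec}}(-)\bigr)$.

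Now the preceding proposition supplies a canonical finite filtration of $\tau_{G}^{\ast}V \otimes \mathrm{Eis}_{P}^{\mathrm{spec}}(-)$ with graded pieces $\mathrm{Eis}_{P}^{\mathrm{spec}}(\tau_{M}^{\ast}W_{i} \otimes -)$. Since $\mathbf{L}_{\psi}^{G}$ is an exact equivalence it carries filtrations to filtrations and commutes with the formation of cones; applying it to this filtration, and then undoing the spectral identifications in reverse, one obtains a finite filtration of $T_{V}\mathrm{Eis}_{P}(-)$ with graded pieces $\mathrm{Eis}_{P}(T_{W_{i}}(-))$, as desired. The compatibility with composition in Conjecture \ref{conj:eiscategoricalmatch} together with the $\otimes$-compatibility of the spectral action also ensures that iterating this procedure for refined filtrations is consistent.

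The main obstacle, of course, is that this derivation is \emph{conditional} on Conjectures \ref{conj:categoricalLLCmain} and \ref{conj:eiscategoricalmatch}, neither of which is known in the generality stated. An unconditional proof would presumably have to proceed geometrically on $\mathrm{Bun}_{G}$: one would combine the defining diagram of $\mathrm{Eis}_{P}$ with a Hecke modification, and use that under geometric Satake the restriction of $V$ to the dual parabolic $\hat{P}_{\Lambda}$ controls how the Hecke kernel pulls back along $\mathrm{Bun}_{P} \to \mathrm{Bun}_{G}$. A filtration of $V|\hat{P}_{\Lambda}$ by sub-$\hat{P}$-modules should then induce a filtration of this pulled-back kernel, and a careful analysis in the style of Braverman--Gaitsgory's treatment in classical geometric Langlands should produce the desired filtration with the correct graded pieces. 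Carrying this through rigorously in the $D_{\mathrm{lis}}$ formalism, and in particular controlling the interaction with the nontrivial invertible object $\mathrm{IC}_{\mathrm{Bun}_{P},\mathbf{Z}_{\ell}[\sqrt{q}]}$, looks like the principal technical hurdle.
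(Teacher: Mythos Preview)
Your conditional derivation is exactly the justification the paper gives: the statement is posed as a \emph{conjecture}, not a theorem, and the paper does not prove it. Immediately after stating it, the paper remarks that ``while this conjecture takes place purely on the automorphic side, it is forced on us by the categorical conjecture and the previous proposition''---which is precisely the transport-across-$\mathbf{L}_{\psi}$ argument you wrote out. So your first two paragraphs match the paper's motivation verbatim in substance.

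Your assessment of the unconditional situation is also accurate and aligns with the paper's remarks: it notes that the conjecture ``has been proved in many cases by Hamann'' and that ``a similar argument should work in general when $\Lambda$ is a torsion ring, and for arbitrary coefficients once the sheaf-theoretic machinery improves.'' Your sketch of an unconditional approach via a Braverman--Gaitsgory-style analysis of how the Satake kernel restricts along $\mathrm{Bun}_{P}\to\mathrm{Bun}_{G}$ is the right shape for what Hamann does, and your identification of the $D_{\mathrm{lis}}$ formalism (in particular handling $\mathrm{IC}_{\mathrm{Bun}_{P}}$) as the technical bottleneck is consistent with the paper's comment about waiting for the sheaf-theoretic machinery to improve. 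There is nothing to correct here; just be aware that what you have written is a heuristic derivation rather than a proof, which is all the paper claims as well.
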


We emphasize that while this conjecture takes place purely on the
automorphic side, it is forced on us by the categorical conjecture
and the previous proposition. This conjecture has been proved in many
cases by Hamann \cite{Ham}, and a similar argument should work in
general when $\Lambda$ is a torsion ring, and for arbitrary coefficients
once the sheaf-theoretic machinery improves.

We can also start combining our predictions in more artful ways. For
instance, recall that we conjectured a natural equivalence $\mathrm{Eis}_{P}^{\mathrm{spec}}\circ\mathbf{L}_{\psi_{M}}\simeq\mathbf{L}_{\psi}\circ\mathrm{Eis}_{P}$.
How does this interact with duality? Applying $\mathbf{D}_{\mathrm{tw.GS}}$
to both sides, we compute
\begin{align*}
\mathbf{L}_{\psi^{-1}}\circ\mathbf{D}_{\mathrm{BZ}}\circ\mathrm{Eis}_{P} & \simeq\mathbf{D}_{\mathrm{tw.GS}}\circ\mathbf{L}_{\psi}\circ\mathrm{Eis}_{P}\\
 & \simeq\mathbf{D}_{\mathrm{tw.GS}}\circ\mathrm{Eis}_{P}^{\mathrm{spec}}\circ\mathbf{L}_{\psi_{M}}\\
 & \overset{!}{\simeq}\mathrm{Eis}_{\overline{P}}^{\mathrm{spec}}\circ\mathbf{D}_{\mathrm{tw.GS}}^{M}\circ\mathbf{L}_{\psi_{M}}\\
 & \overset{}{\simeq}\mathrm{Eis}_{\overline{P}}^{\mathrm{spec}}\circ\mathbf{L}_{\psi_{M}^{-1}}\circ\mathbf{D}_{\mathrm{BZ}}^{M}\\
 & \simeq\mathbf{L}_{\psi^{-1}}\circ\mathrm{Eis}_{\overline{P}}\circ\mathbf{D}_{\mathrm{BZ}}^{M}
\end{align*}
Here we used several times the expected compatibility of the categorical
equivalence with duality, along with the equivalence $\mathrm{Eis}_{P}^{\mathrm{spec}}\circ\mathbf{L}_{\psi_{M}}\simeq\mathbf{L}_{\psi}\circ\mathrm{Eis}_{P}$
and its variant $\mathrm{Eis}_{\overline{P}}^{\mathrm{spec}}\circ\mathbf{L}_{\psi_{M}^{-1}}\simeq\mathbf{L}_{\psi^{-1}}\circ\mathrm{Eis}_{\overline{P}}$.
The only point which requires further analysis is the isomorphism
labelled ``!''. We leave this as an enlightening exercise to the
reader, with the key hint being that the Chevalley involution exchanges
$\hat{P}$ and $\hat{\overline{P}^{\mathrm{}}}$. Anyway, recall now
that $\mathbf{L}_{\psi^{-1}}$ is supposed to be an equivalence of
categories, in which case we may cancel it out from the first and
last expressions. We have thus arrived at the following conjecture,
which again lives purely on the automorphic side!\footnote{Conjecture \ref{conj:Eisdual} has now been resolved in all generality;
see \cite{HHS} for full details, and the independent work \cite{Tak}
for the case of torsion coefficients.}
\begin{conjecture}
\label{conj:Eisdual}For any given parabolic $P$ with opposite $\overline{P}$,
there is a natural equivalence of functors $\mathbf{D}_{\mathrm{BZ}}\circ\mathrm{Eis}_{P}\simeq\mathrm{Eis}_{\overline{P}}\circ\mathbf{D}_{\mathrm{BZ}}^{M}$
from $D(\mathrm{Bun}_{M},\Lambda)^{\omega}$ to $D(\mathrm{Bun}_{G},\Lambda)^{\omega}$.
\end{conjecture}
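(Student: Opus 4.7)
The plan is to combine the expected geometric formula $\mathrm{Eis}_P(A) = p_!(\mathrm{IC}_{\mathrm{Bun}_P} \otimes q^*A)$ from property 6 (assumed to hold for general coefficients once the sheaf-theoretic machinery is sufficiently developed) with the characterizing identity $R\mathrm{Hom}(\mathbf{D}_{\mathrm{BZ}} A, B) = \pi_\natural(A \otimes B)$ of Bernstein-Zelevinsky duality. I would aim to show that $\mathbf{D}_{\mathrm{BZ}}^G \circ \mathrm{Eis}_P$ and $\mathrm{Eis}_{\overline{P}} \circ \mathbf{D}_{\mathrm{BZ}}^M$ represent the same functor on $D(\mathrm{Bun}_G,\Lambda)^{\omega}$, via a Braden-style comparison of $\mathrm{Bun}_P$ and $\mathrm{Bun}_{\overline{P}}$.

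Concretely, for compact $A$ and $B$, I would first compute
\[
R\mathrm{Hom}(\mathbf{D}_{\mathrm{BZ}}^G \mathrm{Eis}_P(A), B) = \pi^G_\natural(\mathrm{Eis}_P(A) \otimes B) \simeq \pi^P_\natural(\mathrm{IC}_{\mathrm{Bun}_P} \otimes q_P^*A \otimes p_P^*B)
\]
using the projection formula and the composition $\pi^G_\natural \circ p_{P!} \simeq \pi^P_\natural$. Dually, granting the existence of a right adjoint $\mathrm{CT}_{\overline{P}}^{R}$ to $\mathrm{Eis}_{\overline{P}}$, the same device gives
\[
R\mathrm{Hom}(\mathrm{Eis}_{\overline{P}} \mathbf{D}_{\mathrm{BZ}}^M(A), B) = R\mathrm{Hom}(\mathbf{D}_{\mathrm{BZ}}^M A, \mathrm{CT}_{\overline{P}}^{R} B) = \pi^M_\natural(A \otimes \mathrm{CT}_{\overline{P}}^{R} B).
\]
The desired equivalence then reduces to a natural isomorphism between these two expressions, which I would establish by base change on the diagram $\mathrm{Bun}_M \xleftarrow{q_P} \mathrm{Bun}_P \xrightarrow{p_P} \mathrm{Bun}_G$ together with a geometric description of $\mathrm{CT}_{\overline{P}}^{R}$ in terms of $p_P, q_P$ rather than $p_{\overline{P}}, q_{\overline{P}}$.

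The hardest step will be exactly the latter: a Braden-type theorem identifying the right adjoint of $\mathrm{Eis}_{\overline{P}}$ with a functor constructed geometrically from the $P$-diagram, up to the appropriate twist by $\mathrm{IC}_{\mathrm{Bun}_P}$. This is the $\mathrm{Bun}_G$ incarnation of Bernstein's second adjointness theorem from the $p$-adic setting, or of Drinfeld--Gaitsgory's hyperbolic localization / miraculous duality in classical geometric Langlands, and a complete proof in the Fargues--Scholze framework seems to require further development of the $\natural$-pushforward formalism and hyperbolic localization on small v-stacks. As a hedge, one can try to verify the equivalence on generators $i_{b!}^M F$ directly: when $M$ is the centralizer of $\nu_b$, Remark \ref{rem:Eisensteinpush} together with Proposition \ref{prop:dualitypush}.ii handles this case essentially immediately, but extending to arbitrary $b \in B(M)$ requires compatibility statements that amount to the same Braden-type theorem in disguise.
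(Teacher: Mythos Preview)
This statement is a \emph{conjecture}, and the paper does not give an unconditional proof. What the paper does is \emph{derive} the conjecture from the categorical local Langlands conjecture together with its expected compatibilities with Eisenstein series (Conjecture~\ref{conj:eiscategoricalmatch}) and with duality. Concretely, the paper computes
\[
\mathbf{D}_{\mathrm{BZ}}\circ\mathrm{Eis}_{P}\circ\mathbf{L}_{\psi_{M}}^{M}
\simeq\mathbf{L}_{\psi^{-1}}^{G}\circ\mathbf{D}_{\mathrm{twGS}}\circ\mathrm{Eis}_{P}^{\mathrm{spec}}
\simeq\mathbf{L}_{\psi^{-1}}^{G}\circ\mathrm{Eis}_{\overline{P}}^{\mathrm{spec}}\circ\mathbf{D}_{\mathrm{twGS}}^{M}
\simeq\mathrm{Eis}_{\overline{P}}\circ\mathbf{D}_{\mathrm{BZ}}^{M}\circ\mathbf{L}_{\psi_{M}}^{M},
\]
using that the Chevalley involution swaps $\hat{P}$ and $\hat{\overline{P}}$, and then cancels $\mathbf{L}_{\psi_{M}}^{M}$ since it is conjecturally an equivalence. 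So the paper's argument lives entirely on the spectral side and is conditional on the categorical conjecture.

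Your proposal takes a genuinely different route: a direct attack on the automorphic side, reducing to a geometric second adjointness (Braden-type) statement for $\mathrm{Bun}_P$ versus $\mathrm{Bun}_{\overline{P}}$. You correctly identify that this is the crux, and you are honest that it is not available in the Fargues--Scholze formalism at present. In fact, the paper records exactly this equivalence as the difficult part 2) of the exercise immediately following the conjecture: Conjecture~\ref{conj:Eisdual} (plus preservation of compact objects) is equivalent to the constant-term identity $q_{\ast}p^{!}\simeq\overline{q}_{!}\overline{p}^{\ast}$. So your reduction is right, but it is a reformulation rather than a proof; the paper's own justification, by contrast, trades this hard geometric input for the (also conjectural) categorical equivalence, where the corresponding step $\mathbf{D}_{\mathrm{twGS}}\circ\mathrm{Eis}_{P}^{\mathrm{spec}}\simeq\mathrm{Eis}_{\overline{P}}^{\mathrm{spec}}\circ\mathbf{D}_{\mathrm{twGS}}^{M}$ is a concrete calculation with coherent sheaves on stacks of $L$-parameters.
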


This is a geometric analogue of the well-known fact that $\mathbf{D}_{\mathrm{coh}}\circ i_{P}^{G}\simeq i_{\overline{P}}^{G}\circ\mathbf{D}_{\mathrm{coh}}^{M}$
with $\mathbf{C}$- or $\overline{\mathbf{Q}_{\ell}}$-coefficients,
which was first observed by Bernstein and is in fact \emph{equivalent}
to his famous second adjointness theorem. With general coefficients,
this isomorphism follows from recent work of Dat-Helm-Kurinczuk-Moss
\cite{DHKM2}.
\begin{xca}
1) Prove that Conjecture \ref{conj:Eisdual} implies second adjointness.

2) (Difficult.) Fix a torsion coefficient ring $\Lambda$ and a parabolic
$P$. Prove that the following two statements are \emph{equivalent}:

i. The functor $\mathrm{Eis}_{P}$ preserves compact objects, and
Conjecture \ref{conj:Eisdual} is true.

ii. There is a natural equivalence of ``constant term'' functors
$q_{\ast}p^{!}\simeq\overline{q}_{!}\overline{p}^{\ast}$. (Here $\overline{p}$
and $\overline{q}$ refer to the obvious maps in the defining diagram
for $\mathrm{Eis}_{\overline{P}}$.)
\end{xca}

Recall the map $\mathcal{O}(X_{G}^{\mathrm{spec}})\cong\mathcal{O}(\mathrm{Par}_{G})\to\mathfrak{Z}(D(\mathrm{Bun}_{G},\overline{\mathbf{Q}_{\ell}}))$
discussed at the beginning of section 1.4. By the very nature of the
categorical center, this induces a canonical and functorial map $\mathcal{O}(X_{G}^{\mathrm{spec}})\to\mathrm{End}(\mathcal{F})$
for any $\mathcal{F}\in D(\mathrm{Bun}_{G},\overline{\mathbf{Q}_{\ell}})$.
\begin{xca}
Show that the categorical conjecture and Conjecture \ref{conj:eiscategoricalmatch}
together imply the following: for any parabolic $P=MU\subset G$ and
any $\mathcal{F}\in D(\mathrm{Bun}_{M},\overline{\mathbf{Q}_{\ell}})$,
the diagram
\[
\xymatrix{\mathrm{End}(\mathcal{F})\ar[r] & \mathrm{End}(\mathrm{Eis}_{P}(\mathcal{F}))\\
\mathcal{O}(X_{M}^{\mathrm{spec}})\ar[u] & \mathcal{O}(X_{G}^{\mathrm{spec}})\ar[u]\ar[l]
}
\]
commutes.
\end{xca}

\subsection{Finiteness conditions and spectral decomposition of sheaves}

Now we set $\Lambda=\overline{\mathbf{Q}_{\ell}}$ and drop it from
the notation. Before continuing our discussion, it is very useful
to analyze how the various finiteness conditions on objects in $D(\mathrm{Bun}_{G})$
interact, and how they interact with ``spectral decomposition''
of sheaves.

We begin with the following observations. For any semisimple parameter
$\phi$, we can formally define the full subcategory $D(\mathrm{Bun}_{G})_{\phi}^{\mathrm{ULA}}\subset D(\mathrm{Bun}_{G})^{\mathrm{ULA}}$
of $\phi$\emph{-local }ULA sheaves spanned by objects $A$ such that
for all $b$ and $n$, every irreducible subquotient of $H^{n}(i_{b}^{\ast\mathrm{ren}}A)$
has Fargues-Scholze parameter $\phi$. It is easy to see that $D(\mathrm{Bun}_{G})_{\phi}^{\mathrm{ULA}}$
is a thick triangulated subcategory stable under Hecke operators,
and one can also prove that it is stable under the perverse truncation
functors. In fact, one can prove that $D(\mathrm{Bun}_{G})_{\phi}^{\mathrm{ULA}}$
is canonically a direct factor of $D(\mathrm{Bun}_{G})^{\mathrm{ULA}}$,
in the sense that any ULA sheaf $A$ has a canonical and functorial
decomposition $A\cong A_{\phi}\oplus A^{\phi}$ where $A_{\phi}\in D(\mathrm{Bun}_{G})_{\phi}^{\mathrm{ULA}}$
and 
\[
\mathrm{Hom}(B,A^{\phi})=\mathrm{Hom}(A^{\phi},B)=0
\]
for all $B\in D(\mathrm{Bun}_{G})_{\phi}^{\mathrm{ULA}}$ (see \cite{H}
for a detailed statement and proof). One can also prove that Verdier
duality induces an involutive anti-equivalence
\[
\mathbf{D}_{\mathrm{Verd}}:D(\mathrm{Bun}_{G})_{\phi}^{\mathrm{ULA}}\overset{\sim}{\to}D(\mathrm{Bun}_{G})_{\phi^{\vee}}^{\mathrm{ULA}}.
\]
At the level of perverse sheaves, we get an evident category $\mathrm{Perv}(\mathrm{Bun}_{G})_{\phi}^{\mathrm{ULA}}$
of $\phi$-local perverse ULA sheaves, which is a direct factor of
$\mathrm{Perv}(\mathrm{Bun}_{G})^{\mathrm{ULA}}$, and Verdier duality
induces an exact anti-equivalence of abelian categories
\[
\mathbf{D}_{\mathrm{Verd}}:\mathrm{Perv}(\mathrm{Bun}_{G})_{\phi}^{\mathrm{ULA}}\overset{\sim}{\to}\mathrm{Perv}(\mathrm{Bun}_{G})_{\phi^{\vee}}^{\mathrm{ULA}}.
\]

\begin{xca}
Check that $\mathrm{Perv}(\mathrm{Bun}_{G})_{\phi}^{\mathrm{ULA}}$
is a Serre subcategory of $\mathrm{Perv}(\mathrm{Bun}_{G})^{\mathrm{ULA}}$.
\end{xca}

It will also be very useful to consider the following more restrictive
finiteness condition.
\begin{defn}
\label{def:finite-sheaves}A sheaf $A\in D(\mathrm{Bun}_{G})$ is
\emph{finite }if it is both compact and ULA. Equivalently, $A$ is
finite if it has quasicompact support and $\oplus_{n}H^{n}(i_{b}^{\ast}A)$
is a finite length $G_{b}(E)$-representation for every $b$.
\end{defn}

Finite sheaves clearly form a thick triangulated subcategory of $D(\mathrm{Bun}_{G})$,
which we denote $D(\mathrm{Bun}_{G})_{\mathrm{fin}}$. The name is
meant to suggest that such objects have finite length in some sense.
Indeed, one easily checks that $D(\mathrm{Bun}_{G})_{\mathrm{fin}}$
is the thick triangulated subcategory of $D(\mathrm{Bun}_{G})$ consisting
of sheaves which can be obtained from objects of the form $i_{b!}^{\mathrm{ren}}\pi$
(with $\pi$ any irreducible $G_{b}(E)$-representation) via finitely
many shifts, cones and retracts. This category is stable under Hecke
operators, since Hecke operators preserve compactness and ULAness
separately. We also note that for any two finite sheaves $A,B$, $R\mathrm{Hom}(A,B)$
is a perfect complex of $\overline{\mathbf{Q}_{\ell}}$-vector spaces,
and in particular $\mathrm{End}(A)$ is an Artinian $\overline{\mathbf{Q}_{\ell}}$-algebra.
This is a special case of the more general fact that $R\mathrm{Hom}(A,B)$
is perfect whenever $A$ is compact and $B$ is ULA, which follows
from \cite[ Prop. VII.7.4 and Prop. VII.7.9]{FS}.

\textbf{Warning.} Finite sheaves are \emph{not }stable under Verdier
duality, except when $G$ is a torus. Indeed, for non-toral groups
it is easy to see that $i_{1!}\overline{\mathbf{Q}_{\ell}}$ is a
finite sheaf whose Verdier dual $i_{1\ast}\overline{\mathbf{Q}_{\ell}}$
does not have quasicompact support. However, we have the following
important result.
\begin{thm}
\label{thm:finiteBZduality}If $\pi\in\Pi(G_{b})$ is irreducible,
then $i_{b\sharp}^{\mathrm{ren}}\pi$ is finite. In particular, finite
sheaves are stable under Bernstein-Zelevinsky duality. 
\end{thm}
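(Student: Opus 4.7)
The plan is first to establish the concrete claim that $i_{b\sharp}^{\mathrm{ren}}\pi$ is finite, and then to deduce stability of $D(\mathrm{Bun}_G)_{\mathrm{fin}}$ under $\mathbf{D}_{\mathrm{BZ}}$ as a formal consequence.

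Compactness of $i_{b\sharp}^{\mathrm{ren}}\pi$ is immediate: $\pi$ is compact in $D(G_b(E),\Lambda)$ by Bernstein's theorem (recalled after Proposition \ref{prop:easyvanishing}), and $i_{b\sharp}^{\mathrm{ren}}$ preserves compactness by Proposition \ref{prop:dualitypush}.o. The support lies in $\mathrm{Bun}_G^{\preceq b}$ by the support property of $i_{b\sharp}$, and the set $\{b'\in B(G):b'\preceq b\}$ is finite (only finitely many Newton elements lie above any fixed one), so the support is quasicompact. The main content is ULA-ness, and for this I would invoke the Eisenstein description from Remark \ref{rem:Eisensteinpush}. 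Letting $M$ be the Levi centralizer of $\nu_b$ (so that $b$ becomes basic in $M$) and $\overline{P}$ the dynamic parabolic opposite to $P_{\{b\}}$, one has $i_{b\sharp}^{\mathrm{ren}}\pi\simeq\mathrm{Eis}_{\overline{P}}^G(i_{b!}^M\pi)$. The sheaf $i_{b!}^M\pi$ is supported on the single basic stratum $\mathrm{Bun}_M^b$ and is ULA there since $\pi$ is admissible; hence it is a ULA sheaf with quasicompact support, and property (5) of the Eisenstein expectation then produces a ULA sheaf on $\mathrm{Bun}_G$.

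For the second assertion, recall from the discussion preceding the theorem that $D(\mathrm{Bun}_G)_{\mathrm{fin}}$ is the thick triangulated subcategory of $D(\mathrm{Bun}_G)$ generated by objects of the form $i_{b!}^{\mathrm{ren}}\pi$ with $b\in B(G)$ and $\pi\in\Pi(G_b)$ irreducible. Applying Proposition \ref{prop:dualitypush}.ii and the Aubert--Zelevinsky calculation from Remark \ref{rem:dualitycomparisons},
\[
\mathbf{D}_{\mathrm{BZ}}(i_{b!}^{\mathrm{ren}}\pi)\simeq i_{b\sharp}^{\mathrm{ren}}(\mathbf{D}_{\mathrm{coh}}\pi)\simeq i_{b\sharp}^{\mathrm{ren}}\mathrm{Zel}(\pi)[-d_\pi].
\]
Since $\mathrm{Zel}(\pi)$ is again irreducible, the first part of the theorem forces this object to be finite. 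As $\mathbf{D}_{\mathrm{BZ}}$ is an exact anti-equivalence on $D(\mathrm{Bun}_G)^\omega$ compatible with shifts, cones, and retracts, it then carries all of $D(\mathrm{Bun}_G)_{\mathrm{fin}}$ into itself.

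The main obstacle is the ULA step. Remark \ref{rem:Eisensteinpush} is only an expectation in full generality (proven for torsion $\Lambda$ and, by Hamann \cite{Ham}, in many further cases), so an unconditional argument cannot rely on it. The alternative is a direct geometric analysis of the stalks $i_{b'}^{\ast}i_{b\sharp}^{\mathrm{ren}}\pi$ via the presentation $i_{b\sharp}=\pi_{b\natural}q_b^{\ast}$: one base changes along $\pi_b$, uses the cohomological smoothness of $\pi_b$ to convert $\natural$-pushforward into a twisted $!$-pushforward that does satisfy base change, and identifies the resulting stalks on each $\mathrm{Bun}_G^{b'}$ with Jacquet-like functors applied to $\pi$, which manifestly preserve finite length and admissibility.
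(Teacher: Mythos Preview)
Your argument for the ULA step has a genuine gap, which you yourself identify: both the identification $i_{b\sharp}^{\mathrm{ren}}\simeq\mathrm{Eis}_{\overline{P}}^G\circ i_{b!}^M$ and property~(5) of the Eisenstein functor are expectations, not theorems, with $\overline{\mathbf{Q}_\ell}$-coefficients. Your fallback sketch (base change along $\pi_b$ and identifying the stalks with ``Jacquet-like functors'') is too optimistic as stated: the fibers of $\pi_b$ over non-basic strata are moduli spaces of modifications whose cohomology has no direct description in terms of classical Jacquet modules, so admissibility is not ``manifest'' from that presentation.

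The paper's proof is entirely different and avoids geometry altogether. The key observation is a spectral criterion: a compact object of $D(G_{b'}(E),\overline{\mathbf{Q}_\ell})$ is admissible if and only if its support in the Bernstein variety $X_{G_{b'}}$ is zero-dimensional (since compact objects are admissible over the Bernstein center). One then shows that the support of $i_{b'}^{\ast\mathrm{ren}}i_{b\sharp}^{\mathrm{ren}}\pi$ lies in the fiber of the composite $X_{G_{b'}}\to X_{G_{b'}}^{\mathrm{spec}}\to X_G^{\mathrm{spec}}$ over the single point $x_\pi$ determined by the Fargues--Scholze parameter of $\pi$. This uses Theorem~\ref{thm:ibcompatibleparameters} (the renormalized functors preserve $L$-parameters) to control the support, and Lemma~\ref{lem:psihasdiscretefibers} (the map $\Psi_G^{\mathrm{geom}}$ has discrete fibers) to conclude that this fiber is zero-dimensional. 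No Eisenstein series or explicit stalk computations are needed. Your deduction of the second assertion from the first is fine and matches the paper's.
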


\begin{proof}
By Proposition \ref{prop:dualitypush} and an easy induction on support,
it is enough to prove the first statement. This immediately reduces
to proving that $i_{b'}^{\ast\mathrm{ren}}i_{b\sharp}^{\mathrm{ren}}\pi$
is compact and admissible for any $b'\preceq b$. Compactness is clear,
since both $i_{b\sharp}^{\mathrm{ren}}$ and $i_{b'}^{\ast\mathrm{ren}}$
preserve compactness. For admissibility, we apply the following criterion:
a compact object $B\in D(G_{b'}(E),\overline{\mathbf{Q}_{\ell}})$
is admissible iff its support in the Bernstein variety $X_{G_{b'}}$
is zero-dimensional. The ``only if'' direction is clear; for the
``if'' direction, note that any compact $B$ is admissible ``over
the Bernstein center'' \cite{Ber}, so true admissibility follows
if the action of the Bernstein center factors over an Artinian quotient.

To apply this criterion, we note that the composite map 
\[
c:X_{G_{b'}}\overset{\Psi_{G_{b'}}^{\mathrm{geom}}}{\longrightarrow}X_{G_{b'}}^{\mathrm{spec}}\to X_{G}^{\mathrm{spec}}
\]
has discrete fibers, and in fact it is a finite morphism after restricting
to any connected component of the source. This is clear for the second
map (see \cite{DHKM2} for a more general statement which also works
integrally), and for the first map it is Lemma \ref{lem:psihasdiscretefibers}
below. Writing $x_{\pi}\in X_{G}^{\mathrm{spec}}$ for the closed
point corresponding to the semisimple \emph{L-}parameter of $\pi$,
we conclude by observing that the support of $i_{b'}^{\ast\mathrm{ren}}i_{b\sharp}^{\mathrm{ren}}\pi$
is contained (set-theoretically) in $c^{-1}(x_{\pi})$, which follows
from Theorem \ref{thm:ibcompatibleparameters}. The admissibility
criterion then applies, giving the desired result.
\end{proof}
\begin{lem}
\label{lem:psihasdiscretefibers}For any $G$, the Fargues-Scholze
map $\Psi_{G}^{\mathrm{geom}}:X_{G}\to X_{G}^{\mathrm{spec}}$ has
discrete fibers, and in fact is a finite morphism after restricting
to any connected component of $X_{G}$.
\end{lem}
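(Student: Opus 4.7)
The plan is to reduce, via the Bernstein decomposition, to a single supercuspidal Bernstein component, and then to exploit the compatibilities of the Fargues--Scholze construction with parabolic induction and with unramified twists.

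First, I would decompose $X_G = \bigsqcup_{\mathfrak{s}} X_{G,\mathfrak{s}}$ into Bernstein components, indexed by inertial equivalence classes $\mathfrak{s} = [M,\sigma]_G$ of cuspidal pairs. Each $X_{G,\mathfrak{s}}$ is a finite-type affine variety lying in a single connected component of $X_G$, and admits a finite surjection $p: X^{\mathrm{unr}}(M) \twoheadrightarrow X_{G,\mathfrak{s}}$ from the complex torus of unramified characters of $M$, given by $\chi \mapsto [M,\sigma \otimes \chi]_G$. Since finiteness is preserved and detected under finite surjective covers, the problem reduces to showing that for each such $\mathfrak{s}$ the composite
\[
\Phi_\sigma : X^{\mathrm{unr}}(M) \xrightarrow{p} X_{G,\mathfrak{s}} \xrightarrow{\Psi_G^{\mathrm{geom}}} X_G^{\mathrm{spec}}
\]
is a finite morphism.

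Second, by compatibility of Fargues--Scholze semisimple parameters with normalized parabolic induction (\cite[Theorem IX.7.2]{FS}, whose renormalized version appears here as Theorem \ref{thm:ibcompatibleparameters}), any irreducible subquotient of $i_P^G(\sigma \otimes \chi)$ has semisimple Fargues--Scholze parameter equal to the image of $\varphi^{FS}_{\sigma \otimes \chi}$ under the canonical map $\iota: X_M^{\mathrm{spec}} \to X_G^{\mathrm{spec}}$ induced by $^LM \hookrightarrow {}^LG$. Hence $\Phi_\sigma$ factors as
\[
X^{\mathrm{unr}}(M) \xrightarrow{\chi \,\mapsto\, \varphi^{FS}_{\sigma \otimes \chi}} X_M^{\mathrm{spec}} \xrightarrow{\iota} X_G^{\mathrm{spec}}.
\]
The map $\iota$ is finite (the same fact already invoked in the proof of Theorem \ref{thm:finiteBZduality}, going back to \cite{DHKM2}), so it suffices to show the first arrow is finite. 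This reduces us to the case $G=M$ with $\sigma$ supercuspidal, in which we must show that $\chi \mapsto \varphi^{FS}_{\sigma \otimes \chi}$ is a finite morphism from $X^{\mathrm{unr}}(M)$ to $X_M^{\mathrm{spec}}$.

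Third, I would invoke compatibility of the Fargues--Scholze construction with unramified twist, which identifies $\varphi^{FS}_{\sigma \otimes \chi} \simeq \varphi_\sigma^{FS} \cdot \widehat{\chi}$, where $\widehat{\chi} : W_E \to Z(\hat{M})$ is the unramified $1$-cocycle corresponding to $\chi$ under local Langlands for the cocenter of $M$. The assignment $\chi \mapsto \widehat{\chi}$ is a finite surjection onto the torus of unramified cocycles with values in $Z(\hat{M})$, by LLC for tori. Pointwise multiplication by the fixed parameter $\varphi_\sigma^{FS}$ is an isomorphism onto a translate of this torus inside $Z^1(W_E,\hat{M})$, and the induced map to $X_M^{\mathrm{spec}}$ is then a finite surjection onto the spectral Bernstein component of $\varphi_\sigma^{FS}$ (with the residual finite fibers coming from the finite group of self-twists of $\varphi_\sigma^{FS}$ inside $Z(\hat{M})/(\mathrm{Frob}-1)$). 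In particular the map factors as \emph{finite surjection composed with closed immersion} and is therefore itself finite, giving the claim.

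The main obstacle will be pinning down the two compatibilities of $\Psi_G$ (with parabolic induction and with unramified twist) in precisely the renormalized form used here. The unramified twist compatibility is essentially tautological from the excursion operator construction, whereas the parabolic induction compatibility at the semisimple level is the content of \cite[Theorem IX.7.2]{FS}; matching the renormalization conventions, and extending the compatibility from inner forms of Levis of the shape $M_{\{b\}}$ covered by Theorem \ref{thm:ibcompatibleparameters} to an arbitrary standard Levi $M$, will take some bookkeeping but presents no conceptual difficulty.
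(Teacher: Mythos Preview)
Your proposal is correct and follows essentially the same route as the paper's proof: decompose $X_G$ into Bernstein components, use compatibility with parabolic induction to reduce to a cuspidal component on a Levi, and then use compatibility with unramified twisting (together with finiteness of the relevant stabilizers) to finish. Your worry at the end is unfounded: the compatibility of the Fargues--Scholze construction with parabolic induction in \cite[Theorem IX.7.2]{FS} is already formulated for arbitrary standard parabolics, not only those arising from Newton points, so no extension beyond the cited result is needed.
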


\begin{proof}
This can be checked one component at a time in $X_{G}$. Then the
usual explicit description of individual Bernstein components (in
terms of a fixed pair $[M,\sigma]$ with $\sigma$ supercuspidal)
plus compatibility of everything with parabolic induction and twisting
reduces us to the special case of a cuspidal component $D$. In this
case, one simply uses the fact that the map from $D$ to the spectral
Bernstein variety is compatible with unramified twists, and unramified
twisting has finite stabilizers in both settings and is transitive
on $D$. (Alternatively - but it's basically the same argument in
different clothing - in the case of a cuspidal component one can reduce
the claim for $G$ to the analogous claims for $G^{\mathrm{der}}$
and $Z_{G}^{\circ}$ separately, using compatibility with products
and central isogenies. But for $G^{\mathrm{der}}$ it is trivial,
since cuspidal components of semisimple groups are zero-dimensional,
and for $Z_{G}^{\circ}$ it is clear since one has $X_{T}=X_{T}^{\mathrm{spec}}$
for $T$ any torus.)
\end{proof}
\begin{rem}
I do not know how to prove this theorem with $\overline{\mathbf{F}_{\ell}}$-coefficients.
The problem is that I have no idea how to prove Lemma \ref{lem:psihasdiscretefibers}
with $\overline{\mathbf{F}_{\ell}}$-coefficients in general, although
the same argument as written above works for $\ell$ outside an explicit
finite set of bad primes.
\end{rem}

\begin{xca}
Prove that for any compact objects $A,B\in D(\mathrm{Bun}_{G})^{\omega}$,
the Fargues-Scholze map $\mathcal{O}(\mathrm{Par}_{G})\to\mathfrak{Z}(D(\mathrm{Bun}_{G}))$
naturally makes $\mathrm{Hom}(A,B)$ into a \emph{finitely generated}
$\mathcal{O}(\mathrm{Par}_{G})$-module. Hint: Reduce to the case
where $A,B$ are $!$- or $\sharp$-pushforwards from individual strata,
then use Lemma \ref{lem:psihasdiscretefibers} and Theorem \ref{thm:ibcompatibleparameters}
together with Bernstein's basic finiteness theorems.
\end{xca}

Now suppose $A$ is a finite sheaf. Then $A$ is ULA, so for any semisimple
parameter we have a decomposition $A\cong A_{\phi}\oplus A^{\phi}$
as before, where $A_{\phi}$ and $A^{\phi}$ are both finite. Since
$A$ is finite, it is easy to see that $A_{\phi}=0$ for all but finitely
many $\phi$, so we get a canonical direct sum decomposition
\[
A\cong\bigoplus_{\phi}A_{\phi}
\]
in $D(\mathrm{Bun}_{G})_{\mathrm{fin}}$ where only finitely many
summands are nonzero. By the functoriality of this decomposition,
we even get a canonical direct sum decomposition of categories
\[
D(\mathrm{Bun}_{G})_{\mathrm{fin}}\cong\bigoplus_{\phi}D(\mathrm{Bun}_{G})_{\mathrm{fin},\phi}
\]
where of course we set $D(\mathrm{Bun}_{G})_{\mathrm{fin},\phi}\overset{\mathrm{def}}{=}D(\mathrm{Bun}_{G})_{\mathrm{fin}}\cap D(\mathrm{Bun}_{G})_{\phi}^{\mathrm{ULA}}$.
\begin{xca}
Check that Bernstein-Zelevinsky duality induces an involutive anti-equivalence
\[
\mathbf{D}_{\mathrm{BZ}}:D(\mathrm{Bun}_{G})_{\mathrm{fin},\phi}\overset{\sim}{\to}D(\mathrm{Bun}_{G})_{\mathrm{fin},\phi^{\vee}}.
\]
\end{xca}

Finally, recall that have constructed the hadal t-structure (Theorem
\ref{thm:hadal-t-structure}), which is a certain t-structure on $D(\mathrm{Bun}_{G})^{\omega}$
with heart denoted $\mathrm{Had}(\mathrm{Bun}_{G})$. We then write
\[
\mathrm{Had}(\mathrm{Bun}_{G})_{\mathrm{fin}}\overset{\mathrm{def}}{=}\mathrm{Had}(\mathrm{Bun}_{G})\cap D(\mathrm{Bun}_{G})_{\mathrm{fin}}
\]
for the subcategory of finite hadal sheaves. Using Theorem \ref{thm:finiteBZduality},
one can check that the hadal truncation functors preserve $D(\mathrm{Bun}_{G})_{\mathrm{fin}}$,
so the pair
\[
\left(^{h}D^{\leq0}(\mathrm{Bun}_{G},\Lambda)^{\omega}\cap D(\mathrm{Bun}_{G})_{\mathrm{fin}},{}^{h}D^{\geq0}(\mathrm{Bun}_{G},\Lambda)^{\omega}\cap D(\mathrm{Bun}_{G})_{\mathrm{fin}}\right)
\]
defines a t-structure on $D(\mathrm{Bun}_{G})_{\mathrm{fin}}$ with
heart $\mathrm{Had}(\mathrm{Bun}_{G})_{\mathrm{fin}}$. Finally, by
the functoriality of the direct sum decompositions above, we get a
canonical decomposition
\[
\mathrm{Had}(\mathrm{Bun}_{G})_{\mathrm{fin}}\cong\bigoplus_{\phi}\mathrm{Had}(\mathrm{Bun}_{G})_{\mathrm{fin},\phi}.
\]

\begin{xca}
i. Check that $\mathrm{Had}(\mathrm{Bun}_{G})_{\mathrm{fin}}$ is
a Serre subcategory of $\mathrm{Had}(\mathrm{Bun}_{G})$.

ii. Check that every object in $\mathrm{Had}(\mathrm{Bun}_{G})_{\mathrm{fin}}$
is of finite length.
\end{xca}

\textbf{Warning. }When $G$ is not a torus, the interplay between
these finiteness conditions can be very subtle. In particular, we
highlight the following phenomena:

1) The perverse truncation functors do not always preserve the property
of being finite. An explicit example is given in the discussion after
Conjecture \ref{conj:categorical-fin-ssgen-perverse-version}.

2) There are examples of irreducible perverse sheaves which are not
finite sheaves. For instance, the constant sheaf $\overline{\mathbf{Q}_{\ell}}$
on one component of $\mathrm{Bun}_{G}$ has this property. 

3) There are examples of finite sheaves which are perverse, but whose
Jordan-Holder series in the perverse category is infinite. For instance,
the sheaf $i_{b_{n}!}^{\mathrm{ren}}\pi_{n}$ appearing at the end
of section 2.2 has this property.

\subsection{The categorical conjecture over $\overline{\mathbf{Q}_{\ell}}$,
unconditionally}

We continue to fix $\Lambda=\overline{\mathbf{Q}_{\ell}}$ as our
coefficient ring, and omit it from the notation. This leads to several
simplifications in the formulation of the categorical conjecture:

1) the nilpotent singular support condition on $\mathrm{Coh}(\mathrm{Par}_{G})$
is automatic \cite[Prop. VIII.2.11]{FS}, and

2) each connected component of $\mathrm{Par}_{G}$ has the property
that $\mathrm{IndPerf}=\mathrm{QCoh}$ \cite[Corollary 3.22]{BFN}.

Using 2), we can formally upgrade the spectral action to a monoidal
action of $\mathrm{QCoh}(\mathrm{Par}_{G})=\mathrm{Ind}(\mathrm{Perf^{qc}}(\mathrm{Par}_{G}))$
on $D(\mathrm{Bun}_{G})$. Acting on the Whittaker sheaf in particular
yields a functor
\begin{align*}
a_{\psi}:\mathrm{QCoh}(\mathrm{Par}_{G}) & \to D(\mathrm{Bun}_{G})\\
\mathcal{F} & \mapsto\mathcal{F}\ast i_{1!}W_{\psi}
\end{align*}
where we choose to write ``$a$'' for \emph{a}ction. As noted in
the discussion after Conjecture \ref{conj:categoricalLLCmain}, we
expect that $a_{\psi}$ is fully faithful: under the hoped-for equivalence
$D(\mathrm{Bun}_{G})\simeq\mathrm{IndCoh}(\mathrm{Par}_{G})$, $a_{\psi}$
should correspond to the natural fully faithful inclusion $\mathrm{QCoh}\overset{\Xi}{\to}\mathrm{Ind}\mathrm{Coh}$.
As noted earler, we also expect that $a_{\psi}$ carries $\mathrm{Perf^{qc}}(\mathrm{Par}_{G})$
into $D(\mathrm{Bun}_{G})^{\omega}$. This is known unconditionally
for many groups: more precisely, for those $G$ which are \emph{reasonable}
in the sense of Definition \ref{def:reasonablegroups}.

\textbf{Warning. }The category $\mathrm{IndCoh}$ contains two copies
of $\mathrm{Coh}$: the ``native'' copy coming from the tautological
inclusion $\mathcal{C}\subset\mathrm{Ind}(\mathcal{C})$, and a second
``phantom'' copy $\Xi(\mathrm{Coh})$. These are not the same, and
their overlap is actually just $\mathrm{Perf}$. In particular, non-perfect
objects in the ``phantom'' copy are not compact when viewed as objects
of $\mathrm{IndCoh}$. Translating into the above picture, we see
that for $\mathcal{F}$ a coherent complex on $\mathrm{Par}_{G}$
which is not perfect, $a_{\psi}(\mathcal{F})$ should not be compact.
In particular, $a_{\psi}$ cannot be the functor realizing the categorical
local Langlands equivalence.

However, there is a closely related functor which \emph{should }realize
this equivalence.
\begin{prop}
There is a (unique) functor
\[
c_{\psi}:D(\mathrm{Bun}_{G})\to\mathrm{QCoh}(\mathrm{Par}_{G})
\]
such that for all $A\in D(\mathrm{Bun}_{G})$ and all $\mathcal{F}\in\mathrm{QCoh}(\mathrm{Par}_{G})$,
there is a natural isomorphism
\[
R\mathrm{Hom}(i_{1!}W_{\psi},\mathcal{F}\ast A)\simeq R\Gamma(\mathrm{Par}_{G},\mathcal{F}\otimes c_{\psi}(A)).
\]
\end{prop}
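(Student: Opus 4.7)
The plan is to construct $c_\psi$ as the right adjoint of the cocontinuous functor $a_\psi$, and then derive the displayed formula by combining this adjunction with the self-duality of $\mathrm{QCoh}(\mathrm{Par}_{G})$ and the ambidexterity of the action of dualizable sheaves.

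First, since the spectral action is cocontinuous in its $\mathrm{QCoh}(\mathrm{Par}_{G})$-variable, $a_\psi:\mathcal{F}\mapsto\mathcal{F}\ast i_{1!}W_\psi$ is a colimit-preserving functor between presentable stable $\infty$-categories. The adjoint functor theorem thus produces a right adjoint which I call $c_\psi$, giving the defining adjunction
\[
\mathrm{Map}_{\mathrm{QCoh}}(\mathcal{F},c_\psi(A))\simeq\mathrm{Map}_{D(\mathrm{Bun}_{G})}(\mathcal{F}\ast i_{1!}W_\psi,A).
\]
This establishes existence, with uniqueness following from this universal property.

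Next I verify the formula of the proposition when $\mathcal{F}$ is perfect (equivalently, dualizable). For such $\mathcal{F}$, the endofunctor $\mathcal{F}\ast(-)$ of $D(\mathrm{Bun}_{G})$ is ambidextrous with adjoint $\mathcal{F}^\vee\ast(-)$, so
\[
R\mathrm{Hom}(\mathcal{F}^\vee\ast i_{1!}W_\psi,A)\simeq R\mathrm{Hom}(i_{1!}W_\psi,\mathcal{F}\ast A).
\]
Applying the adjunction of the previous paragraph to $\mathcal{F}^\vee$, and using the self-duality identity $\mathrm{Map}_{\mathrm{QCoh}}(\mathcal{F}^\vee,\mathcal{G})\simeq R\Gamma(\mathrm{Par}_{G},\mathcal{F}\otimes\mathcal{G})$ valid for dualizable $\mathcal{F}$, I obtain the desired
\[
R\mathrm{Hom}(i_{1!}W_\psi,\mathcal{F}\ast A)\simeq R\Gamma(\mathrm{Par}_{G},\mathcal{F}\otimes c_\psi(A))
\]
for every perfect $\mathcal{F}$.

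Finally I extend to arbitrary $\mathcal{F}\in\mathrm{QCoh}(\mathrm{Par}_{G})$ by writing $\mathcal{F}=\mathrm{colim}_i\mathcal{F}_i$ as a filtered colimit of perfect sheaves, possible since $\mathrm{IndPerf}=\mathrm{QCoh}$ on each component of $\mathrm{Par}_{G}$ (the second simplification noted at the outset of Section 1.6). The right-hand side commutes with such colimits in $\mathcal{F}$: the tensor product is cocontinuous, and $R\Gamma$ preserves filtered colimits on $\mathrm{Par}_{G}$, which is a disjoint union of quotients of affine finite-type schemes by the reductive group $\hat{G}$. For the left-hand side, cocontinuity of the spectral action identifies $\mathcal{F}\ast A\simeq\mathrm{colim}_i(\mathcal{F}_i\ast A)$, after which one needs to commute $R\mathrm{Hom}(i_{1!}W_\psi,-)$ past the filtered colimit.

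The main obstacle is exactly this last commutation, which requires $i_{1!}W_\psi$ to be a compact object of $D(\mathrm{Bun}_{G})$ — a statement the text explicitly flags as ``not obvious''. Since $i_{1!}$ preserves compact objects by Proposition \ref{prop:dualitypush}.o, it suffices to show that $W_\psi=\mathrm{ind}_U^G\psi$ is compact in $D(G(E),\Lambda)$, and by Bernstein's theorem recalled in Section 1.1, this in turn reduces to $W_\psi$ being finitely generated as a smooth $G(E)$-representation. This is where the ``excellent properties'' of $W_\psi$ recalled in Appendix A must be invoked (or else the ``reasonable $G$'' hypothesis from Section 1.6 pressed into service).
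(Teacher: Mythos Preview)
Your construction of $c_\psi$ as the right adjoint of $a_\psi$ is essentially the paper's approach with the logical order reversed: the paper takes the displayed formula as the defining property of $c_\psi$ (deferring to the classical geometric Langlands construction of the enhanced Whittaker coefficient for existence) and then proves, in the \emph{next} proposition, that this $c_\psi$ is right adjoint to $a_\psi$. Your derivation of the formula for perfect $\mathcal{F}$ is exactly that proof read backwards, and is correct.

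Your extension to arbitrary $\mathcal{F}$, however, contains two errors. First, $R\Gamma(\mathrm{Par}_G,-)$ does \emph{not} preserve filtered colimits: since $\mathrm{Par}_G$ has infinitely many connected components $X_j$, one has $R\Gamma(\mathrm{Par}_G,-)=\prod_jR\Gamma(X_j,-)$, and infinite products do not commute with filtered colimits. The fact that each $X_j$ is a quotient of an affine by $\hat G$ is precisely what makes $R\Gamma(X_j,-)$ cocontinuous, but the infinite disjoint union undoes this. Second, and more seriously, $i_{1!}W_\psi$ is simply never compact: $W_\psi$ is not finitely generated (the text explicitly calls it a ``large'' representation), and neither Appendix~A nor reasonableness changes this. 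Appendix~A proves only that the depth truncations $e_rW_\psi$ are finitely generated; reasonableness proves only that $a_\psi$ sends $\mathrm{Perf}^{\mathrm{qc}}$ to compact objects (Proposition~\ref{prop:reasonable-implications}.i), and the passage you cite as flagging compactness of $i_{1!}W_\psi$ as ``not obvious'' is in fact about this latter, different, statement.

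If one is willing to assume $G$ reasonable, the argument can be repaired by working componentwise: both sides of the formula decompose as products over $j$ (the right side tautologically; the left because $i_{1!}W_\psi=\bigoplus_ji_{1!}(e_jW_\psi)$ forces $R\mathrm{Hom}(i_{1!}W_\psi,-)=\prod_jR\mathrm{Hom}(i_{1!}(e_jW_\psi),-)$), and on each $X_j$ your colimit argument goes through since $R\Gamma(X_j,-)$ commutes with filtered colimits and, for $G$ reasonable, each $e_jW_\psi$ is finitely generated by Bushnell--Henniart.
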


The functor $c_{\psi}$ is motivated by classical geometric Langlands,
where the analogous beast is usually called the functor of \emph{enhanced
Whittaker coefficient}, and its construction in our setting is exactly
the same as in classical geometric Langlands (see \cite[Section 10.2]{FR}
and \cite{Ra}). As justification for the name, note that
\begin{align*}
R\Gamma(\mathrm{Par}_{G},c_{\psi}(A)) & \simeq R\mathrm{Hom}(i_{1!}W_{\psi},A)\\
 & =R\mathrm{Hom}(W_{\psi},i_{1}^{\ast}A)
\end{align*}
is exactly the space of Whittaker models of $i_{1}^{\ast}A$. Note
also that $c_{\psi}$ is $\mathrm{QCoh}$-linear, in the sense that
$c_{\psi}(\mathcal{G}\ast A)\simeq\mathcal{G}\otimes c_{\psi}(A)$
for all $A\in D(\mathrm{Bun}_{G})$ and $\mathcal{G}\in\mathrm{QCoh}(\mathrm{Par}_{G})$.
In particular, for all $V\in\mathrm{Rep}(^{L}G)$ we have $c_{\psi}(T_{V}A)\simeq V\otimes c_{\psi}(A)$
and consequently 
\[
R\Gamma(\mathrm{Par}_{G},V\otimes c_{\psi}(A))\simeq R\mathrm{Hom}(i_{1!}W_{\psi},T_{V}A).
\]

\begin{prop}
The functor $c_{\psi}$ is right adjoint to $a_{\psi}$.
\end{prop}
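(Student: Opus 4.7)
The goal is to produce, naturally in $\mathcal{F} \in \mathrm{QCoh}(\mathrm{Par}_G)$ and $A \in D(\mathrm{Bun}_G)$, an equivalence
\[
R\mathrm{Hom}_{D(\mathrm{Bun}_G)}(a_\psi(\mathcal{F}), A) \simeq R\mathrm{Hom}_{\mathrm{QCoh}(\mathrm{Par}_G)}(\mathcal{F}, c_\psi(A)).
\]
The plan is to verify this first when $\mathcal{F}$ is perfect, using the dualizability of $\mathcal{F}$ in the symmetric monoidal spectral action, and then extend to arbitrary $\mathcal{F}$ by compact generation.

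For $\mathcal{F} \in \mathrm{Perf}(\mathrm{Par}_G)$, $\mathcal{F}$ is dualizable with dual $\mathcal{F}^\vee$, and since $\mathrm{QCoh}(\mathrm{Par}_G)$ acts on $D(\mathrm{Bun}_G)$ via a symmetric monoidal functor, the endofunctor $\mathcal{F} \ast (-)$ admits $\mathcal{F}^\vee \ast (-)$ as both a left and a right adjoint. Combining this with the defining property of $c_\psi$ applied with test object $\mathcal{F}^\vee$ and with the standard identity $R\Gamma(\mathrm{Par}_G, \mathcal{F}^\vee \otimes \mathcal{H}) \simeq R\mathrm{Hom}(\mathcal{F}, \mathcal{H})$ for dualizable $\mathcal{F}$, we get
\[
R\mathrm{Hom}(\mathcal{F}\ast i_{1!}W_\psi, A) \simeq R\mathrm{Hom}(i_{1!}W_\psi, \mathcal{F}^\vee \ast A) \simeq R\Gamma(\mathrm{Par}_G, \mathcal{F}^\vee \otimes c_\psi(A)) \simeq R\mathrm{Hom}(\mathcal{F}, c_\psi(A)),
\]
naturally in $\mathcal{F}$ and $A$.

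To extend to arbitrary $\mathcal{F} \in \mathrm{QCoh}(\mathrm{Par}_G)$, I would use that $\mathrm{Par}_G$ is a disjoint union of quasicompact quasiseparated Artin stacks, on each of which $\mathrm{QCoh} = \mathrm{IndPerf}$, so that $\mathrm{QCoh}(\mathrm{Par}_G)$ is generated under filtered colimits by perfect objects. The functor $a_\psi$ commutes with all colimits (the spectral action is colimit-preserving in each variable, since it arises from a $\mathrm{QCoh}$-module structure on a presentable category), so $R\mathrm{Hom}(a_\psi(-), A)$ sends colimits to limits. The same is true of $R\mathrm{Hom}(-, c_\psi(A))$. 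Since the equivalence established in the previous paragraph is natural in $\mathcal{F}$, passing to filtered colimits of perfect complexes yields the asserted adjunction for all $\mathcal{F}$.

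The main obstacle is really just bookkeeping: one needs to verify carefully that the three isomorphisms in the central display are genuinely natural in $\mathcal{F}$ as an object of the symmetric monoidal category $\mathrm{QCoh}(\mathrm{Par}_G)$ (not merely for each individual $\mathcal{F}$), so that the colimit extension in the last step goes through. The middle isomorphism is naturality in the test argument of the defining property of $c_\psi$, so it is automatic; the outer two reduce to the naturality of the dualizability data on $\mathrm{Perf}(\mathrm{Par}_G)$, which is standard symmetric-monoidal $\infty$-categorical yoga.
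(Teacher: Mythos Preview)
Your proof is correct and follows essentially the same approach as the paper: verify the adjunction on perfect $\mathcal{F}$ using that $\mathcal{F}\ast(-)$ has $\mathcal{F}^{\vee}\ast(-)$ as a two-sided adjoint, invoke the defining property of $c_{\psi}$, and then extend by colimits using $\mathrm{QCoh}=\mathrm{IndPerf}$. The only cosmetic difference is that the paper writes $\mathbf{D}_{\mathrm{GS}}\mathcal{F}$ where you write $\mathcal{F}^{\vee}$, but since $\omega\simeq\mathcal{O}$ on $\mathrm{Par}_{G}$ these agree on perfect complexes.
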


\begin{proof}
For $\mathcal{F}\in\mathrm{Perf^{qc}}(\mathrm{Par}_{G})$ and $A\in D(\mathrm{Bun}_{G})$
we compute
\begin{align*}
\mathrm{Hom}(a_{\psi}(\mathcal{F}),A) & =\mathrm{Hom}(\mathcal{F}\ast i_{1!}W_{\psi},A)\\
 & \cong\mathrm{Hom}(i_{1!}W_{\psi},\mathbf{D}_{\mathrm{GS}}\mathcal{F}\ast A)\\
 & \cong\mathrm{Hom}(\mathcal{O},\mathbf{D}_{\mathrm{GS}}\mathcal{F}\otimes c_{\psi}(A))\\
 & \cong\mathrm{Hom}(\mathcal{F},c_{\psi}(A)).
\end{align*}
The only nontrivial point here is the second line, which follows from
the fact that for any given $\mathcal{F}\in\mathrm{Perf^{qc}}(\mathrm{Par}_{G})$,
the endofunctor $A\mapsto\mathcal{F}\ast A$ on $D(\mathrm{Bun}_{G})$
is both left and right adjoint to the endofunctor $A\mapsto\mathbf{D}_{\mathrm{GS}}\mathcal{F}\ast A$.
Since both sides of this isomorphism convert colimits in $\mathcal{F}$
into limits, it formally extends to an isomorphism valid for any $\mathcal{F}\in\mathrm{Ind(Perf^{qc}})=\mathrm{QCoh}$.
\end{proof}
Since $c_{\psi}$ is right adjoint to $a_{\psi}$, under the hoped-for
equivalence $D(\mathrm{Bun}_{G})\simeq\mathrm{IndCoh}(\mathrm{Par}_{G})$
it should correspond to the natural functor $\mathrm{IndCoh}\overset{\Psi}{\to}\mathrm{QCoh}$
right adjoint to $\Xi$. Now, it is easy to see that $\Psi$ defines
an equivalence from the ``native'' copy of $\mathrm{Coh}$ in IndCoh
onto the usual copy of $\mathrm{Coh}$ inside $\mathrm{QCoh}$.\footnote{This follows, for instance, from the observation that the sequence
of functors $\mathrm{QCoh}\overset{\Xi}{\to}\mathrm{IndCoh}\overset{\Psi}{\to}\mathrm{QCoh}$
is obtained by ind-completing the tautological sequence of functors
$\mathrm{Perf^{qc}}\overset{}{\to}\mathrm{Coh}\overset{}{\to}\mathrm{QCoh}$.} We are thus led to the following conjecture.
\begin{conjecture}
\label{conj:categorical-cpsi-unconditional}The functor $c_{\psi}$
restricts to an equivalence of categories
\[
c_{\psi}:D(\mathrm{Bun}_{G})^{\omega}\overset{\sim}{\to}\mathrm{Coh}(\mathrm{Par}_{G}).
\]
\end{conjecture}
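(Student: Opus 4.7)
The plan is to prove the equivalence in three successive steps: first, show that $c_\psi$ carries $D(\mathrm{Bun}_G)^\omega$ into $\mathrm{Coh}(\mathrm{Par}_G)$; second, show it is fully faithful on compact objects; and third, show essential surjectivity. I consider this plan aspirational: the third step amounts essentially to the full strength of Conjecture \ref{conj:categoricalLLCmain}, and there does not appear to be a route around this.

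For the first step, I would exploit the $\mathrm{QCoh}$-linearity $c_\psi(\mathcal{G}\ast A)\simeq\mathcal{G}\otimes c_\psi(A)$. Combined with the defining formula, this gives $R\Gamma(c_\psi(\mathcal{G}\ast A))\simeq R\mathrm{Hom}(i_{1!}W_\psi,\mathcal{G}\ast A)$, and letting $\mathcal{G}$ vary over line bundles and skyscrapers recovers $c_\psi(A)$ locally on $\mathrm{Par}_G$. The key input is a finite generation statement: for $A$ compact and $\mathcal{G}$ perfect, $\mathrm{Hom}(i_{1!}W_\psi,\mathcal{G}\ast A)$ should be a finitely generated module over the appropriate local ring of $\mathrm{Par}_G$. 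This should follow from a mild enhancement of the finite generation exercise in Section 1.6, combined with the boundedness coming from the fact that $W_\psi$ is ULA and $A$ is compact (so $R\mathrm{Hom}(W_\psi, i_1^\ast A)$ is bounded with finitely generated cohomologies).

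For the second step, the adjunction identity $\mathrm{Hom}(c_\psi(A),c_\psi(B))=\mathrm{Hom}(a_\psi c_\psi(A),B)$ reduces fully faithfulness on compact objects to showing the counit $a_\psi c_\psi(A)\to A$ is an equivalence for $A\in D^\omega$. This is genuinely nontrivial: on the spectral side it corresponds to a property of the ``native'' copy of $\mathrm{Coh}$ inside $\mathrm{IndCoh}$, and does not follow formally from fully faithfulness of $a_\psi$ (which is itself expected, and known for reasonable groups). I would verify the counit condition first on $A=i_{1!}W_\psi$, where it is tautological from the defining property of $c_\psi$, and then propagate via compatibility with the spectral action to the entire essential image $a_\psi(\mathrm{Perf}(\mathrm{Par}_G))$. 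Extending from this essential image to all of $D^\omega$ requires a generation result that itself is nontrivial: it amounts to showing that every compact sheaf on $\mathrm{Bun}_G$ is, up to finite extensions, of the form $\mathcal{F}\ast i_{1!}W_\psi$ for some perfect $\mathcal{F}$.

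The main obstacle is the essential surjectivity in step three. I would attack this block-by-block along the decompositions $\mathrm{Coh}(\mathrm{Par}_G)=\bigoplus_\phi\mathrm{Coh}(\mathrm{Par}_G)_\phi$ (coherent sheaves supported scheme-theoretically over the closed point $x_\phi\in X_G^{\mathrm{spec}}$) and $D(\mathrm{Bun}_G)_{\mathrm{fin}}=\bigoplus_\phi D(\mathrm{Bun}_G)_{\mathrm{fin},\phi}$ from Section 1.6. Using the explicit presentation of $q^{-1}(x_\phi)^{\mathrm{red}}$ from Section 1.3, together with the bijection between irreducible hadal sheaves and pairs $(b,\pi)$ supplied by Theorem \ref{thm:irreducible-hadal-sheaves}, one would match irreducible objects across the two sides using the refined local Langlands correspondence (with monodromy), and then propagate to all of $\mathrm{Coh}$ by deformations and the spectral action. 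The hard part is that realizing a non-perfect coherent sheaf, such as the structure sheaf of a singular Vogan stratum or a skyscraper with nontrivial $\mathrm{Ext}^1$ with itself, as $c_\psi(A)$ for an explicit finite sheaf $A$ is essentially the content of the categorical local Langlands equivalence on that block; without independent access to the refined correspondence, I do not see how to produce such $A$ unconditionally.
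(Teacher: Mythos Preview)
The statement you are addressing is a \emph{conjecture} in the paper, not a theorem; the paper offers no proof and explicitly flags that ``very little is obvious here.'' So there is no paper proof to compare against, and your proposal should be read as what it is: a sketch of the obstacles, not a proof. You yourself say as much, correctly identifying that step three is the full categorical conjecture.

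A couple of technical points on your outline. In step two you say the counit $a_\psi c_\psi(A)\to A$ is ``tautological'' for $A=i_{1!}W_\psi$. It is not: to compute $c_\psi(i_{1!}W_\psi)\simeq\mathcal{O}$ from the defining formula you need to know that $R\mathrm{Hom}(i_{1!}W_\psi,\mathcal{F}\ast i_{1!}W_\psi)\simeq R\Gamma(\mathcal{F})$, which is exactly the full faithfulness of $a_\psi$. That is expected but not known in general (it holds for reasonable groups, and even then only on $\mathrm{Perf}^{\mathrm{qc}}$). More seriously, your ``generation result'' at the end of step two, that every compact sheaf is built from objects in the essential image of $a_\psi$, is essentially equivalent to the conjecture itself: on the spectral side it says $\mathrm{Perf}$ generates $\mathrm{Coh}$ under the equivalence, which presupposes the equivalence. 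So step two is circular as written, not merely difficult.

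In step one, your proposed argument does not establish coherence. Knowing that $R\Gamma(\mathcal{F}\otimes c_\psi(A))$ is perfect for all perfect $\mathcal{F}$ (which is roughly what the paper proves for finite $A$) tells you $c_\psi(A)$ has finite-dimensional cohomology against every vector bundle, but this does not force $c_\psi(A)$ to be bounded coherent; an infinite direct sum of shifted skyscrapers can have this property. You would need uniform cohomological bounds, and the paper gives no mechanism for those.
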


In other words, $c_{\psi}$ should realize the categorical local Langlands
equivalence. Note that this is a precise and unconditional conjecture!
However, very little is obvious here. For instance, implicit in this
conjecture is the expectation that $c_{\psi}$ carries compact objects
in $D(\mathrm{Bun}_{G})$ to coherent complexes, which is already
far from obvious.
\begin{xca}
Prove that $c_{\psi}$ is compatible with the central grading.
\end{xca}

\textbf{Warning. }If Conjecture \ref{conj:categorical-cpsi-unconditional}
is true, the equivalence postulated there can be formally ind-completed
to the desired equivalence
\[
\mathbf{L}_{\psi}:D(\mathrm{Bun}_{G})\overset{\sim}{\to}\mathrm{IndCoh}(\mathrm{Par}_{G}).
\]
However, in general one should be careful to distinguish this functor
from the functor $c_{\psi}$ as initially defined above: $c_{\psi}$
can be recovered from $\mathbf{L}_{\psi}$ by composing with the quotient
functor $\Psi:\mathrm{IndCoh}(\mathrm{Par}_{G})\twoheadrightarrow\mathrm{QCoh}(\mathrm{Par}_{G})$.
Since $\Psi$ is an equivalence on the two copies of $\mathrm{Coh}$
as discussed above, there is no real distinction between $\mathbf{L}_{\psi}$
and $c_{\psi}$ as functors on \emph{compact }sheaves on $\mathrm{Bun}_{G}$,
but on general sheaves they do differ. As a sobering exercise, one
can unconditionally check that if we take $A$ to be the constant
sheaf $\overline{\mathbf{Q}_{\ell}}$ on $\mathrm{Bun}_{G}$, then
$c_{\psi}(A)=0$ for any non-toral $G$. Of course, $A$ is not compact,
so this is no contradiction. When $G=\mathrm{PGL}_{2}$, an explicit
candidate object $\mathcal{F}$ in $\mathrm{IndCoh}(\mathrm{Par}_{G})$
which should match this particular sheaf under the putative functor
$\mathbf{L}_{\psi}$ has been constructed by Bertoloni Meli, and it
is visibly clear from his construction that $\Psi(\mathcal{F})=0$.

In this setting, we can also formulate the duality compatibility unconditionally.\footnote{In \cite{HM}, we will prove that if Conjecture \ref{conj:categorical-cpsi-unconditional}
holds for $\psi$ and $\psi^{-1}$, then Conjecture \ref{conj:dualitycpsi}
\emph{automatically }follows!}
\begin{conjecture}
\label{conj:dualitycpsi}There is a natural equivalence of functors
\[
\mathbf{D}_{\mathrm{tw.GS}}\circ c_{\psi}\simeq c_{\psi^{-1}}\circ\mathbf{D}_{\mathrm{BZ}}
\]
from $D(\mathrm{Bun}_{G})^{\omega}$ towards $\mathrm{Coh}(\mathrm{Par}_{G})$.
\end{conjecture}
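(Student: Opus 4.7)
The plan is to exploit the defining adjunction $(a_\psi,c_\psi)$ together with the $\mathrm{QCoh}(\mathrm{Par}_G)$-linearity $c_\psi(\mathcal{G}\ast A)\simeq\mathcal{G}\otimes c_\psi(A)$, reducing the problem to two atomic compatibilities: (I) a self-duality for the Whittaker sheaf, $\mathbf{D}_{\mathrm{BZ}}(i_{1!}W_\psi)\simeq i_{1!}W_{\psi^{-1}}$; and (II) the statement that Bernstein-Zelevinsky duality intertwines the spectral action with twisted Grothendieck-Serre duality, meaning that for $\mathcal{F}\in\mathrm{Perf}(\mathrm{Par}_G)$ and $A\in D(\mathrm{Bun}_G)^\omega$ there is a natural equivalence $\mathbf{D}_{\mathrm{BZ}}(\mathcal{F}\ast A)\simeq\mathbf{D}_{\mathrm{twGS}}\mathcal{F}\ast\mathbf{D}_{\mathrm{BZ}}(A)$.

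Granted (I) and (II), the desired equivalence is immediate on the essential image of $a_\psi|_{\mathrm{Perf}^{qc}}$. Indeed, the adjoint characterization of $c_{\psi^{-1}}$ forces $c_{\psi^{-1}}(i_{1!}W_{\psi^{-1}})\simeq\mathcal{O}$, and then for $\mathcal{F}\in\mathrm{Perf}^{qc}(\mathrm{Par}_G)$ one computes
\[
c_{\psi^{-1}}\mathbf{D}_{\mathrm{BZ}}(\mathcal{F}\ast i_{1!}W_\psi)\simeq c_{\psi^{-1}}(\mathbf{D}_{\mathrm{twGS}}\mathcal{F}\ast i_{1!}W_{\psi^{-1}})\simeq\mathbf{D}_{\mathrm{twGS}}\mathcal{F}\simeq\mathbf{D}_{\mathrm{twGS}}c_\psi(\mathcal{F}\ast i_{1!}W_\psi).
\]
To globalize, I would first assemble the natural transformation $\mathbf{D}_{\mathrm{twGS}}\circ c_\psi\to c_{\psi^{-1}}\circ\mathbf{D}_{\mathrm{BZ}}$ at the functor level, using that $c_\psi$ is right adjoint to the spectrally-linear $a_\psi$ and so inherits a canonical lax linear structure, and then verify objectwise that this transformation is an equivalence by testing against perfect complexes on $\mathrm{Par}_G$ via adjunction; this formally reduces back to the atomic computation just given.

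Input (I) should be tractable via Proposition \ref{prop:dualitypush}(ii): since $\mathbf{D}_{\mathrm{BZ}}i_{1!}^{\mathrm{ren}}\simeq i_{1\sharp}^{\mathrm{ren}}\mathbf{D}_{\mathrm{coh}}$ and on the basic stratum the $\sharp$- and $!$-extensions coincide, (I) collapses to the concrete identity $\mathbf{D}_{\mathrm{coh}}W_\psi\simeq W_{\psi^{-1}}$ in $D(G(E),\Lambda)$, essentially dual to the standard Mackey-type identity $(\mathrm{ind}_U^G\psi)^\vee\cong\mathrm{ind}_U^G\psi^{-1}$. The main obstacle is Input (II), which is a deep compatibility between the Fargues-Scholze spectral action and Bernstein-Zelevinsky duality. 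One realistic route is to factor spectral actions through Hecke operators $T_V$ and, when $V$ is inflated from a parabolic, further through Eisenstein series, invoking Conjecture \ref{conj:Eisdual} together with Conjecture \ref{conj:Eisheckefiltered} (both known in many cases) to get the required intertwining at the Hecke level and then bootstrap to all of $\mathrm{Perf}(\mathrm{Par}_G)$. A more conceptual approach would directly track Verdier duality on the Hecke stack through the Fargues-Scholze construction, exhibiting the Chevalley twist geometrically as in geometric Satake. Either way, obtaining (II) in a form sufficiently functorial to feed into the globalization step is where I expect the real work to concentrate.
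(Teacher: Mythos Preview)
The statement is a \emph{conjecture} in the paper, not a theorem; the paper offers no proof, only Proposition~\ref{prop:actiondualitycompatible} (the analogous statement for $a_\psi$) as evidence. Your inputs (I) and (II) are precisely the ingredients of that proposition, and contrary to your expectation, (II) is not the obstacle: the paper records it as an unconditional consequence of \cite[Theorem VIII.5.1 and Theorem IX.2.2]{FS}, and (I) is Theorem~\ref{thm:Wpsinice}. So what you have really done is rediscover the proof of Proposition~\ref{prop:actiondualitycompatible}.

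The genuine gap is your globalization step. Testing against $\mathcal{F}\in\mathrm{Perf}^{\mathrm{qc}}$ via the adjunction $(a_{\psi^{-1}},c_{\psi^{-1}})$ lets you compute $R\mathrm{Hom}(\mathcal{F},c_{\psi^{-1}}\mathbf{D}_{\mathrm{BZ}}A)\simeq R\mathrm{Hom}(a_{\psi^{-1}}\mathcal{F},\mathbf{D}_{\mathrm{BZ}}A)$, but on the other side you get $R\mathrm{Hom}(\mathcal{F},\mathbf{D}_{\mathrm{twGS}}c_\psi(A))\simeq R\mathrm{Hom}(c_\psi(A),\mathbf{D}_{\mathrm{twGS}}\mathcal{F})$, and adjunction gives you no handle on $R\mathrm{Hom}(c_\psi(A),-)$: the adjunction runs the wrong way. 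Your claim that this ``formally reduces back to the atomic computation'' is simply false; the atomic computation only controls objects of the form $a_\psi(\mathcal{F})$, and there is no mechanism here to propagate it to arbitrary compact $A$. Even to make sense of $\mathbf{D}_{\mathrm{twGS}}c_\psi(A)$ one already needs to know that $c_\psi$ carries compact objects to coherent complexes, which is itself part of Conjecture~\ref{conj:categorical-cpsi-unconditional}. The passage from the $a_\psi$ statement (known) to the $c_\psi$ statement (open) is exactly where the difficulty lives, and your outline does not bridge it.
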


Here as before, $c_{\psi^{-1}}$ denotes the enhanced coefficient
functor associated with the dual Whittaker datum $(B,\psi^{-1})$.
Composing with $i_{1!}$ and using $i_{1!}\mathbf{D}_{\mathrm{coh}}\simeq\mathbf{D}_{\mathrm{BZ}}i_{1!}$,
we see that Hellmann's functor $R_{\psi}=c_{\psi}\circ i_{1!}$ should
satisfy $\mathbf{D}_{\mathrm{tw.GS}}\circ R_{\psi}\simeq R_{\psi^{-1}}\circ\mathbf{D}_{\mathrm{coh}}$.

As some evidence for this, we have the following result; a detailed
proof will appear in \cite{HM}. For the notion of a reasonable group,
see Definition \ref{def:reasonablegroups}.
\begin{prop}
\label{prop:actiondualitycompatible}If $G$ is reasonable, there
is a natural equivalence of functors
\[
\mathbf{D}_{\mathrm{BZ}}\circ a_{\psi}\simeq a_{\psi^{-1}}\circ\mathbf{D}_{\mathrm{tw.GS}}
\]
from $\mathrm{Perf^{qc}}(\mathrm{Par}_{G})$ towards $D(\mathrm{Bun}_{G})^{\omega}$.
\end{prop}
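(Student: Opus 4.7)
The plan is to interpret both sides as contravariant \emph{twisted-linear} functors out of $\mathrm{Perf^{qc}}(\mathrm{Par}_{G})$ and then reduce to checking the isomorphism after a single application of twisted linearity. Set $F_{1}:=\mathbf{D}_{\mathrm{BZ}}\circ a_{\psi}$ and $F_{2}:=a_{\psi^{-1}}\circ\mathbf{D}_{\mathrm{twGS}}$. Both land in $D(\mathrm{Bun}_{G})^{\omega}$ thanks to the reasonableness hypothesis on $G$. The first claim is that both satisfy the \emph{twisted-linearity} identity
\[
F_{i}(\mathcal{F}\otimes\mathcal{G})\simeq c^{*}\mathcal{F}^{\vee}\ast F_{i}(\mathcal{G})
\]
naturally in $\mathcal{F}\in\mathrm{Perf}(\mathrm{Par}_{G})$ and $\mathcal{G}\in\mathrm{Perf^{qc}}(\mathrm{Par}_{G})$. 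For $F_{2}$ this is automatic: $\mathbf{D}_{\mathrm{GS}}$ is symmetric monoidal on $\mathrm{Perf}$ (since $\omega\simeq\mathcal{O}$), $c^{*}$ is a symmetric monoidal involution, and $a_{\psi^{-1}}$ is $\mathrm{Perf}(\mathrm{Par}_{G})$-linear by construction.

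The core of the proof is the analogous twisted linearity for $F_{1}$, i.e.\ the identity
\[
\mathbf{D}_{\mathrm{BZ}}(\mathcal{F}\ast X)\simeq c^{*}\mathcal{F}^{\vee}\ast\mathbf{D}_{\mathrm{BZ}}(X)
\]
for perfect $\mathcal{F}$ and compact $X\in D(\mathrm{Bun}_{G})^{\omega}$. Using the defining characterization $R\mathrm{Hom}(\mathbf{D}_{\mathrm{BZ}}(A),B)\simeq\pi_{\natural}(A\otimes B)$ of BZ duality, together with the adjunction $(\mathcal{F}\ast,\mathcal{F}^{\vee}\ast)$ on $D(\mathrm{Bun}_{G})$, the identity reduces to a Chevalley-twisted self-adjointness property of the spectral action with respect to the Bernstein-Zelevinsky trace:
\[
\pi_{\natural}\bigl((\mathcal{F}\ast X)\otimes Y\bigr)\simeq\pi_{\natural}\bigl(X\otimes(c^{*}\mathcal{F}\ast Y)\bigr).
\]
By the construction of the spectral action from Hecke operators, it is enough to check this when $\mathcal{F}=\tau_{G}^{*}V$ for $V\in\mathrm{Rep}(\hat{G})$, where it becomes a statement about the interaction of $T_{V}$ and $T_{c^{*}V}$ with Verdier duality on the local Hecke stack. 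The Chevalley involution enters because Verdier duality of the $\hat{G}$-equivariant Hecke kernels produces a $c^{*}$-twist on the spectral side.

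With twisted linearity on both sides, a natural equivalence $F_{1}\simeq F_{2}$ is pinned down by an isomorphism after a single instance of applied twisted linearity. Writing $\mathcal{G}\simeq\mathcal{G}\otimes\mathcal{O}$ formally and passing to a system of compact approximations of $\mathcal{O}$, one reduces to the single ``universal Whittaker duality'' statement $\mathbf{D}_{\mathrm{BZ}}(i_{1!}W_{\psi})\simeq i_{1!}W_{\psi^{-1}}$, to be interpreted compatibly with the non-compactness of the Whittaker sheaf---e.g.\ by verifying the induced isomorphism $\mathbf{D}_{\mathrm{BZ}}(\mathcal{G}\ast i_{1!}W_{\psi})\simeq c^{*}\mathcal{G}^{\vee}\ast i_{1!}W_{\psi^{-1}}$ for every compact $\mathcal{G}\in\mathrm{Perf^{qc}}(\mathrm{Par}_{G})$, both sides of which are compact by reasonableness. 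This last identity follows from Proposition~\ref{prop:dualitypush} applied at the basic point $b=1$ (where the renormalized functors $i_{1!}$ and $i_{1\sharp}$ coincide) combined with the Whittaker self-duality $\mathbf{D}_{\mathrm{coh}}(W_{\psi})\simeq W_{\psi^{-1}}$, a classical identity reflecting the self-duality of compactly-supported Whittaker functions under inversion of the character.

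I expect the main obstacle to be the Chevalley-twisted self-adjointness of Step 2: placing the $c^{*}$ correctly requires careful tracking through the Fargues-Scholze geometric Satake and Lafforgue excursion operator formalism, since precisely this twist is what accounts for the shift from $\psi$ to $\psi^{-1}$ in the target Whittaker datum. A secondary subtlety is the non-compactness of $i_{1!}W_{\psi}$, which forces the final identification to be phrased via the compactly-supported sheaves $\mathcal{G}\ast i_{1!}W_{\psi}$ rather than via $i_{1!}W_{\psi}$ directly.
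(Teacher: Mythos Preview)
Your approach is essentially the same as the paper's: the two key ingredients are (a) the twisted-linearity identity $\mathbf{D}_{\mathrm{BZ}}(\mathcal{F}\ast A)\simeq\mathbf{D}_{\mathrm{twGS}}(\mathcal{F})\ast\mathbf{D}_{\mathrm{BZ}}(A)$ for $\mathcal{F}\in\mathrm{Perf}$ and $A$ compact, and (b) the Whittaker self-duality $\mathbf{D}_{\mathrm{coh}}(W_{\psi})\simeq W_{\psi^{-1}}$, with reasonableness used to ensure the relevant objects are compact. The paper invokes exactly these, citing \cite[Theorem VIII.5.1 and Theorem IX.2.2]{FS} for (a) and Theorem~\ref{thm:Wpsinice} for (b).

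Two small points where your write-up could be tightened. First, the reduction ``it is enough to check this when $\mathcal{F}=\tau_{G}^{*}V$'' is not literally a generation statement---pullbacks along $\tau_{G}$ do not generate $\mathrm{Perf}(\mathrm{Par}_{G})$ as a thick subcategory---so one really does need to go through the full excursion-operator construction of the spectral action (as you acknowledge in your final paragraph); the paper sidesteps this by citing the relevant results in \cite{FS} directly. Second, the ``compact approximations of $\mathcal{O}$'' framing is a bit roundabout: more cleanly, for $\mathcal{G}\in\mathrm{Perf^{qc}}$ the reasonableness hypothesis ensures $\mathcal{G}\ast i_{1!}W_{\psi}=\mathcal{G}\ast i_{1!}(e_{r}W_{\psi})$ for some finite depth $r$, and $e_{r}W_{\psi}$ is honestly compact by Theorem~\ref{thm:Wpsinice}.ii, so one can apply (a) with $A=i_{1!}(e_{r}W_{\psi})$ directly and then use $\mathbf{D}_{\mathrm{coh}}(e_{r}W_{\psi})\simeq e_{r}W_{\psi^{-1}}$.
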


\begin{proof}[Sketch]
This follows by combining Theorem \ref{thm:Wpsinice} and Proposition
\ref{prop:reasonable-implications}.i with the following result: For
any $\mathcal{F}\in\mathrm{Perf}(\mathrm{Par}_{G})$ and $A\in D(\mathrm{Bun}_{G})^{\omega}$,
one has $\mathbf{D}_{\mathrm{BZ}}(\mathcal{F}\ast A)\simeq\mathbf{D}_{\mathrm{tw.GS}}(\mathcal{F})\ast\mathbf{D}_{\mathrm{BZ}}(A)$.
This can be deduced from \cite[Theorem VIII.5.1 and Theorem IX.2.2]{FS}.
\end{proof}
\begin{xca}
(Difficult.) Assuming that $G$ is reasonable and $\ell$ is a very
good prime in the sense of \cite{FS}, formulate and prove the appropriate
variant of Proposition \ref{prop:actiondualitycompatible} with $\Lambda=\mathbf{\overline{\mathbf{Z}_{\ell}}}$
or with $\Lambda=\overline{\mathbf{F}_{\ell}}$. (Note that this can
be done \emph{without }imposing some additional condition of being
``$\ell$-integrally reasonable''.)
\end{xca}

We also emphasize that Conjecture \ref{conj:categorical-cpsi-unconditional}
is really the \emph{only }way the categorical equivalence can be realized:
\begin{xca}
Check that there is \emph{at most one }equivalence of categories $\mathbf{L}_{\psi}:D(\mathrm{Bun}_{G})\overset{\sim}{\to}\mathrm{IndCoh}(\mathrm{Par}_{G})$
such that the diagram
\[
\xymatrix{\mathrm{QCoh}(\mathrm{Par}_{G})\ar[r]^{a_{\psi}}\ar[dr]^{\Xi} & D(\mathrm{Bun}_{G})\ar[d]_{\mathbf{L}_{\psi}}^{\wr}\\
 & \mathrm{IndCoh}(\mathrm{Par}_{G})
}
\]
commutes. Show that this equivalence exists\emph{ if and only if}
Conjecture \ref{conj:categorical-cpsi-unconditional} is true, in
which case $\mathbf{L}_{\psi}$ is the ind-completion of the resulting
equivalence $c_{\psi}:D(\mathrm{Bun}_{G})^{\omega}\overset{\sim}{\to}\mathrm{Coh}(\mathrm{Par}_{G})$.
Show that if such an equivalence exists, $a_{\psi}$ is fully faithful. 

(Hint: Pass to right adjoints in the triangle above, and use that
$\Psi$ is an equivalence on the obvious copies of $\mathrm{Coh}$.)
\end{xca}

\subsubsection{A variant with restricted variation}

Recall that we defined the category of finite sheaves $D(\mathrm{Bun}_{G})_{\mathrm{fin}}$.
\begin{conjecture}
\label{conj:categorical-restr-var}The functor $c_{\psi}$ restricts
to an equivalence from $D(\mathrm{Bun}_{G})_{\mathrm{fin}}$ towards
the full subcategory $\mathrm{Coh}(\mathrm{Par}_{G})_{\mathrm{fin}}\subset\mathrm{Coh}(\mathrm{Par}_{G})$
spanned by objects which are supported set-theoretically on finitely
many fibers of the map $q:\mathrm{Par}_{G}\to X_{G}^{\mathrm{spec}}$.
\end{conjecture}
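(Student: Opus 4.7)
The plan is to deduce this from Conjecture \ref{conj:categorical-cpsi-unconditional}, granting which $c_{\psi}$ is already an equivalence between $D(\mathrm{Bun}_{G})^{\omega}$ and $\mathrm{Coh}(\mathrm{Par}_{G})$, so the remaining task is to check that the two prescribed full subcategories correspond under this equivalence. The key structural input is the compatibility of $c_{\psi}$ with the spectral action, which via the Fargues--Scholze map $\mathcal{O}(\mathrm{Par}_{G})\to\mathfrak{Z}(D(\mathrm{Bun}_{G}))$ says that the action of $\mathcal{O}(X_{G}^{\mathrm{spec}})=\mathcal{O}(\mathrm{Par}_{G})$ on $A\in D(\mathrm{Bun}_{G})^{\omega}$ agrees under $c_{\psi}$ with the tautological $\mathcal{O}(\mathrm{Par}_{G})$-module structure on $c_{\psi}(A)$. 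Support conditions on $c_{\psi}(A)$ can therefore be read off from the $\mathcal{O}(X_{G}^{\mathrm{spec}})$-action on $A$ coming from Fargues--Scholze.

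First I would check that $c_{\psi}$ carries $D(\mathrm{Bun}_{G})_{\mathrm{fin}}$ into $\mathrm{Coh}(\mathrm{Par}_{G})_{\mathrm{fin}}$. Fix $A$ finite, and use the canonical decomposition $A\cong\bigoplus_{\phi}A_{\phi}$ (finite sum) reviewed just before Definition \ref{def:finite-sheaves}; it suffices to treat each $A_{\phi}$. Since each $A_{\phi}$ is finite, $\mathrm{End}(A_{\phi})$ is Artinian, so the image of $\mathcal{O}(X_{G}^{\mathrm{spec}})$ in $\mathrm{End}(A_{\phi})$ is Artinian. Combined with $\phi$-locality and Theorem \ref{thm:ibcompatibleparameters}, this action factors through a local Artinian quotient with residue field $\kappa(x_{\phi})$: indeed, every irreducible subquotient of any $H^{n}(i_{b}^{\ast\mathrm{ren}}A_{\phi})$ has Fargues--Scholze parameter $\phi$, hence by Theorem \ref{thm:ibcompatibleparameters} the $\mathcal{O}(X_{G}^{\mathrm{spec}})$-action on it is through $\kappa(x_{\phi})$, and the action on $A_{\phi}$ itself is obtained by iterated extension. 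Transporting via $c_{\psi}$, this forces $c_{\psi}(A_{\phi})$ to be set-theoretically supported in $q^{-1}(x_{\phi})$, and summing gives that $c_{\psi}(A)$ has support in finitely many $q$-fibers.

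For the converse, take $\mathcal{F}\in\mathrm{Coh}(\mathrm{Par}_{G})_{\mathrm{fin}}$ and let $A=c_{\psi}^{-1}(\mathcal{F})\in D(\mathrm{Bun}_{G})^{\omega}$; I need only check $A$ is ULA, since compactness is given. The hypothesis on $\mathcal{F}$ means the $\mathcal{O}(X_{G}^{\mathrm{spec}})$-action on $\mathcal{F}$ factors through an Artinian quotient $R$, and by spectral compatibility the Fargues--Scholze action on $A$ does too. Fix a stratum $b$ with $i_{b}^{\ast\mathrm{ren}}A\neq 0$ (only finitely many such $b$ by compactness). By Theorem \ref{thm:ibcompatibleparameters}, the action of $\mathcal{O}(X_{G}^{\mathrm{spec}})$ on $i_{b}^{\ast\mathrm{ren}}A$ factors through the Bernstein center $\mathfrak{Z}(G_{b}(E))=\mathcal{O}(X_{G_{b}})$ via the composite $X_{G_{b}}\to X_{G_{b}}^{\mathrm{spec}}\to X_{G}^{\mathrm{spec}}$. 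The preimage in $X_{G_{b}}$ of the finite set $\mathrm{Spec}\,R\subset X_{G}^{\mathrm{spec}}$ consists of finitely many points on each component, by Lemma \ref{lem:psihasdiscretefibers} and finiteness of $X_{G_{b}}^{\mathrm{spec}}\to X_{G}^{\mathrm{spec}}$ on components; intersecting with the quasicompact Bernstein support of $i_{b}^{\ast\mathrm{ren}}A$ yields a zero-dimensional support. The admissibility criterion used in the proof of Theorem \ref{thm:finiteBZduality} then gives that $i_{b}^{\ast\mathrm{ren}}A$ is admissible, so $A$ is ULA by the stratumwise characterization.

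The main obstacle is, of course, Conjecture \ref{conj:categorical-cpsi-unconditional} itself; given that, the only further nontrivial point is verifying that the spectral action on $D(\mathrm{Bun}_{G})$ is genuinely compatible with Theorem \ref{thm:ibcompatibleparameters} in a strong enough sense that the Artinian-quotient condition on the $\mathcal{O}(X_{G}^{\mathrm{spec}})$-action transfers cleanly to each stratum's Bernstein center action. Everything else is bookkeeping with supports, Artinian quotients, and the finiteness of the relevant spectral coarse maps from Lemma \ref{lem:psihasdiscretefibers}.
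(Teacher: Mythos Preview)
The statement is a \emph{conjecture}, and the paper does not give a proof of it; rather, the surrounding discussion asserts (without details) that Conjecture \ref{conj:categorical-cpsi-unconditional} implies Conjecture \ref{conj:categorical-restr-var}, and conversely that the latter plus preservation of compact objects by $c_{\psi}$ implies the former. Your proposal is precisely an argument for the forward implication, and in that sense it fleshes out what the paper only asserts. Your forward direction (finite $A$ maps to $\mathrm{Coh}(\mathrm{Par}_{G})_{\mathrm{fin}}$) matches the paper's one-line sketch exactly: Artinian $\mathrm{End}(A)$ forces the $\mathcal{O}(X_{G}^{\mathrm{spec}})$-action to factor over an Artinian quotient, whence the support condition on $c_{\psi}(A)$.

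Your converse direction is where you add content the paper omits, and the argument is sound: transferring the Artinian-quotient condition from $\mathcal{O}(X_{G}^{\mathrm{spec}})$ to the Bernstein-variety support of each $i_{b}^{\ast\mathrm{ren}}A$ via Theorem \ref{thm:ibcompatibleparameters}, then invoking Lemma \ref{lem:psihasdiscretefibers} plus compactness to get zero-dimensional support and applying the admissibility criterion from the proof of Theorem \ref{thm:finiteBZduality}. One small point worth making explicit: the passage from ``$\mathcal{O}(X_{G}^{\mathrm{spec}})$ acts through an Artinian quotient'' to ``Bernstein support lies over finitely many points of $X_{G}^{\mathrm{spec}}$'' uses the elementary fact that support commutes with restriction of scalars along the ring map $\mathcal{O}(X_{G}^{\mathrm{spec}})\to\mathfrak{Z}(G_{b}(E))$; you use this implicitly and it would be worth saying. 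Otherwise your argument is correct and is exactly the ``little more thought'' the paper gestures at.
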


This is reminiscent of (and motivated by) the recent ``restricted
variation'' variant of classical geometric Langlands. The compatibility
of $c_{\psi}$ with the action of the spectral Bernstein center easily
implies that for any $A\in D(\mathrm{Bun}_{G})_{\mathrm{fin}}$, the
quasicoherent complex $c_{\psi}(A)$ does in fact satisfy the desired
support condition. One can be more precise: for any finite sheaf $A$,
the ring $\mathrm{End}(A)$ is Artinian, so the natural map $\mathcal{O}(X_{G}^{\mathrm{spec}})\to\mathrm{End}(A)$
factors over an Artinian quotient. Writing $Z_{A}\subset X_{G}^{\mathrm{spec}}$
for the associated finite-length subscheme, the sheaf $c_{\psi}(A)$
then has support contained in $q^{-1}(Z_{A})$. With a little more
thought, it's not difficult to see that Conjecture \ref{conj:categorical-cpsi-unconditional}
implies Conjecture \ref{conj:categorical-restr-var}. In fact, the
converse implication also holds, in the following form.
\begin{prop}
Suppose that the functor $c_{\psi}$ preserves compact objects and
that Conjecture \ref{conj:categorical-restr-var} is true, with coefficients
in all algebraically closed fields $L/\overline{\mathbf{Q}_{\ell}}$.
Then Conjecture \ref{conj:categorical-cpsi-unconditional} is true.
\end{prop}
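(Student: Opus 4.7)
The plan is to prove both full faithfulness and essential surjectivity of $c_\psi$ on $D(\mathrm{Bun}_G)^\omega$, in each case using the $\mathrm{QCoh}(\mathrm{Par}_G)$-linearity of $c_\psi$ to localize to situations where Conjecture \ref{conj:categorical-restr-var} applies directly.

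For full faithfulness, fix $A, B \in D(\mathrm{Bun}_G)^\omega$ and consider the natural map $R\mathrm{Hom}(A, B) \to R\mathrm{Hom}(c_\psi A, c_\psi B)$. The left side is finitely generated over $\mathcal{O}(\mathrm{Par}_G)$ by the exercise following Lemma \ref{lem:psihasdiscretefibers}, while the right side is finitely generated because $c_\psi A, c_\psi B$ lie in $\mathrm{Coh}(\mathrm{Par}_G)$ on the Noetherian stack $\mathrm{Par}_G$. It therefore suffices to check that the map is an equivalence after completing at each closed point $x \in X_G^{\mathrm{spec}}$. Using $\mathrm{QCoh}$-linearity of $c_\psi$, each such completion can be computed as the inverse limit over $n$ of $R\mathrm{Hom}(A, (q^\ast \mathcal{O}/\mathfrak{m}_x^n) \ast B)$; these tensored objects have spectral support in the fiber $q^{-1}(x)$, placing them in the ``fin'' subcategories, so Conjecture \ref{conj:categorical-restr-var} supplies term-by-term equivalences that pass to the limit.

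For essential surjectivity, the essential image $\mathcal{E} \subset \mathrm{Coh}(\mathrm{Par}_G)$ of $c_\psi$ is a thick triangulated subcategory containing $\mathrm{Coh}(\mathrm{Par}_G)_{\mathrm{fin}}$ (by the restricted-variation hypothesis) and stable under the $\otimes$-action of $\mathrm{Perf}(\mathrm{Par}_G)$ (since $c_\psi$ is $\mathrm{QCoh}$-linear and the $\mathrm{Perf}$-action preserves compact sheaves). I would then show $\mathcal{E} = \mathrm{Coh}(\mathrm{Par}_G)$ by Noetherian induction on $\dim q(\mathrm{supp}\,\mathcal{F}) \subset X_G^{\mathrm{spec}}$: in the inductive step, choose $f \in \mathcal{O}(X_G^{\mathrm{spec}})$ cutting down the support of $\mathcal{F}$; the Koszul cone $\mathcal{F} \otimes^{\mathbb{L}}_{\mathcal{O}} \mathcal{O}/f$ has strictly smaller support and thus lies in $\mathcal{E}$ by induction, and $\mathcal{F}$ itself is recovered from such Koszul-type pieces via a formal-gluing argument along $\{f=0\} \subset X_G^{\mathrm{spec}}$ and its complement.

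The hard part is essential surjectivity: the sketched induction is not fully self-contained, because recovering $\mathcal{F}$ from its Koszul truncations involves a limit that a priori escapes the compact/coherent world, and one must verify that $\mathrm{Par}_G$ is geometrically well-behaved enough (e.g.\ its components have reasonable singularities over $X_G^{\mathrm{spec}}$) for the reconstruction to close up. A cleaner route would be to use full faithfulness to construct the candidate inverse $L_\psi: \mathrm{Coh}(\mathrm{Par}_G) \to D(\mathrm{Bun}_G)^\omega$ as the right Kan extension of the restricted-variation inverse along $\mathrm{Coh}(\mathrm{Par}_G)_{\mathrm{fin}} \hookrightarrow \mathrm{Coh}(\mathrm{Par}_G)$, and then verify (using the compact-preservation hypothesis and $\mathrm{QCoh}$-linearity on both sides) that $L_\psi$ lands in $D(\mathrm{Bun}_G)^\omega$ and is quasi-inverse to $c_\psi$.
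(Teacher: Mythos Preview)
The paper's own proof is a single sentence citing \cite[Section 21.4]{AGKRRV}, so there is no detailed internal argument to compare against. Your outline is broadly in the spirit of that reference: use $\mathrm{QCoh}(\mathrm{Par}_G)$-linearity to localize over $X_G^{\mathrm{spec}}$ and reduce to the restricted-variation equivalence.

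One technical wrinkle in your full-faithfulness sketch: the object $(q^{\ast}\mathcal{O}/\mathfrak{m}_x^n)\ast B$ need not be compact (or even finite), because $q^{\ast}\mathcal{O}/\mathfrak{m}_x^n$ is generally not perfect on $\mathrm{Par}_G$ --- the map $q$ fails to be flat away from the generous locus, and the spectral action only preserves compactness when acting by perfect complexes. The standard repair is to replace $\mathcal{O}/\mathfrak{m}_x^n$ by the Koszul complex on a system of parameters for $\mathfrak{m}_x$ in $\mathcal{O}(X_G^{\mathrm{spec}})$, which \emph{is} perfect and still detects completion; this is how such arguments are usually run.

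You correctly identify essential surjectivity as the hard step, and your sketch there is genuinely incomplete. The Noetherian induction runs into exactly the difficulty you name: reconstructing $\mathcal{F}$ from its Koszul truncations is a limit, and limits do not obviously stay inside $\mathrm{Coh}$ or inside the essential image. The AGKRRV approach avoids this by working at the level of the Ind-completed categories and exploiting that both sides are compactly generated module categories over $\mathrm{QCoh}(\mathrm{Par}_G)$; once the functor is known to be fully faithful on compacts and essentially surjective on the fin subcategory, the rest becomes a formal statement about such module categories rather than a hands-on reconstruction. Your alternative Kan-extension idea points in this direction but, as written, is a wish rather than an argument --- you would still need to explain why the extended functor lands in compact objects, which is precisely where the ``preserves compact objects'' hypothesis earns its keep.
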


\begin{proof}
This follows by a straightforward adaptation of the arguments in \cite[Section 21.4]{AGKRRV}.
\end{proof}
Without the freedom to vary the coefficient field, the question of
whether restricted implies full is harder, and will be addressed in
some cases in \cite{HM}.

We emphasize that the coherence property of $c_{\psi}$ applied to
any finite sheaf does not seem easy. Here is some weak evidence in
its favor.
\begin{prop}
Suppose $A\in D(\mathrm{Bun}_{G})_{\mathrm{fin}}$. Then for any $\mathcal{F}\in\mathrm{Perf}(\mathrm{Par}_{G})$,
$R\Gamma(\mathrm{Par}_{G},\mathcal{F}\otimes c_{\psi}(A))\in\mathrm{Perf}(\overline{\mathbf{Q}_{\ell}})$.
\end{prop}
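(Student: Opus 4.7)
The plan is to reduce the claim, via the defining adjunction for $c_\psi$ and the dualizability of $\mathcal{F}$, to the Fargues--Scholze finiteness result for $R\mathrm{Hom}$ between compact and ULA objects recalled earlier in the text. This turns the question into checking two things: that $A$ is ULA (which it is by hypothesis), and that a certain spectral translate of $i_{1!}W_\psi$ is compact.

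First, I would unfold the left-hand side. Since $\mathcal{F}\in\mathrm{Perf}(\mathrm{Par}_G)$ is dualizable, the endofunctor $\mathcal{F}\ast(-)$ of $D(\mathrm{Bun}_G)$ admits both a left and a right adjoint, each given by $\mathcal{F}^\vee\ast(-)$; this is exactly the adjunction already exploited in the proof that $a_\psi$ is left adjoint to $c_\psi$. Combining it with the defining isomorphism of $c_\psi$ gives
\[
R\Gamma\bigl(\mathrm{Par}_G,\,\mathcal{F}\otimes c_\psi(A)\bigr)\;\simeq\; R\mathrm{Hom}\bigl(i_{1!}W_\psi,\;\mathcal{F}\ast A\bigr)\;\simeq\; R\mathrm{Hom}\bigl(\mathcal{F}^\vee\ast i_{1!}W_\psi,\; A\bigr).
\]

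Next I would apply the finiteness statement recalled in the excerpt (from \cite[Prop.~VII.7.4 and Prop.~VII.7.9]{FS}): $R\mathrm{Hom}(X,Y)\in\mathrm{Perf}(\overline{\mathbf{Q}_\ell})$ whenever $X$ is compact and $Y$ is ULA. By hypothesis $A$ is finite and in particular ULA, so everything comes down to checking that $\mathcal{F}^\vee\ast i_{1!}W_\psi$ is compact. Since the spectral action by a perfect object preserves compactness (part of the spectral action theorem), and $\mathcal{F}^\vee$ is again perfect, this reduces in turn to compactness of $i_{1!}W_\psi$ itself. By Proposition~\ref{prop:dualitypush}.o the functor $i_{1!}$ preserves compact objects, so the whole argument boils down to compactness of $W_\psi$ as a smooth $G(E)$-representation.

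The only non-formal ingredient is therefore compactness of $W_\psi=\mathrm{ind}_U^G(\psi)$, which I expect to be the main obstacle: it is not visible from the definition (indeed, $W_\psi$ is a ``large'' representation containing every generic irreducible as a subrepresentation) and rests on a nontrivial analysis of Whittaker functions. I would invoke it as one of the ``excellent properties'' of $W_\psi$ collected in Appendix~A (see Theorem~\ref{thm:Wpsinice}); granted this, Bernstein's characterization of compact objects in $D(G(E),\overline{\mathbf{Q}_\ell})$ (a representation concentrated in degree zero is compact iff it is finitely generated) then finishes the reduction, and the two steps above give the claimed perfectness.
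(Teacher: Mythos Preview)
Your reduction is clean up to the last step, but the claim that $W_\psi$ is compact in $D(G(E),\overline{\mathbf{Q}_\ell})$ is false, and this is precisely the point where the argument breaks. Theorem~\ref{thm:Wpsinice} does \emph{not} assert that $W_\psi$ is finitely generated; it only says that each depth-bounded piece $e_r W_\psi$ (equivalently, each summand $W_{\psi,\mathfrak{s}}$ on a single Bernstein component) is finitely generated projective. The full $W_\psi$ is supported on infinitely many Bernstein components and is therefore not compact. Correspondingly, $i_{1!}W_\psi$ is not a compact object of $D(\mathrm{Bun}_G)$: this is exactly why the discussion after Conjecture~\ref{conj:categoricalLLCmain} treats the compactness of $a_\psi(\mathcal{F})=\mathcal{F}\ast i_{1!}W_\psi$ for $\mathcal{F}\in\mathrm{Perf}^{\mathrm{qc}}$ as a nontrivial expectation, known only for reasonable $G$ (Proposition~\ref{prop:reasonable-implications}.i).

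The paper's proof repairs this by actually using the \emph{compactness} half of the hypothesis $A\in D(\mathrm{Bun}_G)_{\mathrm{fin}}$, which your argument never touches. After reducing to $\mathcal{F}=\tau_G^\ast V$ and rewriting as $R\mathrm{Hom}(W_\psi,\,i_1^\ast T_V A)$, one observes that $T_V A$ is again finite, hence $i_1^\ast T_V A$ has finite length and is supported on a finite union of Bernstein components. Letting $e$ be the corresponding idempotent, one may replace $W_\psi$ by $eW_\psi$, and \emph{this} truncated Whittaker module is compact by Bushnell--Henniart. The perfectness then follows. In short, the missing idea is that finiteness of $A$ is what lets you cut $W_\psi$ down to a compact piece; without it, the Hom is genuinely infinite-dimensional in general.
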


\begin{proof}[Sketch.]
Reduce to the case where $\mathcal{F}$ is a vector bundle pulled
back from $V\in\mathrm{Rep}(B\hat{G})$, so then
\begin{align*}
R\Gamma(\mathrm{Par}_{G},V\otimes c_{\psi}(A)) & \simeq R\mathrm{Hom}(i_{1!}W_{\psi},T_{V}A)\\
 & \simeq R\mathrm{Hom}(W_{\psi},i_{1}^{\ast}T_{V}A)
\end{align*}
by direct examination of the definitions. Then $A$ is finite and
Hecke operators preserve finite sheaves, so $i_{1}^{\ast}T_{V}A$
has only finitely many nonzero cohomologies, each of finite length,
and hence is supported on a finite union of Bernstein components.
If $e$ is the idempotent projector onto this finite union of Bernstein
components, then $eW_{\psi}$ is compact by a result of Bushnell-Henniart
cited in the discussion before Theorem \ref{thm:Wpsinice}. Then
\[
R\mathrm{Hom}(W_{\psi},i_{1}^{\ast}T_{V}A)\simeq R\mathrm{Hom}(eW_{\psi},i_{1}^{\ast}T_{V}A)
\]
is perfect, as desired.
\end{proof}
Finally, we note that it can be useful to localize even further and
study Conjecture \ref{conj:categorical-restr-var} ``one semisimple
parameter at a time''. More precisely, for any fixed semisimple parameter
$\phi$, define $\mathrm{Coh}(\mathrm{Par}_{G})_{\phi}$ as the full
subcategory of $\mathrm{Coh}(\mathrm{Par}_{G})$ spanned by objects
which are supported set-theoretically on the fiber $q^{-1}(x_{\phi})$.
Clearly we have a direct sum decomposition of categories
\[
\mathrm{Coh}(\mathrm{Par}_{G})_{\mathrm{fin}}=\bigoplus_{\phi}\mathrm{Coh}(\mathrm{Par}_{G})_{\phi}.
\]
Then Conjecture \ref{conj:categorical-restr-var} is obviously equivalent
to the expectation that $c_{\psi}$ induces an equivalence $D(\mathrm{Bun}_{G})_{\mathrm{fin},\phi}\overset{\sim}{\to}\mathrm{Coh}(\mathrm{Par}_{G})_{\phi}$
for every $\phi$.
\begin{xca}
Show that if $G$ is reasonable (Definition \ref{def:reasonablegroups}),
the functor $a_{\psi}$ induces functors
\[
\mathrm{Perf}(\mathrm{Par}_{G})_{\mathrm{fin}}\to D(\mathrm{Bun}_{G})_{\mathrm{fin}}
\]
and

\[
\mathrm{Perf}(\mathrm{Par}_{G})_{\phi}\to D(\mathrm{Bun}_{G})_{\mathrm{fin},\phi}
\]
for every semisimple parameter $\phi$.
\end{xca}

\section{Compatibility with classical local Langlands}

We continue to fix $\Lambda=\overline{\mathbf{Q}_{\ell}}$ as our
coefficient ring, and drop it from the notation. In what sense does
the categorical local Langlands conjecture encode classical local
Langlands? The answer to this question turns out to be surprisingly
complicated. Let us begin with a naive guess for what might be true,
closely following \cite[Remark I.10.3]{FS}.

\textbf{Best Hope. }The categorical equivalence is t-exact with respect
to the perverse t-structure on $D(\mathrm{Bun}_{G})$ and some exotic
perverse coherent t-structure on $\mathrm{Coh}(\mathrm{Par}_{G})$,
and thus induces a bijection between irreducible objects in the hearts.
On the $\mathrm{Bun}_{G}$ side, the irreducibles are indexed by pairs
$(b,\pi)$ where $b\in B(G)$ and $\pi\in\mathrm{Irr}(G_{b}(E))$.
On the $\mathrm{Par}_{G}$ side, the irreducibles are indexed by pairs
$(\phi,\rho)$ where $\phi$ is a Frobenius-semisimple $L$-parameter
and $\rho$ is an irreducible algebraic representation of the centralizer
$S_{\phi}$.

Unfortunately, this ``best hope'' seems slightly too simple, for
several related reasons:
\begin{enumerate}
\item $\mathrm{Par}_{G}$ includes many points corresponding to $L$-parameters
which are \emph{not }Frobenius-semisimple, which play no role in the
classical local Langlands correspondence.
\item The perverse t-structure on $D(\mathrm{Bun}_{G})$ is \emph{not} self-dual
with respect to $\mathbf{D}_{\mathrm{BZ}}$, which however is the
natural duality appearing in the categorical conjecture.
\item The geometry of $\mathrm{Par}_{G}$ seems to preclude any simple direct
definition of the hoped-for t-structure, especially around \emph{L}-parameters
with nontrivial monodromy, since perverse coherence is not (naively)
a reasonable notion around these parameters.
\end{enumerate}
One of our main points is that neverthless, the ``best hope'' should
be true for most $L$-parameters, in a sense we will make precise.
The starting point for this is a remarkable recent result of Bertoloni
Meli-Oi \cite{BMO}, which says that the bijection on irreducibles
predicted by the ``best hope'' is actually true! We will then turn
things around and use their results as a guide to formulate some precise
guesses for how the categorical LLC interacts with the classical LLC.

Here is a brief and impressionistic outline of our current understanding.
We will spend most of this section developing this outline into a
precise collection of conjectures, and giving evidence for them.
\begin{itemize}
\item After localizing around a \emph{generous} \emph{L}-parameter (Definition
\ref{def:generous}), the perverse and hadal t-structures on (appropriately
decorated versions of) $D(\mathrm{Bun}_{G})$ should coincide, and
they should match with the standard t-structure on $\mathrm{Coh}(\mathrm{Par}_{G})$.
Moreover, sheaves on different strata of $\mathrm{Bun}_{G}$ will
not interact with each other, and Hecke operators should be t-exact.
\item After localizing around a \emph{semisimple generic} \emph{L-}parameter
(Definition \ref{prop:ss-gen-equivalent-conditions}), the perverse
t-structure on $D(\mathrm{Bun}_{G})$ should match the standard t-structure
on $\mathrm{Coh}(\mathrm{Par}_{G})$, and the hadal t-structure on
$D(\mathrm{Bun}_{G})$ should match some perverse coherent t-structure
on $\mathrm{Coh}(\mathrm{Par}_{G})$. Sheaves on different strata
of $\mathrm{Bun}_{G}$ will interact with each other in highly nontrivial
ways, but Hecke operators should still be t-exact for both t-structures.
\item Around a generic \emph{L}-parameter, the spectral action should encode
the classical local Langlands correspondence in a precise way, although
we do not yet know how to upgrade this to a t-exact matching of sheaves
as in the best hope. See section 3.1 for a detailed discussion.
\end{itemize}
Beyond the semisimple generic case, we have the following very natural
general question: The hadal t-structure is defined on the whole category
$D(\mathrm{Bun}_{G})^{\omega}$, so Conjecture \ref{conj:categorical-cpsi-unconditional}
implies that it \emph{must }correspond to \emph{some} t-structure
on $\mathrm{Coh}(\mathrm{Par}_{G})$. Can we describe this t-structure
intrinsically?
\begin{rem}
It is of significant interest to extend these ideas to integral and
torsion coefficients. Some precise conjectures in these settings have
been proposed by Hamann and Hamann-Lee (\cite[Section 3]{Ham3} and
\cite[Section 6.2]{HL}). We note in particular that our conditions
of being ``generous'' and ``semisimple generic'' match with the
conditions of being ``Langlands-Shahidi type'' and ``weakly Langlands-Shahidi
type'', respectively, as formulated in \cite[Definition 6.2]{HL}.
However, we emphasize that \cite[Definition 6.2]{HL} also makes sense
with $\overline{\mathbf{F}_{\ell}}$-coefficients, which falls outside
the scope of our discussion. I would also like to point out that I
landed on the notion of generous parameters while trying to understand
\cite{Ham}. Finally, we also refer to \cite{Ham3,HL} for some very
striking global applications of this philosophy.
\end{rem}

\subsection{Generous \emph{L}-parameters}

Let us begin by recalling the $B(G)_{\mathrm{bas}}$ form of the local
Langlands correspondence. Fix $G$ quasisplit and pinned as usual,
and fix a Whittaker datum $\psi$. For any $\phi$, we set $S_{\phi}^{\natural}=S_{\phi}/(S_{\phi}\cap\hat{G}^{\mathrm{der}})^{\circ}$.
This is a disconnected reductive group whose identity component is
a torus, and there is a natural map of algebraic groups $Z(\hat{G})^{\Gamma}\to S_{\phi}^{\natural}$
which turns out to be an isogeny.

The following form of the local Langlands conjecture was formulated
by Kottwitz in unpublished work. To the best of my knowledge it first
appeared in print in \cite{RapICM} (see the discussion preceding
Conjecture 5.1). We also refer to \cite[Conjecture F]{KalN} for a
modern and slightly more precise formulation.
\begin{conjecture}[$B(G)_{\mathrm{bas}}$ local Langlands correspondence]
For each $b\in B(G)_{\mathrm{bas}}$, there is a natural finite-to-one
map $\Pi(G_{b})\to\Phi(G)$. Writing $\Pi_{\phi}(G_{b})$ for the
fiber over $\phi$, there is a natural bijection
\[
\iota_{\psi}:\coprod_{b\in B(G)_{\mathrm{bas}}}\Pi_{\phi}(G_{b})\overset{\sim}{\to}\mathrm{Irr}(S_{\phi}^{\natural})
\]
depending only on the choice of Whittaker datum, such that the diagram
\[
\xymatrix{\underset{b\in B(G)_{\mathrm{bas}}}{\coprod}\Pi_{\phi}(G_{b})\ar[r]^{\,\;\;\;\;\;\;\;\iota_{\psi}}\ar[d] & \mathrm{Irr}(S_{\phi}^{\natural})\ar[d]\\
B(G)_{\mathrm{bas}}\ar[r]^{\kappa_{G}} & X^{\ast}(Z(\hat{G})^{\Gamma})
}
\]
commutes. Here the left vertical arrow is the obvious projection,
the right vertical map is induced by restriction along $Z(\hat{G})^{\Gamma}\to S_{\phi}^{\natural}$,
and $\kappa_{G}$ is the Kottwitz isomorphism.
\end{conjecture}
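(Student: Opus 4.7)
The plan is to derive this conjecture as a consequence of the categorical equivalence (Conjecture \ref{conj:categorical-cpsi-unconditional}), combined with its expected compatibility with the central grading and with the Fargues--Scholze parametrization. The finite-to-one map $\Pi(G_b) \to \Phi(G)$ is already available unconditionally via Fargues--Scholze, and its fibers are finite by Lemma \ref{lem:psihasdiscretefibers} applied to $G_b$ together with Theorem \ref{thm:ibcompatibleparameters} (which identifies $\Psi_G^b$ with the composite $\Psi_{G_b}$ followed by the canonical map of spectral Bernstein varieties). So the real content is the construction and bijectivity of $\iota_\psi$, together with the compatibility with $\kappa_G$.

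To construct $\iota_\psi$, I would fix a semisimple parameter $\phi$ and work inside the summand $D(\mathrm{Bun}_G)_{\mathrm{fin},\phi}$ cut out by the spectral action of the Bernstein center. Granting Conjecture \ref{conj:categorical-restr-var}, the functor $c_\psi$ restricts to an equivalence with the full subcategory $\mathrm{Coh}(\mathrm{Par}_G)_\phi$ of coherent sheaves set-theoretically supported on $q^{-1}(x_\phi)$. On the automorphic side, Theorem \ref{thm:irreducible-hadal-sheaves} classifies the irreducible hadal sheaves with Fargues--Scholze parameter $\phi$ as pairs $(b,\pi)$ with $\pi \in \Pi_\phi(G_b)$; our task amounts to identifying those pairs with $b \in B(G)_{\mathrm{bas}}$ with $\mathrm{Irr}(S_\phi^\natural)$. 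On the spectral side, the fiber $q^{-1}(x_\phi)$ contains a unique closed $\overline{\mathbf{Q}_\ell}$-point corresponding to $\phi$ itself, whose stabilizer is $S_\phi$; irreducible coherent sheaves set-theoretically supported at this closed point are in bijection with $\mathrm{Irr}(S_\phi)$ via the locally closed immersion $i_\phi: BS_\phi \hookrightarrow \mathrm{Par}_G$. The expectation is that the matching under $c_\psi$ picks out precisely the pairs $(b,\pi)$ with $b$ basic on the automorphic side; and the central grading compatibility of $c_\psi$ (which pairs $\alpha \in \pi_1(G)_\Gamma$ with $\chi_\alpha \in X^\ast(Z(\hat G)^\Gamma)$) then refines $\mathrm{Irr}(S_\phi)$ to $\mathrm{Irr}(S_\phi^\natural)$ by restricting to representations whose central character on $Z(\hat G)^\Gamma$ equals $\chi_{\kappa_G(b)}$. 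The Kottwitz square then commutes tautologically, as it is the shadow of this grading match.

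The main obstacle is of course Conjecture \ref{conj:categorical-cpsi-unconditional} itself, which is the principal open problem of these notes. Even granting the categorical equivalence, a substantial secondary task is to identify precisely which pairs $(b,\pi)$ correspond to irreducible sheaves supported at the closed point of $q^{-1}(x_\phi)$: non-basic strata should carry finite sheaves whose spectral images spread out over the Springer-like geometry of the fiber (parametrized by pairs $(u,N)$ with $u \in \mathcal{U}_{S_\phi}$ and $N \in \mathfrak{g}^{\mathrm{ad}\,\phi(I_E)}$ satisfying the displayed relations), and one must show that the boundary case $u=1$, $N=0$ corresponds exactly to the basic locus of $\mathrm{Bun}_G$. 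This matches the classical philosophy that among all L-parameters with semisimplification $\phi$, only $\phi$ itself is visible in the $B(G)_{\mathrm{bas}}$ form of LLC; pinning down this alignment is the heart of the matter, and is exactly what the subsequent sections on generous and semisimple generic parameters are designed to control in the cleanest cases.
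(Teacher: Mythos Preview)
The statement you are attempting to prove is a \emph{conjecture} in the paper, and the paper does not prove it. It is introduced as a classical form of local Langlands due to Kottwitz, taken as a foundational hypothesis (see the sentence just before the statement, and the remark immediately after: ``this conjecture is now known unconditionally for many groups''). In particular, it serves as the \emph{input} to Theorem~\ref{thm:BMOllc} (Bertoloni Meli--Oi), not a consequence of anything developed in the paper. Your plan inverts the logical flow: the paper uses the $B(G)_{\mathrm{bas}}$ LLC to formulate and motivate expectations about the categorical equivalence (e.g.\ Conjecture~\ref{conj:generousdeep} and the discussion in Section~2.3), not the reverse.

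Setting that aside, your proposed derivation has a substantive gap even if one grants Conjecture~\ref{conj:categorical-cpsi-unconditional}. You assert that the central grading ``refines $\mathrm{Irr}(S_\phi)$ to $\mathrm{Irr}(S_\phi^\natural)$'' and that irreducibles supported at the closed point of $q^{-1}(x_\phi)$ correspond exactly to basic $b$. Neither claim follows from the grading compatibility alone: the central grading only records the restriction of $\rho$ to $Z(\hat G)^\Gamma$, which does not detect whether $\rho$ factors through the quotient $S_\phi^\natural = S_\phi/(S_\phi\cap\hat G^{\mathrm{der}})^\circ$. The actual alignment of $\mathrm{Irr}(S_\phi^\natural)\subset\mathrm{Irr}(S_\phi)$ with $B(G)_{\mathrm{bas}}\subset B(G)$ is precisely the content of the BM-O construction (see the Remark after Theorem~\ref{thm:BMOllc}), which itself \emph{assumes} the $B(G)_{\mathrm{bas}}$ LLC. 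So your argument would be circular even at the heuristic level.
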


Of course, the word ``natural'' is doing a lot of heavy lifting
here. However, this conjecture is now known unconditionally for many
groups.
\begin{thm}[Bertoloni Meli-Oi]
\label{thm:BMOllc}Suppose that the $B(G)_{\mathrm{bas}}$ LLC holds
for $G$ and all its standard Levi subgroups. Then for each $b\in B(G)$,
there is a natural finite-to-one map $\Pi(G_{b})\to\Phi(G)$. Writing
$\Pi_{\phi}(G_{b})$ for the fiber over $\phi$, there is a natural
bijection
\[
\iota_{\psi}:\coprod_{b\in B(G)_{\mathrm{}}}\Pi_{\phi}(G_{b})\overset{\sim}{\to}\mathrm{Irr}(S_{\phi})
\]
depending only on the choice of Whittaker datum, such that the diagram
\[
\xymatrix{\underset{b\in B(G)}{\coprod}\Pi_{\phi}(G_{b})\ar[r]^{\;\;\;\;\;\;\iota_{\psi}}\ar[d] & \mathrm{Irr}(S_{\phi})\ar[d]\\
B(G)\ar[r]^{\kappa_{G}} & X^{\ast}(Z(\hat{G})^{\Gamma})
}
\]
commutes. The left vertical map is the obvious projection and the
right vertical map is induced by restriction along $Z(\hat{G})^{\Gamma}\to S_{\phi}$.
\end{thm}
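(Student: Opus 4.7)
The plan is to bootstrap from the basic case using the canonical Newton reduction. Given $b \in B(G)$, let $M = M_{\{b\}}$ be the standard Levi given by the centralizer of the Newton cocharacter $\nu_b$. Then $b$ admits a canonical reduction to a basic element $b_M \in B(M)_{\mathrm{bas}}$, and the extended pure inner twist structure gives a canonical identification $G_b \cong M_{b_M}$; in particular $\Pi(G_b) = \Pi(M_{b_M})$. Since $M$ is a standard Levi, the assumed basic LLC for $M$ assigns to each $\pi \in \Pi(G_b)$ a pair $(\phi_M, \rho_M)$ with $\phi_M \in \Phi(M)$ and $\rho_M \in \mathrm{Irr}(S_{\phi_M}^{M,\natural})$. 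I would then define the $G$-parameter of $\pi$ by composing $\phi_M$ with the canonical $L$-embedding $\iota_M : {}^LM \hookrightarrow {}^LG$, and show that this construction descends to $\hat{G}$-conjugacy classes and is independent of the (essentially canonical) choice of $M$ up to conjugation.

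The core of the theorem is the bijection with $\mathrm{Irr}(S_\phi)$. Because $\phi = \iota_M \circ \phi_M$ factors through ${}^LM$, the inclusion $\hat{M} \subseteq \hat{G}$ induces $S_{\phi_M}^M \hookrightarrow S_\phi$, and $S_{\phi_M}^M$ plays the role of a Levi subgroup of the (possibly disconnected) reductive group $S_\phi$. The plan is to establish a Harish-Chandra-style cuspidal support bijection
\[
\mathrm{Irr}(S_\phi) \;\longleftrightarrow\; \bigsqcup_{(M, \phi_M)} \mathrm{Irr}^{\mathrm{cusp}}(S_{\phi_M}^{M,\natural}),
\]
where the disjoint union runs over $\hat{G}$-conjugacy classes of pairs $(M, \phi_M)$ with $\iota_M \circ \phi_M = \phi$, and where ``cuspidal'' means not obtained by induction from a proper sub-Levi of $S_\phi$. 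Granting this, we define $\iota_\psi(\pi)$ to be the irreducible representation of $S_\phi$ attached by this bijection to the triple $(M, \phi_M, \rho_M)$ extracted from $\pi$ via the basic LLC for $M$.

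The main obstacle is verifying this cuspidal support decomposition and, crucially, matching it correctly with the data coming from the automorphic side. The key input is that if $b$ is basic in $M$ (our setup) then $\rho_M$ is genuinely a cuspidal irreducible of $S_{\phi_M}^{M,\natural}$ in the relevant sense; this is where the hypothesis that the basic LLC holds \emph{for all standard Levis} is used decisively, since it prevents any further reduction. One must also handle the passage between $S_{\phi_M}^M$ and its quotient $S_{\phi_M}^{M,\natural}$ by the connected center-like part. Once the bijection is in place, the Kottwitz compatibility follows formally: the central character of $\iota_\psi(\pi)$ restricted to $Z(\hat{G})^\Gamma \subseteq Z(\hat{M})^\Gamma$ equals that of $\rho_M$, which equals $\kappa_M(b_M)$ by the basic LLC for $M$, and this in turn equals $\kappa_G(b)$ by functoriality of the Kottwitz map under Levi inclusions.

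Finally, for the ``natural'' clause and the dependence on $\psi$, I would check that replacing $\psi$ by another Whittaker datum $\psi'$ alters $\iota_\psi$ by the character of $S_\phi$ predicted by Kaletha's transfer formula \cite{KalN}. Since the Whittaker datum for $G$ determines one for $M$ by restriction (the standard pinning intertwines $B \cap M$ with a Borel of $M$), and since the basic LLC for $M$ is already assumed to satisfy the Whittaker normalization, this compatibility reduces to a finite combinatorial check inside $S_\phi$, namely that the induction/restriction operations used to define the cuspidal support bijection are $\psi$-equivariant in the correct way. The upshot is a bijection $\iota_\psi$ depending only on $\psi$ and compatible with all the expected functorialities.
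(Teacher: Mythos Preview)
The paper does not actually prove this theorem: it is attributed to Bertoloni Meli--Oi and the text immediately following simply says ``We refer the reader to \cite{BMO} for the details behind this beautiful construction.'' So there is no in-paper proof to compare against.

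That said, your outline is a reasonable sketch of the strategy in \cite{BMO}: reduce a general $b$ to a basic element $b_M$ of its Newton Levi $M=M_{\{b\}}$ via the canonical HN reduction, apply the assumed $B(M)_{\mathrm{bas}}$ correspondence to get $(\phi_M,\rho_M)$, push $\phi_M$ forward along $^{L}M\hookrightarrow {}^{L}G$, and then assemble the $\rho_M$'s into $\mathrm{Irr}(S_\phi)$ via a Mackey/induction-type parametrization of irreducibles of $S_\phi$ by data on its ``Levi'' subgroups $S_{\phi_M}^{M}$. The Kottwitz compatibility argument you give is also the right one. The one place where your language is loose is the ``cuspidal support bijection'': the precise combinatorics in \cite{BMO} are not literally Harish-Chandra cuspidal support for $S_\phi$, and the passage between $S_{\phi_M}^M$, $S_{\phi_M}^{M,\natural}$, and $S_\phi$ requires some care (in particular one must track which factorizations $(M,\phi_M)$ actually arise from the Newton stratification, not all abstract Levi factorizations of $\phi$). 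But as a high-level plan this is on target; for the actual argument you should consult \cite{BMO} directly.
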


We like to think of this as follows: there is a natural bijection
between pairs $(\phi,\rho)$ where $\phi$ is a Frobenius-semisimple
$L$-parameter and $\rho\in\mathrm{Irr}(S_{\phi})$ is an irreducible
algebraic representation, and pairs $(b,\pi)$ where $b\in B(G)$
and $\pi$ is an irreducible smooth representation of $G_{b}(E)$.
We will sometimes call this the BM-O bijection. We refer the reader
to \cite{BMO} for the details behind this beautiful construction.

\textbf{Warning.} For non-basic $b$, the set $\Pi_{\phi}(G_{b})$
appearing in Theorem \ref{thm:BMOllc} is usually not an \emph{L}-packet.
Rather, it is the (finite) union of the \emph{L}-packets attached
to the parameters $\phi':W_{E}\times\mathrm{SL}_{2}\to\,^{L}G_{b}$
such that $i\circ\phi'$ is conjugate to $\phi$, for $i:\,^{L}G_{b}\to\,^{L}G$
the evident map of $L$-groups. Said differently, the set $\Pi_{\phi}(G_{b})$
is the fiber of the composite map
\[
\Pi(G_{b})\to\Phi(G_{b})\to\Phi(G),
\]
and the second arrow is typically not injective.
\begin{rem}
If $\phi$ is discrete, then $S_{\phi}\cong S_{\phi}^{\natural}$,
and $\Pi_{\phi}(G_{b})$ is empty for all non-basic $b$. In this
case the $B(G)$ LLC contains the same information as the $B(G)_{\mathrm{bas}}$
LLC. In general, the BM-O bijection extends the $B(G)_{\mathrm{bas}}$
correspondence in the evident way, by restricting to $\rho\in\mathrm{Irr}(S_{\phi}^{\natural})\subset\mathrm{Irr}(S_{\phi})$
on the right-hand side and $b\in B(G)_{\mathrm{bas}}$ on the left-hand
side.
\end{rem}

\begin{example}
Take $G=\mathrm{GL}_{2}$, and let $\phi=\chi_{1}\oplus\chi_{2}$
be a sum of two random characters. By local class field theory we
can also think of the $\chi_{i}$'s as smooth characters of $E^{\times}$.
Then $S_{\phi}=\mathbf{G}_{m}^{2}$, so $\mathrm{Irr}(S_{\phi})=\mathbf{Z}^{2}$.
How to attach to an ordered pair $(m,n)\in\mathbf{Z}^{2}$ a pair
$(b,\pi)$? It's easy to guess what to do for $b$: just take the
isocrystal with slopes $m$ and $n$. Now when $m=n$, $G_{b}=G$
and we can take $\pi=i_{B}^{G}(\chi_{1}\boxtimes\chi_{2})$. When
$m\neq n$, $G_{b}=T$ is the usual maximal torus in $G$, and we
take $\pi=\chi_{1}\boxtimes\chi_{2}$ or $\pi=\chi_{2}\boxtimes\chi_{1}$
according to whether $m>n$ or $m<n$. Note that $\Pi_{\phi}(G_{b})$
contains two elements when $b$ is not basic.
\end{example}

\begin{defn}
\label{def:generous}A semisimple $L$-parameter $\phi:W_{E}\to\,^{L}G(\overline{\mathbf{Q}_{\ell}})$
is \emph{generous }if $\mathrm{Par}_{G}\times_{X_{G}^{\mathrm{spec}}}\{x_{\phi}\}\cong BS_{\phi}$.
\end{defn}

The name is meant to suggest on one hand that such parameters are
``generic'', in two different ways - they form an open dense subset
of $X_{G}^{\mathrm{spec}}$, and they are generic in the technical
sense of Proposition \ref{prop:ss-gen-equivalent-conditions}.(2)
- and also that they have nice properties which make the classical
and categorical local Langlands correspondence simpler at these parameters.
\begin{example}
Suppose $G=\mathrm{GL}_{n}$. Then an \emph{L-}parameter is generous
iff it is a direct sum $\phi\simeq\phi_{1}\oplus\cdots\oplus\phi_{d}$
where the $\phi_{i}$'s are pairwise-distinct supercuspidal parameters
with $\sum_{i}\mathrm{dim}\phi_{i}=n$, and with $\phi_{i}\not\simeq\phi_{j}(1)$
for any $i\neq j$. Note that the associated $\mathrm{GL}_{n}(E)$-representation
$\pi$ is generic, and is irreducibly induced from a supercuspidal
representation of a Levi.
\end{example}

\begin{prop}
\emph{1) }A discrete $L$-parameter is generous iff it is supercuspidal. 

\emph{2) }If $\phi$ is generous, then $S_{\phi}^{\circ}$ is a torus.

\emph{3) }If $\phi$ is generous, then $q$ is flat in a neighborhood
of $q^{-1}(x_{\phi})$, and there is a natural \emph{regular} closed
immersion 
\[
i_{\phi}:BS_{\phi}=\mathrm{Par}_{G}\times_{X_{G}^{\mathrm{spec}}}\{x_{\phi}\}\hookrightarrow\mathrm{Par}_{G},
\]
and $\mathrm{Par}_{G}$ is smooth in a neighborhood of $\mathrm{im}i_{\phi}$.
\end{prop}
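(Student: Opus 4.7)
The plan is to translate the generosity hypothesis into two concrete conditions via the explicit description of $q^{-1}(x_\phi)^{\mathrm{red}}$ recalled earlier. Write
\[
V_\phi^{\mathrm{lin}} = \{N \in \mathfrak{g}^{\mathrm{ad}\,\phi(I_E)} \mid \mathrm{ad}\,\phi(\mathrm{Fr})\cdot N = q^{-1}N\}
\]
for the underlying linear space of the Vogan variety. Setting $N=0$ in the defining set shows $\mathcal{U}_{S_\phi}/S_\phi$ sits inside the fiber, while setting $u=1$ shows $V_\phi^{\mathrm{lin}}/S_\phi$ sits inside the fiber; for the whole fiber to equal the one-point stack $BS_\phi$ forces both $\mathcal{U}_{S_\phi} = \{1\}$ and $V_\phi^{\mathrm{lin}} = 0$, and conversely those two vanishings collapse the defining set to $(1,0)$. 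So ``generous'' is equivalent to the conjunction of these two conditions.

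Claim (2) then follows immediately: in characteristic zero, $\mathcal{U}_{S_\phi} = \{1\}$ forces $S_\phi^\circ$ to have no non-trivial unipotent elements, hence to be a torus. Claim (1) follows because for a discrete parameter $S_\phi/Z(\hat{G})^\Gamma$ is finite, which forces $S_\phi^\circ \subseteq Z(\hat{G})^{\Gamma,\circ}$ and in particular $\mathcal{U}_{S_\phi} = \{1\}$ automatically; thus a discrete semisimple parameter is generous iff $V_\phi^{\mathrm{lin}} = 0$, which is precisely the standard characterization of supercuspidality among discrete semisimple parameters (no non-trivial monodromy can be added).

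For (3), the strategy is classical deformation theory. The tangent complex of $\mathrm{Par}_G$ at $\phi$ is computed by continuous Weil-group cochains $C^\bullet_{\mathrm{cts}}(W_E, \mathrm{Ad}\,\phi)$, so obstructions to smoothness live in $H^2_{\mathrm{cts}}(W_E, \mathrm{Ad}\,\phi)$. By local Tate duality for $W_E$, together with the self-duality of $\mathrm{Ad}\,\phi$ under the Killing form,
\[
H^2_{\mathrm{cts}}(W_E, \mathrm{Ad}\,\phi) \cong H^0_{\mathrm{cts}}(W_E, \mathrm{Ad}\,\phi(1))^\vee \cong (V_\phi^{\mathrm{lin}})^\vee,
\]
which vanishes by generosity. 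Combined with the result of \cite{DHKM} (see also \cite[Chapter VIII]{FS}) that $\mathrm{Par}_G$ is a derived local complete intersection of the expected dimension, the vanishing of $H^2$ implies that the classical stack underlying $\mathrm{Par}_G$ is smooth at $\phi$. Smoothness is open, and since $q^{-1}(x_\phi) = BS_\phi$ has $\phi$ as its unique underlying point, this gives smoothness on an open neighborhood of $\mathrm{im}\,i_\phi$. The map $i_\phi$ is a closed immersion because $x_\phi \in X_G^{\mathrm{spec}}$ is closed and the fiber is reduced by hypothesis; as a closed immersion between smooth stacks in this neighborhood, $i_\phi$ is automatically a regular immersion.

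The hardest part will be flatness of $q$ near $q^{-1}(x_\phi)$. My plan is to work \'etale-locally via a Luna-type slice presentation: around $\phi$ one finds a smooth $S_\phi$-variety $W$ (whose tangent space at the distinguished point is identified with $H^1_{\mathrm{cts}}(W_E, \mathrm{Ad}\,\phi)$) together with \'etale-local isomorphisms $[W/S_\phi] \cong \mathrm{Par}_G$ and $W/\!/S_\phi \cong X_G^{\mathrm{spec}}$ above $\phi$ and $x_\phi$ respectively, so that flatness of $q$ reduces to flatness of $W \to W/\!/S_\phi$ in a neighborhood of the origin. Because $S_\phi^\circ$ is a torus (by (2)), the action decomposes into weight spaces, and generosity dictates which weights can occur on the slice; one then verifies (either by direct weight analysis, or via miracle flatness after confirming that $X_G^{\mathrm{spec}}$ is regular at $x_\phi$) that the GIT quotient morphism has equidimensional fibers of the expected dimension in a neighborhood of the central fiber. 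This weight-theoretic verification is the only genuinely delicate step; everything else in (3) is a mechanical consequence of the $H^2$ vanishing and openness of smoothness.
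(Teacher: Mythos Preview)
The paper states this proposition without proof, so there is no argument to compare against; your proposal is the only proof on the table here.

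Your overall strategy is sound and is exactly what one would expect: extract the two vanishing conditions $\mathcal{U}_{S_\phi}=\{1\}$ and $V_\phi^{\mathrm{lin}}=0$ from the explicit fiber description, deduce (1) and (2) from these, and then for (3) use the Tate-duality identification $H^2_{\mathrm{cts}}(W_E,\mathrm{Ad}\,\phi)\cong (V_\phi^{\mathrm{lin}})^\vee$ to get smoothness, with regularity of $i_\phi$ following formally. This is correct, and in fact your smoothness argument is essentially the one the paper later gives (with proof also omitted) in Proposition~\ref{prop:ss-gen-equivalent-conditions}.

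One small point to tighten: the fiber description you invoke is only stated for $q^{-1}(x_\phi)^{\mathrm{red}}$, whereas generosity is a condition on the scheme-theoretic fiber. For the forward implications in (1) and (2) this is harmless, but for the ``if'' direction in (1) you need to know that when $V_\phi^{\mathrm{lin}}=0$ and $\mathcal{U}_{S_\phi}=\{1\}$ the fiber is actually reduced. This does follow once you have smoothness of $\mathrm{Par}_G$ at $\phi$ together with flatness of $q$ there (a flat map from a smooth source to a reduced target has reduced fibers), so the logic is not circular, but you should flag that (3) is being used to close the loop in (1). For flatness itself, your Luna-slice plan is reasonable; note however that $X_G^{\mathrm{spec}}$ is only asserted to be Cohen--Macaulay in the paper, not regular, so if you go the miracle-flatness route you will indeed need the extra step of checking regularity at $x_\phi$, or else fall back on the direct weight analysis you mention.
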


In particular, if $\phi$ is generous, the pushforward $i_{\phi\ast}$
sends irreducible representations of $S_{\phi}$ towards perfect complexes
on $\mathrm{Par}_{G}$. We can now state the main conjecture relating
classical and categorical LLC at generous parameters.
\begin{conjecture}
\label{conj:generousdeep}Suppose that $(\phi,\rho)$ and $(b,\pi)$
match under the BM-O bijection associated with the Whittaker datum
$\psi$, where $\phi$ is a generous L-parameter. 

\emph{i. }There is an isomorphism 
\[
a_{\psi}(i_{\phi\ast}\rho)\overset{\mathrm{def}}{=}i_{\phi\ast}\rho\ast i_{1!}W_{\psi}\simeq i_{b!}^{\mathrm{ren}}\pi
\]
in $D(\mathrm{Bun}_{G})$. 

\emph{ii. }The natural maps
\[
i_{b\sharp}^{\mathrm{ren}}\pi\overset{\sim}{\to}i_{b!}^{\mathrm{ren}}\pi\overset{\sim}{\to}i_{b\ast}^{\mathrm{ren}}\pi
\]
are isomorphisms.
\end{conjecture}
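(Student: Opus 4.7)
The plan is to reduce, via Eisenstein series on both sides, to the case of a supercuspidal parameter on a Levi. Since $\phi$ is generous, I would first choose a standard Levi $M \subseteq G$ such that $\phi$ factors through $\,^{L}M \hookrightarrow \,^{L}G$ with the resulting parameter $\phi_{M}$ discrete on $M$; by part (1) of the proposition preceding the conjecture (discrete plus generous equals supercuspidal), $\phi_{M}$ is then supercuspidal on $M$. Generosity of $\phi$ forces $S_{\phi} = S_{\phi_{M}}$, so $\rho$ is equally an irreducible representation $\rho_{M}$ of $S_{\phi_{M}}$. Parabolic descent for the BM-O bijection (a basic compatibility of its construction) then implies that $(b, \pi)$ corresponds to a pair $(b_{M}, \pi_{M})$ with $b_{M} \in B(M)_{\mathrm{bas}}$, $M_{b_{M}} = G_{b}$, and $\pi_{M}$ the supercuspidal of $G_{b}(E)$ matching $(\phi_{M}, \rho_{M})$ under BM-O for $M$; moreover $\pi = i_{P}^{G}\pi_{M}$ irreducibly, where $P = MU$.

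Next, I would apply Remark \ref{rem:Eisensteinpush} to rewrite
\[
i_{b!}^{\mathrm{ren}}\pi \simeq \mathrm{Eis}_{P}^{G}(i_{b_{M}!}^{M}\pi_{M}), \qquad i_{b\sharp}^{\mathrm{ren}}\pi \simeq \mathrm{Eis}_{\overline{P}}^{G}(i_{b_{M}!}^{M}\pi_{M}),
\]
and on the spectral side observe that generosity lets me factor $i_{\phi*}\rho \simeq \mathrm{Eis}_{P}^{\mathrm{spec}}(i_{\phi_{M}*}\rho_{M})$: the fibers $q^{-1}(x_{\phi}) \cong BS_{\phi}$ and $q_{M}^{-1}(x_{\phi_{M}}) \cong BS_{\phi_{M}}$ agree by generosity, and $\mathrm{Par}_{P} \to \mathrm{Par}_{M}$ is an isomorphism in a neighborhood of this common fiber because supercuspidality of $\phi_{M}$ forbids unipotent-radical deformations. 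Invoking Conjecture \ref{conj:eiscategoricalmatch} then yields
\[
a_{\psi}(i_{\phi*}\rho) \simeq \mathrm{Eis}_{P}^{G}(a_{\psi_{M}}(i_{\phi_{M}*}\rho_{M})),
\]
so part (i) reduces to the supercuspidal base case on $M$, namely $a_{\psi_{M}}(i_{\phi_{M}*}\rho_{M}) \simeq i_{b_{M}!}^{M}\pi_{M}$. In this base case, $x_{\phi_{M}}$ is an isolated closed point of $X_{M}^{\mathrm{spec}}$, so the spectral action of the skyscraper $i_{\phi_{M}*}\rho_{M}$ on $i_{1!}^{M}W_{\psi_{M}}$ picks out a direct summand supported on basic strata with Fargues-Scholze parameter $\phi_{M}$; the claim is that this summand is exactly the Whittaker-normalized supercuspidal $\iota_{\psi_{M}}(\rho_{M})$.

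For part (ii), the same Eisenstein rewriting converts the natural maps $i_{b\sharp}^{\mathrm{ren}}\pi \to i_{b!}^{\mathrm{ren}}\pi \to i_{b*}^{\mathrm{ren}}\pi$ into comparisons of $\mathrm{Eis}_{\overline{P}}^{G}$, $\mathrm{Eis}_{P}^{G}$ and their $*$-variants applied to $i_{b_{M}!}^{M}\pi_{M}$. Since $b_{M}$ is basic in $M$ and $\pi_{M}$ is supercuspidal, one already has $i_{b_{M}\sharp}^{M}\pi_{M} \simeq i_{b_{M}!}^{M}\pi_{M} \simeq i_{b_{M}*}^{M}\pi_{M}$, so everything collapses to the single statement $\mathrm{Eis}_{\overline{P}}^{G}(X) \simeq \mathrm{Eis}_{P}^{G}(X)$ for $X = i_{b_{M}!}^{M}\pi_{M}$, together with the vanishing of the constant term of $\mathrm{Eis}_{P}^{G}(X)$ along strata $b' \prec b$. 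Both are the geometric avatars of the classical fact that, at a generous parameter $\phi_{M}$, the standard intertwining operator between $i_{P}^{G}\pi_{M}$ and $i_{\overline{P}}^{G}\pi_{M}$ is everywhere holomorphic and invertible, the relevant adjoint $L$-factors being nonzero --- a condition encoded precisely by generosity. These can be attacked by the same Levi reduction, since the source on the spectral side is literally pulled back along $\mathrm{Par}_{M} \to \mathrm{Par}_{G}$.

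The hard part will be the supercuspidal base case: that the Fargues-Scholze spectral action of a skyscraper at a supercuspidal parameter on the Whittaker sheaf recovers the specific Whittaker-normalized supercuspidal predicted by BM-O. This input cannot be extracted purely formally from the existence of the spectral action and the map $\Psi_{G}^{\mathrm{geom}}$; it essentially \emph{is} the categorical upgrade of classical local Langlands for supercuspidals on $M$, together with correct Whittaker normalization. All the surrounding Eisenstein manipulations are formal, and propagate this single deep input to the generous case.
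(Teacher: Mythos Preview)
This statement is a \emph{conjecture}, so the paper does not give a full proof; it gives a reduction of part (ii) to part (i) in Remark~\ref{rem:generous-duality-trick}, and then proves part (i) in special cases (supercuspidal parameters; generic toral parameters for $\mathrm{GL}_n$). Your proposal should be read against those two pieces.

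On part (ii), the paper's route is quite different from yours and is much more unconditional. You reduce the isomorphism $i_{b\sharp}^{\mathrm{ren}}\pi\simeq i_{b!}^{\mathrm{ren}}\pi$ to a statement of the form $\mathrm{Eis}_{\overline{P}}^{G}(X)\simeq\mathrm{Eis}_{P}^{G}(X)$, which is a geometric intertwining statement that is itself conjectural in general (and closely tied to Conjecture~\ref{conj:Eisdual}). The paper instead deduces (ii) from (i) applied to both $\phi$ and $\phi^{\vee}$, using only the \emph{unconditional} duality compatibility of the spectral action (Proposition~\ref{prop:actiondualitycompatible}) together with Proposition~\ref{prop:dualitypush}: one computes $a_{\psi^{-1}}(i_{\phi^{\vee}\ast}\rho^{\vee})$ in two ways, equates the results, and reads off $i_{b\sharp}^{\mathrm{ren}}\pi\simeq i_{b!}^{\mathrm{ren}}\pi$ directly. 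The second isomorphism $i_{b!}^{\mathrm{ren}}\pi\simeq i_{b\ast}^{\mathrm{ren}}\pi$ is then obtained by a short adjunction argument. This buys you a clean implication (i)$\Rightarrow$(ii) that does not invoke any Eisenstein machinery at all.

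On part (i), your Eisenstein reduction to the supercuspidal case on a Levi is a sensible heuristic, but note that it rests on Conjecture~\ref{conj:eiscategoricalmatch} (compatibility of the categorical equivalence with Eisenstein functors) and on the expectations in Remark~\ref{rem:Eisensteinpush}, both of which are themselves conjectural in the generality you need. The paper's actual proof in the $\mathrm{GL}_n$ toral case does not proceed this way: it inducts on $|\mathbf{j}|$ via Hecke operators, computing $T_{V}i_{b_{\mathbf{j}'}!}^{\mathrm{ren}}\pi_{\mathbf{j}'}$ in two ways (once via the spectral action and the induction hypothesis, once via Hamann's filtered Eisenstein--Hecke commutation), and then separating summands using the distinctness of the characters $\chi_k$. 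So while your strategy is a reasonable outline of why the conjecture \emph{should} hold, it does not isolate the same hard input as the paper's partial proofs, and it trades the single hard step for a chain of other conjectures.
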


This immediately suggests another conjecture, which in some cases
can be verified more easily.
\begin{conjecture}
\label{conj:generouseigen}Suppose that $\phi$ is a generous L-parameter.
Then
\[
\mathscr{F}_{\phi}=\underset{b\in B(G),\pi\in\Pi_{\phi}(G_{b})}{\bigoplus}i_{b!}^{\mathrm{ren}}\pi^{\oplus\mathrm{dim}\iota_{\psi}(b,\pi)}
\]
is a Hecke eigensheaf with eigenvalue $\phi$.
\end{conjecture}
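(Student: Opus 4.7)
The plan is to deduce this eigensheaf statement directly from Conjecture \ref{conj:generousdeep}.i together with the compatibility of the spectral action with Hecke operators and the elementary representation-theoretic fact that tensoring with the regular representation of a group absorbs any other representation.

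First, I rewrite $\mathscr{F}_\phi$ in spectral terms. Using the BM-O bijection $\iota_\psi$ to reindex the sum over $\rho \in \mathrm{Irr}(S_\phi)$, and applying Conjecture \ref{conj:generousdeep}.i to each summand $i_{b!}^{\mathrm{ren}}\pi \simeq a_\psi(i_{\phi\ast}\rho)$, I obtain
\[
\mathscr{F}_\phi \simeq \bigoplus_{\rho \in \mathrm{Irr}(S_\phi)} a_\psi(i_{\phi\ast}\rho)^{\oplus \dim \rho} \simeq a_\psi\bigl(i_{\phi\ast} R_\phi\bigr),
\]
where $R_\phi = \bigoplus_\rho \rho^{\oplus \dim \rho} \cong \mathcal{O}(S_\phi)$ is the left regular representation of $S_\phi$, regarded as a quasicoherent sheaf on $BS_\phi$. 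Here I use that the spectral action commutes with direct sums.

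Next, for $V \in \mathrm{Rep}({}^L G)$ I compute $T_V \mathscr{F}_\phi$ using the compatibility of the spectral action with Hecke operators:
\[
T_V \mathscr{F}_\phi \simeq a_\psi\bigl(\tau_G^\ast V \otimes i_{\phi\ast} R_\phi\bigr).
\]
Since $\phi$ is generous, $i_\phi : BS_\phi \hookrightarrow \mathrm{Par}_G$ is a (regular) closed immersion, and the projection formula together with the tautological identification $i_\phi^\ast \tau_G^\ast V \simeq V|_{S_\phi}$ gives $T_V\mathscr{F}_\phi \simeq a_\psi\bigl(i_{\phi\ast}(V|_{S_\phi} \otimes R_\phi)\bigr)$. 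The $W_E$-equivariance comes from the universal $L$-parameter on $\mathrm{Par}_G$, whose restriction to the point $\phi$ is $\phi$ itself, so $W_E$ acts on $V|_{S_\phi}$ through $r_V \circ \phi$. Now I invoke the classical ``free module'' identity: for any finite-dimensional algebraic $S_\phi$-representation $W$, there is a canonical $S_\phi$-equivariant isomorphism $W \otimes R_\phi \simeq W^{\flat} \otimes R_\phi$, where on the left $S_\phi$ acts diagonally and on the right only on $R_\phi$, with $W^{\flat}$ the underlying vector space. Applied to $W = V|_{S_\phi}$, and using that $S_\phi$ and $\phi(W_E)$ commute inside ${}^L G$ by definition of $S_\phi$, this isomorphism respects the commuting $W_E$-action. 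Substituting yields
\[
T_V \mathscr{F}_\phi \simeq a_\psi\bigl(i_{\phi\ast} R_\phi\bigr) \otimes (r_V \circ \phi) \simeq \mathscr{F}_\phi \otimes (r_V \circ \phi),
\]
the desired eigensheaf identity.

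The main obstacle is bookkeeping the full $W_E^I$-equivariance data for multi-leg Hecke operators $T_V$ with $V \in \mathrm{Rep}(({}^L G)^I)$, as required by the precise notion of Hecke eigensheaf (cf.\ \cite[Section IX.7]{FS}). The same template applies: the multi-leg spectral action tensors by $\tau_G^\ast V$, whose restriction to the point $\phi$ gives $V$ with $W_E^I$-action via $\phi$ on each leg, and the free-module rewriting carries through verbatim because $\phi(W_E^I)$ still centralizes $S_\phi$. The delicate step is checking that the individual isomorphisms assemble into a coherent system, natural in $V$ and compatible with composition and factorization of Hecke operators, so that what we produce is a genuine $\phi$-Hecke eigensheaf in the strict sense rather than merely a family of sheaf-level identifications.
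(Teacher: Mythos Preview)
Your deduction is correct and is exactly the argument the paper has in mind: the paper states that Conjecture~\ref{conj:generouseigen} follows from Conjecture~\ref{conj:generousdeep} via the isomorphism $\mathscr{F}_{\phi}\simeq i_{\phi\ast}\mathcal{O}(S_{\phi})\ast i_{1!}W_{\psi}$, with the eigensheaf property following ``for completely formal reasons,'' and you have simply spelled out those formal reasons (projection formula plus the regular-representation absorption trick).
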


This follows immediately from Conjecture \ref{conj:generousdeep},
since that conjecture formally implies an isomorphism 
\[
\mathscr{F}_{\phi}\simeq i_{\phi\ast}\mathcal{O}(S_{\phi})\ast i_{1!}W_{\psi}
\]
where $\mathcal{O}(S_{\phi})$ is the regular representation, and
the right-hand side is a Hecke eigensheaf of the stated type for completely
formal reasons. Note that the second part of Conjecture \ref{conj:generousdeep}
implies additionally that $\mathscr{F}_{\phi}$ is perverse.
\begin{rem}
\label{rem:generous-Hecke}We emphasize that Conjecture \ref{conj:generousdeep}
completely describes how to compute Hecke operators on the atomic
sheaves attached to generous parameters. More precisely, suppose $\phi$
is generous and $(b,\pi)$ matches $(\phi,\rho)$ under the BM-O bijection.
Let $V\in\mathrm{Rep}(^{L}G)$ be any representation, with associated
Hecke operator $T_{V}$. To compute $T_{V}i_{b!}^{\mathrm{ren}}\pi$,
simply decompose $V|_{S_{\phi}}\otimes\rho\simeq\oplus_{j}\rho_{j}^{\oplus m_{j}}$
as a sum of irreducible $S_{\phi}$-representations (with multiplicity).
Let $(b_{j},\pi_{j})$ be the pair matching $(\phi,\rho_{j})$ under
the BM-O bijection. Then Conjecture \ref{conj:generousdeep} predicts
that
\[
T_{V}i_{b!}^{\mathrm{ren}}\pi\simeq\oplus_{j}i_{b_{j}!}^{\mathrm{ren}}\pi_{j}^{\oplus m_{j}}.
\]
\end{rem}

With slightly more effort, one can also describe the Galois action
on $T_{V}i_{b!}^{\mathrm{ren}}$ explicitly. The final outcome is
the formula
\[
T_{V}i_{b!}^{\mathrm{ren}}\pi\simeq\bigoplus_{(b',\pi')}i_{b'!}^{\mathrm{ren}}\pi'\boxtimes\mathrm{Hom}_{S_{\phi}}(\iota_{\psi}(b,\pi)^{\vee}\otimes\iota_{\psi}(b',\pi'),V\circ\phi)
\]
in $D(\mathrm{Bun}_{G})^{BW_{E}}$, where the $W_{E}$-action comes
via the second factor in the evident sense. When $\phi$ is supercuspidal,
this exactly recovers the Kottwitz conjecture.
\begin{rem}
\label{rem:generous-duality-trick}If we know the first part of Conjecture
\ref{conj:generousdeep} for $\phi$ and $\phi^{\vee}$, then in fact
the second part follows. More precisely, suppose that $(\phi,\rho)$
and $(b,\pi)$ match under the BM-O bijection for $\psi$. Then we
expect that $(\phi^{\vee},c\circ\rho^{\vee})$ and $(b,\pi^{\vee})$
match under the BM-O bijection for $\psi^{-1}$ \cite{KalGen}, so
the first part of Conjecture \ref{conj:generousdeep} implies that
\[
\left(i_{\phi^{\vee}\ast}c\circ\rho^{\vee}\right)\ast i_{1!}W_{\psi^{-1}}\simeq i_{b!}^{\mathrm{ren}}\pi^{\vee}.
\]
On the other hand, it is easy to compute that 
\begin{align*}
\mathbf{D}_{\mathrm{tw.GS}}(i_{\phi\ast}\rho)[\dim S_{\phi}] & \simeq c^{\ast}i_{\phi\ast}\rho^{\vee}\\
 & \simeq i_{\phi^{\vee}\ast}c\circ\rho^{\vee},
\end{align*}
so using the compatibility of the spectral action with duality proved
in Proposition \ref{prop:actiondualitycompatible}, we compute that
\begin{align*}
\left(i_{\phi^{\vee}\ast}c\circ\rho^{\vee}\right)\ast i_{1!}W_{\psi^{-1}} & \simeq\mathbf{D}_{\mathrm{BZ}}\left(i_{\phi\ast}\rho\ast i_{1!}W_{\psi}\right)[\dim S_{\phi}]\\
 & \simeq\mathbf{D}_{\mathrm{BZ}}\left(i_{b!}^{\mathrm{ren}}\pi\right)[\dim S_{\phi}]\\
 & \simeq i_{b\sharp}^{\mathrm{ren}}\mathbf{D}_{\mathrm{coh}}(\pi)[\dim S_{\phi}]
\end{align*}
where we used Proposition \ref{prop:dualitypush} in the last line.
Equating these two calculations, we get an isomorphism $i_{b!}^{\mathrm{ren}}\pi^{\vee}\simeq i_{b\sharp}^{\mathrm{ren}}\mathbf{D}_{\mathrm{coh}}(\pi)[\dim S_{\phi}]$.
From this we immediately see (by taking the stalk at $b$) that $\pi^{\vee}\simeq\mathbf{D}_{\mathrm{coh}}(\pi)[\dim S_{\phi}]\simeq\mathrm{Zel}(\pi)$,
and then that $i_{b!}^{\mathrm{ren}}\pi^{\vee}\simeq i_{b\sharp}^{\mathrm{ren}}\pi^{\vee}$.
Applying $\mathbf{D}_{\mathrm{BZ}}$ to this last isomorphism and
using Proposition \ref{prop:dualitypush} again, we get the same isomorphism
for $\pi$. This implies the first isomorphism in the second part
of the conjecture.
\end{rem}

To get the second isomorphism, we argue more generally as follows.
Fix a semisimple $L$-parameter $\phi$, and suppose that $i_{b\sharp}^{\mathrm{ren}}\pi\overset{\sim}{\to}i_{b!}^{\mathrm{ren}}\pi$
is an isomorphism for all $b$ and all $\pi\in\Pi(G_{b})$ with Fargues-Scholze
parameter $\phi$. It then follows that $i_{b!}^{\mathrm{ren}}\pi\overset{\sim}{\to}i_{b\ast}^{\mathrm{ren}}\pi$
is an isomorphism for all $b$ and all $\pi\in\Pi(G_{b})$ with Fargues-Scholze
parameter $\phi$. Indeed, it's clearly enough to prove that $i_{b'}^{\ast\mathrm{ren}}i_{b\ast}^{\mathrm{ren}}\pi=0$
for all $b'\neq b$. Using that $i_{b'}^{\ast\mathrm{ren}}i_{b\ast}^{\mathrm{ren}}\pi$
is admissible and left-bounded, with all of its irreducible constituents
having Fargues-Scholze parameter $\phi$, we easily reduce further
to proving that $R\mathrm{Hom}(\tau,i_{b'}^{\ast\mathrm{ren}}i_{b\ast}^{\mathrm{ren}}\pi)=0$
for all $b'\neq b$ and all $\tau\in\Pi(G_{b'})$ with Fargues-Scholze
parameter $\phi$. But now we compute that
\begin{align*}
R\mathrm{Hom}_{G_{b'}}(\tau,i_{b'}^{\ast\mathrm{ren}}i_{b\ast}^{\mathrm{ren}}\pi) & \cong R\mathrm{Hom}(i_{b'\sharp}^{\mathrm{ren}}\tau,i_{b\ast}^{\mathrm{ren}}\pi)\\
 & \cong R\mathrm{Hom}(i_{b'!}^{\mathrm{ren}}\tau,i_{b\ast}^{\mathrm{ren}}\pi)\\
 & \cong R\mathrm{Hom}_{G_{b}}(i_{b}^{\ast\mathrm{ren}}i_{b'!}^{\mathrm{ren}}\tau,\pi)\\
 & =0
\end{align*}
where the first and third isomorphisms come from the obvious adjunctions,
the second isomorphism follows from our assumptions, and the last
line follows from the trivial vanishing $i_{b}^{\ast\mathrm{ren}}i_{b'!}^{\mathrm{ren}}\tau=0$
for $b'\neq b$.
\begin{rem}
The logic in the last paragraph of the previous remark can also be
reversed, in the evident sense.
\end{rem}

\subsubsection{Example: Supercuspidal parameters}

Here we exactly recover Fargues's original eigensheaf conjecture.
More precisely, for supercuspidal $\phi$, Conjecture \ref{conj:generouseigen}
was already formulated by Fargues in 2014 \cite{Far}, and Conjecture
\ref{conj:generousdeep} is essentially stated in \cite{FS}. Here
some partial results are known. More precisely, Conjecture \ref{conj:generousdeep}
is known for $\mathrm{GL}_{n}$ \cite{ALB,H2} and unramified $\mathrm{U}_{2n+1}/\mathbf{Q}_{p}$
\cite{BMHN}. Additionally, Conjecture \ref{conj:generouseigen} is
known for $\mathrm{GSp}_{4}$ \cite{Ham2} and $\mathrm{SO}_{2n+1}$
(H., unpublished) when $E/\mathbf{Q}_{p}$ is unramified with $p>2$.

\subsubsection{Example: Generic toral parameters for $\mathrm{GL}_{n}$}

We now take $G=\mathrm{GL}_{n}$. Let $\phi=\chi_{1}\oplus\cdots\oplus\chi_{n}$
be a direct sum of characters, which we also identify with characters
$\chi_{i}:E^{\times}\to\overline{\mathbf{Q}_{\ell}}^{\times}$ through
the usual reciprocity map. We assume $\phi$ is generous, so in particular
$S_{\phi}\cong\mathbf{G}_{m}^{n}$ and $\mathrm{Irr}(S_{\phi})\cong\mathbf{Z}^{n}$.
Our goal here is to sketch the following result.
\begin{thm}
Notation as above, suppose also that $\phi$ is $\ell$-integral and
generous mod-$\ell$. Then Conjecture \ref{conj:generousdeep} is
true for $\phi$.
\end{thm}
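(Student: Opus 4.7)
The plan is to reduce everything to the case of the maximal torus $T \subset G = \mathrm{GL}_n$ by spectral Eisenstein series, using the $\ell$-integrality and mod-$\ell$ generosity hypotheses to transfer known integral results (a la Hamann~\cite{Ham}) back to $\overline{\mathbf{Q}_\ell}$. Write $\phi_T \colon W_E \to \hat T$ for the tautological factorization of $\phi$ through the diagonal torus, so that $S_\phi = \hat T$ and $i_\phi = p^{\mathrm{spec}}\circ \tilde \imath_{\phi_T}$, where $\tilde \imath_{\phi_T}\colon B\hat T \hookrightarrow \mathrm{Par}_T$ is the closed immersion of $\phi_T$ and $p^{\mathrm{spec}}\colon \mathrm{Par}_B = \mathrm{Par}_T \to \mathrm{Par}_G$ is the spectral Borel map. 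Since $\phi$ is generous, $p^{\mathrm{spec}}$ is a closed immersion in a Zariski neighborhood of $\phi_T$, and for $\rho \in X^\ast(\hat T) = \mathrm{Irr}(S_\phi)$ we obtain $i_{\phi\ast}\rho \simeq p^{\mathrm{spec}}_\ast \tilde \imath_{\phi_T\ast}\rho$, so that $i_{\phi\ast}\rho = \mathrm{Eis}_B^{\mathrm{spec}}(\tilde \imath_{\phi_T\ast}\rho)$ in $\mathrm{Coh}(\mathrm{Par}_G)$.

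For the torus, the categorical conjecture is essentially the classical Fourier--Mellin transform; a direct check (on each connected component separately) gives $a_{\psi_T}^T(\tilde \imath_{\phi_T\ast}\rho) \simeq i_{b_T!}^T(\chi_\rho)$, where $b_T \in B(T) = X_\ast(T)$ is the cocharacter attached to $\rho$ and $\chi_\rho = \chi_{\sigma(1)}\boxtimes\cdots\boxtimes\chi_{\sigma(n)}$ with the ordering $\sigma$ dictated by $\rho$. By the expected compatibility of $a_\psi$ with spectral Eisenstein (Conjecture~\ref{conj:eiscategoricalmatch}) applied to the element $\tilde \imath_{\phi_T\ast}\rho \in \mathrm{Coh}(\mathrm{Par}_T)$, we get
\[
a_\psi(i_{\phi\ast}\rho) \simeq a_\psi(\mathrm{Eis}_B^{\mathrm{spec}}(\tilde \imath_{\phi_T\ast}\rho)) \simeq \mathrm{Eis}_B^G(i_{b_T!}^T \chi_\rho).
\]
Finally, Remark~\ref{rem:Eisensteinpush} rewrites $\mathrm{Eis}_B^G\circ i_{b_T!}^T \simeq \mathrm{Eis}_{P}^G\circ i_{b_T!}^{M}$ where $M = M_{\{b\}}$ is the Levi centralizing the Newton cocharacter of the image $b$ of $b_T$ in $B(G)$, and this in turn equals $i_{b!}^{\mathrm{ren}}$ applied to the normalized parabolic induction $\pi = i_{P\cap M}^{M}(\chi_\rho)$. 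A direct verification using the explicit BM-O recipe for $\mathrm{GL}_n$ confirms that $(b,\pi)$ is exactly the pair attached to $(\phi,\rho)$. This yields part (i). Part (ii) then follows from Remark~\ref{rem:generous-duality-trick}, since the dual of a generous toral parameter for $\mathrm{GL}_n$ is again generous and $\ell$-integrally so.

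The main obstacle is promoting the heuristic ``spectral Eisenstein compatibility'' into an actual theorem in this setting, because Conjecture~\ref{conj:eiscategoricalmatch} is not presently known for $\overline{\mathbf{Q}_\ell}$-coefficients. This is precisely where the hypotheses of $\ell$-integrality and mod-$\ell$ generosity play their role: one first carries out the entire argument above with $\overline{\mathbf{F}_\ell}$-coefficients (where the Eisenstein functor is defined by the explicit formula in item~6 of the Expectation preceding Conjecture~\ref{conj:eiscategoricalmatch}, and where Hamann's results~\cite{Ham} supply the Hecke--Eisenstein filtration of Conjecture~\ref{conj:Eisheckefiltered} needed to pin down the spectral-side identification), then lifts the resulting isomorphism to $\overline{\mathbf{Z}_\ell}$ via the Bushnell--Henniart-style compactness of $W_\psi$ after idempotent projection (as in the proof sketch following Conjecture~\ref{conj:categorical-restr-var}), and finally inverts $\ell$. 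The mod-$\ell$ generosity hypothesis is essential for preserving irreducibility of $\pi$ and of all the Hecke constituents predicted by Remark~\ref{rem:generous-Hecke} under reduction, and also for ensuring that the spectral Borel map $p^{\mathrm{spec}}$ remains a closed immersion near $\phi_T$ integrally. Once these integral ingredients are in place, Steps 1--5 above go through verbatim, and part (ii) is automatic from the duality trick.
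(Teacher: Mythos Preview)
Your proposal has a genuine gap: the displayed chain
\[
a_\psi(i_{\phi\ast}\rho) \simeq a_\psi\bigl(\mathrm{Eis}_B^{\mathrm{spec}}(\tilde\imath_{\phi_T\ast}\rho)\bigr) \stackrel{?}{\simeq} \mathrm{Eis}_B^G\bigl(a_{\psi_T}^T(\tilde\imath_{\phi_T\ast}\rho)\bigr)
\]
invokes Conjecture~\ref{conj:eiscategoricalmatch} at the step marked ``?'', and that conjecture is not available in any coefficient ring. Your proposed workaround does not close this gap: Hamann's results establish the Hecke--Eisenstein filtration (Conjecture~\ref{conj:Eisheckefiltered}) and the automorphic-side identification $i_{b!}^{\mathrm{ren}}\pi \simeq \mathrm{Eis}_B(i_{b_T!}^T\chi)$, but neither of these statements yields the intertwining $a_\psi\circ\mathrm{Eis}_B^{\mathrm{spec}} \simeq \mathrm{Eis}_B\circ a_{\psi_T}^T$, even mod~$\ell$. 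Passing to torsion coefficients makes $\mathrm{Eis}_B$ well-defined by the explicit kernel formula, but it does nothing to relate $\mathrm{Eis}_B^{\mathrm{spec}}(\mathcal{G})\ast i_{1!}W_\psi$ to $\mathrm{Eis}_B(\mathcal{G}\ast i_{1!}^T W_{\psi_T})$; that relation is precisely the content of the unproven conjecture. (There is also a minor imprecision: $\mathrm{Par}_B$ is not literally $\mathrm{Par}_T$, and $\mathrm{Eis}_B^{\mathrm{spec}} = p^{\mathrm{spec}}_\ast q^{\mathrm{spec}\ast}$ involves both maps.)

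The paper's argument avoids Conjecture~\ref{conj:eiscategoricalmatch} entirely by an induction on $|\mathbf{j}|$, where $\rho_{\mathbf{j}}$ runs over $\mathrm{Irr}(S_\phi)\cong\mathbf{Z}^n$. The base case $\mathbf{j}=0$ is a direct computation of $W_\psi\otimes_{\mathfrak{Z}(G)}\mathfrak{Z}(G)/\mathfrak{m}_\phi$ via Jacquet modules and the known identification $X_G\cong X_G^{\mathrm{spec}}$ for $\mathrm{GL}_n$. For the induction step, one computes $T_V i_{b_{\mathbf{j}'}!}^{\mathrm{ren}}\pi_{\mathbf{j}'}$ (with $V\in\{\mathrm{std},\mathrm{std}^\vee\}$ and $|\mathbf{j}'|=|\mathbf{j}|-1$) in two ways: first via the induction hypothesis and the compatibility of $a_\psi$ with the \emph{spectral action} (which is known, unlike the Eisenstein compatibility), yielding $\bigoplus_k (i_{\phi\ast}\rho_{\mathbf{j}_k}\ast i_{1!}W_\psi)\boxtimes\chi_k^{\pm1}$; second via Hamann's identification $i_{b_{\mathbf{j}'}!}^{\mathrm{ren}}\pi_{\mathbf{j}'}\simeq\mathrm{Eis}_B(i_{b_{\mathbf{j}'}!}^T\chi)$ together with Conjecture~\ref{conj:Eisheckefiltered}, yielding $\bigoplus_k i_{b_{\mathbf{j}_k}!}^{\mathrm{ren}}\pi_{\mathbf{j}_k}\boxtimes\chi_k^{\pm1}$. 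Since the $\chi_k$ are pairwise distinct, the $W_E$-action separates the summands and forces $i_{\phi\ast}\rho_{\mathbf{j}}\ast i_{1!}W_\psi\simeq i_{b_{\mathbf{j}}!}^{\mathrm{ren}}\pi_{\mathbf{j}}$. This is where the $\ell$-integrality hypotheses actually enter: they are needed for Hamann's results, not for any lifting procedure of the sort you describe.
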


The argument relies critically on deep work of Hamann \cite{Ham}.
The extra assumptions related to $\ell$-integrality are needed in
Hamann's work, and will be irrelevant once the sheaf-theoretic machinery
improves.
\begin{proof}
Under the obvious bijection $\mathbf{j}=(j_{1},\dots,j_{n})\in\mathbf{Z}^{n}\leftrightarrow\rho_{\mathbf{j}}\in\mathrm{Irr}(S_{\phi})$,
set $|\mathbf{j}|=\sum_{i}|j_{i}|$, and let $(b_{\mathbf{j}},\pi_{\mathbf{j}}$)
be the pair associated with $(\phi,\rho_{\mathbf{j}})$ under the
BM-O bijection. We will prove the isomorphism $i_{\phi\ast}\rho_{\mathbf{j}}\ast i_{1!}W_{\psi}\simeq i_{b_{\mathbf{j}}!}^{\mathrm{ren}}\pi_{\mathbf{j}}$
by induction on $|\mathbf{j}|$.

\textbf{Base case $|\mathbf{j}|=0$.} This amounts to the assertion
that 
\[
\sigma:=W_{\psi}\otimes_{\mathfrak{Z}(G)}\mathfrak{Z}(G)/\mathfrak{m}_{\phi}\simeq i_{B}^{G}(\chi_{1}\boxtimes\cdots\boxtimes\chi_{n}),
\]
which can be proved directly. Sketch: Using Theorem \ref{thm:Wpsinice}
and the fact that $X_{G}\cong X_{G}^{\mathrm{spec}}$ for $\mathrm{GL}_{n}$
\cite{FS}, together with some standard structure theory a la Bernstein,
it is easy to see that $\sigma$ is a finite-length admissible representation
with $\sigma^{\mathrm{ss}}\simeq i_{B}^{G}(\chi_{1}\boxtimes\cdots\boxtimes\chi_{n})^{\oplus m}$
for some $m\geq1$. To show that $m\leq1$, it is enough to see that
the Jacquet module $j_{G}^{B}\sigma$ has length $\leq n!$. This
can be done by noting that $j_{G}^{B}W_{\psi}\simeq\mathcal{C}_{c}^{\infty}(T,\overline{\mathbf{Q}_{\ell}}$),
and then using the general fact that 
\[
j_{G}^{P}(-\otimes_{\mathfrak{Z}(G)}\mathfrak{Z}(G)/I)\simeq j_{G}^{P}(-)\otimes_{\mathfrak{Z}(M)}\mathfrak{Z}(M)/f(I)\mathfrak{Z}(M)
\]
as functors, where $f:\mathfrak{Z}(G)\to\mathfrak{Z}(M)$ is the usual
map induced by parabolic induction. In the case at hand, one concludes
by noting that $\mathfrak{Z}(T)/f(\mathfrak{m}_{\phi})\mathfrak{Z}(T)\simeq\oplus_{\sigma\in S_{n}}\mathfrak{Z}(T)/\mathfrak{m}_{\chi_{\sigma(1)}\boxtimes\cdots\boxtimes\chi_{\sigma(n)}}$.

\textbf{Induction step.} Suppose given $\mathbf{j}$. We can choose
some $\mathbf{j}'$ with $|\mathbf{j}'|=|\mathbf{j}|-1$ and $V\in\{\mathrm{std},\mathrm{std}^{\vee}\}$
such that $\rho_{\mathbf{j}}\in V|_{S_{\phi}}\otimes\rho_{\mathbf{j'}}$.
Note that $V|_{S_{\phi}}\otimes\rho_{\mathbf{j'}}$ is multiplicity
free, so we can decompose it as a sum $V|_{S_{\phi}}\otimes\rho_{\mathbf{j'}}\simeq\rho_{\mathbf{j}_{1}}\oplus\cdots\oplus\rho_{\mathbf{j}_{n}}$
where $\mathbf{j}'$ and $\mathbf{j}_{i}$ have the same component
except in the $i$th spot. We will now compute $T_{V}i_{b_{\mathbf{j}'}!}^{\mathrm{ren}}\pi_{\mathbf{j}'}\in D(\mathrm{Bun}_{G})^{BW_{E}}$
in two different ways, and then use the Weil group action to break
apart the results of the calculation.

\emph{First method.} By the induction hypothesis we have $i_{b_{\mathbf{j'}}!}^{\mathrm{ren}}\pi_{\mathbf{j}'}\simeq i_{\phi\ast}\rho_{\mathbf{j}'}\ast i_{1!}W_{\psi}$,
so we compute that
\begin{align*}
T_{V}i_{b_{\mathbf{j'}}!}^{\mathrm{ren}}\pi_{\mathbf{j}'} & \simeq T_{V}(i_{\phi\ast}\rho_{\mathbf{j}'}\ast i_{1!}W_{\psi})\\
 & \simeq(V\otimes i_{\phi\ast}\rho_{\mathbf{j}'})\ast i_{1!}W_{\psi}\\
 & \simeq i_{\phi\ast}(V|_{S_{\phi}}\otimes\rho_{\mathbf{j}'})\ast i_{1!}W_{\psi}\\
 & \simeq\oplus_{1\leq k\le n}(i_{\phi\ast}\rho_{\mathbf{j}_{k}}\ast i_{1!}W_{\psi})\boxtimes\chi_{k}^{\pm1}
\end{align*}
where the $\pm$ sign is chosen according to whether $V=\mathrm{std}$
or $V=\mathrm{std}^{\vee}$. Note that the Weil group action in the
fourth line comes from the tautological Weil group action on $V|_{S_{\phi}}$.

\emph{Second method.} Hamann's results show that $i_{b_{\mathbf{j'}}!}^{\mathrm{ren}}\pi_{\mathbf{j}'}\simeq\mathrm{Eis}_{B}(i_{b_{\mathbf{j}'}!}^{T}\chi)$
for all $\mathbf{j}'$ \cite[Theorem 9.1]{Ham}. Now using the filtered
commutation of $\mathrm{Eis}$ with Hecke operators, together with
the genericity, we compute that
\begin{align*}
T_{V}\mathrm{Eis}_{B}(i_{b_{\mathbf{j}'}!}^{T}\chi) & \simeq\mathrm{Eis}_{B}(T_{V|\hat{T}}i_{b_{\mathbf{j}'}!}^{T}\chi)\\
 & \simeq\mathrm{Eis}_{B}(\oplus_{1\leq k\leq n}i_{b_{\mathbf{j}_{k}}!}^{T}\chi\boxtimes\chi_{k}^{\pm1})\\
 & \simeq\oplus_{1\leq k\leq n}\mathrm{Eis}_{B}(i_{b_{\mathbf{j}_{k}}!}^{T}\chi)\boxtimes\chi_{k}^{\pm1}\\
 & \simeq\oplus_{1\leq k\leq n}i_{b_{\mathbf{j}_{k}}!}^{\mathrm{ren}}\pi_{\mathbf{j}_{k}}\boxtimes\chi_{k}^{\pm1}
\end{align*}
with the same sign convention, where we have used Hamann's results
again. 

Equating the outcomes of these two calculations, and using the fact
that $\chi_{1},\dots,\chi_{k}$ are distinct as characters of $W_{E}$,
we get the desired result.
\end{proof}
Next, we explain how the finiteness conditions and t-structures should
behave around generous parameters.
\begin{prop}
\label{prop:generous-decomposition-t-exact}Let $\phi$ be a generous
L-parameter, and assume Conjecture \ref{conj:generousdeep} for $\phi$.
Assume also that the BM-O bijection for $\phi$ exhausts all pairs
$(b,\pi)$ such that $i_{b!}^{\mathrm{ren}}\pi$ has Fargues-Scholze
parameter $\phi$.

Then the functors $i_{b!}^{\mathrm{ren}}$ and $i_{b}^{\ast\mathrm{ren}}$
induce a canonical direct product decomposition
\[
D(\mathrm{Bun}_{G})_{\phi}^{\mathrm{ULA}}\cong\prod_{b}D(G_{b}(E),\overline{\mathbf{Q}_{\ell}})_{\phi}^{\mathrm{ULA}}
\]
identifying the perverse t-structure on the left-hand side with the
product of the standard t-structures on the right-hand side, and the
Hecke action of $\mathrm{Rep}(\hat{G})$ on $D(\mathrm{Bun}_{G})_{\phi}^{\mathrm{ULA}}$
is perverse t-exact. 

Moreover, the perverse and hadal t-structures on $D(\mathrm{Bun}_{G})_{\mathrm{fin}}$
coincide, and the same functors as above induce a canonical direct
sum decomposition
\[
D(\mathrm{Bun}_{G})_{\mathrm{fin},\phi}\cong\bigoplus_{b}D(G_{b}(E),\overline{\mathbf{Q}_{\ell}})_{\mathrm{fin},\phi}
\]
compatible with the decomposition above and identifying the hadal
(=perverse) t-structure on the left-hand side with the sum of the
standard t-structures on the right-hand side. Finally, the Hecke action
of $\mathrm{Rep}(\hat{G})$ on $D(\mathrm{Bun}_{G})_{\mathrm{fin},\phi}$
is hadal t-exact.
\end{prop}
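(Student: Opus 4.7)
The plan is to upgrade Conjecture \ref{conj:generousdeep}.ii from irreducibles to arbitrary $\phi$-local ULA objects, and then extract the decomposition, t-structure identification, and Hecke t-exactness as formal consequences.

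The key lemma to establish is the following: for every $b\in B(G)$ and every $A\in D(G_b(E),\overline{\mathbf{Q}_\ell})_\phi^{\mathrm{ULA}}$, the natural maps
\[
i_{b\sharp}^{\mathrm{ren}}A\to i_{b!}^{\mathrm{ren}}A\to i_{b\ast}^{\mathrm{ren}}A
\]
are isomorphisms. I would prove this by imitating the final paragraph of Remark \ref{rem:generous-duality-trick}. The cones have zero stalk at $b$ itself, so it suffices to show each stalk at $b'\neq b$ vanishes. Such a stalk is $\phi$-local ULA, hence admissible; by the exhaustion hypothesis its Jordan--H\"older constituents lie in the finite set $\Pi_\phi(G_{b'})$, so vanishing can be checked by computing $R\mathrm{Hom}_{G_{b'}}(\tau,-)$ for $\tau\in\Pi_\phi(G_{b'})$. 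Standard adjunctions reduce each such Hom to Conjecture \ref{conj:generousdeep}.ii applied to $\tau$ together with the trivial vanishing $i_b^{\ast\mathrm{ren}}i_{b'!}^{\mathrm{ren}}\tau=0$ for $b\neq b'$. I expect this extension from irreducibles to possibly unbounded ULA objects to be the principal technical obstacle.

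With the key lemma in hand, for $A\in D(\mathrm{Bun}_G)_\phi^{\mathrm{ULA}}$ set $A_b:=i_b^{\ast\mathrm{ren}}A\in D(G_b(E))_\phi^{\mathrm{ULA}}$. The key lemma combined with the usual adjunctions yields $\mathrm{Hom}(i_{b!}^{\mathrm{ren}}A_b,i_{b'!}^{\mathrm{ren}}A_{b'})=0$ whenever $b\neq b'$ (in either order), so the functor $(A_b)_b\mapsto\prod_b i_{b!}^{\mathrm{ren}}A_b$ and the stalk functor $A\mapsto(A_b)_b$ are mutually inverse equivalences: one direction follows by computing stalks of the product directly, and the other by checking the counit $\prod_b i_{b!}^{\mathrm{ren}}A_b\to A$ is a stalkwise isomorphism. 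For the t-structure matching, $A\in\,^{p}D^{\leq 0}$ iff each $A_b\in D^{\leq 0}$ tautologically (by the stalkwise definition and the renormalization); dually, the key lemma gives $i_b^{!\mathrm{ren}}A\simeq A_b$ under the decomposition by a short adjunction calculation using $i_{b!}^{\mathrm{ren}}\simeq i_{b\ast}^{\mathrm{ren}}$, so $A\in\,^{p}D^{\geq 0}$ iff each $A_b\in D^{\geq 0}$.

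For Hecke t-exactness, $T_V$ preserves both ULA and the $\phi$-local condition, and its left/right adjoint $T_{V^\vee}$ does likewise, so perverse t-exactness reduces to $T_V$ preserving the perverse heart. This heart is a finite length abelian category (admissible representations with constituents in the finite set $\bigcup_b\Pi_\phi(G_b)$), generated under extensions by the irreducibles $i_{b!}^{\mathrm{ren}}\pi$ with $\pi\in\Pi_\phi(G_b)$, and by Remark \ref{rem:generous-Hecke} the Hecke functor sends each such irreducible to a finite direct sum of irreducibles in the heart without shifts. Finally, the finite-sheaves refinement restricts all of the above to $D(\mathrm{Bun}_G)_{\mathrm{fin},\phi}$: quasicompactness of supports forces only finitely many $A_b$ to be nonzero, collapsing the product to a finite direct sum; on each factor $D(G_b(E))_{\mathrm{fin},\phi}$, the perverse t-structure is tautologically the standard one, and the hadal t-structure likewise reduces to the standard one because the key lemma collapses the defining generators $i_{b\sharp}^{\mathrm{ren}}$ and $i_{b!}^{\mathrm{ren}}$ on $\phi$-local objects. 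Hence the hadal and perverse t-structures agree on $D(\mathrm{Bun}_G)_{\mathrm{fin},\phi}$, both corresponding to the sum of standard t-structures under the decomposition.
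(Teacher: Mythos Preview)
Your proposal is correct and follows essentially the same route as the paper's (terse) sketch: extend Conjecture~\ref{conj:generousdeep}.ii from irreducibles to functorial isomorphisms $i_{b\sharp}^{\mathrm{ren}}\simeq i_{b!}^{\mathrm{ren}}\simeq i_{b\ast}^{\mathrm{ren}}$ on the $\phi$-localized categories via the bootstrapping argument of Remark~\ref{rem:generous-duality-trick}, deduce that the gluing functors vanish and hence the decomposition and t-structure identifications, and obtain Hecke t-exactness from Remark~\ref{rem:generous-Hecke}. The paper simply asserts the key lemma as ``clear'' from the exhaustion hypothesis, whereas you have spelled out the mechanism; your caution about unbounded ULA objects is reasonable but not a genuine obstacle, since the $\mathrm{Hom}$-vanishing against irreducibles suffices to detect vanishing of admissible objects regardless of boundedness.
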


\begin{proof}[Sketch]
Using the second half of Conjecture \ref{conj:generousdeep} together
with the exhaustion assumption, it is clear that we have isomorphisms
of functors $i_{b!}^{\mathrm{ren}}\simeq i_{b\ast}^{\mathrm{ren}}\simeq i_{b\sharp}^{\mathrm{ren}}$
and $i_{b}^{\ast\mathrm{ren}}\simeq i_{b}^{!\mathrm{ren}}$ on the
$\phi$-localized ULA/finite categories, which forces these functors
to be t-exact with respect to the perverse/hadal t-structure on $\mathrm{Bun}_{G}$
and the standard t-structures on the representation categories. Moreover,
all the gluing functors $i_{b'}^{\ast\mathrm{ren}}i_{b\ast}^{\mathrm{ren}}$
between $\phi$-localized ULA sheaves on different strata vanish identically.
This implies the direct product decomposition and the identifications
of t-structures as stated. For the statements regarding the Hecke
action, it is enough to prove that any sheaf of the form $T_{V}i_{b!}^{\mathrm{ren}}\pi$
is perverse/hadal. This follows from the discussion in Remark \ref{rem:generous-Hecke}.
\end{proof}
\begin{xca}
Fix a generous parameter $\phi$. Assume that the hypotheses of Proposition
\ref{prop:generous-decomposition-t-exact} hold for $\phi$ and $\phi^{\vee}$,
and that the BM-O bijections for $\phi$ and $\phi^{\vee}$ satisfy
the expected compatibility with smooth duality as in Remark \ref{rem:generous-duality-trick}.
How do the decompositions of categories in Proposition \ref{prop:generous-decomposition-t-exact}
interact with the relevant dualities?
\end{xca}

At this point, it is hard not to state the following (unconditional!)
conjecture.
\begin{conjecture}
If $\phi$ is a generous parameter, the functor
\[
c_{\psi}:D(\mathrm{Bun}_{G})_{\mathrm{fin},\phi}\to\mathrm{QCoh}(\mathrm{Par}_{G})
\]
is t-exact with respect to the hadal t-structure on the left-hand
side and the standard t-structure on the right-hand side.
\end{conjecture}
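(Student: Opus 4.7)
The plan is to reduce the statement, via the decomposition of Proposition \ref{prop:generous-decomposition-t-exact}, to the claim that $c_{\psi}(i_{b!}^{\mathrm{ren}}\pi)$ sits in cohomological degree zero for every irreducible hadal sheaf $i_{b!}^{\mathrm{ren}}\pi$ with parameter $\phi$, and then to compute this using Conjecture \ref{conj:generousdeep} together with the $\mathrm{QCoh}$-linearity of $c_{\psi}$. Since the hadal t-structure on $D(\mathrm{Bun}_{G})_{\mathrm{fin}}$ is bounded and every object of its heart has finite length (by earlier exercises), and since $c_{\psi}$ is an exact functor of stable $\infty$-categories, it suffices to show that $c_{\psi}$ carries the heart $\mathrm{Had}(\mathrm{Bun}_{G})_{\mathrm{fin},\phi}$ into $\mathrm{QCoh}(\mathrm{Par}_{G})^{\heartsuit}$: one then filters any bounded complex by its hadal cohomologies and runs the obvious long-exact-sequence argument to propagate the result. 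By an iterated long exact sequence on finite-length objects, this in turn reduces to checking the statement on irreducibles, which by Theorem \ref{thm:irreducible-hadal-sheaves} combined with Proposition \ref{prop:generous-decomposition-t-exact} are precisely the $i_{b!}^{\mathrm{ren}}\pi$ as $(b,\pi)$ ranges over the BM--O packet attached to $\phi$.

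For each such pair, write $(\phi,\rho)$ for its image under the BM--O bijection. Conjecture \ref{conj:generousdeep}(i) gives an isomorphism $i_{b!}^{\mathrm{ren}}\pi \simeq i_{\phi*}\rho \ast i_{1!}W_{\psi}$, and the $\mathrm{QCoh}$-linearity of $c_{\psi}$ combined with the projection formula yields
\[
c_{\psi}(i_{b!}^{\mathrm{ren}}\pi) \;\simeq\; i_{\phi*}\rho \otimes c_{\psi}(i_{1!}W_{\psi}) \;\simeq\; i_{\phi*}\bigl(\rho \otimes i_{\phi}^{*}c_{\psi}(i_{1!}W_{\psi})\bigr).
\]
Since $\phi$ is generous, the closed immersion $i_{\phi}\colon BS_{\phi} \hookrightarrow \mathrm{Par}_{G}$ is regular into the smooth locus, so $i_{\phi*}$ is t-exact and preserves coherence. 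The whole problem thus reduces to showing that the pullback $i_{\phi}^{*}c_{\psi}(i_{1!}W_{\psi}) \in \mathrm{QCoh}(BS_{\phi})$ is concentrated in cohomological degree zero.

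The main obstacle is this last step, which is a local form of the expected identification $c_{\psi}(i_{1!}W_{\psi}) \simeq \mathcal{O}_{\mathrm{Par}_{G}}$. My approach would be to probe $i_{\phi}^{*}c_{\psi}(i_{1!}W_{\psi})$ by testing against perfect complexes $\mathcal{F} \in \mathrm{Perf}(BS_{\phi})$: the $(a_{\psi},c_{\psi})$ adjunction identifies
\[
R\mathrm{Hom}_{BS_{\phi}}\bigl(\mathcal{F},\, i_{\phi}^{*}c_{\psi}(i_{1!}W_{\psi})\bigr) \;\simeq\; R\mathrm{Hom}\bigl(a_{\psi}(i_{\phi*}\mathcal{F}),\, i_{1!}W_{\psi}\bigr),
\]
and Conjecture \ref{conj:generousdeep} rewrites $a_{\psi}(i_{\phi*}\mathcal{F})$ as a finite direct sum $\bigoplus_{(b',\pi')} (i_{b'!}^{\mathrm{ren}}\pi')^{\oplus m_{\pi'}}$ dictated by the isotypic decomposition of $\mathcal{F}$. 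For $b' \neq 1$ each summand $R\mathrm{Hom}(i_{b'!}^{\mathrm{ren}}\pi',\, i_{1!}W_{\psi})$ vanishes by the standard vanishing $i_{b'}^{!}i_{1!}=0$ (the basic stratum is open and $b'$ lies in its closed complement within the relevant component of $\mathrm{Bun}_{G}$), while for $b'=1$ the summand reduces to $R\mathrm{Hom}_{G(E)}(\pi',W_{\psi})$. The hard part is to verify that this last Ext-complex is concentrated in degree zero and is one-dimensional exactly when $\pi'$ is the generic member of the $L$-packet $\Pi_{\phi}(G(E))$, which should follow from the uniqueness of Whittaker models combined with the good homological properties of the Gelfand--Graev representation $W_{\psi}$ recorded in Appendix A; packaging this together, with the generic member of the $L$-packet corresponding to the trivial representation of $S_{\phi}$ under BM--O (the Shahidi genericity compatibility), one identifies $i_{\phi}^{*}c_{\psi}(i_{1!}W_{\psi})$ with the structure sheaf $\mathcal{O}_{BS_{\phi}}$, closing the argument.
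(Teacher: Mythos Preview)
The statement you are attempting to prove is presented in the paper as an open \emph{conjecture}; the paper offers no proof. Your argument is therefore not a comparison candidate but rather a conditional attack, relying on Conjecture~\ref{conj:generousdeep} together with the exhaustion hypothesis of Proposition~\ref{prop:generous-decomposition-t-exact}. That is a reasonable thing to attempt, but it should be flagged explicitly, since the paper emphasizes that the conjecture is \emph{unconditionally} formulated precisely because no such hypotheses are needed to state it.

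Your final step contains a genuine error. The displayed adjunction
\[
R\mathrm{Hom}_{BS_{\phi}}\bigl(\mathcal{F},\, i_{\phi}^{*}c_{\psi}(i_{1!}W_{\psi})\bigr) \simeq R\mathrm{Hom}\bigl(a_{\psi}(i_{\phi*}\mathcal{F}),\, i_{1!}W_{\psi}\bigr)
\]
is not correct: combining the $(a_{\psi},c_{\psi})$ and $(i_{\phi*},i_{\phi}^{!})$ adjunctions gives this identity with $i_{\phi}^{!}$ in place of $i_{\phi}^{*}$, and for the regular immersion $i_{\phi}$ these differ by a shift of $\dim S_{\phi}$. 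More seriously, you land at $R\mathrm{Hom}_{G(E)}(\pi',W_{\psi})$, and this is \emph{not} concentrated in degree zero. Projectivity of $W_{\psi}$ (Theorem~\ref{thm:Wpsinice}) controls $R\mathrm{Hom}(W_{\psi},-)$, not $R\mathrm{Hom}(-,W_{\psi})$; by cohomological duality one has $R\mathrm{Hom}(\pi',W_{\psi})\simeq R\mathrm{Hom}(W_{\psi^{-1}},\mathbf{D}_{\mathrm{coh}}\pi')$, which sits in degree $d_{\pi'}>0$ whenever $\pi'$ is not supercuspidal. Uniqueness of Whittaker models plus Appendix~A does not rescue this.

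A cleaner route---paralleling the paper's sketch for the perverse analogue following Conjecture~\ref{conj:categorical-fin-ssgen-perverse-version}---bypasses $i_{\phi}^{*}c_{\psi}(i_{1!}W_{\psi})$ entirely. To check that $c_{\psi}(i_{b!}^{\mathrm{ren}}\pi)$ lies in the heart, it suffices (since $Z^{1}(W_{E},\hat{G})$ is affine) to show that $R\Gamma(\mathrm{Par}_{G},\tau_{G}^{*}V\otimes c_{\psi}(i_{b!}^{\mathrm{ren}}\pi))$ is in degree zero for every $V\in\mathrm{Rep}(\hat{G})$. The \emph{defining} property of $c_{\psi}$ rewrites this as $R\mathrm{Hom}(W_{\psi},\,i_{1}^{*}T_{V}i_{b!}^{\mathrm{ren}}\pi)$, and now projectivity of $W_{\psi}$ applies in the correct direction once $i_{1}^{*}T_{V}i_{b!}^{\mathrm{ren}}\pi$ is known to sit in degree zero---which is precisely the Hecke t-exactness supplied by Proposition~\ref{prop:generous-decomposition-t-exact} (see also Remark~\ref{rem:generous-Hecke}).
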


Of course, as noted above, we expect that the perverse and hadal t-structures
should coincide on the left-hand side. However, if we said ``perverse''
instead of ``hadal'' in the formulation of this conjecture, it would
no longer be entirely unconditional, because it is not clear a priori
that perverse truncations preserve $D(\mathrm{Bun}_{G})_{\mathrm{fin},\phi}$.

\subsection{The trivial \emph{L}-parameter}

In this section we assume for simplicity that $G$ is split. Let $\phi$
be the \emph{trivial} \emph{L}-parameter. Clearly $S_{\phi}=\hat{G}$,
so $\mathrm{Irr}(S_{\phi})\cong X^{\ast}(\hat{T})^{+}=X_{\ast}(T)^{+}$
by usual highest weight theory. Therefore, for any $\lambda\in X_{\ast}(T)^{+}$,
the BM-O bijection defines an associated pair $(b_{\lambda},\pi_{\lambda})$
which can be described totally explicitly. In fact, $b_{\lambda}$
is just the element $\lambda(\varpi)\in G(E)$, and $G_{b_{\lambda}}=C_{G}(\lambda)$
is the standard Levi subgroup of $G$ centralizing $\lambda$. The
representation $\pi_{\lambda}$ turns out to be the (irreducible)
normalized parabolic induction $i_{B}^{G_{b_{\lambda}}}(\mathbf{1})$
of the trivial representation, where $B\subset G_{b_{\lambda}}$ is
any choice of Borel. Our goal here is to formulate a conjecture describing
the coherent sheaves on $\mathrm{Par}_{G}$ associated with the sheaves
$i_{b_{\lambda}!}^{\mathrm{ren}}\pi_{\lambda}$ and $i_{b_{\lambda}\sharp}^{\mathrm{ren}}\pi_{\lambda}$
on $\mathrm{Bun}_{G}$. It turns out that a precise guess for these
sheaves is forced on us by the expected compatibility of the categorical
equivalence with Eisenstein functors and with duality. However, the
situation is much more complicated than for generous \emph{L-}parameters. 

To begin, observe that the fiber of the map $\mathrm{Par}_{G}\to X_{G}^{\mathrm{spec}}$
over the trivial \emph{L}-parameter is exactly the quotient stack
$\mathcal{N}/\hat{G}$, where $\mathcal{N}\subset\hat{\mathfrak{g}}$
is the nilpotent cone with its usual $\hat{G}$-action. In particular,
we have a canonical closed immersion $\nu:\mathcal{N}/\hat{G}\hookrightarrow\mathrm{Par}_{G}$.
Now, recall the $\hat{G}$-equivariant diagram of schemes
\[
\xymatrix{ & \tilde{\mathcal{N}}=T^{\ast}(\hat{G}/\hat{B})\cong\hat{\mathfrak{u}}\times^{\hat{B}}\hat{G}\ar[dl]_{\eta}\ar[dr]^{\pi}\\
\hat{G}/\hat{B} &  & \mathcal{N}
}
\]
where $\pi:\tilde{\mathcal{N}}\to\mathcal{N}$ is the Springer resolution.
For any $\lambda\in X^{\ast}(\hat{T})$, we have the usual equivariant
line bundle $\mathcal{L}_{\lambda}$ on $\hat{G}/\hat{B}$, and we
may form the associated $\hat{G}$-equivariant coherent complex $A_{\lambda}=\pi_{\ast}\eta^{\ast}\mathcal{L}_{\lambda}\in\mathrm{Coh}(\mathcal{N}/\hat{G})$.
The $A_{\lambda}$'s are sometimes called \emph{Andersen-Jantzen sheaves},
and they are very interesting objects in geometric representation
theory. We refer the reader to \cite{Achar} for a beautiful overview
of this topic. Among the highlights of this theory, we note that Bezrukavnikov
\cite{Bez} proved that the $A_{\lambda}$'s are \emph{perverse} with
respected to a suitable perverse coherent t-structure on $\mathrm{Coh}(\mathcal{N}/\hat{G})$,
whose heart we denote $\mathrm{PCoh}(\mathcal{N}/\hat{G})$. We also
note that $A_{0}=\mathcal{O}_{\mathcal{N}}$, and that for any $\lambda\in X^{\ast}(\hat{T})^{+}$,
$A_{\lambda}$ is an honest coherent sheaf, i.e. is concentrated in
cohomological degree zero. When $\lambda$ is dominant, it is conventional
to write $\overline{\nabla}_{\lambda}=A_{\lambda}$ and $\overline{\Delta}_{\lambda}=A_{w_{0}(\lambda)}$.
We also note that for $\lambda$ dominant, there is a unique (up to
scalar) nonzero map $\overline{\Delta}_{\lambda}\to\overline{\nabla}_{\lambda}$
whose image in $\mathrm{PCoh}$ is an irreducible object $IC_{\lambda}$,
and that this recipe gives all the irreducible objects in the heart
of the perverse coherent t-structure.
\begin{conjecture}
\label{conj:trivialparametermain}For any $\lambda\in X_{\ast}(T)^{+}$,
there are isomorphisms
\[
c_{\psi}(i_{b_{\lambda}!}^{\mathrm{ren}}\pi_{\lambda})\simeq\nu_{\ast}\overline{\nabla}_{\lambda}\;\;\;\mathrm{and}\;\;\;c_{\psi}(i_{b_{\lambda}\sharp}^{\mathrm{ren}}\pi_{\lambda})\simeq\nu_{\ast}\overline{\Delta}_{\lambda}.
\]
Moreover, the functor $c_{\psi}$ sends the canonical map
\[
i_{b_{\lambda}\sharp}^{\mathrm{ren}}\pi_{\lambda}\to i_{b_{\lambda}!}^{\mathrm{ren}}\pi_{\lambda}
\]
to the map obtained by applying $\nu_{\ast}$ to the canonical map
$\overline{\Delta}_{\lambda}\to\overline{\nabla}_{\lambda}$.
\end{conjecture}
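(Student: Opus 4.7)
The plan is to reduce to a computation on the torus $T$ via iterated Eisenstein series, transport everything to the spectral side, and then recognize the resulting pushforward as the Springer computation that defines the Andersen--Jantzen sheaves. Set $M = G_{b_\lambda} = C_G(\lambda)$, and let $P$ and $\overline{P}$ be the attracting and repelling dynamic parabolics for $\lambda$. Pick a Borel $B_M \subset M$, and put $B = B_M \cdot U_P$ and $\overline{B} = \overline{B_M}\cdot U_{\overline{P}}$. Since $\pi_\lambda \simeq i_{B_M}^M \mathbf{1}$ and $b_\lambda \in B(T)$ has basic image in $B(M)$, property~1 of Eisenstein series gives $i_{b_\lambda !}^M \pi_\lambda \simeq \mathrm{Eis}_{B_M}^M(i_{b_\lambda !}^T \mathbf{1})$; combining with Remark~\ref{rem:Eisensteinpush} and the composition law (property~2), I obtain
\[
i_{b_\lambda !}^{\mathrm{ren}} \pi_\lambda \;\simeq\; \mathrm{Eis}_B^G(i_{b_\lambda !}^T \mathbf{1}), \qquad i_{b_\lambda \sharp}^{\mathrm{ren}} \pi_\lambda \;\simeq\; \mathrm{Eis}_{\overline{B}}^G(i_{b_\lambda !}^T \mathbf{1}).
\]

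Next I would invoke the Eisenstein compatibility of the categorical equivalence (Conjecture~\ref{conj:eiscategoricalmatch}), transported across the adjunction $a_\psi \dashv c_\psi$ to yield $c_\psi^G \circ \mathrm{Eis}_B^G \simeq \mathrm{Eis}_B^{G,\mathrm{spec}} \circ c_{\psi_T}^T$ (and likewise for $\overline{B}$). Local Langlands for the split torus $T$ is unconditional and is the Mellin transform, so $c_{\psi_T}^T(i_{b_\lambda !}^T \mathbf{1})$ is a skyscraper-type coherent sheaf supported at the trivial parameter $B\hat{T} \hookrightarrow \mathrm{Par}_T$ and twisted by the character $\lambda$ that records the component $b_\lambda$ under the central grading of Section~1.4 and Kottwitz's isomorphism; concretely it is the pushforward $i_{\mathrm{triv}\ast}\mathcal{O}_{B\hat{T},\lambda}$ of the $\hat{T}$-weight-$\lambda$ line bundle on $B\hat{T}$.

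The geometric heart of the argument is the identification of $\mathrm{Eis}_B^{G,\mathrm{spec}}$ applied to this sheaf with $\nu_\ast \overline{\nabla}_\lambda$. The fiber of $q^{\mathrm{spec}}\colon \mathrm{Par}_B \to \mathrm{Par}_T$ over the trivial parameter is derived-stack-theoretically $[\hat{\mathfrak{u}}/\hat{B}] \simeq [\tilde{\mathcal{N}}/\hat{G}]$, and the restriction of $p^{\mathrm{spec}}$ to this fiber is exactly the Springer resolution $\pi\colon \tilde{\mathcal{N}} \to \mathcal{N}$ after passing to $\hat{G}$-quotients. The $\lambda$-character on $B\hat{T}$ pulls back along $\hat{B} \twoheadrightarrow \hat{T}$ to the character $\lambda$ of $\hat{B}$, which on $\tilde{\mathcal{N}} = \hat{\mathfrak{u}} \times^{\hat{B}} \hat{G}$ is exactly $\eta^\ast \mathcal{L}_\lambda$. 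Thus
\[
c_\psi^G(i_{b_\lambda !}^{\mathrm{ren}} \pi_\lambda) \;\simeq\; \nu_\ast \pi_\ast \eta^\ast \mathcal{L}_\lambda \;=\; \nu_\ast A_\lambda \;=\; \nu_\ast \overline{\nabla}_\lambda,
\]
and the analogous computation with $\overline{B}$ produces $\nu_\ast A_{w_0\lambda} = \nu_\ast \overline{\Delta}_\lambda$ after absorbing the Weyl twist that identifies $\hat{G}/\hat{\overline{B}}$ with $\hat{G}/\hat{B}$. The compatibility with the canonical map follows by naturality: the map $i_{b_\lambda\sharp}^{\mathrm{ren}} \pi_\lambda \to i_{b_\lambda !}^{\mathrm{ren}} \pi_\lambda$ corresponds under the reductions above to the intertwining natural transformation $\mathrm{Eis}_{\overline{B}}^G \to \mathrm{Eis}_B^G$, whose image under $c_\psi^G$ is the standard map between the two Springer pushforwards, i.e.\ the tautological $\overline{\Delta}_\lambda \to \overline{\nabla}_\lambda$.

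The main obstacle is that the argument is conditional on Conjecture~\ref{conj:eiscategoricalmatch}, hence ultimately on Conjecture~\ref{conj:categoricalLLCmain}. Even granting these inputs, the most delicate concrete step is the bookkeeping of twists: the $\delta_{b_\lambda}^{1/2}$-renormalization built into $i_{b_\lambda !}^{\mathrm{ren}}$, the modulus characters produced by iterating parabolic induction to build $\pi_\lambda$, the Kottwitz central-grading identification $\pi_1(T) \cong X^\ast(\hat{T})$, and the $\hat{B}$-weight defining $\mathcal{L}_\lambda$ all contribute half-sum-of-positive-roots shifts; arranging them to cancel so as to land on $\mathcal{L}_\lambda$ exactly (rather than on some twist $\mathcal{L}_{\lambda \pm 2\rho}$) is where I would expect the real effort to lie.
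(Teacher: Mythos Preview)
Your derivation of the first isomorphism coincides with the paper's: both write $i_{b_\lambda!}^{\mathrm{ren}}\pi_\lambda \simeq \mathrm{Eis}_B(i_{\lambda!}^T\mathbf{1})$ (you via Remark~\ref{rem:Eisensteinpush} plus the composition law, the paper directly as Proposition~\ref{prop:eisdomtrivialparameter}), invoke the Eisenstein compatibility Conjecture~\ref{conj:eiscategoricalmatch}, and identify $\mathrm{Eis}_B^{\mathrm{spec}}(e_\ast\mathcal{O}_\lambda)$ with $\nu_\ast A_\lambda$ via the Cartesian Springer square over the trivial parameter.

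For the second isomorphism the routes diverge. You argue symmetrically through $\mathrm{Eis}_{\overline{B}}$; the paper instead deduces the $\overline{\Delta}$ side from the $\overline{\nabla}$ side by duality, computing $\mathbf{D}_{\mathrm{twGS}}(\nu_\ast A_\lambda)\simeq \nu_\ast A_{w_0(\lambda)}[-\dim T]$ on the spectral side and $\mathbf{D}_{\mathrm{BZ}}(i_{b_\lambda!}^{\mathrm{ren}}\pi_\lambda)\simeq i_{b_\lambda\sharp}^{\mathrm{ren}}\pi_\lambda[-\dim T]$ on the automorphic side (the latter from Proposition~\ref{prop:dualitypush}), and then invoking Conjecture~\ref{conj:dualitycpsi}. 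Your route is pleasantly uniform but hides a Whittaker-datum issue: Conjecture~\ref{conj:eiscategoricalmatch} is stated for \emph{standard} parabolics relative to the fixed datum $\psi$, and the paper's own manipulation leading to Conjecture~\ref{conj:Eisdual} shows that the expected compatibility for the opposite parabolic naturally pairs $\mathrm{Eis}_{\overline{P}}$ with $\mathbf{L}_{\psi^{-1}}$ rather than $\mathbf{L}_\psi$. So your $\overline{B}$-computation a priori lands on $c_{\psi^{-1}}(i_{b_\lambda\sharp}^{\mathrm{ren}}\pi_\lambda)$, and passing back to $c_\psi$ requires an extra self-duality argument at the trivial parameter---which is essentially the duality step the paper takes directly. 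Your handling of the canonical map via an ``intertwining transformation $\mathrm{Eis}_{\overline{B}}\to\mathrm{Eis}_B$'' is also more speculative than anything the paper makes precise; the paper does not give a separate justification for that clause either.
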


\begin{rem}
\label{rem:trivialparameterapsivariant}It is not hard to see that
$\nu_{\ast}\overline{\nabla}_{\lambda}$ and $\nu_{\ast}\overline{\Delta}_{\lambda}$
are perfect complexes on $\mathrm{Par}_{G}$, so we could also formulate
an obvious variant of this conjecture using the more humane functor
$a_{\psi}$ instead of $c_{\psi}$.
\end{rem}

This conjecture follows from the expected compatibilities of the categorical
conjecture with Eisenstein series and duality. To see this, recall
that the unramified component of $\mathrm{Par}_{T}$ is canonically
$\mathrm{Par}_{T}^{\mathrm{nr}}\cong\hat{T}\times B\hat{T}$. We have
a line bundle $\mathcal{O}_{\lambda}$ on $B\hat{T}$ corresponding
to $\lambda$. Pushing forward along the evident closed immersion
$e:B\hat{T}\to\hat{T}\times B\hat{T}\cong\mathrm{Par}_{T}^{\mathrm{nr}}\subset\mathrm{Par}_{T}$
gives a coherent sheaf $e_{\ast}\mathcal{O}_{\lambda}$ on $\mathrm{Par}_{T}$.
Under the known categorical equivalence for $T$ \cite{Zou}, it corresponds
to the sheaf $i_{\lambda!}\mathbf{1}$ on $\mathrm{Bun}_{T}=\coprod_{X_{\ast}(T)}[\ast/T(E)]$
given by the $!$-extension of the trivial representation from the
component labelled by $\lambda$.
\begin{prop}
Notation and assumptions as above, we have an isomorphism $\mathrm{Eis}_{B}^{\mathrm{spec}}(e_{\ast}\mathcal{O}_{\lambda})\cong\nu_{\ast}A_{\lambda}$
in $\mathrm{Coh}(\mathrm{Par}_{G})$ for any $\lambda$.
\end{prop}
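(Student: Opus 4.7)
The plan is to compute the left-hand side by forming the derived fibre product
$$\mathrm{Par}_B^e := \mathrm{Par}_B \times_{\mathrm{Par}_T}^{\mathbb{L}} B\hat{T}$$
along the closed immersion $e$, applying base change and the projection formula for the resulting cartesian square, and then recognising the output as $\nu_* A_\lambda$ via the Springer resolution.

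The crucial geometric input would be the identification $\mathrm{Par}_B^e \simeq \tilde{\mathcal{N}}/\hat{G} \simeq \hat{\mathfrak{u}}/\hat{B}$ as derived Artin stacks over $\overline{\mathbf{Q}_\ell}$. Heuristically, parameters $W_E \to \hat{B}$ projecting to the trivial parameter on $\hat{T}$ are continuous parameters $W_E \to \hat{U}$ with trivial Galois action; the derived moduli of these (modulo $\hat{B}$-conjugation) can be computed by a Koszul-duality argument for the unipotent group $\hat{U}$, producing $\hat{\mathfrak{u}}/\hat{B}$. The derived structure on $\mathrm{Par}_B$ for the non-reductive group $B$ (in the sense of Zhu) is essential here: the naive underived fibre is much smaller, and only after passing to derived intersections do the Springer-fibre cotangent directions appear. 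Under this identification, I would then verify that the projection to $B\hat{T}$ factors as $\hat{\mathfrak{u}}/\hat{B} \to B\hat{B} \to B\hat{T}$, and that the composition $p^{\mathrm{spec}} \circ e'$, where $e'$ is the induced closed immersion $\mathrm{Par}_B^e \hookrightarrow \mathrm{Par}_B$, factors as $\tilde{\mathcal{N}}/\hat{G} \xrightarrow{\pi} \mathcal{N}/\hat{G} \xrightarrow{\nu} \mathrm{Par}_G$ with $\pi$ the Springer map—this is essentially the assertion that the derived fibre picks out exactly the Springer resolution of the nilpotent cone sitting inside $\mathrm{Par}_G$ over the trivial parameter.

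Once these identifications are in hand, the rest is formal. By derived base change, $q^{\mathrm{spec}*}(e_*\mathcal{O}_\lambda) \simeq e'_* f^* \mathcal{O}_\lambda$, where $f: \hat{\mathfrak{u}}/\hat{B} \to B\hat{T}$ is the projection above. Tracing the pullback of $\mathcal{O}_\lambda$ through $B\hat{B} \to B\hat{T}$ (inflating $\lambda$ from $\hat{T}$ to $\hat{B}$) produces the $\hat{B}$-equivariant trivial line bundle on $\hat{\mathfrak{u}}$ with equivariance twisted by $\lambda$, which under the identification $\tilde{\mathcal{N}}/\hat{G} = \hat{\mathfrak{u}}/\hat{B}$ is precisely $\eta^* \mathcal{L}_\lambda$. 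Applying $p^{\mathrm{spec}}_*$ and using the factorisation above gives
$$\mathrm{Eis}_B^{\mathrm{spec}}(e_*\mathcal{O}_\lambda) \simeq \nu_* \pi_* \eta^* \mathcal{L}_\lambda = \nu_* A_\lambda$$
by the very definition of $A_\lambda$.

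The main obstacle is the derived identification $\mathrm{Par}_B^e \simeq \tilde{\mathcal{N}}/\hat{G}$ in the first step. This genuinely uses the derived structure on parameter stacks for non-reductive groups, and requires a clean computation of the cotangent complex of $q^{\mathrm{spec}}$ along $e$; everything downstream is standard manipulation with base change, the projection formula, and the Andersen–Jantzen definition.
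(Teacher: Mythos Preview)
Your overall architecture matches the paper exactly: both form the square with $\hat{\mathfrak{u}}/\hat{B} \cong \tilde{\mathcal{N}}/\hat{G}$ sitting as the fiber of $q^{\mathrm{spec}}$ over $e$, apply base change to rewrite $q^{\mathrm{spec}*} e_* \mathcal{O}_\lambda$, and then read off $\nu_* \pi_* \eta^* \mathcal{L}_\lambda = \nu_* A_\lambda$ from the definition of the Andersen--Jantzen sheaves. The formal manipulations downstream are just as you describe.

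Where you diverge from the paper is in your diagnosis of the hard step, and here you have the direction of the difficulty reversed. You write that ``the naive underived fibre is much smaller, and only after passing to derived intersections do the Springer-fibre cotangent directions appear.'' In fact the \emph{classical} fiber product $\mathrm{Par}_B^{\mathrm{cl}} \times_{\mathrm{Par}_T} B\hat{T}$ is already all of $\hat{\mathfrak{u}}/\hat{B}$ (compare the explicit identification of classical fibers in the proof of Proposition~\ref{prop:ssgennotderived}), so the square is Cartesian on classical truncations with no work. The genuine subtlety runs the other way: since $\mathrm{Par}_B$ is a priori a nontrivially derived stack, the \emph{derived} fiber product could carry extra structure beyond $\hat{\mathfrak{u}}/\hat{B}$, and one must rule this out. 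The paper does so not by any Koszul-type computation but by showing that the derived structure on $\mathrm{Par}_B$ is trivial in a neighbourhood of $(q^{\mathrm{spec}})^{-1}(\mathrm{im}\,e)$, invoking the general classicality result Proposition~\ref{prop:ssgennotderived} (which in turn rests on the dimensional criterion of Proposition~\ref{prop:soap}). Once this is known, $q^{\mathrm{spec}}$ is flat near $\mathrm{im}\,e$ and ordinary flat base change suffices. Your proposed direct computation, carried out correctly, would simply rediscover that the derived fiber is classical; the ``Springer-fibre cotangent directions'' are already present classically and are not produced by derived enhancement.
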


\begin{proof}[Proof sketch]
The key point is that there is commutative diagram of (derived) Artin
stacks
\[
\xymatrix{B\hat{T}\ar[d]^{e} & \hat{\mathfrak{u}}/\hat{B}\cong\tilde{\mathcal{N}}/\hat{G}\ar[d]^{f}\ar[l]_{q^{\mathrm{nil}}}\ar[r]^{\;\;\;\pi} & \mathcal{N}/\hat{G}\ar[d]^{\nu}\\
\mathrm{Par}_{T} & \mathrm{Par}_{B}\ar[l]^{q^{\mathrm{spec}}}\ar[r]_{p^{\mathrm{spec}}} & \mathrm{Par}_{G}
}
\]
where the vertical maps are closed immersions, and moreover the lefthand
square is Cartesian and $q^{\mathrm{spec}}$ is flat in a neighborbood
of $\mathrm{im}e$.\footnote{The hard part here is showing that the left-hand square is actually
Cartesian, and not just Cartesian on classical truncations, since
$\mathrm{Par}_{B}$ is genuinely a derived Artin stack. To verify
this, one needs to check that the derived structure of $\mathrm{Par}_{B}$
is trivial in a neighborhood of $(q^{\mathrm{spec}})^{-1}(\mathrm{im}e)$.
See Proposition \ref{prop:ssgennotderived} for a much more general
statement.} It is then straightforward to compute
\begin{align*}
\mathrm{Eis}_{B}^{\mathrm{spec}}(e_{\ast}\mathcal{O}_{\lambda}) & =p_{\ast}^{\mathrm{spec}}q^{\mathrm{spec}\ast}e_{\ast}\mathcal{O}_{\lambda}\\
 & \simeq p_{\ast}^{\mathrm{spec}}f_{\ast}q^{\mathrm{nil}\ast}\mathcal{O}_{\lambda}\\
 & \simeq\nu_{\ast}\pi_{\ast}q^{\mathrm{nil}\ast}\mathcal{O}_{\lambda}\\
 & \simeq\nu_{\ast}A_{\lambda}
\end{align*}
where the second line follows from flat base change, the third line
is trivial, and the fourth line follows from the definition of $A_{\lambda}$
plus the simple observation that $q^{\mathrm{nil}\ast}\mathcal{O}_{\lambda}=\eta^{\ast}\mathcal{L}_{\lambda}$
as line bundles on $\tilde{\mathcal{N}}/\hat{G}$.
\end{proof}
\begin{prop}
\label{prop:eisdomtrivialparameter}Notation as above, we have an
isomorphism $\mathrm{Eis}_{B}(i_{\lambda!}\mathbf{1})\simeq i_{b_{\lambda}!}^{\mathrm{ren}}\pi_{\lambda}$
for $\lambda$ dominant.
\end{prop}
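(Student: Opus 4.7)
The plan is to iterate the expected compatibilities of the Eisenstein functor listed in the Expectation preceding Conjecture \ref{conj:eiscategoricalmatch}, then invoke Remark \ref{rem:Eisensteinpush}. Since $\lambda \in X_{\ast}(T)^{+}$ is dominant, its attracting dynamic parabolic $P_{\lambda}$ is the standard parabolic of $G$ containing $B$ with Levi $M_{\lambda} := C_{G}(\lambda) = G_{b_{\lambda}}$. In particular $B \cap M_{\lambda}$ is a Borel of $M_{\lambda}$, so $\pi_{\lambda} \simeq i_{B\cap M_{\lambda}}^{M_{\lambda}}(\mathbf{1})$. Moreover, the element $\lambda \in B(T)$ has image in $B(M_{\lambda})$ which is basic (since $\lambda$ factors through $Z(M_{\lambda})$), and this basic image in turn maps to $b_{\lambda} \in B(G)$ under the natural map.

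First I would apply the composition property (property 2 of the Expectation) to the chain of parabolics $T \subset B\cap M_{\lambda} \subset M_{\lambda}$ and $M_{\lambda} \subset P_{\lambda} \subset G$, giving
\[
\mathrm{Eis}_{B}^{G} \simeq \mathrm{Eis}_{P_{\lambda}}^{G}\circ \mathrm{Eis}_{B\cap M_{\lambda}}^{M_{\lambda}}.
\]
Next I would apply the generalized form of property 1 to the group $M_{\lambda}$, its parabolic $B\cap M_{\lambda}$, and the element $\lambda \in B(T)$, whose image in $B(M_{\lambda})$ is basic so that $(M_{\lambda})_{\lambda} = M_{\lambda}$. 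This yields
\[
\mathrm{Eis}_{B\cap M_{\lambda}}^{M_{\lambda}}\circ i_{\lambda!}^{T} \simeq i_{\lambda!}^{M_{\lambda}}\circ i_{B\cap M_{\lambda}}^{M_{\lambda}}.
\]
Combining the last two displays and evaluating on $\mathbf{1}$ produces
\[
\mathrm{Eis}_{B}^{G}(i_{\lambda!}\mathbf{1}) \simeq \mathrm{Eis}_{P_{\lambda}}^{G}\bigl(i_{\lambda!}^{M_{\lambda}}\pi_{\lambda}\bigr).
\]

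Finally, Remark \ref{rem:Eisensteinpush} applied to the element $b_{\lambda}$ (whose Newton centralizer is $M_{\lambda}$ and whose attracting parabolic is $P_{\lambda}$) identifies $\mathrm{Eis}_{P_{\lambda}}^{G}\circ i_{\lambda!}^{M_{\lambda}}$ with $i_{b_{\lambda}!}^{\mathrm{ren}}$ as functors $D(M_{\lambda}(E),\Lambda)\to D(\mathrm{Bun}_{G},\Lambda)$, delivering the claim.

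The main obstacle is not any computation but simply the current status of the inputs: properties 1 and 2 of $\mathrm{Eis}_{P}$ and the Eisenstein realization of $i_{b!}^{\mathrm{ren}}$ are established for torsion $\Lambda$ via the definition $\mathrm{Eis}_{P} = p_{!}(\mathrm{IC}_{\mathrm{Bun}_{P}}\otimes q^{\ast}(-))$, and are expected to hold once the sheaf-theoretic foundations are extended to general coefficients; granted these expected properties, the proof is a purely formal diagram chase. A secondary technical point to double-check is that the renormalizing twist $\delta_{b_{\lambda}}^{1/2}[-\langle 2\rho_{G},\lambda\rangle]$ matches the twist built into the Eisenstein identification of Remark \ref{rem:Eisensteinpush}, but this is exactly what that remark is asserting.
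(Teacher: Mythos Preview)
Your proposal is correct and is precisely the ``little thought'' the paper's one-line proof alludes to: factor $\mathrm{Eis}_{B}^{G}$ through $P_{\lambda}$ via property~2, identify the inner Eisenstein as normalized parabolic induction via property~1 (using that $\lambda$ is basic in $B(M_{\lambda})$), and then invoke Remark~\ref{rem:Eisensteinpush}. Your caveats about the foundational status of these properties are also in line with the paper's own hedging.
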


\begin{proof}
This follows from Remark \ref{rem:Eisensteinpush} plus a little thought.
See also \cite[Proposition 9.4]{Ham} and the discussion immediately
afterwards.
\end{proof}
Now we can put things together: since $e_{\ast}\mathcal{O}_{\lambda}$
matches $i_{\lambda!}\mathbf{1}$ under the (known) categorical equivalence
for $T$, compatibility of the categorical equivalence with Eisenstein
functors on both sides \emph{forces} us to expect that for any dominant
$\lambda$, $\nu_{\ast}\overline{\nabla}_{\lambda}\simeq\mathrm{Eis}_{B}^{\mathrm{spec}}(e_{\ast}\mathcal{O}_{\lambda})$
should match $i_{b_{\lambda}!}^{\mathrm{ren}}\pi_{\lambda}\simeq\mathrm{Eis}_{B}(i_{\lambda!}\mathbf{1})$
under the categorical conjecture for $G$. This gives the first isomorphism
in Conjecture \ref{conj:trivialparametermain}. The second isomorphism
follows from the expected compatibility with duality. Indeed, one
checks directly that $\mathbf{D}_{\mathrm{tw.GS}}(\nu_{\ast}A_{\lambda})\cong\nu_{\ast}A_{w_{0}(\lambda)}[-\dim T]$
for any $\lambda$ (this follows, for instance, from the arguments
in section 4 of \cite{AH}), and also that $\mathbf{D}_{\mathrm{BZ}}(i_{b_{\lambda}!}^{\mathrm{ren}}\pi_{\lambda})\cong i_{b_{\lambda}\sharp}^{\mathrm{ren}}\pi_{\lambda}[-\dim T]$,
which is an easy consequence of Proposition \ref{prop:dualitypush}.
Conjecture \ref{conj:dualitycpsi} then implies the desired isomorphism.

We can use Conjecture \ref{conj:trivialparametermain} to do some
actual calculations. Let us illustrate this in the simplest case $G=\mathrm{PGL}_{2}$.
Here we identify $X^{\ast}(\hat{T})^{+}=\mathbf{Z}_{\geq0}$ in the
usual way, and write $\overline{\nabla}_{n}$ and $\overline{\Delta}_{n}$
for the associated perverse coherent sheaves as defined above. Let
$b_{n}\in|\mathrm{Bun}_{\mathrm{PGL}_{2}}|$ be the point labelled
by the highest weight $n$, so $b_{n}$ corresponds to the image of
$\mathrm{diag}(\varpi^{n},1)$ in $B(\mathrm{PGL}_{2})$. Note that
one component of $\mathrm{Bun}_{\mathrm{PGL}_{2}}$ consists of the
chain of specializations $b_{0}\rightsquigarrow b_{2}\rightsquigarrow b_{4}\cdots$,
while the other component consists of the chain $b_{1/2}\rightsquigarrow b_{1}\rightsquigarrow b_{3}\cdots$.
Here $b_{1/2}$ is the ``missing'' point, which won't play any role
in our discussion (since no sheaf supported at this point can have
trivial \emph{L}-parameter). When $n=0$, $G_{b_{0}}=G$ and $\pi_{0}=i_{B}^{G}(\mathbf{1})$
is an irreducible principal series representation. For $n\geq1$,
$G_{b_{n}}(E)=E^{\times}$ and the representation $\pi_{n}$ is just
the trivial representation. In particular, the sheaf $i_{b_{n}!}^{\mathrm{ren}}\mathbf{1}$
corresponding to $\nu_{\ast}\overline{\nabla}_{n}$ is very simple
and explicit, and is supported at one point. But how to calculate
the stalks of the sheaf $i_{b_{n}\sharp}^{\mathrm{ren}}\mathbf{1}$?
It turns out that the categorical conjecture lets us do this!

For this, recall that (as we already mentioned) there is a unique
nonzero map $\overline{\Delta}_{\lambda}\to\overline{\nabla}_{\lambda}$
whose image is an irreducible perverse coherent sheaf $IC_{\lambda}$.
Translating the diagram $\nu_{\ast}\overline{\Delta}_{\lambda}\to\nu_{\ast}IC_{\lambda}\to\nu_{\ast}\overline{\nabla}_{\lambda}$
to the other side of the categorical conjecture, we now predict the
existence of a canonical indecomposable sheaf $\mathscr{F}_{\lambda}$
on $\mathrm{Bun}_{G}$ admitting maps $i_{b_{\lambda}\sharp}^{\mathrm{ren}}\pi_{\lambda}\to\mathscr{F}_{\lambda}\to i_{b_{\lambda}!}^{\mathrm{ren}}\pi_{\lambda}$
whose composite is the canonical map, and such that $\mathbf{D}_{\mathrm{BZ}}\mathscr{F}_{\lambda}\simeq\mathscr{F}_{\lambda}[-\dim T]$.
In general, these sheaves are hard to calculate. However, for $\mathrm{PGL}_{2}$,
everything can be made very explicit. In particular, the following
results are known \cite[Section 5]{Achar}:

a) there are isomorphisms $\overline{\Delta_{0}}\cong\overline{\nabla}_{0}\cong IC_{0}\cong\mathcal{O}_{\mathcal{N}/\hat{G}}$
and $\overline{\Delta_{1}}\cong\overline{\nabla}_{1}\cong IC_{1}$

b) for $n\geq2$ there are short exact sequences of perverse coherent
sheaves
\[
0\to IC_{n}\to\overline{\nabla}_{n}\to\overline{\nabla}_{n-2}\to0
\]
and
\[
0\to\overline{\Delta}_{n-2}\to\overline{\Delta}_{n}\to IC_{n}\to0,
\]
and isomorphisms $IC_{n}\simeq i_{0\ast}V_{n-2}[-1]$, where $i_{0}:B\hat{G}\to\mathcal{N}/\hat{G}$
is the inclusion of the closed orbit.

Note that the information in a) translates into isomorphisms $i_{b_{0}\sharp}^{\mathrm{ren}}\pi_{0}\simeq i_{b_{0}!}^{\mathrm{ren}}\pi_{0}$
and $i_{b_{1}\sharp}^{\mathrm{ren}}\mathbf{1}\simeq i_{b_{1}!}^{\mathrm{ren}}\mathbf{1}\simeq\mathscr{F}_{1}$.
The first of these is completely tautological, while the second is
not tautological but follows a priori from the fact that no sheaf
supported at $b_{1/2}$ (which is the unique generization of $b_{1}$)
can have trivial \emph{L-}parameter. 

Using the first sequence in b) we can now calculate the stalks of
$\mathscr{F}_{n}$ inductively via the distinguished triangles $\mathscr{F}_{n}\to i_{b_{n}!}^{\mathrm{ren}}\pi_{n}\to i_{b_{n-2}!}^{\mathrm{ren}}\pi_{n-2}\to$
which map to this sequence under the categorical equivalence. The
result is easy to discern:
\begin{prop}
For $n\geq2$, the stalks of $\mathscr{F}_{n}$ vanish outside the
points $b_{n}$ and $b_{n-2}$. 

For $n=2$ we have $i_{b_{2}}^{\ast}\mathscr{F}_{2}=\delta^{1/2}[-2]$
and $i_{b_{0}}^{\ast}\mathscr{F}_{2}=\pi_{0}[-1]$. 

For $n>2$ we have $i_{b_{n}}^{\ast}\mathscr{F}_{n}=\delta^{1/2}[-n]$
and $i_{b_{n-2}}^{\ast}\mathscr{F}_{n}=\delta^{1/2}[1-n]$.
\end{prop}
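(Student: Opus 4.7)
The strategy is to translate the short exact sequence $0 \to IC_n \to \overline{\nabla}_n \to \overline{\nabla}_{n-2} \to 0$ listed above, which under the categorical equivalence (Conjecture \ref{conj:trivialparametermain}) becomes a distinguished triangle
\[
\mathscr{F}_n \to i_{b_n!}^{\mathrm{ren}} \pi_n \to i_{b_{n-2}!}^{\mathrm{ren}} \pi_{n-2} \overset{[1]}{\to}
\]
in $D(\mathrm{Bun}_G)$. The entire proposition then falls out of applying $i_{b_m}^{\ast}$ to this triangle for each $b_m \in |\mathrm{Bun}_G|$ and reading off the resulting long exact sequence on cohomology.

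Two simple stalk inputs are needed. First, for every $m \geq 1$ the paper has already noted that $i_{b_m!}^{\mathrm{ren}} \mathbf{1}$ has nonzero stalk only at $b_m$; from the definition of the renormalized pushforward one further extracts $i_{b_m}^{\ast} i_{b_m!}^{\mathrm{ren}}\mathbf{1} \simeq \delta_{b_m}^{1/2}[-m]$, using $\pi_m = \mathbf{1}$, $i_{b_m}^{\ast} i_{b_m!} \simeq \mathrm{id}$, and the equality $\langle 2\rho_G, \nu_{b_m}\rangle = m$ for $\mathrm{PGL}_2$. Second, because $b_0$ is basic, the punctured chart $\mathcal{M}_{b_0}^\circ$ is empty, so $i_{b_0!} = i_{b_0\sharp}$; hence $i_{b_0!}\pi_0$ has support only at generizations of $b_0$, i.e.\ only at $b_0$ itself, where its stalk is $\pi_0$. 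Both endpoints of our triangle are therefore concentrated at a single point each, and those two points ($b_n$ and $b_{n-2}$) are distinct.

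The stalk computation is then mechanical. At any $b_m \notin \{b_{n-2}, b_n\}$ both endpoints vanish after $i_{b_m}^{\ast}$, so $i_{b_m}^{\ast}\mathscr{F}_n = 0$. At $m = n$ (any $n \geq 2$) the triangle reduces to $i_{b_n}^{\ast}\mathscr{F}_n \to \delta_{b_n}^{1/2}[-n] \to 0 \overset{[1]}{\to}$, giving $i_{b_n}^{\ast}\mathscr{F}_n \simeq \delta_{b_n}^{1/2}[-n]$. At $m = n-2$ for $n > 2$ it reduces to $i_{b_{n-2}}^{\ast}\mathscr{F}_n \to 0 \to \delta_{b_{n-2}}^{1/2}[2-n] \overset{[1]}{\to}$, and the connecting shift yields $i_{b_{n-2}}^{\ast}\mathscr{F}_n \simeq \delta_{b_{n-2}}^{1/2}[1-n]$. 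In the special case $n=2$ the right-hand endpoint becomes $i_{b_0!}\pi_0$ instead, and applying $i_{b_0}^{\ast}$ gives $i_{b_0}^{\ast}\mathscr{F}_2 \to 0 \to \pi_0 \overset{[1]}{\to}$, whence $i_{b_0}^{\ast}\mathscr{F}_2 \simeq \pi_0[-1]$.

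The only genuinely nontrivial ingredient is the skyscraper assertion for $i_{b_m!}^{\mathrm{ren}}\mathbf{1}$ when $m \geq 1$: this is where all the geometry hides. The cleanest route to it is via the Eisenstein description $i_{b_m!}^{\mathrm{ren}} \simeq \mathrm{Eis}_B^G \circ i_{b_m!}^T$ of Remark \ref{rem:Eisensteinpush}, combined with a direct analysis of the map $\mathrm{Bun}_B \to \mathrm{Bun}_G$ near the deeper Newton strata, where the relevant geometric fibers collapse. Granted this input, the proof is pure bookkeeping with the long exact sequence, and Conjecture \ref{conj:trivialparametermain} does all the real work of turning the coherent-sheaf calculation into a statement on $\mathrm{Bun}_G$.
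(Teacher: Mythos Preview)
Your computation is correct and matches the paper's approach exactly: apply $i_{b_m}^{\ast}$ to the triangle $\mathscr{F}_n \to i_{b_n!}^{\mathrm{ren}}\pi_n \to i_{b_{n-2}!}^{\mathrm{ren}}\pi_{n-2} \to$ and read off the stalks.

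However, your final paragraph betrays a misunderstanding. The ``skyscraper assertion'' that $i_{b_m!}^{\mathrm{ren}}A$ has nonzero stalk only at $b_m$ is \emph{not} where the geometry hides---it is completely trivial. The functor $i_{b!}$ is, by construction, extension by zero from the locally closed stratum $\mathrm{Bun}_G^b$, so $i_{b'}^{\ast} i_{b!} = 0$ for $b' \neq b$ tautologically (the paper even calls this ``the trivial vanishing'' in Remark~\ref{rem:generous-duality-trick}). No Eisenstein series, no analysis of $\mathrm{Bun}_B \to \mathrm{Bun}_G$, is needed. Your argument for the $b_0$ case via $\mathcal{M}_{b_0}^\circ = \varnothing$ is likewise overcomplicated: $b_0$ is basic hence open, so $i_{b_0!}$ is open extension by zero, end of story. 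The genuinely nontrivial input in this proposition is Conjecture~\ref{conj:trivialparametermain} together with the coherent-side short exact sequence $0 \to IC_n \to \overline{\nabla}_n \to \overline{\nabla}_{n-2} \to 0$; once you accept the resulting triangle on $\mathrm{Bun}_G$, the stalk computation is pure bookkeeping, as you correctly carry out.
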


We can now use this information together with the triangles $i_{b_{n-2}\sharp}^{\mathrm{ren}}\pi_{n-2}\to i_{b_{n}\sharp}^{\mathrm{ren}}\pi_{n}\to\mathscr{F}_{n}\to$
corresponding to the second sequence in b) to inductively calculate
the stalks of $i_{b_{n}\sharp}^{\mathrm{ren}}\mathbf{1}$. For even
$n$, the outcome is the following:
\begin{prop}
Assume $n\geq2$ is even. The stalks of $i_{b_{n}\sharp}^{\mathrm{ren}}\mathbf{1}$
vanish outside the points $b_{n},b_{n-2},\dots,b_{2},b_{0}$. At these
points, all stalk cohomologies are zero except the following:

\emph{i.} $H^{n}(i_{b_{n}}^{\ast}i_{b_{n}\sharp}^{\mathrm{ren}}\mathbf{1})=\delta^{1/2}$

\emph{ii. }For $0<2j<n$, $H^{2j}(i_{b_{2j}}^{\ast}i_{b_{n}\sharp}^{\mathrm{ren}}\mathbf{1})=H^{2j+1}(i_{b_{2j}}^{\ast}i_{b_{n}\sharp}^{\mathrm{ren}}\mathbf{1})=\delta^{1/2}$

\emph{iii. $H^{0}(i_{b_{0}}^{\ast}i_{b_{n}\sharp}^{\mathrm{ren}}\mathbf{1})=H^{1}(i_{b_{0}}^{\ast}i_{b_{n}\sharp}^{\mathrm{ren}}\mathbf{1})=i_{B}^{G}(\mathbf{1})$.}
\end{prop}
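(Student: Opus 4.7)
The plan is to argue by induction on $n$, running over even positive integers, using the distinguished triangles
\[
S_{n-2}\to S_n\to\mathscr{F}_n\overset{[1]}{\to}
\]
where $S_m:=i_{b_m\sharp}^{\mathrm{ren}}\pi_m$. These are the images (under the conjectural categorical equivalence, but now taken on faith as the triangles explicitly introduced in the text) of the short exact sequences $0\to\overline{\Delta}_{n-2}\to\overline{\Delta}_n\to IC_n\to 0$, and the stalks of $\mathscr{F}_n$ are known from the preceding proposition. The support statement is immediate from the general fact that $i_{b\sharp}^{\mathrm{ren}}$ has stalks supported on generizations of $b$, so that $S_n$ can only be nonzero at $b_n,b_{n-2},\dots,b_0$.

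For the base case $n=2$, the sheaf $S_0=i_{b_0\sharp}^{\mathrm{ren}}\pi_0\simeq i_{b_0!}^{\mathrm{ren}}\pi_0$ is supported on the open basic stratum $b_0$ with stalk $\pi_0=i_B^G(\mathbf{1})$ in degree zero. Pulling the defining triangle back to $b_2$ annihilates the $S_0$ term and yields condition (i), while pulling back to $b_0$ produces a triangle
\[
\pi_0\to i_{b_0}^{\ast}S_2\to\pi_0[-1]\overset{[1]}{\to}
\]
whose connecting morphism lies in $\mathrm{Ext}^2_{\mathrm{PGL}_2(E)}(\pi_0,\pi_0)$. I would argue that this group vanishes because the category of smooth $\overline{\mathbf{Q}_\ell}$-representations of $\mathrm{PGL}_2(E)$ has cohomological dimension equal to the split rank, which is one; the triangle therefore splits and condition (iii) follows at $n=2$.

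The inductive step (for $n\geq 4$, assuming the result for $n-2$) is a stratum-by-stratum computation. For $2j\leq n-4$ the preceding proposition gives $i_{b_{2j}}^{\ast}\mathscr{F}_n=0$, so $i_{b_{2j}}^{\ast}S_n\simeq i_{b_{2j}}^{\ast}S_{n-2}$ and the desired stalks come from the induction hypothesis (iterating back to the base case handles $2j=0$). At $2j=n$ one has $i_{b_n}^{\ast}S_{n-2}=0$, so $i_{b_n}^{\ast}S_n\simeq i_{b_n}^{\ast}\mathscr{F}_n=\delta^{1/2}[-n]$, giving (i). At $2j=n-2$, the induction hypothesis gives $i_{b_{n-2}}^{\ast}S_{n-2}=\delta^{1/2}[-(n-2)]$ (condition (i) at index $n-2$), and the preceding proposition gives $i_{b_{n-2}}^{\ast}\mathscr{F}_n=\delta^{1/2}[1-n]$. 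The resulting triangle has connecting morphism in $\mathrm{Ext}^2_{E^{\times}}(\delta^{1/2},\delta^{1/2})$, which vanishes since the category of smooth $\overline{\mathbf{Q}_\ell}$-representations of $E^{\times}$ has cohomological dimension one. Splitting this triangle gives condition (ii) at $2j=n-2$ and completes the induction.

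The main obstacle I anticipate is the clean justification of these boundary-map vanishings: they rely on the standard but not entirely trivial fact that the cohomological dimension of the smooth representation category of a $p$-adic reductive group with $\overline{\mathbf{Q}_\ell}$-coefficients equals its split rank. Once this input is granted, the remainder is routine bookkeeping with distinguished triangles and the support properties of $i_{b\sharp}^{\mathrm{ren}}$; as a sanity check one may verify that the answer is self-dual under $\mathbf{D}_{\mathrm{BZ}}$ in the sense of Proposition \ref{prop:dualitypush}, consistent with the expected identity $\mathbf{D}_{\mathrm{BZ}}(i_{b_n!}^{\mathrm{ren}}\mathbf{1})\simeq i_{b_n\sharp}^{\mathrm{ren}}\mathbf{1}[-\dim T]$ implicit in the preceding discussion.
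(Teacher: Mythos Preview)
Your inductive scheme via the triangles $S_{n-2}\to S_n\to\mathscr{F}_n\to$ is exactly the approach the paper indicates (the paper merely says ``use this information together with the triangles\dots to inductively calculate the stalks'' and leaves the verification to the reader), and your case analysis is correct.

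However, the appeal to $\mathrm{Ext}^2$-vanishing is unnecessary. The proposition only asserts the values of the individual cohomology groups $H^i$, not that the stalk complex splits as a direct sum. At each step where you invoke splitting, the two inputs to the triangle are concentrated in \emph{adjacent but distinct} degrees: at $b_0$ for $n=2$ one has $\pi_0$ in degree $0$ and $\pi_0[-1]$ in degree $1$; at $b_{n-2}$ for $n\ge 4$ one has $\delta^{1/2}$ in degree $n-2$ and $\delta^{1/2}$ in degree $n-1$. The long exact sequence in cohomology therefore already forces $H^{n-2}=H^{n-1}=\delta^{1/2}$ (respectively $H^0=H^1=\pi_0$) and the vanishing elsewhere, with no information about the connecting map required. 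So you can drop the cohomological-dimension input entirely; what remains is routine bookkeeping, which you have carried out correctly.
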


We encourage the reader to check this for themselves, and to formulate
and prove a similar result for $n\geq3$ odd. As a notable consequence
of these calculations, we get the following suggestive result.
\begin{prop}
For all $n\geq0$ the sheaves $i_{b_{n}\sharp}^{\mathrm{ren}}\pi_{n}$,
$i_{b_{n}!}^{\mathrm{ren}}\pi_{n}$, and $\mathscr{F}_{n}$ are hadal
sheaves, and the distinguished triangles
\[
\mathscr{F}_{n}\to i_{b_{n}!}^{\mathrm{ren}}\pi_{n}\to i_{b_{n-2}!}^{\mathrm{ren}}\pi_{n-2}\to
\]
and
\[
i_{b_{n-2}\sharp}^{\mathrm{ren}}\pi_{n-2}\to i_{b_{n}\sharp}^{\mathrm{ren}}\pi_{n}\to\mathscr{F}_{n}\to
\]
are short exact sequences of hadal sheaves. The hadal sheaf $\mathscr{F}_{n}$
is irreducible.
\end{prop}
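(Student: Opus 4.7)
The plan is to induct on $n$. For the base cases $n=0,1$: when $n=0$, the point $b_{0}$ is basic and the renormalization is trivial, so $i_{b_{0}!}\pi_{0}$ is just the ordinary extension by zero from an open substack, and the hadal t-structure restricted to any open basic stratum agrees with the standard t-structure on $D(G(E))$; hence $\mathscr{F}_{0}:=i_{b_{0}!}\pi_{0}$ is hadal and (since $\pi_{0}$ is) irreducible. When $n=1$, the excerpt already notes $i_{b_{1}\sharp}^{\mathrm{ren}}\mathbf{1}\simeq i_{b_{1}!}^{\mathrm{ren}}\mathbf{1}$ (the unique generization $b_{1/2}$ admits no sheaves with trivial Fargues--Scholze parameter); this common sheaf lies in both ${}^{h}D^{\leq 0}$ and ${}^{h}D^{\geq 0}$ by the very definition of the hadal t-structure, hence is hadal, and is irreducible by Theorem \ref{thm:irreducible-hadal-sheaves} as the intermediate extension $\mathscr{G}_{b_{1},\mathbf{1}}$. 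We set $\mathscr{F}_{1}$ equal to this sheaf.

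For the inductive step at $n\geq 2$, I would first construct the two distinguished triangles by transporting the short exact sequences of b) through the functor $a_{\psi}$, using Conjecture \ref{conj:trivialparametermain} together with Remark \ref{rem:trivialparameterapsivariant} (which ensures $\nu_{*}\overline{\nabla}_{n}$ and $\nu_{*}\overline{\Delta}_{n}$ are perfect, so the $a_{\psi}$-variant of the conjecture yields $a_{\psi}(\nu_{*}\overline{\nabla}_{n})\simeq i_{b_{n}!}^{\mathrm{ren}}\pi_{n}$ and $a_{\psi}(\nu_{*}\overline{\Delta}_{n})\simeq i_{b_{n}\sharp}^{\mathrm{ren}}\pi_{n}$). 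Setting $\mathscr{F}_{n}:=a_{\psi}(\nu_{*}IC_{n})$, the two triangles in the proposition are then the images under $a_{\psi}$ of the two short exact sequences in b). This step is the main obstacle to a fully unconditional proof: producing the canonical maps $i_{b_{n}!}^{\mathrm{ren}}\pi_{n}\to i_{b_{n-2}!}^{\mathrm{ren}}\pi_{n-2}$ and $i_{b_{n-2}\sharp}^{\mathrm{ren}}\pi_{n-2}\to i_{b_{n}\sharp}^{\mathrm{ren}}\pi_{n}$ directly on the automorphic side would presumably require a careful Hecke-theoretic argument via Conjecture \ref{conj:Eisheckefiltered}, in the spirit of Hamann's results invoked earlier.

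Once the triangles are available, verifying hadality is formal. The second triangle exhibits $\mathscr{F}_{n}$ as the cone of a map between objects in ${}^{h}D^{\leq 0}$ (both are $i_{b\sharp}^{\mathrm{ren}}$ of a representation in degree zero), so $\mathscr{F}_{n}\in{}^{h}D^{\leq 0}$. For the opposite inclusion, since ${}^{h}D^{\leq -1}$ is by definition generated by objects $i_{b\sharp}^{\mathrm{ren}}F$ with $F\in D^{\leq -1}$, the $(i_{b\sharp}^{\mathrm{ren}},i_{b}^{*\mathrm{ren}})$-adjunction reduces $\mathscr{F}_{n}\in{}^{h}D^{\geq 0}$ to the condition $i_{b}^{*\mathrm{ren}}\mathscr{F}_{n}\in D^{\geq 0}$ for every $b$; applying $i_{b}^{*\mathrm{ren}}$ to the first triangle and using that $i_{b_{n}!}^{\mathrm{ren}}\pi_{n}$ and $i_{b_{n-2}!}^{\mathrm{ren}}\pi_{n-2}$ have disjoint (singleton) supports at $b_{n}$ and $b_{n-2}$ respectively yields this directly. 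Thus $\mathscr{F}_{n}$ is hadal. Feeding this back into the triangles, $i_{b_{n}!}^{\mathrm{ren}}\pi_{n}$ is the middle term of a triangle whose other terms $\mathscr{F}_{n}$ and $i_{b_{n-2}!}^{\mathrm{ren}}\pi_{n-2}$ are both in ${}^{h}D^{\leq 0}$ (the latter by induction), while being in ${}^{h}D^{\geq 0}$ by definition, so it is hadal; symmetrically $i_{b_{n}\sharp}^{\mathrm{ren}}\pi_{n}$ is hadal. The long exact sequence in hadal cohomology then upgrades the distinguished triangles to short exact sequences in $\mathrm{Had}(\mathrm{Bun}_{G})$.

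For irreducibility of $\mathscr{F}_{n}$: composing the two short exact sequences gives a factorization $i_{b_{n}\sharp}^{\mathrm{ren}}\pi_{n}\twoheadrightarrow\mathscr{F}_{n}\hookrightarrow i_{b_{n}!}^{\mathrm{ren}}\pi_{n}$ whose restriction under $i_{b_{n}}^{*\mathrm{ren}}$ is an isomorphism $\pi_{n}\to\pi_{n}$ (the outer terms $i_{b_{n-2}\sharp}^{\mathrm{ren}}\pi_{n-2}$ and $i_{b_{n-2}!}^{\mathrm{ren}}\pi_{n-2}$ of the two triangles both vanish under $i_{b_{n}}^{*\mathrm{ren}}$ by support reasons). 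Since $\mathrm{Hom}(i_{b_{n}\sharp}^{\mathrm{ren}}\pi_{n},i_{b_{n}!}^{\mathrm{ren}}\pi_{n})\cong\mathrm{End}_{G_{b_{n}}(E)}(\pi_{n})=\overline{\mathbf{Q}_{\ell}}$ by the $(i_{b_{n}\sharp}^{\mathrm{ren}},i_{b_{n}}^{*\mathrm{ren}})$-adjunction and the irreducibility of $\pi_{n}$, the composite must be a nonzero scalar multiple of the canonical adjunction map; its image is therefore exactly the hadal intermediate extension $\mathscr{G}_{b_{n},\pi_{n}}$ of Theorem \ref{thm:irreducible-hadal-sheaves}, so $\mathscr{F}_{n}$ is irreducible.
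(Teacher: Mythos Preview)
Your proof is correct and follows the same broad strategy as the paper's, but with a different ordering that is arguably cleaner. The paper first invokes the explicit stalk computation from the preceding proposition to show that $i_{b_n\sharp}^{\mathrm{ren}}\pi_n$ is hadal-coconnective (hence hadal, connectivity being definitional), then extracts coconnectivity of $\mathscr{F}_n$ from the first triangle, and finally hadality of $\mathscr{F}_n$ from the second triangle once both $\sharp$-pushforwards are known hadal. You instead establish hadality of $\mathscr{F}_n$ directly---connectivity from the second triangle (cone of a map between objects of ${}^hD^{\leq 0}$), coconnectivity from the first triangle via the stalkwise criterion $i_b^{*\mathrm{ren}}\mathscr{F}_n \in D^{\geq 0}$---and only then recover hadality of $i_{b_n\sharp}^{\mathrm{ren}}\pi_n$ and $i_{b_n!}^{\mathrm{ren}}\pi_n$ by induction on $n$. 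This bypasses the full inductive stalk computation entirely. You also supply an explicit irreducibility argument (via one-dimensionality of $\mathrm{Hom}(i_{b_n\sharp}^{\mathrm{ren}}\pi_n, i_{b_n!}^{\mathrm{ren}}\pi_n)$ together with Theorem~\ref{thm:irreducible-hadal-sheaves}), which the paper's proof leaves unaddressed.
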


\begin{proof}
For $n=0,1$ it is clear that these sheaves are hadal. For $n\geq2$,
the calculation in the previous proposition shows that $i_{b_{n}\sharp}^{\mathrm{ren}}\pi_{n}$
is coconnective for the hadal t-structure, but we also know it is
connective by definition. Thus $i_{b_{n}\sharp}^{\mathrm{ren}}\pi_{n}$
is a hadal sheaf. Now from the first triangle we get that all $\mathscr{F}_{n}$'s
are coconnective for the hadal t-structure, so then the map $i_{b_{n-2}\sharp}^{\mathrm{ren}}\pi_{n-2}\to i_{b_{n}\sharp}^{\mathrm{ren}}\pi_{n}$
in the second triangle is an injection of hadal sheaves, which then
implies that $\mathscr{F}_{n}$ is hadal. Now the first triangle implies
that $i_{b_{n}!}^{\mathrm{ren}}\pi_{n}$ is hadal by an easy induction
on $n$. 
\end{proof}
In \cite[Remark I.10.3]{FS} one finds the suggestion that the categorical
conjecture might match the perverse t-structure on $\mathrm{Bun}_{G}$
with some perverse coherent t-structure on $\mathrm{Coh}(\mathrm{Par}_{G})$.
Since the $IC_{\lambda}$'s are perverse coherent, this might naively
lead one to guess that the $\mathscr{F}_{\lambda}$'s should be perverse,
but already for $\mathrm{PGL}_{2}$ this is false. Indeed, we calculated
above that $i_{b_{0}}^{\ast}\mathscr{F}_{2}$ has nonzero cohomology
in degree $1$, which could not happen if $\mathscr{F}_{2}$ were
perverse. However, this numerology matches the fact that $IC_{2}\simeq i_{0\ast}V_{0}[-1]$
has a nonzero cohomology sheaf in degree one only. We will argue later
in these notes that after localizing over a large (and explicit) open
substack of $\mathrm{Par}_{G}$, the categorical equivalence should
be t-exact for the perverse t-structure on $\mathrm{Bun}_{G}$ and
the \emph{naive }t-structure on $\mathrm{Coh}(\mathrm{Par}_{G})$.
On the other hand, we also saw above that $\mathscr{F}_{n}$ is a
hadal sheaf. This suggests that perverse coherent t-structures on
$\mathrm{Coh}(\mathrm{Par}_{G}$) should be relevant, but that they
will be related to the \emph{hadal} t-structure on $\mathrm{Bun}_{G}$
instead.

We now make these ideas precise for the trivial parameter, again in
the context of a general split $G$.
\begin{thm}
\label{thm:trivial-parameter-categorical-magic}Suppose that for all
$\lambda\in X^{\ast}(\hat{T})^{+}$ we have isomorphisms
\[
a_{\psi}(\nu_{\ast}\overline{\Delta}_{\lambda})\simeq i_{b_{\lambda}\sharp}^{\mathrm{ren}}\pi_{\lambda}\;\;\;\mathrm{and}\;\;\;a_{\psi}(\nu_{\ast}\overline{\nabla}_{\lambda})\simeq i_{b_{\lambda}!}^{\mathrm{ren}}\pi_{\lambda}
\]
as in Remark \ref{rem:trivialparameterapsivariant}. Then the following
results are true.

\emph{i. }The triangulated functor 
\[
a_{\psi}\circ\nu_{\ast}:\mathrm{Coh}(\mathcal{N}/\hat{G})\to D(\mathrm{Bun}_{G})_{\mathrm{fin}}
\]
is t-exact with respect to the perverse coherent t-structure on the
left and the hadal t-structure on the right. In particular, it induces
an exact functor
\[
a_{\psi}\circ\nu_{\ast}:\mathrm{PCoh}(\mathcal{N}/\hat{G})\to\mathrm{Had}(\mathrm{Bun}_{G})_{\mathrm{fin}}.
\]

\emph{ii. }The sheaves $i_{b_{\lambda}\sharp}^{\mathrm{ren}}\pi_{\lambda}$
and $i_{b_{\lambda}!}^{\mathrm{ren}}\pi_{\lambda}$ are hadal.

\emph{iii. }The image of the (unique up to scalar) nonzero map $\overline{\Delta}_{\lambda}\to\overline{\nabla}_{\lambda}$
under the functor $a_{\psi}\circ\nu_{\ast}$ is nonzero, and we have
an isomorphism
\[
a_{\psi}(\nu_{\ast}IC_{\lambda})\simeq i_{b_{\lambda}\sharp!}^{\mathrm{ren}}\pi_{\lambda}.
\]
In particular, the exact functor
\[
a_{\psi}\circ\nu_{\ast}:\mathrm{PCoh}(\mathcal{N}/\hat{G})\to\mathrm{Had}(\mathrm{Bun}_{G})_{\mathrm{fin}}
\]
is faithful, and sends irreducible objects to irreducible objects.

\emph{iv. }Assume that the pairs $(b_{\lambda},\pi_{\lambda})$ exhaust
all pairs $(b,\pi)$ for which $i_{b!}^{\mathrm{ren}}\pi$ has trivial
Fargues-Scholze parameter. Then the sheaves $i_{b_{\lambda}!}^{\mathrm{ren}}\pi_{\lambda}$
are perverse.
\end{thm}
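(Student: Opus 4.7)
The plan is to derive all four claims from Bezrukavnikov's description of $\mathrm{PCoh}(\mathcal{N}/\hat{G})$ as a highest-weight category with standards $\overline{\Delta}_\lambda$ and costandards $\overline{\nabla}_\lambda$, combined with the hypothesized identifications of $a_\psi\nu_*$ on these generators. Throughout I exploit that the defining property of the hadal t-structure is tailored precisely to $\sharp$-generation (and dually to $!$-cogeneration). By \cite{Bez}, $^pD^{\leq 0}$ is the smallest subcategory closed under finite extensions containing the $\overline{\Delta}_\lambda[k]$ with $k\geq 0$, while $^pD^{\geq 0}$ admits a dual description via costandard resolutions.

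For part (i), right t-exactness is immediate: $a_\psi\nu_*(\overline{\Delta}_\lambda[k])\simeq i_{b_\lambda\sharp}^{\mathrm{ren}}(\pi_\lambda[k])$ lies in $^hD^{\leq 0}$ for $k\geq 0$ by the very definition of the hadal t-structure (Theorem \ref{thm:hadal-t-structure}), and the triangulated functor preserves finite extensions. For left t-exactness, the key computation at $\mathcal{F}=\overline{\nabla}_\mu$ reads
\[
R\mathrm{Hom}(i_{b\sharp}^{\mathrm{ren}}A,\,a_\psi\nu_*\overline{\nabla}_\mu)\;=\;R\mathrm{Hom}(i_{b\sharp}^{\mathrm{ren}}A,\,i_{b_\mu!}^{\mathrm{ren}}\pi_\mu),
\]
which vanishes unless $b=b_\mu$ by Proposition \ref{prop:easyvanishing}.iii; when $b=b_\mu$, adjunction plus $i_b^{\ast\mathrm{ren}}i_{b!}^{\mathrm{ren}}\simeq\mathrm{id}$ identifies it with $R\mathrm{Hom}_{G_{b_\mu}(E)}(A,\pi_\mu)\in D^{\geq 0}$ for $A\in D^{\leq 0}$. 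To propagate this bound to arbitrary $\mathcal{F}\in {}^pD^{\geq 0}$ I invoke the costandard dévissage of the highest-weight category $\mathrm{PCoh}(\mathcal{N}/\hat{G})$, presenting each such $\mathcal{F}$ as an iterated fiber of shifts $\overline{\nabla}_\mu[k]$ with $k\geq 0$.

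For parts (ii) and (iii), t-exactness immediately yields (ii), since $\overline{\Delta}_\lambda,\overline{\nabla}_\lambda$ lie in the heart of the perverse coherent t-structure. For (iii), applying $a_\psi\nu_*$ to the canonical map $\overline{\Delta}_\lambda\to\overline{\nabla}_\lambda$ produces the natural transformation $i_{b_\lambda\sharp}^{\mathrm{ren}}\pi_\lambda\to i_{b_\lambda!}^{\mathrm{ren}}\pi_\lambda$, which is nonzero because applying $i_{b_\lambda}^{\ast\mathrm{ren}}$ recovers $\mathrm{id}_{\pi_\lambda}$. By exactness on hearts, the image is $a_\psi\nu_*(IC_\lambda)$, and by Theorem \ref{thm:irreducible-hadal-sheaves} this image coincides with the intermediate extension $i_{b_\lambda\sharp!}^{\mathrm{ren}}\pi_\lambda=\mathscr{G}_{b_\lambda,\pi_\lambda}$, which is an irreducible hadal sheaf. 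Since distinct pairs $(b_\lambda,\pi_\lambda)$ produce non-isomorphic irreducible hadal sheaves, simples go to distinct simples, and faithfulness on the heart follows from exactness together with the standard length-and-dévissage argument in Bezrukavnikov's heart.

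For (iv), the exhaustion hypothesis forces every irreducible hadal sheaf with trivial Fargues--Scholze parameter to be of the form $\mathscr{G}_{b_\mu,\pi_\mu}$, so every hadal composition factor of $i_{b_\lambda!}^{\mathrm{ren}}\pi_\lambda\simeq a_\psi\nu_*\overline{\nabla}_\lambda$ arises from an $IC_\mu$ appearing in the perverse-coherent composition series of $\overline{\nabla}_\lambda$. Exercise \ref{exer:perversesemiexact}.i already places $i_{b_\lambda!}^{\mathrm{ren}}\pi_\lambda$ in $^pD^{\leq 0}$, and the remaining stalk bound $i_{b'}^{!\mathrm{ren}}i_{b_\lambda!}^{\mathrm{ren}}\pi_\lambda\in D^{\geq 0}$ at each stratum $b'$ is obtained by dévissage along this hadal composition series, using Exercise \ref{exer:perversesemiexact}.ii (intermediate extensions are perverse) to bound the contribution of each layer. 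The principal technical hurdle throughout is the costandard dévissage underpinning left t-exactness in (i): it rests on Bezrukavnikov's highest-weight structure for $\mathrm{PCoh}(\mathcal{N}/\hat{G})$, and must be applied carefully to ensure that the key vanishing at $\overline{\nabla}_\mu$ propagates to all of $^pD^{\geq 0}$ without introducing spurious positive-$\mathrm{Ext}$ contributions from strata outside the image of $a_\psi\nu_*$.
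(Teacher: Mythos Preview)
Your approach to (i) and (ii) is correct, though for (i) you are working harder than necessary: by \cite{Bez} the coconnective half $^{p}D^{\geq0}$ is generated under extensions by $\overline{\nabla}_{\mu}[n]$ with $n\leq0$, and the hypothesis sends these to $i_{b_{\mu}!}^{\mathrm{ren}}\pi_{\mu}[n]$, which lie in $^{h}D^{\geq0}$ \emph{by definition} of the hadal t-structure. There is no need to re-derive this via Proposition~\ref{prop:easyvanishing}.iii.

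There is a genuine gap in (iii). You assert that $a_{\psi}\nu_{\ast}$ applied to the canonical map $\gamma_{\lambda}:\overline{\Delta}_{\lambda}\to\overline{\nabla}_{\lambda}$ \emph{is} the natural transformation $i_{b_{\lambda}\sharp}^{\mathrm{ren}}\pi_{\lambda}\to i_{b_{\lambda}!}^{\mathrm{ren}}\pi_{\lambda}$, and then justify nonvanishing by saying $i_{b_{\lambda}}^{\ast\mathrm{ren}}$ returns $\mathrm{id}_{\pi_{\lambda}}$. But the hypothesis only matches \emph{objects}, not morphisms: the map $a_{\psi}\nu_{\ast}(\gamma_{\lambda})$ is a priori just some element of the one-dimensional space $\mathrm{Hom}(i_{b_{\lambda}\sharp}^{\mathrm{ren}}\pi_{\lambda},i_{b_{\lambda}!}^{\mathrm{ren}}\pi_{\lambda})$, and you must rule out that it is zero. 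The paper does this by a support argument: the cone $\mathcal{K}_{\lambda}$ of $\gamma_{\lambda}$ lies in the subcategory generated by $\overline{\nabla}_{\mu}$ with $\mu<\lambda$, so $a_{\psi}\nu_{\ast}\mathcal{K}_{\lambda}$ has vanishing $\ast$-stalk at $b_{\lambda}$; if $a_{\psi}\nu_{\ast}(\gamma_{\lambda})$ were zero, the cone would split and acquire a nonzero stalk at $b_{\lambda}$, contradiction. The paper explicitly stresses that ``the only assumption in this theorem is a set-theoretic matching of objects''---your argument silently assumes more.

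Your argument for (iv) is incorrect. You invoke Exercise~\ref{exer:perversesemiexact}.ii to claim that the hadal composition factors $\mathscr{G}_{b_{\mu},\pi_{\mu}}=i_{b_{\mu}\sharp!}^{\mathrm{ren}}\pi_{\mu}$ are perverse, but that exercise concerns the \emph{perverse} intermediate extensions $i_{b!\ast}^{\mathrm{ren}}\pi$, which are entirely different objects. Indeed the paper computes for $\mathrm{PGL}_{2}$ that $\mathscr{F}_{2}=\mathscr{G}_{b_{2},\pi_{2}}$ has $i_{b_{0}}^{\ast}\mathscr{F}_{2}$ concentrated in degree~$1$, so $\mathscr{F}_{2}$ is \emph{not} perverse; your d\'evissage therefore does not yield the required $!$-stalk bound. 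The paper's argument is completely different and uses the hadal t-structure in the opposite direction: by the exhaustion hypothesis, $^{p}D^{\leq-1}$ localized at $\phi_{\mathrm{triv}}$ is generated by $i_{b_{\mu}!}^{\mathrm{ren}}\pi_{\mu}[n]$ with $n>0$; since (ii) says these are hadal, such shifts lie in $^{h}D^{\leq-1}$, while the target $i_{b_{\lambda}!}^{\mathrm{ren}}\pi_{\lambda}$ lies in $^{h}D^{\geq0}$, and the t-structure axioms give the required Hom-vanishing.
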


Note that the sheaves $\mathscr{F}_{\lambda}$ which we predicted
earlier, and calculated by hand for $\mathrm{PGL}_{2}$, are now revealed
more clearly: they are exactly the irreducible hadal sheaves $i_{b_{\lambda}\sharp!}^{\mathrm{ren}}\pi_{\lambda}$.

We emphasize again that the \emph{only }assumption in this theorem
is a set-theoretic matching of objects, and this matching is \emph{forced}
on us by the most basic expectations regarding the categorical local
Langlands conjecture. The fact that this matching implies so much
more is really some magic, related to the remarkable properties of
the hadal and perverse coherent t-structures.
\begin{proof}
By \cite{Bez}, the left half, resp. right half of the perverse coherent
t-structure is generated under extensions by objects of the form $\overline{\Delta}_{\lambda}[n],\,n\geq0$,
resp. objects of the form $\overline{\nabla}_{\lambda}[n],\,n\leq0$.
Our assumption now guarantees that $a_{\psi}\circ\nu_{\ast}$ sends
the left resp. right half into the left resp. right half of the hadal
t-structure. This gives i, and then ii. is an immediate consequence
of the fact that $\overline{\Delta}_{\lambda}$ and $\overline{\nabla}_{\lambda}$
are perverse coherent.

For iii., let $\gamma_{\lambda}:\overline{\Delta}_{\lambda}\to\overline{\nabla}_{\lambda}$
be a nonzero map. Pick a total ordering $<$ on $X^{\ast}(\hat{T})^{+}$
refining the usual partial ordering. By \cite{Bez}, the cone $\mathcal{K}_{\lambda}$
of the map $\gamma_{\lambda}$ lies in the triangulated subcategory
of $\mathrm{Coh}(\mathcal{N}/\hat{G})$ generated by $\overline{\nabla}_{\mu}[m]$
for $m\in\mathbf{Z}$ and $\mu<\lambda$. This implies that the $\ast$-stalks
of the sheaf $a_{\psi}(\nu_{\ast}\mathcal{K}_{\lambda})$ can only
be nonzero at points $b_{\mu}$ with $\mu<\lambda$, and in particular
the $\ast$-stalk at $b_{\lambda}$ vanishes. This implies that the
first map in the distinguished triangle
\[
a_{\psi}(\nu_{\ast}\overline{\Delta}_{\lambda})\simeq i_{b_{\lambda}\sharp}^{\mathrm{ren}}\pi_{\lambda}\overset{(a_{\psi}\circ\nu_{\ast})(\gamma_{\lambda})}{\longrightarrow}a_{\psi}(\nu_{\ast}\overline{\nabla}_{\lambda})\simeq i_{b_{\lambda}!}^{\mathrm{ren}}\pi_{\lambda}\to a_{\psi}(\nu_{\ast}\mathcal{K}_{\lambda})\to
\]
cannot be zero, because otherwise we would get an isomorphism
\[
a_{\psi}(\nu_{\ast}\mathcal{K}_{\lambda})\simeq i_{b_{\lambda}!}^{\mathrm{ren}}\pi_{\lambda}\oplus i_{b_{\lambda}\sharp}^{\mathrm{ren}}\pi_{\lambda}[1],
\]
and the right-hand side here clearly has nonzero $\ast$-stalk at
$b_{\lambda}$. Therefore, the image of $\gamma_{\lambda}$ under
$a_{\psi}\circ\nu_{\ast}$ is the (unique up to scalar) nonzero map
\[
i_{b_{\lambda}\sharp}^{\mathrm{ren}}\pi_{\lambda}\overset{\delta_{\lambda}}{\to}i_{b_{\lambda}!}^{\mathrm{ren}}\pi_{\lambda}.
\]
Since we have already seen that the source and target of $\delta_{\lambda}$
are hadal sheaves, it is clear that $\mathrm{im}\delta_{\lambda}=i_{b_{\lambda}\sharp!}^{\mathrm{ren}}\pi_{\lambda}$
by the definition of the latter as in Theorem \ref{thm:irreducible-hadal-sheaves}.
On the other hand, $a_{\psi}\circ\nu_{\ast}$ is an exact functor
of abelian categories, so it preserves images, and therefore
\[
i_{b_{\lambda}\sharp!}^{\mathrm{ren}}\pi_{\lambda}=\mathrm{im}\delta_{\lambda}\simeq a_{\psi}\circ\nu_{\ast}(\mathrm{im}\gamma_{\lambda})\simeq a_{\psi}\circ\nu_{\ast}(IC_{\lambda}).
\]
This also implies that $a_{\psi}\circ\nu_{\ast}$ is conservative
(since it doesn't send any irreducible object to the zero object),
and thus faithful.

For iv., we already know that $i_{b_{\lambda}!}^{\mathrm{ren}}\pi_{\lambda}$
lies in the left half of the of the perverse t-structure. To see that
it lies in the right half, it is enough (by the exhaustion assumption
in iv.) to check that
\[
\mathrm{Hom}(i_{b_{\mu}!}^{\mathrm{ren}}\pi_{\mu}[n],i_{b_{\lambda}!}^{\mathrm{ren}}\pi_{\lambda})=0
\]
for all $\mu$ and all $n>0$. We already know that $i_{b_{\lambda}!}^{\mathrm{ren}}\pi_{\lambda}\in\phantom{}^{h}D^{\geq0}$
in general, but from ii. we also know that $i_{b_{\mu}!}^{\mathrm{ren}}\pi_{\mu}$
is hadal for all $\mu$, and therefore $i_{b_{\mu}!}^{\mathrm{ren}}\pi_{\mu}[n]\in\phantom{}^{h}D^{\leq-1}$
for all $n>0$. The desired $\mathrm{Hom}$-vanishing then follows
from basic properties of t-structures.
\end{proof}
\begin{xca}
1. Show that for $\lambda,\mu$ dominant with $\lambda\neq\mu$, we
have $\mathrm{Hom}(i_{b_{\lambda}\sharp}^{\mathrm{ren}}\pi_{\lambda},i_{b_{\mu}!}^{\mathrm{ren}}\pi_{\mu})=0$.
(Hint: Use Proposition \ref{prop:easyvanishing}.iii.) Match this
under the categorical conjecture with Proposition 4.b) from \cite{Bez}.

2. Show that for $\lambda,\mu$ dominant with $\lambda\ngeq\mu$,
we have $\mathrm{Hom}(i_{b_{\lambda}!}^{\mathrm{ren}}\pi_{\lambda},i_{b_{\mu}!}^{\mathrm{ren}}\pi_{\mu})=0$.
(Hint: Use Proposition \ref{prop:easyvanishing}.ii.) Match this under
the categorical conjecture with the vanishing portion of \cite[Theorem 4.12]{Achar}.
How does the ``other part'' of \cite[Theorem 4.12]{Achar} translate
under the categorical conjecture?

3. Assume that the pairs $(b_{\lambda},\pi_{\lambda})$ exhaust all
pairs $(b,\pi)$ for which $i_{b!}^{\mathrm{ren}}\pi$ has trivial
Fargues-Scholze parameter. Pick a total ordering $<$ on $X^{\ast}(\hat{T})^{+}$
refining the usual partial ordering. Prove by hand that the collection
$\left\{ i_{b_{\lambda}!}^{\mathrm{ren}}\pi_{\lambda}\right\} _{\lambda\in X^{\ast}(\hat{T})^{+}}$
generates the triangulated category $D(\mathrm{Bun}_{G})_{\mathrm{fin},\phi_{\mathrm{triv}}}$
and defines a dualizable quasi-exceptional set within it, whose associated
t-structure is the hadal t-structure, and with dual quasi-exceptional
set $\left\{ i_{b_{\lambda}\sharp}^{\mathrm{ren}}\pi_{\lambda}\right\} _{\lambda\in X^{\ast}(\hat{T})^{+}}$.

4. Assume that $G=\mathrm{GL}_{n}$. If $\lambda$ is minuscule, show
that the natural map $i_{b_{\lambda}\sharp}^{\mathrm{ren}}\pi_{\lambda}\to i_{b_{\lambda}!}^{\mathrm{ren}}\pi_{\lambda}$
is an isomorphism. Observe that this is \emph{predicted} by the categorical
conjecture for all $G$: for $\lambda$ minuscule, the natural map
$\overline{\Delta}_{\lambda}\to\overline{\nabla}_{\lambda}$ is always
an isomorphism (see e.g. \cite[Proof of Prop. 3.8]{Achar}). (Hint:
If $b$ is a proper generization of $b_{\lambda}$, show that $G_{b}$
is not quasisplit. Deduce that no sheaf supported at $b$ can have
trivial \emph{L-}parameter, using the known compatibility of the FS
correspondence with the classical LLC for inner forms of $\mathrm{GL}_{n}$
\cite{HKW}.)
\end{xca}

\begin{rem}
Let $i_{0}:B\hat{G}\to\mathcal{N}/\hat{G}$ be the inclusion of the
closed orbit. It is a general fact \cite[Proposition 3.9]{Achar}
that for any dominant $\lambda$, $IC_{\lambda+2\rho}\simeq i_{0\ast}V_{\lambda}[-\dim U]$.\footnote{This is more or less the only part of the Lusztig-Vogan bijection
which can be explicitly understood.} In the particular case of $G=\mathrm{PGL}_{2}$, this reduces to
isomorphisms $IC_{n}\simeq i_{0\ast}V_{n-2}[-1]$ for all $n\geq2$,
as we've already noted. Writing $\mathcal{O}(\hat{G})\simeq\oplus_{n\geq2}V_{n-2}^{\oplus n-1}$,
we formally see that $\nu_{\ast}i_{0\ast}\mathcal{O}(\hat{G})\simeq\oplus_{n\geq2}\nu_{\ast}IC_{n}^{\oplus n-1}[1]$,
so translating to the other side we get that
\begin{align*}
a_{\psi}(\nu_{\ast}i_{0\ast}\mathcal{O}(\hat{G})) & \simeq\oplus_{n\geq2}a_{\psi}(IC_{n})^{\oplus n-1}[1]\\
 & \simeq\oplus_{n\geq2}\mathscr{F}_{n}^{\oplus n-1}[1]
\end{align*}
should be a Hecke eigensheaf with trivial eigenvalue. For general
$G$, similar reasoning leads formally to the expectation that $\mathscr{G}=\oplus_{\lambda\in X^{\ast}(\hat{T})^{+}}\mathscr{F}_{\lambda+2\rho}^{\oplus\mathrm{dim}V_{\lambda}}$
should be a Hecke eigensheaf with trivial eigenvalue.

It seems that much more can be said about the sheaves $\mathscr{F}_{\lambda}$.
We already noted above that there are isomorphisms $\mathbf{D}_{\mathrm{BZ}}\mathscr{F}_{\lambda}\simeq\mathscr{F}_{\lambda}[-\dim T]$.
However, a heuristic argument with compactified Eisenstein series
suggests that for $\lambda\in2\rho+X^{\ast}(\hat{T})^{+}$, we should
also have isomorphisms $\mathbf{D}_{\mathrm{Verd}}\mathscr{F}_{\lambda}\simeq\mathscr{F}_{\lambda}[2\mathrm{dim}U]$,
and $\mathscr{F}_{\lambda}[\dim U]$ should be an irreducible perverse
sheaf. We will come back to this later in the notes. For now, we note
that the special case of $\mathrm{PGL}_{2}$ can be understood directly
using our previous calculations. Indeed, recall from earlier the natural
map $i_{b_{n}!}^{\mathrm{ren}}\pi_{n}\to i_{b_{n-2}!}^{\mathrm{ren}}\pi_{n-2}$,
which is a surjection of hadal sheaves with kernel $\mathscr{F}_{n}$.
However, the source and target of this map are also perverse (by Theorem
\ref{thm:trivial-parameter-categorical-magic}.iv), and by some general
nonsense this same map must be an \emph{injection }of perverse sheaves,
with cokernel in the perverse category given by $\mathscr{F}_{n}[1]$!
This immediately implies that $\mathscr{F}_{n}[1]$ is perverse, and
a small additional argument identifies it with the intermediate extension
$i_{b_{n-2}!\ast}^{\mathrm{ren}}\pi_{n-2}$. In other words, we have
short exact sequences of perverse sheaves
\[
0\to i_{b_{n+2}!}^{\mathrm{ren}}\pi_{n+2}\to i_{b_{n}!}^{\mathrm{ren}}\pi_{n}\to i_{b_{n}!\ast}^{\mathrm{ren}}\pi_{n}\to0
\]
for all $n\geq0$. This implies that the perverse sheaf $i_{b_{n}!}^{\mathrm{ren}}\pi_{n}$
is uniserial of infinite length, with Jordan-Holder series
\[
\cdots i_{b_{n+6}!\ast}^{\mathrm{ren}}\pi_{n+6}-i_{b_{n+4}!\ast}^{\mathrm{ren}}\pi_{n+4}-i_{b_{n+2}!\ast}^{\mathrm{ren}}\pi_{n+2}-i_{b_{n}!\ast}^{\mathrm{ren}}\pi_{n}.
\]
Note also that $i_{b_{n}!}^{\mathrm{ren}}\pi_{n}$ has infinite length
as a perverse sheaf, despite clearly being an object of $D(\mathrm{Bun}_{G})_{\mathrm{fin}}$.
Finally, taking the Verdier dual of the previous short exact sequence
and using the self-duality of intermediate extensions, we get short
exact seqences of perverse sheaves
\[
0\to i_{b_{n}!\ast}^{\mathrm{ren}}\pi_{n}\to i_{b_{n}\ast}^{\mathrm{ren}}\pi_{n}\to i_{b_{n+2}\ast}^{\mathrm{ren}}\pi_{n+2}\to0
\]
for all $n\geq0$. This immediately implies that $i_{b_{n}\ast}^{\mathrm{ren}}\pi_{n}$
is uniserial of infinite length, with the same perverse Jordan-Holder
factors as $i_{b_{n}!}^{\mathrm{ren}}\pi_{n}$ but arranged in the
opposite order. By a simple induction, this triangle also lets us
easily compute the $\ast$-stalks of the sheaves $i_{b_{n}\ast}^{\mathrm{ren}}\pi_{n}$.
\end{rem}

\begin{prop}
For all $n\geq0$ and $j>0$, we have
\[
i_{b_{n+2j}}^{\ast\mathrm{ren}}i_{b_{n}\ast}^{\mathrm{ren}}\pi_{n}\simeq\pi_{n+2j}\oplus\pi_{n+2j}[1].
\]
\end{prop}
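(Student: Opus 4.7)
The plan is to induct on $j \geq 1$ by applying $i_{b_{n+2j}}^{\ast\mathrm{ren}}$ to the distinguished triangle underlying the short exact sequence of perverse sheaves
\[
0 \to i_{b_n !\ast}^{\mathrm{ren}}\pi_n \to i_{b_n\ast}^{\mathrm{ren}}\pi_n \to i_{b_{n+2}\ast}^{\mathrm{ren}}\pi_{n+2} \to 0
\]
displayed just above. The only inputs I will need are the stalks of the intermediate extension at $b_{n+2j}$, together with a small $\mathrm{Ext}^2$ vanishing to handle the base case $j = 1$.

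First I pin down $i_{b_{n+2j}}^{\ast\mathrm{ren}}i_{b_n!\ast}^{\mathrm{ren}}\pi_n$ using the identification $i_{b_n!\ast}^{\mathrm{ren}}\pi_n \simeq \mathscr{F}_{n+2}[1]$ noted above. From the explicit stalk computation of $\mathscr{F}_m$ for $m \geq 2$, the sheaf $\mathscr{F}_{n+2}$ is supported on $\{b_n, b_{n+2}\}$, so $i_{b_{n+2j}}^{\ast\mathrm{ren}}i_{b_n!\ast}^{\mathrm{ren}}\pi_n$ vanishes for $j > 1$. For $j = 1$, the stalk $i_{b_{n+2}}^{\ast} \mathscr{F}_{n+2}$ equals $\delta^{1/2}[-(n+2)]$ (or $\delta^{1/2}[-2]$ when $n=0$), and applying the renormalization $\delta_{b_{n+2}}^{-1/2}\otimes(-)[\langle 2\rho,\nu_{b_{n+2}}\rangle] = \delta^{-1/2}\otimes(-)[n+2]$ yields
\[
i_{b_{n+2}}^{\ast\mathrm{ren}}i_{b_n!\ast}^{\mathrm{ren}}\pi_n \simeq \pi_{n+2}[1],
\]
using $\pi_{n+2} = \mathbf{1}$ for $n+2 \geq 1$.

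For the base case $j = 1$, combining this with the counit isomorphism $i_{b_{n+2}}^{\ast\mathrm{ren}}i_{b_{n+2}\ast}^{\mathrm{ren}}\pi_{n+2} \simeq \pi_{n+2}$ produces the distinguished triangle
\[
\pi_{n+2}[1] \to i_{b_{n+2}}^{\ast\mathrm{ren}}i_{b_n\ast}^{\mathrm{ren}}\pi_n \to \pi_{n+2} \overset{d}{\to} \pi_{n+2}[2].
\]
The connecting morphism $d$ lies in $\mathrm{Ext}^2_{E^{\times}}(\mathbf{1},\mathbf{1})$. Since $E^{\times} \cong \mathbf{Z}\times\mathcal{O}_E^{\times}$ and the category of smooth $\overline{\mathbf{Q}_\ell}$-representations of $E^{\times}$ has cohomological dimension $1$, this Ext group vanishes. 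Hence the triangle splits, giving the result for $j = 1$.

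For the inductive step $j > 1$, the vanishing of $i_{b_{n+2j}}^{\ast\mathrm{ren}}i_{b_n!\ast}^{\mathrm{ren}}\pi_n$ collapses the triangle to an isomorphism
\[
i_{b_{n+2j}}^{\ast\mathrm{ren}}i_{b_n\ast}^{\mathrm{ren}}\pi_n \overset{\sim}{\to} i_{b_{(n+2)+2(j-1)}}^{\ast\mathrm{ren}}i_{b_{n+2}\ast}^{\mathrm{ren}}\pi_{n+2},
\]
and applying the inductive hypothesis to the pair $(n+2, j-1)$ closes the argument. The only conceptual step beyond bookkeeping with shifts and twists is the Ext vanishing in the base case; I would single that out as the main point, though it is immediate from the structure of $E^{\times}$.
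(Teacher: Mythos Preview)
Your proof is correct and follows exactly the approach the paper indicates: the paper merely remarks that ``by a simple induction, this triangle also lets us easily compute the $\ast$-stalks of the sheaves $i_{b_{n}\ast}^{\mathrm{ren}}\pi_{n}$,'' and you have faithfully spelled out that induction, including the splitting via $\mathrm{Ext}^2_{E^\times}(\mathbf{1},\mathbf{1})=0$ in the base case.
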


\subsection{Semisimple generic parameters}

In this section we sketch our expectations regarding the maximal common
generalizations of the discussion in section 2.1 and 2.2. We again
allow any quasisplit $G$. It will be convenient to adopt the following
notation: If $H$ is any linear algebraic group over $\overline{\mathbf{Q}_{\ell}}$,
we have the associated Artin stack $\mathcal{N}_{H}=\mathrm{Lie}(H)^{\mathrm{nil}}/H$
where $\mathrm{Lie}(H)^{\mathrm{nil}}$ is the nilpotent cone. If
the connected component $H^{\circ}$ is reductive, the obvious map
$\mathcal{N}_{H^{\circ}}\to\mathcal{N}_{H}$ is finite etale, and
it is not hard to see that that we can define a perverse coherent
t-structure on $\mathrm{Coh}(\mathcal{N}_{H})$ whose heart is exactly
the objects in $\mathrm{Coh}(\mathcal{N}_{H})$ whose pullback to
$\mathrm{Coh}(\mathcal{N}_{H^{\circ}})$ is perverse coherent.

Let $\phi$ be a semisimple parameter, $x_{\phi}\in X_{G}^{\mathrm{spec}}$
the associated closed point. To begin, observe that there is a canonical
closed immersion $\nu_{\phi}:\mathcal{N}_{S_{\phi}}\to\mathrm{Par}_{G}$
factoring over the closed substack $q^{-1}(x_{\phi})$, which parametrizes
exactly the \emph{L}-parameters with open kernel whose Frobenius-semisimplification
is $\phi$. We omit the proof, which is an easy exercise with the
Jordan-Chevalley decomposition. 
\begin{prop}
\label{prop:ss-gen-equivalent-conditions}The following conditions
on a semisimple parameter $\phi$ are equivalent.

1) The morphism $\nu_{\phi}$ factors through an isomorphism $\mathcal{N}_{S_{\phi}}\overset{\sim}{\to}q^{-1}(x_{\phi})^{\mathrm{red}}$.

2) The parameter $\phi$ is \emph{generic}, i.e. the adjoint L-function
$L(s,\mathrm{ad}\circ\phi)$ is regular at $s=1$.

3) The composite map $BS_{\phi}\to\mathcal{N}_{S_{\phi}}\overset{\nu_{\phi}}{\to}\mathrm{Par}_{G}$
defines a smooth point of $\mathrm{Par}_{G}$.

4) The stack $\mathrm{Par}_{G}$ is smooth in a Zariski neighborhood
of $q^{-1}(x_{\phi})$.
\end{prop}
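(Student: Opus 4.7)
The plan is to reduce conditions (2) and (3) to a single cohomological statement via deformation theory and local Tate duality, and then use the explicit description of $q^{-1}(x_\phi)^{\mathrm{red}}$ recalled in the previous proposition to compare with $\mathcal{N}_{S_\phi}$. First I would identify the tangent complex of the derived Artin stack $\mathrm{Par}_G = Z^1(W_E,\hat G)/\hat G$ at $\phi$: by the standard cocycle calculation it is $C^{*}_{\mathrm{cts}}(W_E,\hat{\mathfrak g}_\phi)[1]$, where $\hat{\mathfrak g}_\phi$ denotes the adjoint representation pulled back along $\phi$. In particular $\mathrm{Par}_G$ is quasi-smooth and is smooth at $\phi$ iff the obstruction group $H^2(W_E,\hat{\mathfrak g}_\phi)$ vanishes. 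Next I would apply local Tate duality for $W_E$ together with the Killing form identification $\hat{\mathfrak g}_\phi^\vee \simeq \hat{\mathfrak g}_\phi$ to obtain
\[
H^2(W_E,\hat{\mathfrak g}_\phi)^\vee \cong H^0(W_E,\hat{\mathfrak g}_\phi(1)) = \{x \in \hat{\mathfrak g}^{\mathrm{ad}\phi(I_E)} : \mathrm{ad}\phi(\mathrm{Fr})\cdot x = q^{-1}x\},
\]
which is simultaneously the tangent space at the origin of the Vogan variety $V_\phi$ and, by direct inspection of $L(s,\mathrm{ad}\circ\phi) = \det(1 - q^{-s}\mathrm{ad}\phi(\mathrm{Fr}) \mid \hat{\mathfrak g}^{\mathrm{ad}\phi(I_E)})^{-1}$, the order of the pole of this $L$-function at $s=1$. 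This already gives (2)$\Leftrightarrow$(3).

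For (3)$\Leftrightarrow$(4), the direction (4)$\Rightarrow$(3) is trivial; conversely, using the fact recalled earlier that $\phi$ is the unique closed $\overline{\mathbf{Q}_\ell}$-point of $q^{-1}(x_\phi)$, any other point of the fiber is a generization of $\phi$, and since smoothness is generization-stable and Zariski-open it spreads from $\phi$ to an open neighborhood containing all of $q^{-1}(x_\phi)$. For (1)$\Leftrightarrow$(2), I would use the explicit presentation of $q^{-1}(x_\phi)^{\mathrm{red}}$ as $\{(u,N) : \ldots\}/S_\phi$ recalled in the previous proposition. The morphism $\nu_\phi$ sends a nilpotent $N \in \mathrm{Lie}(S_\phi)$ to $(\exp N, 0)$ via Springer's isomorphism $\mathcal{U}_{S_\phi} \xrightarrow{\sim} \mathrm{Lie}(S_\phi)^{\mathrm{nil}}$, so its image is exactly the $N=0$ slice of the fiber; this slice coincides with the whole fiber iff the Vogan variety is trivial, iff (2) holds.

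The main obstacle I anticipate is upgrading the reduced comparison in (1) to a genuine isomorphism of (possibly derived) stacks, since the formula recalled earlier describes only $q^{-1}(x_\phi)^{\mathrm{red}}$. The way to close this gap is to feed (4) back into (1): once $\mathrm{Par}_G$ is classically smooth in a neighborhood of the fiber, $q$ becomes flat there (by the dimension count against $X_G^{\mathrm{spec}}$), the derived fiber $\mathrm{Par}_G \times^{\mathbf{L}}_{X_G^{\mathrm{spec}}} \{x_\phi\}$ carries no hidden derived or nilpotent structure, and the target of $\nu_\phi$ is pinned down classically. The remaining delicate points — nailing down the Tate-twist conventions and verifying the tangent-complex computation at the level of derived stacks of continuous cocycles — are essentially bookkeeping.
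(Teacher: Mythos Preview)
The paper omits the proof entirely, so there is no argument to compare against; your outline is the standard one and is correct for the cycle (2)$\Leftrightarrow$(3)$\Leftrightarrow$(4). The tangent-complex identification with $C^*_{\mathrm{cts}}(W_E,\hat{\mathfrak g}_\phi)[1]$, the Tate-duality link between $H^2$-vanishing and regularity of $L(s,\mathrm{ad}\circ\phi)$ at $s=1$, and the spreading argument for (3)$\Rightarrow$(4) (the unique closed point of the fiber lies in the open smooth locus, and open subsets are stable under generization) are all right. One cosmetic point: the self-duality of $\hat{\mathfrak g}_\phi$ uses an invariant nondegenerate form, not literally the Killing form, which is degenerate when $\hat G$ has positive-dimensional center.

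For (2)$\Rightarrow$(1) there is a genuine gap exactly where you flag it, but your proposed fix does not quite close it. Two issues: first, miracle flatness requires the \emph{base} to be regular, whereas $X_G^{\mathrm{spec}}$ is only a finite quotient of a torus, hence Cohen--Macaulay but typically singular; second, even granting flatness of $q$, that kills only the derived structure on the fiber, not nilpotents---flat morphisms from smooth sources over reduced bases can still have non-reduced fibers (e.g.\ $x\mapsto x^2$ on $\mathbf A^1$). One clean way to finish: since (by (4)) $Z^1$ is smooth at $\phi$ with closed $\hat G$-orbit and reductive stabilizer $S_\phi$, Luna's \'etale slice theorem models $q$ near $x_\phi$ on the linear quotient $H^1(W_E,\hat{\mathfrak g}_\phi)\to H^1(W_E,\hat{\mathfrak g}_\phi)/\!/S_\phi$, and one identifies the scheme-theoretic null-fiber directly with the nilpotent cone of $\mathrm{Lie}(S_\phi)$, which is reduced by Kostant. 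Alternatively, argue that the fiber is CM (as a local complete intersection in the smooth locus) and generically reduced (on the dense regular-unipotent orbit), then invoke Serre's criterion.
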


\begin{proof}
Omitted. We note that condition 2) may seem to depend a priori on
a choice of isomorphism $\mathbf{C}\simeq\overline{\mathbf{Q}_{\ell}}$,
and we encourage the reader to convince themselves that it is actually
independent of any such choice.
\end{proof}
\begin{example}
Suppose $G=\mathrm{GL}_{n}$. Then an \emph{L-}parameter $\phi$ is
semisimple generic iff it is a direct sum $\phi\simeq\phi_{1}\oplus\cdots\oplus\phi_{d}$
where the $\phi_{i}$'s are supercuspidal parameters with $\sum_{i}\mathrm{dim}\phi_{i}=n$,
and with $\phi_{i}\not\simeq\phi_{j}(1)$ for any $i\neq j$. However,
unlike in the generous case, we allow the $\phi_{i}$'s to occur with
arbitrary multiplicities. Note that a semisimple \emph{L-}parameter
$\phi$ for $\mathrm{GL}_{n}$ is generic \emph{if and only if }the
associated $\mathrm{GL}_{n}(E)$-representation $\pi_{\phi}$ is an
irreducible parabolic induction of a supercuspidal representation
on a Levi subgroup.
\end{example}

It is not hard to see that the locus $X_{G}^{\mathrm{spec},\mathrm{gen}}\subset X_{G}^{\mathrm{spec}}$
parametrizing semisimple generic parameters is open and dense, and
contains all generous parameters. Here is another key simplifying
feature associated with these parameters.
\begin{prop}
\label{prop:ssgennotderived}Choose any parabolic $P=MU\subset G$,
so we have the natural composite map $\mathrm{Par}_{P}\overset{p^{\mathrm{spec}}}{\to}\mathrm{Par}_{G}\overset{q}{\to}X_{G}^{\mathrm{spec}}$.
Let 
\[
\mathrm{Par}_{P}^{G-\mathrm{gen}}=\mathrm{Par}_{P}\times_{X_{G}^{\mathrm{spec}}}X_{G}^{\mathrm{spec,gen}}\subset\mathrm{Par}_{P}
\]
be the evident open substack. Then the derived structure on $\mathrm{Par}_{P}^{G-\mathrm{gen}}$
is trivial.
\end{prop}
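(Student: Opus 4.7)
The plan is to verify the claim pointwise. For any linear algebraic group $H$ over $\overline{\mathbf{Q}_\ell}$, the derived Artin stack $\mathrm{Par}_H$ is quasi-smooth, with tangent complex at a parameter $\phi_H$ naturally identified with the shifted continuous cochain complex $C^\bullet(W_E,\mathrm{Lie}\,\hat{H})[1]$ (with $W_E$ acting through $\mathrm{Ad}\circ\phi_H$); see \cite[Section 2.3]{Zhu} or \cite{DHKM}. The locus on which $\mathrm{Par}_H$ is classical is then Zariski open and is characterized by the vanishing of the degree-one obstruction group $H^2(W_E,\mathrm{Lie}\,\hat{H})$. So my goal is to show this group vanishes at every classical point $\phi_P\in|\mathrm{Par}_P^{G-\mathrm{gen}}|$; pointwise vanishing will then propagate to triviality of the derived structure on the whole open substack via Nakayama applied to the coherent sheaf on the classical truncation that records the derived structure.

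Fix such a $\phi_P$, and write $\phi_G=p^{\mathrm{spec}}(\phi_P)$ with semisimplification $\phi_G^{\mathrm{ss}}$; by the genericity hypothesis $\phi_G^{\mathrm{ss}}$ is semisimple generic. Local Tate duality for continuous $\ell$-adic $W_E$-representations gives
\[
H^2(W_E,\mathrm{Lie}\,\hat{P})^\vee\;\cong\;\mathrm{Hom}_{W_E}(\mathrm{Lie}\,\hat{P},\overline{\mathbf{Q}_\ell}(1)),
\]
so it suffices to rule out nonzero $W_E$-equivariant maps $\mathrm{Lie}\,\hat{P}\to\overline{\mathbf{Q}_\ell}(1)$. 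Any such map would make $\overline{\mathbf{Q}_\ell}(1)$ a Jordan--H\"older constituent of $\mathrm{Lie}\,\hat{P}$, and hence (via the $W_E$-equivariant inclusion $\mathrm{Lie}\,\hat{P}\hookrightarrow\mathrm{Lie}\,\hat{G}$ coming from $\hat{P}\hookrightarrow\hat{G}$) a Jordan--H\"older constituent of $\mathrm{Ad}\,\phi_G$. Since JH constituents depend only on the semisimplification of the representation, and since the compatibility of the Jordan decomposition with $\mathrm{Ad}$ in the reductive group $\hat{G}$ yields $(\mathrm{Ad}\,\phi_G)^{\mathrm{ss}}\cong\mathrm{Ad}\,\phi_G^{\mathrm{ss}}$, the question reduces to whether the semisimple $W_E$-representation $\mathrm{Ad}\,\phi_G^{\mathrm{ss}}$ on $\mathrm{Lie}\,\hat{G}$ contains a summand isomorphic to $\overline{\mathbf{Q}_\ell}(1)$.

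To finish, I would invoke a $\hat{G}$-invariant nondegenerate symmetric bilinear form on $\mathrm{Lie}\,\hat{G}$ (available because $\hat{G}$ is reductive) to see that $\mathrm{Ad}\,\phi_G^{\mathrm{ss}}$ is $W_E$-equivariantly self-dual, so an $\overline{\mathbf{Q}_\ell}(1)$-summand exists iff an $\overline{\mathbf{Q}_\ell}(-1)$-summand does. But the latter is forbidden by the definition of genericity: the regularity of $L(s,\mathrm{Ad}\,\phi_G^{\mathrm{ss}})$ at $s=1$ says exactly that $\phi_G^{\mathrm{ss}}(\mathrm{Fr})$ has no eigenvalue $q$ on the inertia-invariants of $\mathrm{Ad}\,\phi_G^{\mathrm{ss}}$, which is precisely the absence of an $\overline{\mathbf{Q}_\ell}(-1)$-summand.

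The main obstacle in this plan is the derived-geometric input at the start --- cleanly identifying the obstruction theory of $\mathrm{Par}_P$ with the cited group cohomology and justifying that pointwise vanishing of $H^2$ on classical points really forces triviality of the derived structure on the open substack. Once that is in hand, the rest of the argument is a light Tate duality / Jordan--H\"older / self-duality manipulation, essentially parallel to (and weaker than) the smoothness argument for $\mathrm{Par}_G$ itself in Proposition \ref{prop:ss-gen-equivalent-conditions}.
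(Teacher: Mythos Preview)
Your argument is correct and in fact proves something stronger than asked: vanishing of $H^2(W_E,\mathrm{Lie}\,\hat{P})$ at every $\overline{\mathbf{Q}_\ell}$-point shows that $\mathrm{Par}_P^{G-\mathrm{gen}}$ is \emph{smooth}, not merely classical. One small correction: the vanishing of $H^2$ does not characterize the classical locus (e.g.\ $\mathrm{Par}_G$ is everywhere classical but not everywhere smooth), it characterizes the smooth locus. Since smooth implies classical your conclusion still follows, and the Nakayama step becomes unnecessary---smoothness at a point is already a Zariski-open condition. The identification $(\mathrm{Ad}\,\phi_G)^{\mathrm{ss}}\cong\mathrm{Ad}\,\phi_G^{\mathrm{ss}}$ is perhaps stated a bit tersely; the cleanest justification is that the morphism $\mathrm{Par}_G\to\mathrm{Par}_{\mathrm{GL}(\mathrm{Lie}\,\hat{G})}$ induced by $\mathrm{Ad}$ commutes with passage to coarse moduli.

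The paper takes a genuinely different route. Rather than computing the obstruction group directly, it applies Khan's dimensional criterion (Proposition~\ref{prop:soap}) to the quasi-smooth morphism $q^{\mathrm{spec}}:\mathrm{Par}_P\to\mathrm{Par}_M$: since source and target have virtual dimension zero and the target is classical, it suffices to check that the classical fibers over closed points of $\mathrm{Par}_M$ lying in the generic locus have relative dimension zero. This is then verified by identifying those fibers with the map of nilpotent cones $\mathcal{N}_{S_\phi^P}\to\mathcal{N}_{S_\phi}$ (for $S_\phi^P=\mathrm{Cent}_{\hat{P}}(\phi)$) and invoking the fact that a surjection of linear algebraic groups with unipotent kernel induces a smooth map of relative dimension zero on nilpotent cones. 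Your cohomological approach via Tate duality and self-duality of the adjoint is more elementary and self-contained, and delivers smoothness as a bonus; the paper's approach is more geometric, yields an explicit description of the relevant fibers, and was designed to showcase the general criterion recorded in the appendix.
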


\begin{proof}
By the calculations in \cite[Section 2.3]{Zhu}, we know that $q^{\mathrm{spec}}:\mathrm{Par}_{P}\to\mathrm{Par}_{M}$
is a quasismooth morphism of quasismooth derived stacks with classical
target, and where the source and target both have virtual (equi)dimension
zero. By Proposition \ref{prop:soap}, it is then enough to prove
the following: for every closed point $x_{\phi}\in X_{M}^{\mathrm{spec}}$
whose image in $X_{G}^{\mathrm{spec}}$ lies in $X_{G}^{\mathrm{spec,gen}}$,
with associated closed immersion $\nu_{\phi}^{M}:\mathcal{N}_{S_{\phi}}\simeq(q^{M})^{-1}(x_{\phi})^{\mathrm{red}}\hookrightarrow\mathrm{Par}_{M}$,
the natural map on classical truncations $\mathrm{Par}_{P}^{\mathrm{cl}}\times_{\mathrm{Par}_{M},\nu_{\phi}^{M}}\mathcal{N}_{S_{\phi}}\to\mathcal{N}_{S_{\phi}}$
is smooth of relative dimension zero. (Here $S_{\phi}=\mathrm{Cent}_{\hat{M}}(\phi)$
is computed relative to the Levi.) This follows in turn by explicitly
identifying this fiber product with $\mathcal{N}_{S_{\phi}^{P}}$
where $S_{\phi}^{P}=\mathrm{Cent}_{\hat{P}}(\phi)$, and then using
the fact that for any surjection $H\to G$ of linear algebraic groups
with unipotent kernel, the associated map $\mathcal{N}_{H}\to\mathcal{N}_{G}$
is smooth of relative dimension zero.
\end{proof}
\begin{rem}
It is probably true that for most groups, $X_{G}^{\mathrm{spec,gen}}$
is the maximal open subvariety of $X_{G}^{\mathrm{spec}}$ whose preimage
in $\mathrm{Par}_{P}$ has trivial derived structure for all parabolics
$P\subset G$. 
\end{rem}

Let $\phi$ be a semisimple generic parameter, so we have maps $BS_{\phi}\overset{i_{0}}{\to}\mathcal{N}_{S_{\phi}}\overset{\nu_{\phi}}{\to}\mathrm{Par}_{G}$.
Here we write $i_{0}$ for the inclusion of the zero orbit, so $i_{\phi}=\nu_{\phi}\circ i_{0}:BS_{\phi}\to\mathrm{Par}_{G}$
is the closed orbit of the semisimple parameter $\phi$ just as in
the discussion of generous parameters. For any $\rho\in\mathrm{Irr}(S_{\phi})$,
we set $\delta_{\rho}=i_{0\ast}\rho\in\mathrm{Coh}(\mathcal{N}_{S_{\phi}})$,
so the pushforward $\nu_{\phi\ast}\delta_{\rho}=i_{\phi\ast}\rho$
is still a perfect complex on $\mathrm{Par}_{G}$. However, there
should be several other sheaves in the picture, reflecting the more
complicated geometry of $q^{-1}(x_{\phi})$ and our experience with
the trivial \emph{L}-parameter. More precisely, we expect that there
is a clean and explicit recipe which assigns to any $\rho\in\mathrm{Irr}(S_{\phi})$
three canonical perverse-coherent objects $\overline{\Delta}_{\rho},IC_{\rho},\overline{\nabla}_{\rho}\in\mathrm{PCoh}(\mathcal{N}_{S_{\phi}})$
together with maps
\[
\overline{\Delta}_{\rho}\twoheadrightarrow IC_{\rho}\hookrightarrow\overline{\nabla}_{\rho}
\]
realizing $IC_{\rho}$ as the socle of $\overline{\nabla}_{\rho}$
and the cosocle of $\overline{\Delta}_{\rho}$. Moreover, $IC_{\rho}$
should be irreducible, and $\overline{\nabla}_{\rho}$ should be a
genuine coherent sheaf. These sheaves should interpolate the following
properties.

1. When $\phi$ is generous, $\mathcal{N}_{S_{\phi}}\cong BS_{\phi}$
and $\overline{\Delta}_{\rho}=IC_{\rho}=\overline{\nabla}_{\rho}=\delta_{\rho}=\rho$.

2. When $S_{\phi}$ is connected, $\mathrm{Irr}(S_{\phi})$ is parametrized
by highest weights and $\overline{\Delta}_{\rho},IC_{\rho},\overline{\nabla}_{\rho}$
should be the standard, resp. irreducible, resp. costandard perverse-coherent
objects as in the discussion preceding Conjecture \ref{conj:trivialparametermain}. 

3. When $\rho$ factors over the quotient $S_{\phi}\to S_{\phi}^{\natural}$,
$\overline{\Delta}_{\rho}=IC_{\rho}=\overline{\nabla}_{\rho}$ should
be the pullback of $\rho$ along the tautological map $\mathcal{N}_{S_{\phi}}\to BS_{\phi}$.

4. For any $\rho$, there should be a uniquely determined $\rho'$
such that $\delta_{\rho}\simeq IC_{\rho'}[d]$, where $d=\mathrm{dim}U_{S_{\phi}^{\circ}}$
for $U_{S_{\phi}^{\circ}}$ the unipotent radical of a(ny) Borel subgroup
of $S_{\phi}^{\circ}$.

5. There should be a surjection of coherent sheaves $\overline{\nabla}_{\rho}\twoheadrightarrow\delta_{\rho}$.

Finally, these sheaves should have the property that if $(b,\pi)$
is the pair matching $(\phi,\rho)$ under the BM-O bijection, then
the categorical equivalence induces the following matchings of sheaves
($c_{\psi}$ in one direction, $a_{\psi}$ in the other)
\begin{align*}
\nu_{\phi\ast}\overline{\Delta}_{\rho} & \longleftrightarrow i_{b\sharp}^{\mathrm{ren}}\pi\\
\nu_{\phi\ast}IC_{\rho} & \longleftrightarrow i_{b\sharp!}^{\mathrm{ren}}\pi\\
\nu_{\phi\ast}\overline{\nabla}_{\rho} & \longleftrightarrow i_{b!}^{\mathrm{ren}}\pi\\
\nu_{\phi\ast}\delta_{\rho}=i_{\phi\ast}\rho & \longleftrightarrow i_{b!\ast}^{\mathrm{ren}}\pi
\end{align*}
compatible with the evident maps. The first three sheaves on the right
should be hadal, and the last two should be perverse. The surjection
$\nu_{\phi\ast}\overline{\nabla}_{\rho}\twoheadrightarrow\nu_{\phi\ast}\delta_{\rho}$
from 5. above should correspond to the natural surjection $i_{b!}^{\mathrm{ren}}\pi\twoheadrightarrow i_{b!\ast}^{\mathrm{ren}}\pi$
of perverse sheaves. The isomorphism $\delta_{\rho}\simeq IC_{\rho'}[d]$
from 4. should correspond to an isomorphism $i_{b!\ast}^{\mathrm{ren}}\pi\simeq i_{b'\sharp!}^{\mathrm{ren}}\pi'[d]$,
where $(b',\pi')$ is the pair matching $(\phi,\rho')$ under the
BM-O bijection. In particular, we see that irreducible perverse sheaves
over a semisimple generic parameter should be finite sheaves.

If we believe in this matching, it is also not hard to see that Hecke
operators should be t-exact for the hadal t-structure on $D(\mathrm{Bun}_{G})_{\mathrm{fin},\phi}$.
Indeed, $T_{V}i_{b\sharp}^{\mathrm{ren}}\pi$ matches with $\nu_{\phi\ast}(V\otimes\overline{\Delta}_{\rho})$,
and $V\otimes\overline{\Delta}_{\rho}$ is still perverse coherent,
so by general nonsense about t-structures associated with quasiexceptional
collections it will have a finite filtration with graded pieces of
the form $\overline{\Delta}_{\rho_{i}}[n_{i}]$ for some $n_{i}\geq0$.
Passing back to the other side, we see that $T_{V}i_{b\sharp}^{\mathrm{ren}}\pi$
should have a finite filtration with graded pieces of the form $i_{b_{i}\sharp}^{\mathrm{ren}}\pi_{i}[n_{i}]$,
which are connective for the hadal t-structure. This shows that $T_{V}$
is right t-exact, and a similar argument gives left t-exactness.
\begin{rem}
\label{rem:bzdual-hadal-interaction}It is natural to wonder how Bernstein-Zelevinsky
duality interacts with the hadal t-structure. Let us say that a semisimple
\emph{L}-parameter $\phi$ is \emph{cohomologically inert }if there
is a fixed (nonnegative) integer $d_{\phi}$ such that for all $b\in B(G)$
and all $\pi\in\Pi(G_{b})$ with Fargues-Scholze parameter $\phi$,
$\mathbf{D}_{\mathrm{coh}}(\pi)\simeq\mathrm{Zel}(\pi)[-d_{\phi}]$.
Here $\mathrm{Zel}(-)$ denotes the Aubert-Zelevinsky involution on
$\Pi(G_{b})$ as in Remark \ref{rem:dualitycomparisons}. It is easy
to see that if $\phi$ is cohomologically inert, then also $\phi^{\vee}$
is cohomologically inert with $d_{\phi}=d_{\phi^{\vee}}$, and one
can show in this case that $\mathbf{D}_{\mathrm{BZ}}(-)[d_{\phi}]$
restricts to an exact anti-equivalence of abelian categories
\begin{align*}
\mathrm{Had}(\mathrm{Bun}_{G})_{\mathrm{fin},\phi} & \overset{\sim}{\to}\mathrm{Had}(\mathrm{Bun}_{G})_{\mathrm{fin},\phi^{\vee}}\\
A & \mapsto\mathbf{D}_{\mathrm{BZ}}(A)[d_{\phi}]
\end{align*}
sending the irreducible hadal sheaf $\mathscr{G}_{b,\pi}$ to the
irreducible hadal sheaf $\mathscr{G}_{b,\mathrm{Zel}(\pi)}$. One
can also show that this result is best possible: if $\phi$ is not
cohomologically inert, then no fixed shift of $\mathbf{D}_{\mathrm{BZ}}(-)$
can induce such an equivalence. Finally, all evidence and examples
point to the speculation that a semisimple parameter is cohomologically
inert if and only if it is generic.\footnote{Note added October 2023: I recently realized that for some groups
of low rank, this is not true. In particular, for $G=\mathrm{SL}_{2}$,
every semisimple parameter is cohomologically inert.}
\end{rem}

\subsection{Perverse t-exactness of Hecke operators}

In the previous section we pointed out that Hecke operators should
be hadal t-exact on $\phi$-local finite sheaves when $\phi$ is semisimple
generic. It is also natural to wonder how the Hecke action interacts
with the perverse t-structure. Our main goal in this section is to
justify the following conjecture.
\begin{conjecture}
\label{conj:t-exact-Hecke-ssgen}If $\phi$ is a semisimple generic
parameter, the Hecke action of $\mathrm{Rep}(\hat{G})$ on $D(\mathrm{Bun}_{G})_{\phi}^{\mathrm{ULA}}$
is perverse t-exact.
\end{conjecture}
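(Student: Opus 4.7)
The plan is to mimic the argument sketched at the end of Section 2.3 for hadal t-exactness, but now applied to the "delta layer" of the expected matching in the semisimple generic case. Granting the identification
\[
i_{b!\ast}^{\mathrm{ren}}\pi \;\longleftrightarrow\; \nu_{\phi\ast}\delta_\rho \;=\; i_{\phi\ast}\rho
\]
between irreducible perverse sheaves with FS parameter $\phi$ and skyscrapers at the closed orbit $i_\phi\colon BS_\phi \hookrightarrow \mathrm{Par}_G$, the spectral-side computation is immediate: since $i_\phi$ is a regular closed immersion and $\tau_G^{\ast}V$ is a vector bundle pulled back from $B\hat G$, the projection formula gives
\[
\tau_G^{\ast}V \otimes i_{\phi\ast}\rho \;\simeq\; i_{\phi\ast}(V|_{S_\phi} \otimes \rho),
\]
and decomposing $V|_{S_\phi}\otimes \rho \simeq \bigoplus_j \rho_j^{\oplus m_j}$ into $S_\phi$-irreducibles shows that $T_V$ sends the irreducible perverse sheaf $i_{b!\ast}^{\mathrm{ren}}\pi$ to a finite direct sum of irreducible perverse sheaves of the same form, indexed by the pairs $(b_j,\pi_j)$ matching $(\phi,\rho_j)$ under BM-O.

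To promote this from the irreducibles to the whole abelian category, I would show that every object of $\mathrm{Perv}(\mathrm{Bun}_G)_\phi^{\mathrm{ULA}}$ has a finite composition series by irreducible perverse sheaves. The input is BM-O plus the semisimple-generic structure theory of Section 2.3, which bounds the set of isomorphism classes of simple perverse sheaves with FS parameter $\phi$ by $\mathrm{Irr}(S_\phi)$; combined with admissibility of ULA stalks this should force finite length. Once this is in hand, perverse t-exactness of $T_V$ follows by dévissage, because the perverse heart is closed under extensions in $D(\mathrm{Bun}_G)$ and $T_V$ is a triangulated functor. Left and right t-exactness are equivalent via the biadjointness $T_V \dashv T_{V^{\vee}} \dashv T_V$, which swaps $^{p}D^{\leq 0}$ and $^{p}D^{\geq 0}$ on the ULA subcategory using the Verdier self-duality of the perverse t-structure.

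The main obstacle is that the plan above is not unconditional: it rests both on the expected matching of irreducibles and on the finite-length statement, neither of which is currently available outside the generous case. An unconditional alternative, in the spirit of \cite{Ham}, is to invoke Remark \ref{rem:Eisensteinpush} to write $i_{b!}^{\mathrm{ren}}\pi \simeq \mathrm{Eis}_P^G(i_{b_M!}^M\pi_M)$ whenever $\pi$ is a full parabolic induction from a Levi (as is expected at any semisimple generic $\phi$), apply Conjecture \ref{conj:Eisheckefiltered} to filter $T_V \mathrm{Eis}_P^G(-)$ by the functors $\mathrm{Eis}_P^G(T_{W_i}-)$ for $W_i$ the graded pieces of $V|\hat P$, use the perverse t-exactness of $T_{W_i}$ at a generous parameter on the Levi side (Proposition \ref{prop:generous-decomposition-t-exact}), and then reduce to perverse t-exactness of the normalized Eisenstein functor $\mathrm{Eis}_P^G$ itself on the $\phi$-local subcategory. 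This last reduction is where I expect the genuine difficulty to lie: right t-exactness of $\mathrm{Eis}_P^G$ is formal from the IC-twisted proper pushforward description, but left t-exactness requires a Braden-type hyperbolic localization / compactification argument that is intertwined with the conjectural duality $\mathbf{D}_{\mathrm{BZ}}\circ \mathrm{Eis}_P \simeq \mathrm{Eis}_{\overline P}\circ \mathbf{D}_{\mathrm{BZ}}^M$ of Conjecture \ref{conj:Eisdual}, and hence with the full strength of second adjointness upgraded to the geometric setting.
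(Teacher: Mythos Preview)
Your first approach has a genuine gap: the finite-length claim for objects of $\mathrm{Perv}(\mathrm{Bun}_G)_\phi^{\mathrm{ULA}}$ is false already in the simplest non-generous case. For $G=\mathrm{PGL}_2$ with $\phi$ the trivial parameter, the paper computes explicitly that the perverse sheaf $i_{b_n!}^{\mathrm{ren}}\pi_n$ is uniserial of \emph{infinite} length, with Jordan--H\"older factors $i_{b_{n+2j}!\ast}^{\mathrm{ren}}\pi_{n+2j}$ for all $j\geq 0$. More generally, ULA objects carry no quasicompact-support constraint, so there is no a priori bound on the number of strata contributing to a perverse filtration. Your computation that $T_V$ sends each irreducible $i_{b!\ast}^{\mathrm{ren}}\pi$ to a finite direct sum of irreducible perverse sheaves is correct (conditionally on the matching), but it cannot be promoted to all of $\mathrm{Perv}(\mathrm{Bun}_G)_\phi^{\mathrm{ULA}}$ by finite d\'evissage.

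The paper's conditional argument (carried out in full for the trivial parameter) avoids this by working not with the irreducibles $i_{b!\ast}^{\mathrm{ren}}\pi$ but with the $!$-extensions $i_{b!}^{\mathrm{ren}}\pi$, which under the expected matching correspond to the costandard objects $\nu_{\phi\ast}\overline{\nabla}_\rho$. These generate $D(\mathrm{Bun}_G)_\phi^{\mathrm{ULA}}\cap\,^{p}D^{\leq 0}$ under extensions and \emph{colimits}, so it suffices to show each $T_V i_{b!}^{\mathrm{ren}}\pi$ lies in $^{p}D^{\leq 0}$; left t-exactness then follows from the biadjunction exactly as you note. The spectral-side statement one needs is that $V\otimes\overline{\nabla}_\rho$ admits a finite filtration with graded pieces $\overline{\nabla}_{\rho'}[n]$ for $n\geq 0$. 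For the trivial parameter this says $V\otimes A_\lambda\in D_{+}^{b}\mathrm{PCoh}(\mathcal{N}/\hat{G})$, and the paper proves it by passing through the Arkhipov--Bezrukavnikov equivalence $F_{\mathcal{IW}}$ and ultimately reducing to the fact that $\nabla^I_{w'}\star^I\nabla^I_w$ lies in $D_I^b(\mathrm{Fl}_{\mathbf{G}},\overline{\mathbf{Q}_{\ell}})_+$. This is a genuinely different mechanism from either of your proposals: it trades the irreducible $\delta$-layer for the costandard $\overline{\nabla}$-layer precisely because the latter generates the connective part under colimits without any length hypothesis, and the required spectral-side filtration, while nontrivial, is accessible via the AB equivalence. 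Your second approach via Eisenstein series is not what the paper does and introduces an additional unresolved difficulty (perverse t-exactness of $\mathrm{Eis}_P$) that the costandard-filtration route sidesteps entirely.
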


When $\phi$ is supercuspidal, this conjecture was essentially formulated
in Fargues-Scholze. For generous parameters, this conjecture is a
straightforward consequence of Conjecture \ref{conj:generousdeep},
as discussed in the proof of Proposition \ref{prop:generous-decomposition-t-exact}.
For general semisimple generic parameters, however, there is no easy
evidence in its favor. Nevertheless, we will rigorously show that
when $\phi$ is the trivial parameter, Conjecture \ref{conj:trivialparametermain}
actually implies Conjecture \ref{conj:t-exact-Hecke-ssgen}! A very
similar argument will apply to all semisimple generic parameters,
once the ideas sketched in section 2.3 are more thoroughly developed.

The essential point on the spectral side is the following result.
Here we return to the notation and setup of section 2.2; in particular
we assume $G$ is split.
\begin{prop}
\label{prop:costandard-filtration-key}For any $\lambda\in X^{\ast}(\hat{T})^{+}$
and any $V\in\mathrm{Rep}(\hat{G})$, the sheaf $V\otimes A_{\lambda}\in\mathrm{Coh}(\mathcal{N}/\hat{G})$
admits a finite filtration with graded pieces of the form $A_{\nu}[n]$
for some $\nu\in X^{\ast}(\hat{T})^{+}$ and $n\geq0$. 
\end{prop}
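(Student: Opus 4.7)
The plan is to construct a filtration of $V \otimes A_\lambda$ directly from a $\hat{B}$-equivariant filtration of $V$, and then invoke the Bezrukavnikov-style analysis of Andersen-Jantzen sheaves to rewrite each graded piece in the required form.

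For the first step, I choose a $\hat{B}$-stable filtration $0 = V_0 \subset V_1 \subset \cdots \subset V_m = V$ of $V|_{\hat{B}}$ with one-dimensional graded pieces $V_i/V_{i-1} \simeq k_{\mu_i}$, where $\mu_1,\dots,\mu_m$ enumerates the $\hat{T}$-weights of $V$ with multiplicity (in any order). Under the equivalence $\mathrm{Coh}^{\hat{G}}(\tilde{\mathcal{N}}) \simeq \mathrm{Coh}^{\hat{B}}(\hat{\mathfrak{u}})$ arising from $\tilde{\mathcal{N}} = \hat{\mathfrak{u}} \times^{\hat{B}} \hat{G}$, this $\hat{B}$-filtration corresponds to a $\hat{G}$-equivariant filtration of $\pi^* V \otimes \eta^* \mathcal{L}_\lambda$ on $\tilde{\mathcal{N}}$ with graded pieces $\eta^* \mathcal{L}_{\lambda + \mu_i}$; here I use the identification $\pi^* V \simeq \eta^* V$, since both realize the $\hat{G}$-equivariant bundle associated with the ambient $\hat{G}$-representation $V$. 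Invoking the projection formula, $V \otimes A_\lambda \simeq \pi_*(\pi^* V \otimes \eta^* \mathcal{L}_\lambda)$, and applying the derived pushforward $\pi_*$ to the filtration yields a finite filtration of $V \otimes A_\lambda$ in the derived category of $\mathrm{Coh}(\mathcal{N}/\hat{G})$ with graded pieces $A_{\lambda + \mu_i}$.

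The second step is to rewrite each $A_\mu$, for $\mu \in X^*(\hat{T})$ possibly non-dominant, as an iterated extension of objects $A_\nu[n]$ with $\nu$ dominant and $n \geq 0$. This is a structural fact about Andersen-Jantzen sheaves, established by Bezrukavnikov in \cite{Bez} and treated in detail in \cite{Achar}. Conceptually, the argument proceeds by induction on the distance of $\mu$ from the dominant chamber: if $\mu$ is dominant, $A_\mu$ is already of the desired form with $n = 0$; otherwise, one picks a simple root $\alpha$ with $\langle \mu, \alpha^\vee\rangle < 0$ and uses the partial resolution
\[
\tilde{\mathcal{N}} \;\xrightarrow{\pi_\alpha}\; \mathcal{N}_{P_\alpha} := \hat{\mathfrak{u}}_{P_\alpha} \times^{\hat{P}_\alpha} \hat{G} \;\longrightarrow\; \mathcal{N},
\]
where $\pi_\alpha$ is a $\mathbb{P}^1$-bundle. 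A Koszul/Leray analysis on the fibers produces a distinguished triangle relating $A_\mu$ to $A_{s_\alpha \cdot \mu}$ with an appropriate shift (plus possibly a boundary term that is itself an $A_{\mu'}$), and one iterates until reaching the dominant chamber. Concatenating the filtrations from the two steps gives the proposition.

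The main obstacle is Step 2. Unlike the usual Borel--Weil--Bott theorem for line bundles on $\hat{G}/\hat{B}$, for non-dominant $\mu$ the sheaf $A_\mu$ need not be concentrated in a single cohomological degree: the pushforward is over the cotangent bundle, and $\eta_* \mathcal{O}_{\tilde{\mathcal{N}}} = \mathrm{Sym}^\bullet T_{\hat{G}/\hat{B}}$ is an infinite sum of line bundles of ever-growing positive degree along the fibers of $q_\alpha$. Consequently, one cannot hope for a clean Bott-type isomorphism $A_\mu \simeq A_{w \cdot \mu}[\pm \ell(w)]$; the correct output is a filtration, whose existence must be established by keeping careful track of both the Leray-page shifts and the symmetric-algebra grading in the recursive $\mathbb{P}^1$-bundle computation.
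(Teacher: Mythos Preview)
Your Step 1 is correct, but Step 2 is \emph{false}, not merely incomplete. Take $\hat{G}=\mathrm{SL}_2$ and $\mu=-2$. Since the costandard sheaves $A_\nu$ for $\nu\geq 0$ are honest coherent sheaves concentrated in standard cohomological degree zero, any iterated extension of objects $A_\nu[n]$ with $\nu$ dominant and $n\geq 0$ is concentrated in standard degrees $\leq 0$. But $A_{-2}=R\pi_*\eta^*\mathcal{O}(-2)$ has $R^1\pi_*\neq 0$: the Springer resolution contracts the zero section $\mathbb{P}^1$ to the origin, and $H^1(\mathbb{P}^1,\mathcal{O}(-2))$ is one-dimensional. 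Equivalently, from the short exact sequence $0\to\overline{\Delta}_0\to\overline{\Delta}_2\to IC_2\to 0$ with $IC_2\simeq i_{0\ast}V_0[-1]$ recalled in section~2.2, one sees directly that $\overline{\Delta}_2=A_{-2}$ has nonzero $\mathcal{H}^1$. So $A_{-2}$ cannot lie in $D_+^b\mathrm{PCoh}$. Your citations of \cite{Bez} and \cite{Achar} are off the mark: those references establish that every $A_\mu$ is perverse coherent, not that non-dominant $A_\mu$ admit costandard filtrations with nonnegative shifts, and indeed they do not. (Incidentally, the map $\tilde{\mathcal{N}}\to\mathcal{N}_{P_\alpha}$ you write down is not a $\mathbb{P}^1$-bundle either---the dimensions differ by $2$---but this is secondary.)

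The paper's proof avoids this trap by never passing through $A_\mu$ for non-dominant $\mu$. Instead it lifts via the projection formula to $\nabla_\lambda^{\mathrm{ex}}\otimes V$ on $\tilde{\mathcal{N}}$, transports through the Arkhipov--Bezrukavnikov equivalence $F_{\mathcal{IW}}$ to the Iwahori--Whittaker category on the affine flag variety, rewrites tensoring with $V$ as convolution with the central sheaf $\mathscr{Z}(V)$, and then invokes two genuinely nontrivial geometric inputs: that $\mathrm{Av}_{\mathcal{IW}}(\mathscr{Z}(V))$ admits a costandard filtration, and that convolutions $\nabla^I_{w'}\star^I\nabla^I_w$ of Iwahori-costandard sheaves stay in the ``$+$'' subcategory. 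The author explicitly remarks that he could not prove the proposition without the full strength of this equivalence; your counterexample illustrates concretely why the direct route is blocked.
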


Despite its innocent nature, I was only able to prove this by using
the full power of the Arkhipov-Bezrukavnikov equivalence.\footnote{It is quite striking that in order to prove something about the Hecke
action on $D(\mathrm{Bun}_{G})$, we will first go to the spectral
side via the categorical equivalence, and then pass through the Langlands
mirror \emph{again }via the AB equivalence.} To prepare for the argument, recall that we have a diagram of functors
between triangulated categories
\[
\xymatrix{\mathrm{Coh}(\tilde{\mathcal{N}}/\hat{G})\ar[d]^{\pi_{\ast}} & D^{b}\mathrm{ExCoh}(\tilde{\mathcal{N}}/\hat{G})\ar[d]^{\pi_{\ast}}\ar[l]_{\sim}\ar[r]_{\sim}^{F_{\mathcal{IW}}} & D^{b}\mathrm{Perv}_{\mathcal{IW}}(\mathrm{Fl}_{\mathbf{G}},\overline{\mathbf{Q}_{\ell}})\ar[r]^{\sim} & D_{\mathcal{IW}}^{b}(\mathrm{Fl}_{\mathbf{G}},\overline{\mathbf{Q}_{\ell}})\\
\mathrm{Coh}(\mathcal{N}/\hat{G}) & D^{b}\mathrm{PCoh}(\mathcal{N}/\hat{G})\ar[l]_{\sim} & D^{b}\mathrm{Perv}_{I}(\mathrm{Fl}_{\mathbf{G}},\overline{\mathbf{Q}_{\ell}})\ar[r]\ar[u]_{\mathrm{Av}_{\mathcal{IW}}} & D_{I}^{b}(\mathrm{Fl}_{\mathbf{G}},\overline{\mathbf{Q}_{\ell}})\ar[u]_{\mathrm{Av}_{\mathcal{IW}}}
}
\]
where all but one horizontal arrow is an equivalence of categories.
Here $\mathbf{G}/\overline{\mathbf{F}_{p}}$ is a split reductive
group equipped with a fixed identification of dual groups $\hat{\mathbf{G}}\cong\hat{G}$
over $\overline{\mathbf{Q}_{\ell}}$, and all our remaining notation
essentially follows the book \cite{AR}, which we will refer to heavily.
Aside from notation we have already seen in section 2.2, we recall
that $\mathrm{ExCoh}(\tilde{\mathcal{N}}/\hat{G})$ denotes the heart
of the ``exotic'' t-structure on equivariant coherent sheaves on
the Springer resolution, and $F_{\mathcal{IW}}$ denotes the Arkhipov-Bezrukavnikov
equivalence. All unlabelled horizontal arrows are induced by the usual
realization functors, and we will elide them in our notation; this
should cause no confusion. We also recall that $F_{\mathcal{IW}}$
is t-exact for the exotic and perverse t-structures, and defines an
exact equivalence of abelian categories between $\mathrm{Ex}\mathrm{Coh}$
and $\mathrm{Perv}_{\mathcal{IW}}$. Moreover, $\pi_{\ast}$ is t-exact
for the exotic and perverse coherent t-structures. Finally, $\mathrm{Av}_{\mathcal{IW}}$
is t-exact for the evident perverse t-structures.

Each of the abelian categories $\mathrm{PCoh}$, $\mathrm{ExCoh}$,
$\mathrm{Perv}_{\mathcal{IW}}$, $\mathrm{Perv}_{I}$ is equipped
with a canonical collection of costandard objects. For $\mathrm{PCoh}$
these are exactly the sheaves $A_{\lambda}=\overline{\nabla}_{\lambda}$,
$\lambda\in X^{\ast}(\hat{T})^{+}$, which we have seen in section
2.2. For $\mathrm{ExCoh}$ and $\mathrm{Perv}_{\mathcal{IW}}$ the
indexing set for the costandard objects is the set of all $\mu\in X^{\ast}(\hat{T})$,
and we write $\nabla_{\mu}^{\mathrm{ex}}\in\mathrm{ExCoh}$ resp.
$\nabla_{\mu}^{\mathcal{IW}}\in\mathrm{Perv}_{\mathcal{IW}}$ for
the associated costandard objects. For $\mathrm{Perv}_{I}$ the indexing
set for costandard objects is the extended affine Weyl group $\widetilde{W}=W\ltimes X^{\ast}(\hat{T})$,
and we write $\nabla_{w}^{I}$ for the costandard object corresponding
to an element $w\in\widetilde{W}$.

For $\mathcal{C}$ any one of these four abelian categories, we write
$D_{+}^{b}\mathcal{C}$ for the full (but not triangulated!) subcategory
of $D^{b}\mathcal{C}$ spanned by objects which admit a finite filtration
with graded pieces of the form $A[n]$ where $A$ is costandard and
$n\geq0$. We also write $D_{\mathcal{IW}}^{b}(\mathrm{Fl}_{\mathbf{G}},\overline{\mathbf{Q}_{\ell}})_{+}$
and $D_{I}^{b}(\mathrm{Fl}_{\mathbf{G}},\overline{\mathbf{Q}_{\ell}})_{+}$
for the full subcategories spanned by objects admitting finite filtrations
whose graded pieces are nonnegative shifts of costandard objects.
\begin{prop}
\label{prop:plus-preservation}The functors $\pi_{\ast}$, $F_{\mathcal{IW}}$,
and $\mathrm{Av}_{\mathcal{IW}}$, together with the realization functors,
induce a commutative diagram of functors
\[
\xymatrix{D_{+}^{b}\mathrm{ExCoh}(\tilde{\mathcal{N}}/\hat{G})\ar[d]^{\pi_{\ast}}\ar[r]_{\sim}^{F_{\mathcal{IW}}} & D_{+}^{b}\mathrm{Perv}_{\mathcal{IW}}(\mathrm{Fl}_{\mathbf{G}},\overline{\mathbf{Q}_{\ell}})\ar[r]^{\sim} & D_{\mathcal{IW}}^{b}(\mathrm{Fl}_{\mathbf{G}},\overline{\mathbf{Q}_{\ell}})_{+}\\
D_{+}^{b}\mathrm{PCoh}(\mathcal{N}/\hat{G}) & D_{+}^{b}\mathrm{Perv}_{I}(\mathrm{Fl}_{\mathbf{G}},\overline{\mathbf{Q}_{\ell}})\ar[u]_{\mathrm{Av}_{\mathcal{IW}}}\ar[r] & D_{I}^{b}(\mathrm{Fl}_{\mathbf{G}},\overline{\mathbf{Q}_{\ell}})_{+}\ar[u]_{\mathrm{Av}_{\mathcal{IW}}}
}
\]
where both upper horizontal arrows are equivalences.
\end{prop}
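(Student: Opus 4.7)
The plan is to reduce the preservation of the $D^b_+$ subcategories to a finite collection of explicit calculations on costandard objects, and then obtain commutativity essentially tautologically from the already-known commutativity of the bigger diagram that precedes the proposition.

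First I would handle the top row. By the work of Achar--Riche, $F_{\mathcal{IW}}$ is a t-exact equivalence of abelian categories that matches the costandards $\nabla_\mu^{\mathrm{ex}} \leftrightarrow \nabla_\mu^{\mathcal{IW}}$ for all $\mu \in X^{\ast}(\hat{T})$. Since the $D^b_+$ subcategories are intrinsically defined by filtrations whose graded pieces are nonnegative shifts of costandards, $F_{\mathcal{IW}}$ automatically restricts to an equivalence between them. For the realization functor $D^b\mathrm{Perv}_{\mathcal{IW}}(\mathrm{Fl}_\mathbf{G}) \to D^b_{\mathcal{IW}}(\mathrm{Fl}_\mathbf{G})$, this is known to be an equivalence, and because it sends each costandard perverse sheaf to the corresponding costandard constructible object (concentrated in perverse degree zero), it restricts to an equivalence between the plus subcategories as well.

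Next I would handle the two vertical functors $\pi_{\ast}$ and $\mathrm{Av}_{\mathcal{IW}}$. Both are t-exact for the relevant t-structures, so by a standard d\'evissage along the filtration it suffices to check the preservation claim on a single costandard generator. For $\mathrm{Av}_{\mathcal{IW}}$ (in either of its two incarnations), the classical computation in \cite{AR} shows that $\mathrm{Av}_{\mathcal{IW}}(\nabla_w^I)$ is either zero or isomorphic to $\nabla_\mu^{\mathcal{IW}}$ for a suitable $\mu$ determined by $w$, and hence lies (trivially) in $D^b_+$. For $\pi_{\ast}$, the analogous statement is the content of the classical Andersen--Jantzen decomposition: $\pi_{\ast}\nabla_\mu^{\mathrm{ex}}$ admits a finite filtration whose graded pieces are costandards $\overline{\nabla}_\lambda$ for various dominant $\lambda$. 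Since $\pi_{\ast}$ is t-exact, all such graded pieces sit in cohomological degree zero, placing $\pi_{\ast}\nabla_\mu^{\mathrm{ex}}$ inside $D^b_+\mathrm{PCoh}$ with no shift.

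Once these input calculations are in place, commutativity of the displayed diagram follows for free: each constituent square is already commutative as a diagram of triangulated functors between the ambient derived categories (this is the whole point of the larger diagram preceding the proposition), and the $D^b_+$ subcategories are full subcategories, so the induced restrictions commute automatically. I expect the main obstacle to be verifying the precise behavior of $\pi_{\ast}$ on exotic costandards in the form required; although the existence of a costandard filtration is classical in spirit (see \cite[Chapter 2]{Achar}), one has to cite it at the level of the \emph{exotic} t-structure on the Springer resolution, which in practice means invoking the Kempf vanishing theorem together with the combinatorics of the Grothendieck simultaneous resolution. An alternative, and perhaps cleaner, route is to transport the question through $F_{\mathcal{IW}}$ and deduce the statement for $\pi_{\ast}$ from the corresponding statement for $\mathrm{Av}_{\mathcal{IW}}$ on the Iwahori--Whittaker side, where the costandard-filtration property is directly accessible from the stratification of $\mathrm{Fl}_{\mathbf{G}}$.
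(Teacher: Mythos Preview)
Your approach is essentially the paper's: reduce to costandards via the defining filtrations and check each functor termwise. The one substantive discrepancy is in your treatment of $\pi_{\ast}$. You anticipate that $\pi_{\ast}\nabla_\mu^{\mathrm{ex}}$ only admits a costandard \emph{filtration}, and you flag the verification of this as ``the main obstacle''; but in fact the situation is cleaner: $\pi_{\ast}\nabla_\mu^{\mathrm{ex}} \simeq \overline{\nabla}_{\mathrm{dom}(\mu)}$ is a \emph{single} costandard (this is \cite[Lemma~7.3.10]{AR}), so no filtration or Kempf-vanishing argument is needed. Your proposed alternative route---deducing the $\pi_{\ast}$ statement from the $\mathrm{Av}_{\mathcal{IW}}$ statement by transport through $F_{\mathcal{IW}}$---does not quite make sense as stated, since the diagram contains no functor linking $D_{+}^{b}\mathrm{PCoh}$ to the constructible side, and $\pi_{\ast}$ and $\mathrm{Av}_{\mathcal{IW}}$ are not intertwined by any equivalence here. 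Similarly, for $\mathrm{Av}_{\mathcal{IW}}$ the paper cites the precise fact $\mathrm{Av}_{\mathcal{IW}}(\nabla_w^I)\simeq\nabla_\mu^{\mathcal{IW}}$ for the unique $\mu$ with $W\cdot(1,\mu)=W\cdot w$ (so never zero), rather than your ``zero or a costandard''.
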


\begin{proof}
Recall that all three functors in the left half of the diagram come
from exact functors on the evident abelian categories. For $\pi_{\ast}$
the claim follows from the fact that $\pi_{\ast}\nabla_{\mu}^{\mathrm{ex}}\simeq\overline{\nabla}_{\mathrm{dom}(\mu)}$
for all $\mu$, where $\mathrm{dom}(\mu)$ is the unique dominant
weight in the $W$-orbit of $\mu$ \cite[Lemma 7.3.10]{AR}. For $F_{\mathcal{IW}}$
the claim follows from the fact that $F_{\mathcal{IW}}(\nabla_{\mu}^{\mathrm{ex}})\simeq\nabla_{\mu}^{\mathcal{IW}}$
for all $\mu$ \cite[Proposition 7.1.5]{AR}. Finally, for $\mathrm{Av}_{\mathcal{IW}}$
the claim follows from the fact that $\mathrm{Av}_{\mathcal{IW}}(\nabla_{w}^{I})\simeq\nabla_{\mu}^{\mathcal{IW}}$,
where $\mu\in X^{\ast}(\hat{T})$ is the unique element with $W\cdot(1,\mu)=W\cdot w$
\cite[Lemma 6.4.5]{AR}.
\end{proof}
We now return to the task of proving Proposition \ref{prop:costandard-filtration-key}.
Note that in the ``$+$'' notation introduced above, we are simply
trying to prove that for all $\lambda\in X^{\ast}(\hat{T})^{+}$ and
$V\in\mathrm{Rep}(\hat{G})$, $\overline{\nabla}_{\lambda}\otimes V$
lies in $D_{+}^{b}\mathrm{PCoh}(\mathcal{N}/\hat{G})$. A trivial
projection formula gives an isomorphism $\overline{\nabla}_{\lambda}\otimes V\simeq\pi_{\ast}(\nabla_{\lambda}^{\mathrm{ex}}\otimes V)$,
so by Proposition \ref{prop:plus-preservation} it's enough to prove
that $\nabla_{\lambda}^{\mathrm{ex}}\otimes V$ lies in $D_{+}^{b}\mathrm{ExCoh}(\tilde{\mathcal{N}}/\hat{G})$.
Going to the other side of the Arkhipov-Bezrukavnikov equivalence
and using Proposition \ref{prop:plus-preservation} again, it's enough
in turn to prove that $F_{\mathcal{IW}}(\nabla_{\lambda}^{\mathrm{ex}}\otimes V)$
lies in $D_{+}^{b}\mathrm{Perv}_{\mathcal{IW}}(\mathrm{Fl}_{\mathbf{G}},\overline{\mathbf{Q}_{\ell}})$.

Now the magic happens. Recall that $D_{\mathcal{IW}}^{b}$ is a right
module over $D_{I}^{b}$ via the convolution action of $D_{I}^{b}$
on itself, compatibly with the functor $\mathrm{Av}_{\mathcal{IW}}$.
It is then true that 
\[
F_{\mathcal{IW}}(\mathcal{G}\otimes V)\simeq F_{\mathcal{IW}}(\mathcal{G})\star^{I}\mathscr{Z}(V)
\]
for any $\mathcal{G}\in D^{b}\mathrm{ExCoh}$ and any $V\in\mathrm{Rep}(\hat{G})$,
where $\mathscr{Z}(V)$ denotes the central sheaf in $\mathrm{Perv}_{I}$
associated with $V$. Applying this property with $\mathcal{G}=\nabla_{\lambda}^{\mathrm{ex}}$,
using the identification of costandard objects under $F_{\mathcal{IW}}$
and $\mathrm{Av}_{\mathcal{IW}}$ stated previously, and rearranging
using the centrality of $\mathscr{Z}(V)$, we get that 
\begin{align*}
F_{\mathcal{IW}}(\nabla_{\lambda}^{\mathrm{ex}}\otimes V) & \simeq F_{\mathcal{IW}}(\nabla_{\lambda}^{\mathrm{ex}})\star^{I}\mathscr{Z}(V)\\
 & \simeq\nabla_{\lambda}^{\mathcal{IW}}\star^{I}\mathscr{Z}(V)\\
 & \simeq\mathrm{Av}_{\mathcal{IW}}\left(\nabla_{w_{\lambda}}^{I}\star^{I}\mathscr{Z}(V)\right)\\
 & \simeq\mathrm{Av}_{\mathcal{IW}}\left(\mathscr{Z}(V)\star^{I}\nabla_{w_{\lambda}}^{I}\right)\\
 & \simeq\mathrm{Av}_{\mathrm{\mathcal{IW}}}(\mathscr{Z}(V))\star^{I}\nabla_{w_{\lambda}}^{I}
\end{align*}
where $w_{\lambda}$ is the evident lift. By \cite[Theorem 6.5.2]{AR},
$\mathrm{Av}_{\mathcal{IW}}(\mathscr{Z}(V))$ admits a finite filtration
in $\mathrm{Perv}_{\mathcal{IW}}$ with costandard graded pieces.
This immediately reduces us to proving that any complex of the form
$\nabla_{\nu}^{\mathcal{IW}}\star^{I}\nabla_{w}^{I}$ lies in $D_{+}^{b}\mathrm{Perv}_{\mathcal{IW}}$.
But 
\[
\nabla_{\nu}^{\mathcal{IW}}\star^{I}\nabla_{w}^{I}\simeq\mathrm{Av}_{\mathcal{IW}}(\nabla_{w_{\nu}}^{I}\star^{I}\nabla_{w}^{I}),
\]
so using Proposition \ref{prop:plus-preservation} one more time,
we're now reduced to showing that any complex of the form $\nabla_{w'}^{I}\star^{I}\nabla_{w}^{I}$
lies in $D_{I}^{b}(\mathrm{Fl}_{\mathbf{G}},\overline{\mathbf{Q}_{\ell}})_{+}$.
But this is exactly the second half of \cite[Lemma 6.5.8]{AR}.
\begin{thm}
Assume Conjecture \ref{conj:trivialparametermain}, and also that
the pairs $(b_{\lambda},\pi_{\lambda})$ exhaust all pairs $(b,\pi)$
for which $i_{b!}^{\mathrm{ren}}\pi$ has trivial Fargues-Scholze
parameter. Then the Hecke action of $\mathrm{Rep}(\hat{G})$ on $D(\mathrm{Bun}_{G})_{\phi_{\mathrm{triv}}}^{\mathrm{ULA}}$
is perverse t-exact.
\end{thm}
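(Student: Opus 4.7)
The strategy is to establish right perverse t-exactness of $T_V$ first; left t-exactness will then follow formally by Verdier duality, since the isomorphism $\mathbf{D}_{\mathrm{Verd}}\circ T_V\simeq T_{V^{\vee}}\circ \mathbf{D}_{\mathrm{Verd}}$ on ULA sheaves interchanges $\,^{p}D^{\leq 0}$ and $\,^{p}D^{\geq 0}$, and $V$ ranges over the duality-stable category $\mathrm{Rep}(\hat{G})$. The heart of the computation takes place on the atomic objects $i_{b_{\lambda}!}^{\mathrm{ren}}\pi_{\lambda}$: using the compatibility of Hecke operators with the spectral action, Remark~\ref{rem:trivialparameterapsivariant}, and the projection formula for the closed immersion $\nu:\mathcal{N}/\hat{G}\hookrightarrow\mathrm{Par}_{G}$, we have
\[
T_V\, i_{b_\lambda!}^{\mathrm{ren}}\pi_\lambda \;\simeq\; a_\psi\bigl(\tau_G^\ast V\otimes \nu_\ast\overline{\nabla}_\lambda\bigr)\;\simeq\; a_\psi\bigl(\nu_\ast(V|_{\hat G}\otimes \overline{\nabla}_\lambda)\bigr).
\]
By Proposition~\ref{prop:costandard-filtration-key}, $V|_{\hat G}\otimes \overline{\nabla}_\lambda$ admits a finite filtration in $\mathrm{Coh}(\mathcal{N}/\hat{G})$ with graded pieces of the form $\overline{\nabla}_{\mu_i}[n_i]$ with $n_i\geq 0$. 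Applying the triangulated functor $a_{\psi}\circ\nu_{\ast}$ and invoking Conjecture~\ref{conj:trivialparametermain} once more transports this to a finite filtration of $T_V\, i_{b_\lambda!}^{\mathrm{ren}}\pi_\lambda$ by $i_{b_{\mu_i}!}^{\mathrm{ren}}\pi_{\mu_i}[n_i]$; by Exercise~\ref{exer:perversesemiexact}.i each term lies in $\,^{p}D^{\leq -n_i}\subset \,^{p}D^{\leq 0}$, so $T_V\, i_{b_\lambda!}^{\mathrm{ren}}\pi_\lambda\in\,^{p}D^{\leq 0}$.

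To promote this to right t-exactness on all of $\,^{p}D^{\leq 0}\cap D(\mathrm{Bun}_G)_{\phi_{\mathrm{triv}}}^{\mathrm{ULA}}$, the plan is to take the adjunction route. Right t-exactness of $T_V$ is equivalent, via the $(T_V,T_{V^\vee})$-adjunction, to $T_{V^\vee}$ preserving $\,^{p}D^{\geq 1}$. Given $B\in\,^{p}D^{\geq 1}\cap D(\mathrm{Bun}_G)_{\phi_{\mathrm{triv}}}^{\mathrm{ULA}}$, the $\mathcal{O}(\mathrm{Par}_G)$-linearity of the spectral action forces $T_{V^\vee}B$ to remain $\phi_{\mathrm{triv}}$-local, so by the exhaustion hypothesis the costalk $i_b^{!\mathrm{ren}}T_{V^\vee}B$ vanishes unless $b=b_\mu$ for some $\mu$, in which case its cohomological Jordan--H\"older constituents lie among $\{\pi_\mu\}_\mu$. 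Thus $i_{b_\mu}^{!\mathrm{ren}}T_{V^\vee}B\in D^{\geq 1}(G_{b_\mu}(E),\Lambda)$ can be detected by the vanishing of $\mathrm{Hom}(\pi_\mu[n], i_{b_\mu}^{!\mathrm{ren}}T_{V^\vee}B)$ for $n\geq 0$; applying the $(i_{b_\mu!}^{\mathrm{ren}}, i_{b_\mu}^{!\mathrm{ren}})$-adjunction and then the $(T_V,T_{V^\vee})$-adjunction, this becomes the vanishing of $\mathrm{Hom}(T_V\, i_{b_\mu!}^{\mathrm{ren}}\pi_\mu[n], B)$, which is automatic from the first paragraph since $T_V\, i_{b_\mu!}^{\mathrm{ren}}\pi_\mu[n]\in\,^{p}D^{\leq -n}\subset \,^{p}D^{\leq 0}$ and $B\in\,^{p}D^{\geq 1}$.

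The main technical obstacle is the detection step in the second paragraph, i.e.\ the assertion that membership of $i_{b_\mu}^{!\mathrm{ren}}T_{V^\vee}B$ in $D^{\geq 1}$ can be read off from Hom-vanishing against the irreducibles $\pi_\mu$ alone. At the level of an individual smooth representation, this is the easy observation that any nonzero smooth $G_{b_\mu}(E)$-module whose Jordan--H\"older constituents all lie in $\{\pi_\mu\}$ admits some $\pi_\mu$ as an honest subrepresentation; upgrading from this cohomology-level statement to the derived-level Hom-vanishing on $i_{b_\mu}^{!\mathrm{ren}}T_{V^\vee}B$ itself requires combining the ULA hypothesis on $B$ with cohomological boundedness properties of $T_{V^\vee}$, which ultimately rest on finite-dimensionality of $V$. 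If this direct detection proves too delicate in the ULA setting, I would fall back to first proving the theorem on the compact-ULA (``finite'') subcategory $D(\mathrm{Bun}_G)_{\mathrm{fin},\phi_{\mathrm{triv}}}$, where Jordan--H\"older is finite by Theorem~\ref{thm:finiteBZduality} and a direct generation argument from the $\{i_{b_\lambda!}^{\mathrm{ren}}\pi_\lambda\}$ under finite extensions, non-negative shifts and retracts suffices, and then extend to ULA sheaves by passage to appropriate filtered colimits, using that $T_V$ commutes with colimits and that $\,^{p}D^{\leq 0}$ is closed under such.
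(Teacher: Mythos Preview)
Your argument is correct and follows the paper's strategy: the core computation in your first paragraph (transporting to $\mathrm{Coh}(\mathcal{N}/\hat G)$ via $a_\psi\circ\nu_\ast$ and invoking Proposition~\ref{prop:costandard-filtration-key}) is identical. Two points of comparison are worth noting.

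First, for promoting from atomic objects to the whole category, the paper works directly on the connective side: it asserts that $^pD^{\leq 0}\cap D(\mathrm{Bun}_G)_{\phi_{\mathrm{triv}}}^{\mathrm{ULA}}$ is generated under extensions and colimits by the objects $i_{b_\lambda!}^{\mathrm{ren}}\pi_\lambda[n]$ with $n\geq 0$, and then uses that $T_V$ commutes with colimits. Your paragraph~2 is the formally dual statement (detecting membership in $^pD^{\geq 1}$ by Hom-vanishing from the same atomic objects), unwound through the $(i_{b!}^{\mathrm{ren}},i_b^{!\mathrm{ren}})$ and $(T_V,T_{V^\vee})$ adjunctions; the two routes are equivalent, but the paper's phrasing is a single sentence. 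Second, the paper deduces left t-exactness from right t-exactness via the $(T_V,T_{V^\vee})$-adjunction rather than Verdier duality; in fact your paragraph~2 already establishes that $T_{V^\vee}$ preserves $^pD^{\geq 1}$, i.e.\ is left t-exact, so varying $V$ finishes and the Verdier-duality appeal in your opening sentence is redundant.

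Finally, the boundedness concerns in your paragraph~3 are unfounded and the fallback is unnecessary. For any ULA $\phi_{\mathrm{triv}}$-local $C\in D(G_{b_\mu}(E))$ with $H^m(C)\neq 0$ for some $m\leq 0$, admissibility of $H^m(C)$ furnishes an irreducible sub $\pi_\lambda\hookrightarrow H^m(C)$, and elementary t-structure orthogonality (applied to the triangles for $\tau^{\leq m}C$ and $\tau^{\leq m-1}C$, with no minimality of $m$ required) shows that the nonzero class in $\mathrm{Hom}(\pi_\lambda,H^m(C))$ lifts to a nonzero class in $\mathrm{Hom}(\pi_\lambda[-m],C)$. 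So the detection step works as stated for arbitrary ULA objects.
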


The exhaustion hypothesis is known unconditionally for many groups,
including $\mathrm{GL}_{n}$, $\mathrm{SL}_{n}$, and $\mathrm{GSp}_{4}$.
\begin{proof}
It suffices to prove that any Hecke operator $T_{V}$ acting on $D(\mathrm{Bun}_{G})_{\phi_{\mathrm{triv}}}^{\mathrm{ULA}}$
is perverse right t-exact. Indeed, since the right adjoint of $T_{V}$
is $T_{V^{\vee}}$, this automatically implies that $T_{V^{\vee}}$
is perverse left t-exact. Varying over all $V$, we get the claimed
reduction.

Next, using the exhaustion hypothesis, one checks that $D(\mathrm{Bun}_{G})_{\phi_{\mathrm{triv}}}^{\mathrm{ULA}}\cap\,^{p}\!D^{\leq0}$
is generated under extensions and colimits by objects of the form
$i_{b_{\lambda}!}^{\mathrm{ren}}\pi_{\lambda}[n]$ for $\lambda\in X^{\ast}(\hat{T})^{+}$
and $n\geq0$. This reduces us to checking that any sheaf of the form
$T_{V}i_{b_{\lambda}!}^{\mathrm{ren}}\pi_{\lambda}$ is perverse connective.
Under the categorical equivalence, Conjecture \ref{conj:trivialparametermain},
$T_{V}i_{b_{\lambda}!}^{\mathrm{ren}}\pi_{\lambda}$ corresponds to
the coherent complex $\nu_{\ast}(V\otimes A_{\lambda})$. By Proposition
\ref{prop:costandard-filtration-key}, this has a finite filtration
with graded pieces of the form $\nu_{\ast}A_{\mu}[n]$ for some $\mu\in X^{\ast}(\hat{T})^{+}$
and $n\geq0$. Translating back to the other side, we deduce that
$T_{V}i_{b_{\lambda}!}^{\mathrm{ren}}\pi_{\lambda}$ has a finite
filtration with graded pieces of the form $i_{b_{\mu}!}^{\mathrm{ren}}\pi_{\mu}[n]$
for some $\mu\in X^{\ast}(\hat{T})^{+}$ and $n\geq0$. Since these
graded pieces are perverse connective, we get the desired result.
\end{proof}

\subsection{Two t-exactness conjectures}

At this point, we are ready to confidently pose some precise t-exactness
conjectures for the categorical equivalence with restricted variation,
localized over semisimple generic parameters. 

Set $\mathrm{Par}_{G}^{\mathrm{gen}}=\mathrm{Par}_{G}\times_{X_{G}^{\mathrm{spec}}}X_{G}^{\mathrm{spec,gen}}$,
so $\mathrm{Par}_{G}^{\mathrm{gen}}$ is a smooth algebraic stack.
In fact, $X_{G}^{\mathrm{spec,gen}}$ is the \emph{maximal} open subscheme
of $X_{G}^{\mathrm{spec}}$ with the property that its preimage in
$\mathrm{Par}_{G}$ is a smooth algebraic stack.

Let $D(\mathrm{Bun}_{G})_{\mathrm{fin}}^{\mathrm{gen}}$ be the full
subcategory of finite sheaves $A$ whose $\phi$-local summand $A_{\phi}$
vanishes for every semisimple parameter $\phi$ which is not semisimple
generic. It is clear that Conjecture \ref{conj:categorical-restr-var}
localizes to a conjectural equivalence
\[
c_{\psi}:D(\mathrm{Bun}_{G})_{\mathrm{fin}}^{\mathrm{gen}}\overset{\sim}{\to}\mathrm{Coh}(\mathrm{Par}_{G}^{\mathrm{gen}})_{\mathrm{fin}}.
\]

\begin{conjecture}
\emph{\label{conj:categorical-fin-ssgen-perverse-version}i. }The
category $D(\mathrm{Bun}_{G})_{\mathrm{fin}}^{\mathrm{gen}}$ is stable
under the perverse truncation functors.

\emph{ii. }The equivalence 
\[
D(\mathrm{Bun}_{G})_{\mathrm{fin}}^{\mathrm{gen}}\overset{\sim}{\to}\mathrm{Coh}(\mathrm{Par}_{G}^{\mathrm{gen}})_{\mathrm{fin}}
\]
is t-exact with respect to the \emph{perverse} t-structure on the
left-hand side and the \emph{standard} t-structure on the right-hand
side.
\end{conjecture}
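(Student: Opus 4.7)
\textbf{The plan is to} localize everything one semisimple generic parameter at a time, using the direct sum decompositions
\[
D(\mathrm{Bun}_{G})_{\mathrm{fin}}^{\mathrm{gen}} \cong \bigoplus_{\phi}D(\mathrm{Bun}_{G})_{\mathrm{fin},\phi} \quad\text{and}\quad \mathrm{Coh}(\mathrm{Par}_G^{\mathrm{gen}})_{\mathrm{fin}} \cong \bigoplus_{\phi}\mathrm{Coh}(\mathrm{Par}_G)_\phi
\]
indexed by semisimple generic parameters, both of which are compatible with $c_\psi$ by its $\mathcal{O}(X_G^{\mathrm{spec}})$-linearity. For each such $\phi$, the plan is to exploit the expected matching of four distinguished sheaves on each side sketched in Section 2.3: $\nu_{\phi\ast}\overline{\Delta}_\rho \leftrightarrow i_{b\sharp}^{\mathrm{ren}}\pi$, $\nu_{\phi\ast}IC_\rho \leftrightarrow i_{b\sharp!}^{\mathrm{ren}}\pi$, $\nu_{\phi\ast}\overline{\nabla}_\rho \leftrightarrow i_{b!}^{\mathrm{ren}}\pi$, and $i_{\phi\ast}\rho \leftrightarrow i_{b!\ast}^{\mathrm{ren}}\pi$.

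\textbf{For part (i)}, I would first show that the simple perverse constituents of any object in $D(\mathrm{Bun}_G)_{\mathrm{fin},\phi}$ remain $\phi$-local. Given the matching above, the irreducible perverse sheaves $i_{b!\ast}^{\mathrm{ren}}\pi$ with $\pi$ having Fargues-Scholze parameter $\phi$ are exactly $c_\psi^{-1}(i_{\phi\ast}\rho)$, hence are automatically finite since $i_{\phi\ast}\rho$ is coherent with support at a single closed $\overline{\mathbf{Q}_\ell}$-point of $\mathrm{Par}_G$. Combined with Theorem 1.5.4 (stability of finite sheaves under $\mathbf{D}_{\mathrm{BZ}}$, and hence under shifts and standard truncation via the Jordan-Hölder filtration in $\mathrm{Perv}(\mathrm{Bun}_G)_\phi^{\mathrm{ULA}}$), this gives the claimed stability under perverse truncation.

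\textbf{For part (ii)}, the crucial observation is that under the matching, $\nu_{\phi\ast}\overline{\nabla}_\rho$ is a \emph{genuine coherent sheaf} (property 5 of the Section 2.3 list), so under $c_\psi^{-1}$ its image $i_{b!}^{\mathrm{ren}}\pi$ must already be perverse, and similarly $\nu_{\phi\ast}\delta_\rho = i_{\phi\ast}\rho$ on the spectral side is a standard-heart simple whose preimage $i_{b!\ast}^{\mathrm{ren}}\pi$ is perverse simple. I would then argue t-exactness by a generation argument: the connective part of $\mathrm{Coh}(\mathrm{Par}_G)_\phi$ is generated under extensions and shifts $[n]$, $n\geq 0$, by the sheaves $\nu_{\phi\ast}\overline{\nabla}_\rho$ (the costandards for an appropriate quasi-exceptional structure, exactly as in the quoted Bezrukavnikov theory for the trivial parameter); the corresponding perverse-connective part of $D(\mathrm{Bun}_G)_{\mathrm{fin},\phi}$ is generated by $i_{b!}^{\mathrm{ren}}\pi[n]$, $n\geq 0$, since $i_{b!}^{\mathrm{ren}}$ is right perverse t-exact (Exercise 1.2.2). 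Matching these generating systems under $c_\psi$ gives right t-exactness; left t-exactness follows either by taking right adjoints of the same argument applied to $\phi^{-1}$ (swapping with $\mathbf{D}_{\mathrm{BZ}}$ and $\mathbf{D}_{\mathrm{twGS}}$ via Conjecture 1.5.5) or by running the dual argument with $i_{b\ast}^{\mathrm{ren}}\pi$ and the standards $\nu_{\phi\ast}\overline{\Delta}_\rho$.

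\textbf{The main obstacle} is that essentially the entire apparatus rests on the conjectural matching of sheaves from Section 2.3, which is the substantive content. Specifically, one must construct the perverse coherent objects $\overline{\Delta}_\rho, IC_\rho, \overline{\nabla}_\rho$ on $\mathcal{N}_{S_\phi}$ (with $S_\phi$ typically disconnected, pushing beyond the classical Bezrukavnikov setup), verify they have the predicted properties, and prove that $c_\psi$ identifies them with the given hadal sheaves on the automorphic side. The only reliable tool for such an identification appears to be parabolic reduction via the Eisenstein compatibility (Conjecture 1.4.8), which does give the identification for generous parameters (section 2.1) and for the trivial parameter (section 2.2), but a uniform semisimple-generic statement seems to require a genuinely new geometric input, essentially a Springer-theoretic description of the fiber $q^{-1}(x_\phi)$ compatible with both Eisenstein series and Bernstein-Zelevinsky duality.
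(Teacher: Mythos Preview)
The statement is a \emph{conjecture}, so the paper does not prove it. What the paper does provide is the proposition immediately following, which derives part (ii) from Conjecture~\ref{conj:categorical-restr-var} and Conjecture~\ref{conj:t-exact-Hecke-ssgen} by a route quite different from yours. The paper's argument is: if $A$ is perverse, then by Hecke t-exactness $T_V A$ is perverse for every $V$, so $i_1^\ast T_V A$ sits in degree zero, and then by projectivity of $W_\psi$ the complex
\[
R\Gamma(\mathrm{Par}_G, V\otimes c_\psi(A))\simeq R\mathrm{Hom}(W_\psi, i_1^\ast T_V A)
\]
is concentrated in degree zero for all $V$, which forces $c_\psi(A)$ into the standard heart. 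This is a direct, global argument that never touches individual irreducibles or any quasi-exceptional structure.

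Your approach instead localizes at each $\phi$ and invokes the full conjectural matching of the four distinguished sheaves from Section~2.3, together with a generation argument using (co)standard objects. This is more structural and would also give information about part (i) (the paper offers no argument for (i), only the remark that it is needed for (ii) to be well-posed and that it fails without the semisimple-generic restriction). But your route rests on strictly more input: you need the construction of $\overline{\Delta}_\rho$, $\overline{\nabla}_\rho$, $IC_\rho$ on $\mathcal{N}_{S_\phi}$ for possibly disconnected $S_\phi$, and the object-by-object matching under $c_\psi$, all of which the paper explicitly leaves as expectations. The paper's route trades all of that for a single concrete ingredient (perverse t-exactness of Hecke operators on the semisimple-generic locus) plus the classical projectivity of the Whittaker module, which is why it is packaged as a clean conditional proposition rather than left as heuristic.
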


Note that ii. is not well-posed unless one assumes i. is true. However,
we warn the reader that i. definitely fails before passing to the
localization around semisimple generic parameters. For a concrete
example, take $G=\mathrm{SL}_{2}$ and let $i_{1}:[\ast/\underline{\mathrm{SL}_{2}(E)}]\to\mathrm{Bun}_{G}$
be the inclusion of the open stratum, with closed complement $h:Z\to\mathrm{Bun}_{G}$.
Considering the distinguished triangle
\[
i_{1!}\overline{\mathbf{Q}_{\ell}}\to\overline{\mathbf{Q}_{\ell}}\to h_{\ast}\overline{\mathbf{Q}_{\ell}}\overset{[1]}{\to},
\]
it is easy to see that the constant sheaf $\overline{\mathbf{Q}_{\ell}}$
is perverse, and (by consideration of $\ast$-stalks) that $h_{\ast}\overline{\mathbf{Q}_{\ell}}$
can only have nonvanishing perverse cohomology sheaves in degrees
$\leq-2$. Therefore applying $\phantom{}^{p}H^{0}(-)$ to the first
map of this triangle induces an isomorphism
\[
\phantom{}^{p}H^{0}(i_{1!}\overline{\mathbf{Q}_{\ell}})\overset{\sim}{\to}\overline{\mathbf{Q}_{\ell}},
\]
where of course $i_{1!}\overline{\mathbf{Q}_{\ell}}$ is finite but
$\overline{\mathbf{Q}_{\ell}}$ is not. However, all of these sheaves
are $\phi$-local for the unramified parameter $\phi:W_{E}\to\mathrm{PGL}_{2}(\overline{\mathbf{Q}_{\ell}})$
sending Frobenius to $\left(\begin{array}{cc}
1\\
 & q
\end{array}\right)$, and this parameter is certainly not semisimple generic.

On the other hand, Conjecture \ref{conj:categorical-fin-ssgen-perverse-version}
is not a wild guess. Aside from the analogy with the t-exactness results
in \cite[Section 1.6.2]{FR} (which we will discuss more carefully
in section 3.2), we have the following result.
\begin{prop}
If Conjectures \ref{conj:categorical-restr-var} and \ref{conj:t-exact-Hecke-ssgen}
are true, then so is Conjecture \ref{conj:categorical-fin-ssgen-perverse-version}.
\end{prop}

\begin{proof}
If $A\in D(\mathrm{Bun}_{G})_{\mathrm{fin}}^{\mathrm{gen}}$ is perverse
connective resp. coconnective, then $T_{V}A$ is perverse connective
resp. coconnective for all $V\in\mathrm{Rep}(\hat{G})$ by Conjecture
\ref{conj:t-exact-Hecke-ssgen}, so $i_{1}^{\ast}T_{V}A$ is concentrated
in nonpositive resp. nonnegative degrees by the perverse t-exactness
of $i_{1}^{\ast}$, and then
\[
R\mathrm{Hom}(i_{1!}W_{\psi},T_{V}A)\simeq R\mathrm{Hom}(W_{\psi},i_{1}^{\ast}T_{V}A)
\]
is also concentrated in nonpositive resp. nonnegative degrees by the
projectivity of $W_{\psi}$ (see the discussion around Theorem \ref{thm:Wpsinice}).
On the other side of Conjecture \ref{conj:categorical-restr-var},
this translates into the knowledge that
\[
R\Gamma(\mathrm{Par}_{G},V\otimes c_{\psi}(A))\simeq R\mathrm{Hom}(i_{1!}W_{\psi},T_{V}A)
\]
is concentrated in nonpositive resp. nonnegative degrees for all $V\in\mathrm{Rep}(\hat{G})$.
But this implies that $c_{\psi}(A)$ is concentrated in nonpositive
resp. nonnegative degrees for the standard t-structure, since$R\Gamma(\mathrm{Par}_{G},V\otimes-)$
is a \emph{t-exact }conservative family on $\mathrm{QCoh^{qc}}(\mathrm{Par}_{G})$
for varying $V\in\mathrm{Rep}(\hat{G})$.\footnote{I sincerely thank Xinwen Zhu for explaining this last fact to me.} 

This implies the t-exactness claim in \ref{conj:categorical-fin-ssgen-perverse-version}.ii,
and then since standard truncation on $\mathrm{Coh}$ preserves finite
sheaves, the t-exactness of the equivalence 
\[
D(\mathrm{Bun}_{G})_{\mathrm{fin}}^{\mathrm{gen}}\overset{\sim}{\to}\mathrm{Coh}(\mathrm{Par}_{G}^{\mathrm{gen}})_{\mathrm{fin}}
\]
translates this into the claim that perverse truncations on $D(\mathrm{Bun}_{G})$
preserve $D(\mathrm{Bun}_{G})_{\mathrm{fin}}^{\mathrm{gen}}$. This
gives the result.
\end{proof}
Finally we formulate a t-exactness conjecture for the hadal t-structure.
This is somewhat implicit in the discussion from section 2.3, but
for completeness we spell it out fully. Here we freely reuse the notations
introduced in section 2.3. For a fixed semisimple generic parameter
$\phi$, define $^{pcoh}D^{\leq0}$ resp. $^{pcoh}D^{\geq0}$ inside
$\mathrm{Coh}(\mathrm{Par}_{G})_{\phi}$ as the full subcategory generated
under extensions by $\nu_{\phi\ast}\overline{\Delta}_{\rho}[n]$ for
$n\geq0$, resp. by $\nu_{\phi\ast}\overline{\nabla}_{\rho}[n]$ for
$n\leq0$.
\begin{conjecture}
\emph{i. }The pair $(^{pcoh}D^{\leq0},{}^{pcoh}D^{\geq0})$ defines
a perverse coherent t-structure on $\mathrm{Coh}(\mathrm{Par}_{G})_{\phi}$.
Writing $\mathrm{PCoh}(\mathrm{Par}_{G})_{\phi}$ for the heart, the
functor $\nu_{\phi\ast}:\mathrm{Coh}(\mathcal{N}_{S_{\phi}})\to\mathrm{Coh}(\mathrm{Par}_{G})_{\phi}$
should induce a faithful exact functor 
\[
\mathrm{PCoh}(\mathcal{N}_{S_{\phi}})\to\mathrm{PCoh}(\mathrm{Par}_{G})_{\phi}
\]
which is bijective on isomorphism classes of irreducible objects.

\emph{ii. }The equivalence
\[
c_{\psi}:D(\mathrm{Bun}_{G})_{\mathrm{fin},\phi}\overset{\sim}{\to}\mathrm{Coh}(\mathrm{Par}_{G})_{\phi}
\]
induced by $\phi$-localizing the equivalence of Conjecture \ref{conj:categorical-restr-var}
should be t-exact with respect to the \emph{hadal} t-structure on
the left-hand side and the \emph{perverse coherent} t-structure defined
above on the right-hand side. In particular, it should restrict to
an exact equivalence of abelian categories
\[
\mathrm{Had}(\mathrm{Bun}_{G})_{\mathrm{fin},\phi}\overset{\sim}{\to}\mathrm{PCoh}(\mathrm{Par}_{G})_{\phi}.
\]
\end{conjecture}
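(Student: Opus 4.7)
The plan is to reduce both parts of the conjecture to a single object-matching statement across $c_\psi$, and then apply formal arguments about t-structures generated by dualizable quasi-exceptional collections in the style of Bezrukavnikov.

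For part (i), I would first show that $\mathrm{Coh}(\mathrm{Par}_{G})_{\phi}$ is generated as a triangulated category by $\{\nu_{\phi\ast}\overline{\Delta}_{\rho}\}_{\rho\in\mathrm{Irr}(S_{\phi})}$. By Proposition \ref{prop:ss-gen-equivalent-conditions}, $\mathrm{Par}_{G}$ is smooth in a Zariski neighborhood of $q^{-1}(x_{\phi})$ and $\nu_{\phi}$ is a regular closed immersion, so its conormal sheaf $N$ is a locally free $S_{\phi}$-equivariant module. Every object of $\mathrm{Coh}(\mathrm{Par}_{G})_{\phi}$ admits a finite filtration by powers of the ideal sheaf of $\nu_{\phi}$, whose graded pieces are $\nu_{\phi\ast}$-pushforwards of objects of the form $\mathrm{Sym}^{n}N^{\vee}\otimes\mathcal{F}$ from $\mathcal{N}_{S_{\phi}}$. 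Combined with generation of $\mathrm{Coh}(\mathcal{N}_{S_{\phi}})$ by the $\overline{\Delta}_{\rho}$'s (an extension of Bezrukavnikov's theory to the possibly disconnected reductive group $S_{\phi}$), this yields generation. Next I would verify the Hom-vanishing axioms of a dualizable quasi-exceptional collection via adjunction plus the self-intersection formula $\nu_{\phi}^{!}\nu_{\phi\ast}\mathcal{F}\simeq\mathcal{F}\otimes\Lambda^{\bullet}N^{\vee}[\,\cdot\,]$; the resulting cohomological shifts are controlled and compatible with the perverse perversity function, so the vanishings on $\mathcal{N}_{S_{\phi}}$ propagate to $\mathrm{Par}_{G}$. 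Standard theory then produces the t-structure, and the exactness and bijectivity-on-irreducibles of $\nu_{\phi\ast}$ on hearts follows because irreducible objects in the heart of a quasi-exceptional t-structure are exactly the images of the canonical maps $\nu_{\phi\ast}\overline{\Delta}_{\rho}\to\nu_{\phi\ast}\overline{\nabla}_{\rho}$, which by the same argument are the $\nu_{\phi\ast}IC_{\rho}$.

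For part (ii), the argument is formal once (i) is in hand. By Theorem \ref{thm:hadal-t-structure}, the hadal t-structure on $D(\mathrm{Bun}_{G})^{\omega}$ is the t-structure associated with the dualizable quasi-exceptional collection $\{i_{b\sharp}^{\mathrm{ren}}\pi,\,i_{b!}^{\mathrm{ren}}\pi\}_{(b,\pi)}$. Restricting to $D(\mathrm{Bun}_{G})_{\mathrm{fin},\phi}$, which is stable under hadal truncation by the discussion following Theorem \ref{thm:finiteBZduality}, one obtains the analogous collection indexed by pairs $(b,\pi)$ with Fargues--Scholze parameter $\phi$, or equivalently by $\rho\in\mathrm{Irr}(S_{\phi})$ via the BM--O bijection of Theorem \ref{thm:BMOllc}. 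Granting the matching
\[
c_{\psi}(i_{b\sharp}^{\mathrm{ren}}\pi)\simeq\nu_{\phi\ast}\overline{\Delta}_{\rho}\quad\text{and}\quad c_{\psi}(i_{b!}^{\mathrm{ren}}\pi)\simeq\nu_{\phi\ast}\overline{\nabla}_{\rho}
\]
compatibly with the canonical maps between them, t-exactness is automatic, since an equivalence of triangulated categories that identifies the generators of two dualizable quasi-exceptional t-structures identifies the t-structures themselves.

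The main obstacle, unsurprisingly, is precisely this object matching. It is the semisimple-generic extension of Conjecture \ref{conj:generousdeep} and Conjecture \ref{conj:trivialparametermain}, both of which remain open in general. The natural route would combine the conjectural compatibility of $c_{\psi}$ with spectral Eisenstein series (Conjecture \ref{conj:eiscategoricalmatch}) with an explicit realization of the $\overline{\Delta}_{\rho},\overline{\nabla}_{\rho}$ via a Springer-type resolution of $\mathcal{N}_{S_{\phi}}$ and honest Eisenstein series from a Borel of $S_{\phi}^{\circ}$, directly generalizing the trivial-parameter calculation of section 2.2. A prerequisite is a rigorous construction of $\overline{\Delta}_{\rho},IC_{\rho},\overline{\nabla}_{\rho}$ on $\mathcal{N}_{S_{\phi}}$ satisfying the five properties enumerated in section 2.3, which requires extending Bezrukavnikov's theory of perverse coherent sheaves from connected reductive groups to the possibly disconnected reductive group $S_{\phi}$. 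Neither piece looks insurmountable in principle, but both are substantial refinements of current technology, and the first in particular seems to require a clean understanding of Eisenstein series on $\mathrm{Bun}_{G}$ with arbitrary coefficients.
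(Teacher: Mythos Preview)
The statement is a \emph{conjecture}; the paper does not prove it. The only commentary the paper offers is the single sentence immediately following it: ``Part i.\ of this conjecture should be quite easy to verify, conditionally on working out the speculations from section 2.3.'' Your proposal is therefore not a competing proof but a fleshing-out of what the paper leaves implicit, and in that capacity it is broadly in the spirit intended: part (i) via quasi-exceptional machinery once the objects $\overline{\Delta}_\rho,\overline{\nabla}_\rho$ of section 2.3 are in hand, and part (ii) formally from the conjectural object matching under $c_\psi$.

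A few places where your write-up presumes more than is established. First, Theorem~\ref{thm:hadal-t-structure} constructs the hadal t-structure by an explicit inductive recipe, not by exhibiting a quasi-exceptional collection; the quasi-exceptional description you invoke appears in the paper only as an exercise for the trivial parameter, and for a general semisimple generic $\phi$ it would itself need proof. Second, your identification of the indexing set $\{(b,\pi):\varphi_\pi=\phi\}$ with $\mathrm{Irr}(S_\phi)$ via the BM--O bijection tacitly assumes the exhaustion hypothesis (that every such $(b,\pi)$ arises from the bijection), which the paper repeatedly flags as an additional input---see the hypotheses of Proposition~\ref{prop:generous-decomposition-t-exact} and Theorem~\ref{thm:trivial-parameter-categorical-magic}. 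Third, and as you yourself note, the object matching $c_\psi(i_{b\sharp}^{\mathrm{ren}}\pi)\simeq\nu_{\phi\ast}\overline{\Delta}_\rho$ already presupposes that the sheaves $\overline{\Delta}_\rho,IC_\rho,\overline{\nabla}_\rho$ on $\mathcal{N}_{S_\phi}$ have been constructed with the five properties of section 2.3, which the paper treats as speculative. So the honest summary is that your outline and the paper's one-line remark agree: both parts reduce to the programme of section 2.3, and neither you nor the paper claims a proof.
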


Part i. of this conjecture should be quite easy to verify, conditionally
on working out the speculations from section 2.3.

\section{Additional conjectures}

\subsection{ULA sheaves and generic parameters}

The reader may have noticed that general (i.e. non-finite) ULA sheaves
have been largely absent from our discussion. This is due to a psychological
complication, which we have tried to avoid confronting until now:
when translating general ULA sheaves to the spectral side, we are
forced unavoidably to reckon with $\mathrm{IndCoh}$. More precisely,
we have the following definition.
\begin{defn}
An object $A\in\mathrm{IndCoh}(\mathrm{Par}_{G})$ is \emph{admissible
}if for all $B\in\mathrm{Coh}(\mathrm{Par}_{G})$, $R\mathrm{Hom}(B,A)$
lies in $\mathrm{Perf}(\overline{\mathbf{Q}_{\ell}})$. We write $\mathrm{Adm}(\mathrm{Par}_{G})\subset\mathrm{IndCoh}(\mathrm{Par}_{G})$
for the evident stable $\infty$-category of admissible ind-coherent
sheaves.
\end{defn}

If we believe the categorical conjecture, then these are exactly the
sheaves on the spectral side which should match with ULA sheaves on
the automorphic side.
\begin{prop}
\label{prop:categorical-conj-ULA}If Conjecture \ref{conj:categorical-cpsi-unconditional}
is true, then the ind-extension of $c_{\psi}$ to an equivalence $\mathbf{L}_{\psi}:D(\mathrm{Bun}_{G})\overset{\sim}{\to}\mathrm{IndCoh}(\mathrm{Par}_{G})$
restricts to an equivalence of categories
\[
\mathbf{L}_{\psi}:D(\mathrm{Bun}_{G})^{\mathrm{ULA}}\overset{\sim}{\to}\mathrm{Adm}(\mathrm{Par}_{G})
\]
linear over the $\otimes$-action of $\mathrm{Perf}(\mathrm{Par}_{G})$.
\end{prop}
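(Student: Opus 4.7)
The plan is to characterize both ULA sheaves and admissible ind-coherent sheaves as the objects against which compact (respectively coherent) sheaves have perfect mapping spectra, and then transport this characterization across the ind-extended equivalence.

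\textbf{Step 1.} I claim $A \in D(\mathrm{Bun}_G)$ is ULA if and only if $R\mathrm{Hom}(B, A) \in \mathrm{Perf}(\overline{\mathbf{Q}_\ell})$ for every $B \in D(\mathrm{Bun}_G)^\omega$. The forward direction is the general perfectness statement recalled just before the warning preceding Theorem \ref{thm:finiteBZduality}. For the converse, the objects $i_{b\sharp} \mathrm{ind}_K^{G_b(E)} \overline{\mathbf{Q}_\ell}$, with $b$ running over $B(G)$ and $K$ over pro-$p$ open compacts of $G_b(E)$, form a set of compact generators of $D(\mathrm{Bun}_G)$: compactness is Proposition \ref{prop:dualitypush}.o applied to the basic compact generators of each $D(G_b(E), \overline{\mathbf{Q}_\ell})$, while generation follows from the joint conservativity of the stalk functors $i_b^\ast$ together with the adjunction $(i_{b\sharp}, i_b^\ast)$. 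By that same adjunction and Frobenius reciprocity,
$$R\mathrm{Hom}(i_{b\sharp} \mathrm{ind}_K^{G_b(E)} \overline{\mathbf{Q}_\ell}, A) \simeq (i_b^\ast A)^K,$$
so perfectness of the left-hand side for all $b, K$ is exactly the stratumwise ULA condition which, according to the excerpt, characterizes ULA sheaves on $\mathrm{Bun}_G$.

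\textbf{Step 2.} Both $D(\mathrm{Bun}_G)$ and $\mathrm{IndCoh}(\mathrm{Par}_G) = \mathrm{Ind}(\mathrm{Coh}(\mathrm{Par}_G))$ are compactly generated presentable stable $\infty$-categories, with compact subcategories identified by Conjecture \ref{conj:categorical-cpsi-unconditional}. The ind-extension $c_\psi : D(\mathrm{Bun}_G) \to \mathrm{IndCoh}(\mathrm{Par}_G)$ is therefore automatically an equivalence, and it preserves mapping spectra out of compact objects: for $B \in D(\mathrm{Bun}_G)^\omega$ and arbitrary $A$, $R\mathrm{Hom}(B, A) \simeq R\mathrm{Hom}(c_\psi(B), c_\psi(A))$. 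Combining this with Step 1 yields
\begin{align*}
A \text{ is ULA} \ & \Longleftrightarrow \ R\mathrm{Hom}(B, A) \in \mathrm{Perf}(\overline{\mathbf{Q}_\ell}) \ \forall B \in D(\mathrm{Bun}_G)^\omega \\
& \Longleftrightarrow \ R\mathrm{Hom}(C, c_\psi(A)) \in \mathrm{Perf}(\overline{\mathbf{Q}_\ell}) \ \forall C \in \mathrm{Coh}(\mathrm{Par}_G) \\
& \Longleftrightarrow \ c_\psi(A) \in \mathrm{Adm}(\mathrm{Par}_G),
\end{align*}
where the middle step uses that $c_\psi$ restricts to a bijection $D(\mathrm{Bun}_G)^\omega \simeq \mathrm{Coh}(\mathrm{Par}_G)$ and the last step is the definition of admissibility.

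\textbf{Step 3 and main obstacle.} The $\mathrm{Perf}(\mathrm{Par}_G)$-linearity is automatic: the spectral action preserves compact objects on both sides, so linearity of the equivalence at the compact/coherent level ind-extends to linearity on the full categories. The only genuinely delicate point is the ``if'' direction in Step 1, which rests on the stratumwise characterization of ULA sheaves on $\mathrm{Bun}_G$ that the excerpt asserts but leaves unproved; granting that characterization, the remaining arguments are formal manipulations of adjunctions, ind-completions, and the $\mathrm{Perf}(\mathrm{Par}_G)$-module structure.
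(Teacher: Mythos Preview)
Your proof is correct and follows essentially the same route as the paper's: both rest on the characterization of ULA sheaves $A$ as those with $R\mathrm{Hom}(B,A)\in\mathrm{Perf}(\overline{\mathbf{Q}_\ell})$ for all compact $B$, which the paper attributes directly to \cite[Prop.~VII.7.4 and VII.7.9]{FS} without spelling out the converse direction as you do. Your more explicit argument for the converse via the generators $i_{b\sharp}\,\mathrm{ind}_K^{G_b(E)}\overline{\mathbf{Q}_\ell}$ and the adjunction identity $R\mathrm{Hom}(i_{b\sharp}\,\mathrm{ind}_K^{G_b(E)}\overline{\mathbf{Q}_\ell},A)\simeq(i_b^\ast A)^K$ is a helpful unpacking, and your Step~3 on $\mathrm{Perf}(\mathrm{Par}_G)$-linearity (which the paper leaves implicit) is handled correctly by the $\mathrm{QCoh}$-linearity of $c_\psi$ already recorded in the text.
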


\begin{proof}
This is immediate from the fact that ULA sheaves $A$ on $\mathrm{Bun}_{G}$
are characterized by the condition that $R\mathrm{Hom}(B,A)$ lies
in $\mathrm{Perf}(\overline{\mathbf{Q}_{\ell}})$ for all $B\in D(\mathrm{Bun}_{G})^{\omega}$,
which follows from \cite[Prop. VII.7.4 and Prop. VII.7.9]{FS}.
\end{proof}
Again, we emphasize that after restricting to $D(\mathrm{Bun}_{G})_{\mathrm{fin}}\subset D(\mathrm{Bun}_{G})^{\mathrm{ULA}}$
the functors $\mathbf{L}_{\psi}$ and $c_{\psi}$ coincide, but they
do not agree on all ULA sheaves, as discussed in the warning before
Conjecture \ref{conj:dualitycpsi}. 
\begin{xca}
1. Show that there is an inclusion $\mathrm{Adm}(\mathrm{Par}_{G})\cap\mathrm{Coh}(\mathrm{Par}_{G})\subset\mathrm{Coh}(\mathrm{Par}_{G})_{\mathrm{fin}}$.

2. Show that there is an inclusion $\mathrm{Perf}(\mathrm{Par}_{G})_{\mathrm{fin}}\subset\mathrm{Adm}(\mathrm{Par}_{G})\cap\mathrm{Coh}(\mathrm{Par}_{G})$.

3. Show that if the categorical conjecture is true, then the inclusion
in 1. must be an equality.
\end{xca}

We warn the reader that most admissible sheaves are not coherent.
For instance, let $x\in\mathrm{Par}_{G}(\overline{\mathbf{Q}_{\ell}})$
be any point in the smooth locus of the stack $\mathrm{Par}_{G}$,
with associated residual gerbe $BS_{x}$, and let $i_{x}:BS_{x}\to\mathrm{Par}_{G}$
be the evident immersion. If $\rho$ is any irreducible algebraic
representation of $S_{x}$, one can show that $i_{x\ast}^{\mathrm{IndCoh}}\rho$
is always an admissible sheaf (we will prove a more general result
below). However, if $x$ is not a closed point, this sheaf will not
be coherent.

Next we formulate a duality conjecture. If $X$ is any quasismooth
algebraic stack over $\mathrm{Spec}\,\overline{\mathbf{Q}_{\ell}}$
with structure map $f_{X}$, and $A\in\mathrm{IndCoh}(X)$ is a given
object, then we get a contravariant functor
\begin{align*}
\mathrm{IndCoh}(X) & \to\mathrm{IndCoh}(\mathrm{Spec}\,\overline{\mathbf{Q}_{\ell}})=\mathrm{Vect}_{\overline{\mathbf{Q}_{\ell}}}\\
B & \mapsto R\mathrm{Hom}(f_{X\ast}^{\mathrm{IndCoh}}(A\otimes^{!}B),\overline{\mathbf{Q}_{\ell}})
\end{align*}
where the notation for pushforward and tensor product of ind-coherent
sheaves follows Gaitsgory-Rozenblyum's book. By some general nonsense
with the $\infty$-categorical adjoint functor theorem, this functor
is representable by $R\mathrm{Hom}(-,\mathbf{D}_{\mathrm{adm}}A)$
for a uniquely determined object $\mathbf{D}_{\mathrm{adm}}A\in\mathrm{IndCoh}(X)$.
The association $A\mapsto\mathbf{D}_{\mathrm{adm}}A$ is a contravariant
endofunctor of $\mathrm{IndCoh}(X)$. For general stacks and general
sheaves, this functor of ``admissible dual'' will not be well-behaved.
However, for the stack of \emph{L}-parameters, we expect the following.\footnote{A proof of Conjecture \ref{conj:ULA-duality} will appear in \cite{HM}.}
\begin{conjecture}
\emph{\label{conj:ULA-duality}i.} The functor $\mathbf{D}_{\mathrm{adm}}(-)$
defines an involutive anti-equivalence from $\mathrm{Adm}(\mathrm{Par}_{G})$
to itself.

\emph{ii. }The equivalence $\mathbf{L}_{\psi}:D(\mathrm{Bun}_{G})^{\mathrm{ULA}}\overset{\sim}{\to}\mathrm{Adm}(\mathrm{Par}_{G})$
conjectured in Proposition \ref{prop:categorical-conj-ULA} should
satisfy the duality compatibility
\[
\mathbf{D}_{\mathrm{tw.adm}}\circ\mathbf{L}_{\psi}\simeq\mathbf{L}_{\psi^{-1}}\circ\mathbf{D}_{\mathrm{Verd}},
\]
where $\mathbf{D}_{\mathrm{tw.adm}}=c^{\ast}\mathbf{D}_{\mathrm{adm}}$
is the composition of admissible duality with pullback along the Chevalley
involution.
\end{conjecture}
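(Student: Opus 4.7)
The plan is to attack the two parts of the conjecture in sequence, with (1) being a purely spectral assertion about $\mathrm{Par}_G$ and (2) a compatibility statement built on top of (1) and the expected equivalence of Proposition \ref{prop:categorical-conj-ULA}. Throughout I use that $\mathrm{Par}_G$ is a quasi-smooth derived Artin stack with canonically trivial dualizing complex $\omega \simeq \mathcal{O}$.

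For part (1), I would first analyze $\mathbf{D}_{\mathrm{adm}}$ on the coherent subcategory $\mathrm{Coh}(\mathrm{Par}_G)_{\mathrm{fin}} \subset \mathrm{Adm}(\mathrm{Par}_G)$ (as characterized by the exercise after Proposition \ref{prop:categorical-conj-ULA}). On such objects the universal property defining the admissible dual should reduce to Grothendieck-Serre duality: since $\mathrm{Par}_G$ is quasi-smooth with trivial canonical, and coherent admissibles have finite set-theoretic support over $X_G^{\mathrm{spec}}$, one expects $\mathbf{D}_{\mathrm{adm}} A \simeq \mathbf{D}_{\mathrm{GS}} A$ in this case, giving involutivity on coherent admissibles. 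For a general $A \in \mathrm{Adm}(\mathrm{Par}_G)$, I would present $A$ as a limit over its natural filtration by $\phi$-local pieces --- a spectral analogue of the direct sum decomposition of finite sheaves by semisimple parameters --- where each piece is built out of coherent complexes supported in an infinitesimal neighborhood of a single closed point of $X_G^{\mathrm{spec}}$. The admissibility hypothesis, that $R\mathrm{Hom}(B,-)$ is perfect for all $B \in \mathrm{Coh}$, then controls this limit tightly enough to let $\mathbf{D}_{\mathrm{adm}}$ interchange it with the ind-coherent pro-extension of $\mathbf{D}_{\mathrm{GS}}$, yielding involutivity on all of $\mathrm{Adm}$. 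A natural template is smooth duality in the representation theory of $G_b(E)$, whose involutivity on admissibles is classical and which should transfer to the spectral setting via the categorical conjecture on generous strata and on tori.

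For part (2), I would exploit the adjunction $a_\psi \dashv c_\psi$ together with the duality compatibility of $a_\psi$ on perfect complexes (Proposition \ref{prop:actiondualitycompatible}). Given $A \in D(\mathrm{Bun}_G)^{\mathrm{ULA}}$ and arbitrary $\mathcal{F} \in \mathrm{Perf}(\mathrm{Par}_G)$, the plan is to compute
\[
R\mathrm{Hom}(\mathcal{F},\, \mathbf{D}_{\mathrm{tw.adm}}\, c_\psi A) \simeq R\mathrm{Hom}(\mathbf{D}_{\mathrm{twGS}}\, c^{\ast} \mathcal{F},\, c_\psi A)
\]
using part (1), then apply the $a_\psi \dashv c_\psi$ adjunction, Proposition \ref{prop:actiondualitycompatible} to convert $a_\psi \mathbf{D}_{\mathrm{twGS}}$ into $\mathbf{D}_{\mathrm{BZ}}\, a_{\psi^{-1}}$, and finally invoke the Verdier-BZ interplay (via duality between $i_{1!}$-extension and Whittaker coefficients, cf.\ the discussion preceding Conjecture \ref{conj:dualitycpsi}) to rewrite this as $R\mathrm{Hom}(\mathcal{F},\, c_{\psi^{-1}}\, \mathbf{D}_{\mathrm{Verd}} A)$. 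Yoneda then gives the desired natural isomorphism of functors.

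The main obstacle lies in two places. For part (1), the derived structure of $\mathrm{Par}_G$ away from the semisimple generic locus (Proposition \ref{prop:ssgennotderived}) makes the comparison $\mathbf{D}_{\mathrm{adm}} \simeq \mathbf{D}_{\mathrm{GS}}$ on coherent objects genuinely subtle, and one must track admissibility carefully across the non-smooth locus where the dualizing behavior of $\mathrm{IndCoh}$ is most delicate; this is the technical crux, and it is what the forthcoming work with Mann is presumably designed to settle. For part (2), the statement is logically downstream of Conjecture \ref{conj:categorical-cpsi-unconditional}, which is itself unproven; what can realistically be established without assuming the full categorical equivalence is the compatibility at the level of the pairing between $a_\psi$ on perfect objects and $c_\psi$ on admissibles --- i.e., the above Yoneda manipulation can be read as an unconditional statement about the adjoint pair regardless of whether $c_\psi$ is an equivalence --- and this is essentially the best evidence one can offer for the compatibility in its sharpest form.
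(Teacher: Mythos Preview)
The paper does not prove this statement: it is stated as a \emph{conjecture}, and a footnote immediately following it says that a proof of part 1 and substantial evidence for part 2 will appear in forthcoming joint work with Mann. The only additional information the paper gives is the parenthetical remark, buried in the proof of Proposition \ref{prop:springermiracleconsequences}, that part 2 is an unconditional consequence of Conjectures \ref{conj:categorical-cpsi-unconditional} and \ref{conj:dualitycpsi} together, again with the argument deferred to \cite{HM}. So there is no proof in the paper to compare your proposal against; at best one can ask whether your sketch is plausible and whether it aligns with the hints the paper drops.

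On that score, your plan for part 2 has a genuine gap. You propose to use Proposition \ref{prop:actiondualitycompatible}, which relates $\mathbf{D}_{\mathrm{BZ}}$ and $\mathbf{D}_{\mathrm{twGS}}$ through $a_\psi$, and then to ``invoke the Verdier--BZ interplay'' to pass from $\mathbf{D}_{\mathrm{BZ}}$ to $\mathbf{D}_{\mathrm{Verd}}$. But these are genuinely different dualities on different subcategories (compact versus ULA), and the paper does not supply any mechanism for converting one into the other in the way you need; the reference you give to ``the discussion preceding Conjecture \ref{conj:dualitycpsi}'' does not contain such a statement. The paper's own hint is that one should instead assume Conjecture \ref{conj:dualitycpsi} (the BZ/twGS compatibility of $c_\psi$ on \emph{compact} objects) and deduce the Verdier/tw.adm compatibility on ULA objects from it together with the full categorical equivalence---this is a different logical route, and the passage from one duality to the other presumably goes through ind-extension and the characterization of ULA objects via perfect pairings with compacts, not through $a_\psi$ directly.

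For part 1, your reduction to $\mathbf{D}_{\mathrm{adm}} \simeq \mathbf{D}_{\mathrm{GS}}$ on $\mathrm{Coh}(\mathrm{Par}_G)_{\mathrm{fin}}$ is reasonable as a first step, but the extension to all of $\mathrm{Adm}$ via ``a limit over $\phi$-local pieces'' is where the real content lies, and your description of this step is too vague to assess. Admissible ind-coherent sheaves need not decompose as direct sums over semisimple parameters (unlike finite sheaves), and controlling $\mathbf{D}_{\mathrm{adm}}$ on such limits is exactly the delicate point you yourself flag. You are right that this is the technical crux and that it is what the announced work is meant to handle.
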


This suggests another perspective on the ``best hope'' discussed
at the beginning of section 2. Specifically, if we believe in Proposition
\ref{prop:categorical-conj-ULA} and Conjecture \ref{conj:ULA-duality},
we are forced to believe in the existence of an exotic t-structure
on $\mathrm{Adm}(\mathrm{Par}_{G})$ matching the perverse t-structure
on $D(\mathrm{Bun}_{G})^{\mathrm{ULA}}$, whose left and right halves
are swapped by $\mathbf{D}_{\mathrm{tw.adm}}$. Over the smooth locus
of $\mathrm{Par}_{G}$, this should just be the standard t-structure,
but it seems extremely subtle to extend the desired t-structure over
the singularities of the stack of $L$-parameters. For instance, one
can show that for most $V\in\mathrm{Rep}(\hat{G})$, the endofunctor
$V\otimes-$ corresponding to the Hecke operator $T_{V}$ is neither
left nor right t-exact for this t-structure, and its failure of t-exactness
around a given point $x$ seems to correlate with ``how singular''
the stack is at $x$. This seems to suggest that any direct definition
of this t-structure will need to use specific features of the stack
of singularities of $\mathrm{Par}_{G}$. We will discuss these ideas
in more detail elsewhere.

Next we formulate some conjectures attaching ULA Hecke eigensheaves
on $\mathrm{Bun}_{G}$ to generic \emph{L}-parameters. In some sense
this brings us back to the very origins of the entire subject in Fargues's
2014 MSRI lecture \cite{FarguesMSRI}. To simplify the discussion,
we will consider \emph{L}-parameters satisfying the following condition.

($\dagger$) $\phi$ is Frobenius-semisimple and generic, and $S_{\phi}^{\circ}$
is reductive.

We emphasize that in contrast to most of the discussion in section
2, we are no longer requiring $\phi$ to be semisimple. In fact, the
condition $(\dagger)$ is very mild. For instance, it holds for all
discrete parameters, all parameters which are $\iota$-essentially
tempered for some isomorphism $\iota:\overline{\mathbf{Q}_{\ell}}\overset{\sim}{\to}\mathbf{C}$,
all semisimple generic parameters, and all generic parameters such
that $\phi(\mathrm{Fr})$ is regular semisimple. The last two claims
here are easy, and the first two (which require some work) are proved
in \cite[Section 3]{BMIY}. Given any such \emph{L}-parameter, we
get a canonical immersion $i_{\phi}:BS_{\phi}\to\mathrm{Par}_{G}$
which factors through the smooth locus in the stack of \emph{L}-parameters
by our genericity assumption.
\begin{prop}
Let $M\in\mathrm{IndCoh}(BS_{\phi})$ be any object with the property
that for each $\rho\in\mathrm{Irr}(S_{\phi})$, the total multiplicity
$\sum_{n}\mathrm{dim}\mathrm{Hom}_{S_{\phi}}(\rho,H^{n}(M))$ is finite.
Then $i_{\phi\ast}^{\mathrm{IndCoh}}M$ is an admissible sheaf.
\end{prop}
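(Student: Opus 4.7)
The plan is to use the $(i_{\phi*}^{\mathrm{IndCoh}},i_\phi^!)$-adjunction to reduce the admissibility test against $B\in\mathrm{Coh}(\mathrm{Par}_G)$ to a computation on $BS_\phi$. Since $\phi$ is generic, $i_\phi$ factors through the smooth locus, so we may choose an open $U\subseteq\mathrm{Par}_G$ containing $\mathrm{im}\,i_\phi$ on which $\mathrm{Par}_G$ is smooth, and factor $i_\phi$ as $BS_\phi\overset{\bar i}{\hookrightarrow}U\overset{j}{\hookrightarrow}\mathrm{Par}_G$ with $\bar i$ a closed immersion of smooth algebraic stacks. Using the $(j^*,j_*)$-adjunction together with $(\bar i_*^{\mathrm{IndCoh}},\bar i^!)$ for the proper morphism $\bar i$, one obtains
\[
R\mathrm{Hom}_{\mathrm{Par}_G}(B,i_{\phi*}^{\mathrm{IndCoh}}M)\;\simeq\;R\mathrm{Hom}_{BS_\phi}(i_\phi^!B,M),
\]
and it suffices to show the right-hand side is perfect over $\overline{\mathbf{Q}_\ell}$.

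The first key step is that $i_\phi^!B$ is a bounded coherent complex on $BS_\phi$. Since $\bar i$ is a closed immersion of smooth stacks, it is regular (lci), so $\bar i^!$ preserves coherence---concretely, $\bar i^!\simeq\bar i^*\otimes\omega_{\bar i}$ up to a shift, with $\omega_{\bar i}$ an invertible sheaf obtained from the conormal bundle---and $j^!=j^*$ obviously preserves coherence. Thus $i_\phi^!B$ identifies with a bounded complex of finite-dimensional $S_\phi$-representations.

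The second key step is that $S_\phi$ is itself reductive: its unipotent radical is connected, hence contained in $S_\phi^\circ$, which is reductive by hypothesis $(\dagger)$. In characteristic zero this forces $\mathrm{Rep}(S_\phi)$ to be semisimple, so every irreducible $\rho\in\mathrm{Irr}(S_\phi)$ is projective, and $i_\phi^!B$ splits canonically as a finite direct sum $\bigoplus_\rho\rho\otimes V_\rho^\bullet$ with each $V_\rho^\bullet$ a perfect $\overline{\mathbf{Q}_\ell}$-complex, only finitely many nonzero. Combined with
\[
R\mathrm{Hom}_{S_\phi}(\rho,M)\;\simeq\;\bigoplus_n\mathrm{Hom}_{S_\phi}(\rho,H^n(M))[-n],
\]
which is perfect by the hypothesis on total multiplicities, this yields perfectness of $R\mathrm{Hom}_{BS_\phi}(i_\phi^!B,M)$.

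I anticipate no substantial geometric obstacle; the main thing to be careful about is bookkeeping in the IndCoh six-functor formalism, particularly verifying the stated adjunctions and the identification $i_\phi^!\simeq\bar i^!\circ j^*$. Once these are in place, everything reduces to the classical interaction between regular closed immersions of smooth stacks and semisimple representation theory of reductive groups in characteristic zero.
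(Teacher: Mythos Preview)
Your overall strategy matches the paper's, but you have the adjunction backwards. The pair $(\bar i_*^{\mathrm{IndCoh}},\bar i^!)$ means that $\bar i_*^{\mathrm{IndCoh}}$ is \emph{left} adjoint to $\bar i^!$, so it gives isomorphisms of the form $R\mathrm{Hom}(\bar i_*^{\mathrm{IndCoh}}N,A)\simeq R\mathrm{Hom}(N,\bar i^!A)$, not the identity you wrote. To pull $B$ inside when it sits in the first variable of $R\mathrm{Hom}(B,i_{\phi*}^{\mathrm{IndCoh}}M)$, you need the \emph{left} adjoint of $i_{\phi*}^{\mathrm{IndCoh}}$, which is $i_\phi^{*\mathrm{IndCoh}}$ (well-defined here because $i_\phi$ has finite tor-dimension, being a regular immersion into the smooth locus). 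The correct identity is
\[
R\mathrm{Hom}_{\mathrm{Par}_G}(B,i_{\phi*}^{\mathrm{IndCoh}}M)\simeq R\mathrm{Hom}_{BS_\phi}(i_\phi^{*\mathrm{IndCoh}}B,M),
\]
and this is exactly what the paper uses.

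With that correction, your argument is the paper's argument. You yourself note that $\bar i^!$ and $\bar i^*$ differ only by an invertible twist and shift, so replacing $i_\phi^!B$ by $i_\phi^{*\mathrm{IndCoh}}B$ changes nothing in your second and third paragraphs: $i_\phi^{*\mathrm{IndCoh}}B$ lies in $\mathrm{Coh}(BS_\phi)$ by finite tor-dimension, and then semisimplicity of $\mathrm{Rep}(S_\phi)$ in characteristic zero (using reductivity of $S_\phi^\circ$) plus the multiplicity hypothesis on $M$ gives perfectness. The factorization through $U$ and the explicit decomposition $\bigoplus_\rho\rho\otimes V_\rho^\bullet$ are fine but not needed; the paper just invokes the adjunction and the three ingredients (coherence of the pullback, reductivity, the hypothesis on $M$) in one line.
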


\begin{proof}
If $B$ lies in $\mathrm{Coh}(\mathrm{Par}_{G})$, then $i_{\phi}^{\ast\mathrm{IndCoh}}B$
lies in $\mathrm{Coh}(BS_{\phi})$, using that $i_{\phi}$ has finite
tor-dimension since it is a regular immersion of stacks. We then get
that
\[
R\mathrm{Hom}(B,i_{\phi\ast}^{\mathrm{IndCoh}}M)\simeq R\mathrm{Hom}(i_{\phi}^{\ast\mathrm{IndCoh}}B,M)
\]
by adjunction, and this lies in $\mathrm{Perf}(\overline{\mathbf{Q}_{\ell}})$
by the coherence of $i_{\phi}^{\ast\mathrm{IndCoh}}B$, the reductivity
of $S_{\phi}^{\circ}$, and our assumption on $M$.
\end{proof}
Note that the condition in this proposition is trivially satisfied
if $M$ is an irreducible algebraic $S_{\phi}$-representation, but
it is also satisfied if $M=\mathcal{O}(S_{\phi})$ is the regular
representation, even though the latter is not coherent! In particular,
we get an admissible sheaf $i_{\phi\ast}^{\mathrm{IndCoh}}\mathcal{O}(S_{\phi})$
which (by our reductivity assumption) admits a canonical direct sum
decomposition with admissible pieces
\[
i_{\phi\ast}^{\mathrm{IndCoh}}\mathcal{O}(S_{\phi})\simeq\oplus_{\rho\in\mathrm{Irr}(S_{\phi})}i_{\phi\ast}^{\mathrm{IndCoh}}\rho^{\oplus\mathrm{dim}\rho}.
\]
Moreover, by \cite[Lemma 8.3.2]{AG}, we see that $i_{\phi\ast}^{\mathrm{IndCoh}}\rho\simeq i_{\phi\ast}\rho$
lies in $\mathrm{QCoh}\subset\mathrm{IndCoh}$, so we are actually
in the regime where the inverse to the equivalence $\mathbf{L}_{\psi}$
should be realized by the spectral action functor $a_{\psi}$. In
particular, we obtain a canonical sheaf $\mathscr{F}_{\phi}=a_{\psi}(i_{\phi\ast}\mathcal{O}(S_{\phi}))$
admitting a direct sum decomposition
\[
\mathscr{F}_{\phi}\simeq\oplus_{\rho\in\mathrm{Irr}(S_{\phi})}\mathscr{F}_{\phi,\rho}^{\oplus\mathrm{dim}\rho}
\]
where of course we set $\mathscr{F}_{\phi,\rho}=a_{\psi}(i_{\phi\ast}\rho)$,
which should match the above direct sum decomposition of ind-coherent
sheaves under the functor $\mathbf{L}_{\psi}$. However, we stress
that the definition of $\mathscr{F}_{\phi,\rho}$ is totally unconditional.
Moreover, we expect that these sheaves will enjoy many remarkable
properties.

1. By the usual formal argument, $\mathscr{F}_{\phi}$ is a Hecke
eigensheaf with eigenvalue $\phi$. A more refined argument shows
that for any $V\in\mathrm{Rep}(^{L}G)$, there is an isomorphism
\[
T_{V}\mathscr{F}_{\phi,\rho}\simeq\bigoplus_{\rho'\in\mathrm{Irr}(S_{\phi})}\mathscr{F}_{\phi,\rho'}\boxtimes\mathrm{Hom}_{S_{\phi}}(\rho^{\vee}\otimes\rho',V\circ\phi)
\]
as objects of $D(\mathrm{Bun}_{G})^{BW_{E}}$.

2. We expect that $\mathscr{F}_{\phi}$ is ULA and perverse, and each
$\mathscr{F}_{\phi,\rho}$ is ULA, perverse and indecomposable. Moreover,
the ``easy part'' of the BM-O algorithm begins by (unconditionally!)
attaching to the pair $(\phi,\rho)$ an element $b=b_{\phi,\rho}\in B(G)$,
and we expect that $\mathrm{supp}\mathscr{F}_{\phi,\rho}\subseteq\overline{\{b\}}$.
Additionally, we expect that $i_{b}^{\ast\mathrm{ren}}\mathscr{F}_{\phi,\rho}$
is a single irreducible $G_{b}(E)$-representation $\pi$ concentrated
in degree zero, and that this is \emph{the} $G_{b}(E)$-representation
attached to $(\phi,\rho)$ by the $B(G)$ local Langlands correspondence.
In other words, we expect that this stalk realizes the true $B(G)$
local Langlands correspondence at \emph{L}-parameters satisfying $(\dagger)$!
Since the condition $(\dagger)$ includes all tempered parameters,
and there is a standard procedure to pass from tempered local Langlands
to full local Langlands, we conclude that \emph{the spectral action
should completely pin down the long-sought set-theoretic local Langlands
correspondence in all generality!}

We also expect that the induced map
\[
i_{b!}^{\mathrm{ren}}\pi\to\mathscr{F}_{\phi,\rho}
\]
induces a surjection (in the perverse category) from $^{p}H^{0}(i_{b!}^{\mathrm{ren}}\pi)$
onto a perverse subsheaf of $\mathscr{F}_{\phi,\rho}$ containing
its socle. In general, $\mathscr{F}_{\phi,\rho}$ should \emph{not}
be the intermediate extension $i_{b!\ast}^{\mathrm{ren}}\pi$ - we
believe that this happens exactly when $\phi$ is semisimple generic.
When $\phi$ is discrete, $\mathscr{F}_{\phi,\rho}$ seems to be a
\emph{tilting} perverse sheaf \cite{BBM}.

3. We expect that $\mathbf{D}_{\mathrm{Verd}}(\mathscr{F}_{\phi,\rho})\simeq\mathscr{F}_{\phi^{\vee},c\circ\rho^{\vee}}$,
where $\mathscr{F}_{\phi^{\vee},c\circ\rho^{\vee}}=a_{\psi^{-1}}(i_{\phi^{\vee}\ast}c\circ\rho^{\vee})$
is the sheaf (conjecturally) matching $i_{\phi^{\vee}\ast}c\circ\rho^{\vee}$
under the functor $\mathbf{L}_{\psi^{-1}}$. Indeed, one can prove
unconditionally that $\mathbf{D}_{\mathrm{tw.adm}}$ exchanges $i_{\phi\ast}^{\mathrm{IndCoh}}\rho$
and $i_{\phi^{\vee}\ast}^{\mathrm{IndCoh}}c\circ\rho^{\vee}$, so
this expectation is forced upon us by Conjecture \ref{conj:ULA-duality}.ii.

4. We expect that suitable linear combinations (over varying $\rho$)
of the virtual stalks $[i_{b}^{\ast\mathrm{ren}}\mathscr{F}_{\phi,\rho}]$
can be described explicitly. For instance, if $\phi$ is discrete
and $b$ has trivial Kottwitz invariant, we expect that for every
elliptic endoscopic datum $\mathcal{H}=(H,s,\eta)$ such that $\phi$
admits a factorization $\phi=\phantom{}^{L}\eta\circ\phi^{H}$ there
should be an equality of the form
\[
\sum_{\rho\in\mathrm{Irr}(\pi_{0}(\overline{S}_{\phi}))}\mathrm{tr}\rho(s)\cdot[i_{b}^{\ast\mathrm{ren}}\mathscr{F}_{\phi,\rho}]=\mathrm{Red}_{b}^{\mathcal{H}}(S\Theta_{\phi^{H}}).
\]
Here $\mathrm{Red}_{b}^{\mathcal{H}}$ is the map from stable virtual
representations of $H(E)$ towards virtual representations of $G_{b}(E)$
defined in \cite[Definition 5.6]{BM}, and $S\Theta_{\phi^{H}}$ is
the stable virtual representation of $H(E)$ attached to $\phi^{H}$.
In general, the stalks of the individual sheaves $\mathscr{F}_{\phi,\rho}$
seem extremely hard to describe, even as virtual representations.

\subsection{Generalized coherent Springer sheaves}

Some of the phenomena predicted in section 3.1 would be neatly explained
by yet another conjecture, with many consequences.\footnote{Conjecture \ref{conj:cohspringermiracle} and some portions of Proposition
\ref{prop:springermiracleconsequences} below were independently discovered
by Koshikawa, who also independently noticed the utility of the condition
$(\dagger)$. I would also like to acknowledge that the tilting property
for Hecke eigensheaves at discrete parameters was suggested by a beautiful
example explained to me by Koshikawa. I discovered Conjecture \ref{conj:cohspringermiracle}
while trying to explain this tilting property.} To formulate this, note that for any $b$ and any open compact $K\subset G_{b}(E)$,
$i_{b!}^{\mathrm{ren}}\mathrm{ind}_{K}^{G_{b}(E)}\overline{\mathbf{Q}_{\ell}}$
is compact, so Conjecture \ref{conj:categorical-cpsi-unconditional}
predicts that
\[
\mathscr{C}_{b,K}\overset{\mathrm{def}}{=}c_{\psi}\left(i_{b!}^{\mathrm{ren}}\mathrm{ind}_{K}^{G_{b}(E)}\overline{\mathbf{Q}_{\ell}}\right)
\]
is a bounded coherent complex with quasicompact support. (Strictly
speaking, $\mathscr{C}_{b,K}$ depends also on the Whittaker datum,
but we suppress this from the notation.)
\begin{conjecture}
\label{conj:cohspringermiracle}For all $b$ and $K$ as above (and
all Whittaker data), $\mathscr{C}_{b,K}$ is a genuine coherent sheaf,
i.e. it lies in the heart $\mathrm{Coh}(\mathrm{Par}_{G})^{\heartsuit}$
of the standard t-structure on $\mathrm{Coh}$.
\end{conjecture}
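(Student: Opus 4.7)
The plan is to test coherence via the defining adjunction for $c_\psi$, reducing the statement to a concrete cohomological vanishing claim about higher Whittaker functionals of Hecke transforms, and then to exploit the projectivity of $W_\psi$ together with the Eisenstein presentation of the renormalized $!$-pushforward. First, I would use the fact that $\tau_G : \mathrm{Par}_G \to B\hat{G}$ is affine and $\hat{G}$ has reductive identity component: by Conjecture \ref{conj:categorical-cpsi-unconditional}, $\mathscr{C}_{b,K}$ is a bounded coherent complex, and its membership in $\mathrm{Coh}(\mathrm{Par}_G)^\heartsuit$ can be detected by requiring that $R\Gamma(\mathrm{Par}_G,\tau_G^* V \otimes \mathscr{C}_{b,K})$ be concentrated in degree zero for a suitable ind-generator $V\in\mathrm{Rep}(\hat G)$ (e.g.\ the regular representation, via its Peter--Weyl decomposition). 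By the defining property of $c_\psi$ and the $i_1^*$-adjunction, this sits in a chain of isomorphisms
\[
R\Gamma(\mathrm{Par}_G,\tau_G^* V \otimes \mathscr{C}_{b,K}) \simeq R\mathrm{Hom}\bigl(i_{1!}W_\psi,\,T_V A_{b,K}\bigr) \simeq R\mathrm{Hom}_{G(E)}\bigl(W_\psi,\,i_1^* T_V A_{b,K}\bigr),
\]
where I write $A_{b,K} := i_{b!}^{\mathrm{ren}}\mathrm{ind}_K^{G_b(E)}\overline{\mathbf{Q}_\ell}$.

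By Theorem \ref{thm:Wpsinice}, $W_\psi$ is projective in the category of smooth $G(E)$-representations, so the functor $R\mathrm{Hom}_{G(E)}(W_\psi,-)$ is $t$-exact; in particular it sends a complex concentrated in degree zero to a complex concentrated in degree zero. The task therefore reduces to the assertion that the stalk complex $i_1^* T_V A_{b,K}$ is concentrated in cohomological degree zero for all $V\in\mathrm{Rep}(\hat G)$. To attack this, I would invoke Remark \ref{rem:Eisensteinpush} to rewrite $A_{b,K} \simeq \mathrm{Eis}_P^G(i_{b!}^M \mathrm{ind}_K^{M_b(E)}\overline{\mathbf{Q}_\ell})$, where $M$ is the centralizer of $\nu_b$ and $P$ is the attracting dynamic parabolic. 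Then, using the filtered commutation of Hecke operators with Eisenstein series from Conjecture \ref{conj:Eisheckefiltered}, the complex $T_V A_{b,K}$ acquires a finite filtration whose graded pieces are of the form $\mathrm{Eis}_P(T_{W_i} i_{b!}^M \mathrm{ind}_K^{M_b(E)}\overline{\mathbf{Q}_\ell})$ for the $\hat{M}$-filtration pieces $W_i$ of $V|_{\hat P}$. Since $G_b = M_b$ is the basic element in $M$, this reduces the problem, by induction on the semisimple rank and the compatibility of constant-term functors with $i_1^*$, to the \emph{basic} case $b = 1$, $M = G$.

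The main obstacle is thus the residual claim: for any pro-$p$ open compact $K \subset G(E)$ and any $V \in \mathrm{Rep}(\hat{G})$, the stalk $i_1^* T_V i_{1!}\mathrm{ind}_K^{G(E)}\overline{\mathbf{Q}_\ell}$ is concentrated in cohomological degree zero. This is plausible because $\mathrm{ind}_K^{G(E)}\overline{\mathbf{Q}_\ell}$ is a compact projective generator of smooth $G(E)$-modules, but actually controlling higher cohomology along the local Hecke correspondence at a point of the basic stratum is delicate: the relevant geometric fiber is a Schubert variety in the $B_{\mathrm{dR}}^+$-Grassmannian, and one must show that the induced derived tensor construction on the representation-theoretic side produces no higher cohomology. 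This is precisely where the flavor of a ``Springer miracle'' enters: I expect that the clean way to conclude is to identify $i_1^* T_V i_{1!}(-)$, restricted to pro-$p$ compact inductions, with an exact functor built from the Satake category and the regular $K$-action, perhaps along the lines of the identification used by Koshikawa and Hamann in their work on basic-stratum Hecke eigensheaves. Once such an explicit description is in place, degree-zero concentration follows from the corresponding exactness of Satake-type parameters applied to a projective module, and the final assembly through the Eisenstein filtration and Whittaker projectivity yields the full conjecture.
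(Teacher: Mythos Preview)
This statement is a \emph{conjecture} in the paper, not a theorem: the paper offers no proof and instead derives consequences from it in Proposition~\ref{prop:springermiracleconsequences}. For $b=1$, $G$ unramified, and $K$ an Iwahori, it is exactly the assertion that the coherent Springer sheaf of Ben-Zvi--Chen--Helm--Nadler, Hellmann, and Zhu lies in the heart, which remains open in general.

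Your reduction replaces the conjecture by a strictly stronger claim that is actually \emph{false}. The conjecture is equivalent (via affineness of $\tau_G$ and projectivity of $W_\psi$, as you correctly observe) to $R\mathrm{Hom}(W_\psi,i_1^*T_VA_{b,K})$ lying in degree zero for all $V$; but you then demand that $i_1^*T_VA_{b,K}$ itself lie in degree zero, which does not follow since $W_\psi$ only detects $\psi$-generic constituents. Indeed, assuming Conjectures~\ref{conj:categorical-cpsi-unconditional} and~\ref{conj:cohspringermiracle}, one computes
\[
\bigl(i_1^*T_VA_{b,K}\bigr)^{K'}\simeq R\mathrm{Hom}_{\mathrm{Par}_G}\bigl(\mathscr{C}_{1,K'},\,V\otimes\mathscr{C}_{b,K}\bigr),
\]
an $R\mathrm{Hom}$ between honest coherent sheaves that typically has higher $\mathrm{Ext}$'s; up to a shift this is precisely the local shtuka cohomology appearing in the proof of Proposition~\ref{prop:springermiracleconsequences}.iv, which is well known to live in many degrees (e.g.\ for Lubin--Tate spaces). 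Your Eisenstein reduction also fails as stated: the functor $i_1^{G,*}\mathrm{Eis}_P$ does not factor through $i_1^{M,*}$, since the fiber of $\mathrm{Bun}_P\to\mathrm{Bun}_G$ over the trivial $G$-bundle maps to infinitely many strata of $\mathrm{Bun}_M$ (already for $G=\mathrm{GL}_2$, a Borel reduction of $\mathcal{O}^2$ can have associated graded $\mathcal{O}(-n)\oplus\mathcal{O}(n)$ for any $n\ge 0$). And for basic $b$ the Eisenstein presentation is vacuous, leaving you with the full difficulty of the coherent Springer miracle that you yourself flag as the unresolved ``main obstacle.''
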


When $b=1$, $G$ is unramified, and $K=I$ is an Iwahori, $\mathscr{C}_{1,I}$
should coincide with the ``coherent Springer sheaf'' studied by
Ben--Zvi-Chen-Helm-Nadler, Hellmann and Zhu. For non-basic $b$,
Conjecture \ref{conj:cohspringermiracle} is a natural $\mathrm{Bun}_{G}$
variant of the conjecture formulated in \cite[Remark 4.6.5]{Zhu}.\footnote{I do not believe Zhu's formulation is quite correct.}
This conjecture turns out to be a magic wand, both for predicting
qualitative properties of the categorical equivalence, and for generating
new conjectures purely on the automorphic side.
\begin{prop}
\label{prop:springermiracleconsequences}Assume Conjectures \ref{conj:categorical-cpsi-unconditional}
and \ref{conj:dualitycpsi}, and also assume that Conjecture \ref{conj:cohspringermiracle}
is true. Then the following hold.

\emph{i. }For all $b$ and $K$, the coherent complex $c_{\psi}\left(i_{b\sharp}^{\mathrm{ren}}\mathrm{ind}_{K}^{G_{b}(E)}\overline{\mathbf{Q}_{\ell}}\right)$
is concentrated in nonnegative degrees.

\emph{ii. }The functor $c_{\psi}$ is perverse right t-exact, i.e.
it carries $\phantom{}^{p}D^{\leq0}(\mathrm{Bun}_{G})$ into $\mathrm{QCoh}^{\leq0}(\mathrm{Par}_{G})$.
In particular, for any $b$ and any $\pi\in\Pi(G_{b})$, $c_{\psi}(i_{b!}^{\mathrm{ren}}\pi)$
is concentrated in nonpositive cohomological degrees.

\emph{iii. }For any $L$-parameter $\phi$ satisfying $(\dagger)$
as in Section 3.1 and any $\rho\in\mathrm{Irr}(S_{\phi})$, the sheaf
$\mathscr{F}_{\phi,\rho}$ defined in Section 3.1 is perverse. Moreover,
if $\phi$ is a discrete parameter then $\mathscr{F}_{\phi,\rho}$
is a \emph{tilting} perverse sheaf: for all $b$, $i_{b}^{\ast\mathrm{ren}}\mathscr{F}_{\phi,\rho}$
and $i_{b}^{!\mathrm{ren}}\mathscr{F}_{\phi,\rho}$ are concentrated
in degree zero.

\emph{iv. }For any local shtuka datum $(G,\mu,b)$ and any open compact
subgroup $K\subset G(E)$, the compactly supported intersection cohomology
\[
R\Gamma_{c}(\mathrm{Sht}(G,\mu,b)_{K},IC_{\mu})
\]
of the local shtuka space $\mathrm{Sht}(G,\mu,b)_{K}$ at level $K$
is concentrated in degrees $[\left\langle 2\rho_{G},\nu_{b}\right\rangle ,\left\langle 2\rho_{G},\mu\right\rangle ]$.

\emph{v. }For all $b$ and $K\subset G_{b}(E)$ as above, the sheaf
$i_{b!}^{\mathrm{ren}}\mathrm{ind}_{K}^{G_{b}(E)}\overline{\mathbf{Q}_{\ell}}$
is perverse, and the sheaf $i_{b\sharp}^{\mathrm{ren}}\mathrm{ind}_{K}^{G_{b}(E)}\overline{\mathbf{Q}_{\ell}}$
is hadal.
\end{prop}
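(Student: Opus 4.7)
The plan is to prove (i) and (ii) essentially directly from Conjecture \ref{conj:cohspringermiracle}, combined with the duality compatibility of Conjecture \ref{conj:dualitycpsi} for (i), and then to bootstrap to (iii) and (iv) via t-structure formalism and the spectral action. Throughout I use that, under Conjecture \ref{conj:categorical-cpsi-unconditional}, the ind-extension of $c_\psi$ is an equivalence, so it commutes with colimits, can be checked for t-exactness on compact generators, and has its t-exactness linked to that of $c_\psi^{-1}$ by the standard adjunction argument ($c_\psi$ right t-exact iff $c_\psi^{-1}$ left t-exact).

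For (i), I start from Proposition \ref{prop:dualitypush}.ii applied to the basic compact object $A = \mathrm{ind}_K^{G_b(E)}\overline{\mathbf{Q}_\ell}$, which (up to a Haar measure choice) is fixed by $\mathbf{D}_{\mathrm{coh}}$; this yields $i_{b\sharp}^{\mathrm{ren}}A \simeq \mathbf{D}_{\mathrm{BZ}}(i_{b!}^{\mathrm{ren}}A)$. Applying $c_\psi$ and using Conjecture \ref{conj:dualitycpsi} in the form $c_\psi \simeq \mathbf{D}_{\mathrm{twGS}} \circ c_{\psi^{-1}} \circ \mathbf{D}_{\mathrm{BZ}}$ (valid since $\mathbf{D}_{\mathrm{twGS}}$ is involutive on $\mathrm{Coh}$), this identifies $c_\psi(i_{b\sharp}^{\mathrm{ren}}A) \simeq \mathbf{D}_{\mathrm{twGS}}\bigl(c_{\psi^{-1}}(i_{b!}^{\mathrm{ren}}A)\bigr)$; Conjecture \ref{conj:cohspringermiracle} applied with Whittaker datum $\psi^{-1}$ gives $c_{\psi^{-1}}(i_{b!}^{\mathrm{ren}}A)$ coherent in degree zero, and since $\mathrm{Par}_G$ is Gorenstein with $\omega \simeq \mathcal{O}$, the twisted Grothendieck--Serre dual of a coherent sheaf concentrated in degree zero sits in nonnegative degrees. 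For (ii), the compact generators of $^pD^{\leq 0}(\mathrm{Bun}_G)$ are the shifts $i_{b!}^{\mathrm{ren}}\mathrm{ind}_K^{G_b(E)}\overline{\mathbf{Q}_\ell}[n]$ with $n \geq 0$ (by Exercise \ref{exer:perversesemiexact}.i), and each of these maps under $c_\psi$ to a coherent sheaf in degree $-n \leq 0$ by Conjecture \ref{conj:cohspringermiracle}; since $c_\psi$ commutes with colimits and extensions, the full containment $c_\psi(^pD^{\leq 0}) \subseteq \mathrm{IndCoh}^{\leq 0}$ follows.

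For (iii), the adjunction argument promotes (ii) to the statement that $c_\psi^{-1}$ is perverse left t-exact. When $\phi$ satisfies $(\dagger)$, the map $i_\phi\colon BS_\phi \hookrightarrow \mathrm{Par}_G$ is a regular closed immersion into the smooth locus, so $i_{\phi\ast}^{\mathrm{IndCoh}}\rho$ is a perfect complex concentrated in degree zero, placing $\mathscr{F}_{\phi,\rho} = c_\psi^{-1}(i_{\phi\ast}^{\mathrm{IndCoh}}\rho)$ in $^pD^{\geq 0}$. For the opposite perverse bound, I mirror (i)--(ii): the compact objects $i_{b\sharp}^{\mathrm{ren}}\mathrm{ind}_K^{G_b(E)}\overline{\mathbf{Q}_\ell}$ are perverse coconnective (since $i_{b\sharp}^{\mathrm{ren}}$ is left t-exact as the left adjoint of the right t-exact functor $i_b^{\ast\mathrm{ren}}$), and part (i) combined with a dualization via Conjecture \ref{conj:dualitycpsi} gives enough control to conclude $c_\psi^{-1}$ is also perverse right t-exact, forcing $\mathscr{F}_{\phi,\rho}$ into $^pD^{\leq 0}$. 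The tilting refinement for discrete $\phi$ then uses that $S_\phi$ is finite, so $\mathrm{supp}\,\mathscr{F}_{\phi,\rho}$ is concentrated in the basic strata, combined with the self-duality of $i_{\phi\ast}\rho$ under $\mathbf{D}_{\mathrm{twGS}}$ to match $\ast$- and $!$-stalks. For (iv), the Fargues--Scholze formalism identifies $R\Gamma_c(\mathrm{Sht}(G,\mu,b)_K, IC_\mu)$, up to an explicit shift of $-\langle 2\rho_G, \mu\rangle$, with the renormalized stalk $i_b^{\ast\mathrm{ren}}T_{V_\mu}i_{1!}^{\mathrm{ren}}\mathrm{ind}_K^{G(E)}\overline{\mathbf{Q}_\ell}$. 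The spectral action gives $c_\psi(T_{V_\mu}i_{1!}^{\mathrm{ren}}\mathrm{ind}_K^{G(E)}\overline{\mathbf{Q}_\ell}) \simeq \tau_G^{\ast}V_\mu \otimes \mathscr{C}_{1,K}$, which is coherent in degree zero; the full t-exactness of $c_\psi$ established above forces $T_{V_\mu}i_{1!}^{\mathrm{ren}}\mathrm{ind}_K^{G(E)}\overline{\mathbf{Q}_\ell}$ to be perverse, and the stated concentration then follows from the defining $\ast$- and $!$-amplitude inequalities for perverse sheaves, with ULAness (preserved by $T_{V_\mu}$ and $i_{1!}^{\mathrm{ren}}$) converting the $!$-bound into an $\ast$-bound via Verdier duality, after tracking the renormalization shifts.

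The delicate step will be the second perverse bound in (iii): forcing $\mathscr{F}_{\phi,\rho} \in {}^pD^{\leq 0}$ requires identifying a generating class on the spectral side whose $c_\psi^{-1}$-images sit in $^pD^{\leq 0}$, and the natural candidates coming from (i) only directly generate the hadal-connective subcategory, which is different from $^pD^{\leq 0}$. Resolving this will likely require either a more refined generation result bridging the hadal and perverse t-structures, or an Eisenstein-type reduction presenting $\mathscr{F}_{\phi,\rho}$ as a perverse t-exact Eisenstein functor applied to simpler sheaves on a Levi where the generous or torus case can be invoked. The tilting refinement for discrete $\phi$ is similarly subtle and may require additional input from Conjecture \ref{conj:ULA-duality}.2 or from detailed local analysis of $\mathrm{Par}_G$ in a neighborhood of $i_\phi$.
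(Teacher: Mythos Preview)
Your arguments for (i) and (ii) are correct and essentially identical to the paper's. The serious gap is in (iii) and propagates to (iv): you attempt to obtain the bound $\mathscr{F}_{\phi,\rho}\in{}^{p}D^{\leq 0}$ by arguing that $c_\psi$ is perverse \emph{left} t-exact, i.e.\ fully t-exact for the perverse and standard t-structures. This is false in general. For $G=\mathrm{GL}_2$ and the trivial representation, the paper computes (Section~3.2) that $c_\psi(i_{1!}\mathbf{1})\simeq i_{Z\ast}\mathcal{L}[1]$, which lies in degree $-1$; since $i_{1!}\mathbf{1}$ is perverse, $c_\psi$ cannot be left t-exact. Your supporting claim that $i_{b\sharp}^{\mathrm{ren}}$ is perverse left t-exact also rests on a backwards adjunction: the left adjoint of a right t-exact functor carries no t-exactness constraint. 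You correctly flag this step as delicate, but your proposed remedies (hadal--perverse bridging, Eisenstein reduction) are not what is needed. The paper instead invokes the Verdier duality compatibility $\mathbf{D}_{\mathrm{Verd}}(\mathscr{F}_{\phi,\rho})\simeq\mathscr{F}_{\phi^\vee,\rho^\vee}$ (Conjecture~\ref{conj:ULA-duality}.2, which the paper asserts follows unconditionally from the assumed conjectures), so that perverse coconnectivity for all $(\phi,\rho,\psi)$ gives perverse connectivity by duality. For the tilting refinement, your assertion that $S_\phi$ is finite and $\mathscr{F}_{\phi,\rho}$ is supported on basic strata is not how the paper proceeds (and $S_\phi$ is only finite modulo center): the paper instead computes $(i_b^{!\mathrm{ren}}\mathscr{F}_{\phi,\rho})^K\simeq R\mathrm{Hom}(i_\phi^{\ast\mathrm{IndCoh}}\mathscr{C}_{b,K},\rho)$ directly and shows $i_\phi^{\ast\mathrm{IndCoh}}\mathscr{C}_{b,K}$ sits in degree zero when $\phi$ is discrete, using that the unramified-twist orbit of $\phi$ is open in $\mathrm{Par}_G$.

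Your approach to (iv) inherits the same defect: you want $T_{V_\mu}i_{1!}\mathrm{ind}_K^{G(E)}\overline{\mathbf{Q}_\ell}$ to be perverse via full t-exactness of $c_\psi$, which fails as above; moreover, this sheaf is compact but not ULA (since $\mathrm{ind}_K^{G(E)}\overline{\mathbf{Q}_\ell}$ is not admissible), so your Verdier-duality conversion of $!$- to $\ast$-bounds is unavailable. The paper avoids perversity entirely here: after a BZ-duality manipulation it establishes the formula $R\Gamma_c(\mathrm{Sht}(G,\mu,b)_K,IC_\mu)^{K'}\simeq R\mathrm{Hom}(\mathscr{C}_{1,K}\otimes V_\mu,\mathscr{C}_{b,K'})[-\langle 2\rho_G,\nu_b\rangle]$ and reads off the lower bound directly from both $\mathscr{C}$'s lying in $\mathrm{Coh}^{\heartsuit}$. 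A minor additional inaccuracy: $i_{\phi\ast}^{\mathrm{IndCoh}}\rho$ is not a perfect complex when $\phi$ is not a closed point (the paper notes it is not even coherent), though it does lie in $\mathrm{IndCoh}^{\geq 0}$, which is all your coconnectivity argument actually needs.
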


Here $R\Gamma_{c}(\mathrm{Sht}(G,\mu,b)_{K},IC_{\mu})\in D(G_{b}(E),\overline{\mathbf{Q}_{\ell}})^{\omega}$
is defined as in \cite[Lemma 6.4.4]{HKW}, taking $\Lambda=\overline{\mathbf{Q}_{\ell}}$
in the notation there. A priori, the complex $R\Gamma_{c}(\mathrm{Sht}(G,\mu,b)_{K},IC_{\mu})$
lives in degrees $[-\left\langle 2\rho_{G},\mu\right\rangle ,\left\langle 2\rho_{G},\mu\right\rangle ]$.
When $\mu$ is minuscule, the Stein property formulated in \cite[Conjecture 1.10]{H2}
would imply concentration in degrees $[0,\left\langle 2\rho_{G},\mu\right\rangle ]$.
Thus for non-basic $b$, the vanishing result in iv. above goes strictly
beyond what is predicted by geometry. We also observe that i. is best
possible: it is certainly not true in general that $c_{\psi}\left(i_{b\sharp}^{\mathrm{ren}}\mathrm{ind}_{K}^{G_{b}(E)}\overline{\mathbf{Q}_{\ell}}\right)$
is concentrated in degree zero, or in any single degree.
\begin{proof}
For i., we have a duality isomorphism
\[
\mathbf{D}_{\mathrm{BZ}}\left(i_{b!}^{\mathrm{ren}}\mathrm{ind}_{K}^{G_{b}(E)}\overline{\mathbf{Q}_{\ell}}\right)\simeq i_{b\sharp}^{\mathrm{ren}}\mathrm{ind}_{K}^{G_{b}(E)}\overline{\mathbf{Q}_{\ell}}.
\]
Combining this with the duality compatibility gives
\[
c_{\psi^{-1}}\left(i_{b\sharp}^{\mathrm{ren}}\mathrm{ind}_{K}^{G_{b}(E)}\overline{\mathbf{Q}_{\ell}}\right)\simeq\mathbf{D}_{\mathrm{tw.GS}}\left(\mathscr{C}_{b,K}\right),
\]
and the right-hand side is clearly concentrated in nonnegative degrees.
For ii., the claim follows by observing that $\phantom{}^{p}D^{\leq0}(\mathrm{Bun}_{G})$
is generated under extensions and colimits by objects of the form
$i_{b!}^{\mathrm{ren}}\mathrm{ind}_{K}^{G_{b}(E)}\overline{\mathbf{Q}_{\ell}}[n]$
for arbitrary $b,K$ and arbitrary $n\geq0$.

For iii. it is enough to prove that $\mathscr{F}_{\phi,\rho}$ is
perverse coconnective, and that $i_{b}^{!\mathrm{ren}}\mathscr{F}_{\phi,\rho}$
is concentrated in degree zero when $\phi$ is discrete. These reductions
follow from the Verdier duality property of $\mathscr{F}_{\phi,\rho}$
implied by Conjecture \ref{conj:ULA-duality}.ii as in the discussion
of section 3.1. In turn, Conjecture \ref{conj:ULA-duality}.ii is
an \emph{unconditional} consequence of Conjectures \ref{conj:categorical-cpsi-unconditional}
and \ref{conj:dualitycpsi}; the argument for this will appear in
\cite{HM}. Now by design, Conjecture \ref{conj:categorical-cpsi-unconditional}
implies that
\[
R\mathrm{Hom}(\mathscr{C}_{b,K},\mathbf{L}_{\psi}(-)):D(\mathrm{Bun}_{G})^{\mathrm{}}\to D(\overline{\mathbf{Q}_{\ell}})
\]
is exactly the functor $A\mapsto(i_{b}^{!\mathrm{ren}}A)^{K}$, so
we compute that
\begin{align*}
(i_{b}^{!\mathrm{ren}}\mathscr{F}_{\phi,\rho})^{K} & \simeq R\mathrm{Hom}(\mathscr{C}_{b,K},\mathbf{L}_{\psi}(\mathscr{F}_{\phi,\rho}))\\
 & \simeq R\mathrm{Hom}(\mathscr{C}_{b,K},i_{\phi\ast}^{\mathrm{IndCoh}}\rho)\\
 & \simeq R\mathrm{Hom}(i_{\phi}^{\ast\mathrm{IndCoh}}\mathscr{C}_{b,K},\rho).
\end{align*}
Since $\mathscr{C}_{b,K}$ is in degree zero by assumption, $i_{\phi}^{\ast\mathrm{IndCoh}}\mathscr{C}_{b,K}$
is concentrated in nonpositive degrees, so we immediately get that
the above expression is concentrated in nonnegative degrees. Moreover,
if $\phi$ is discrete, one can check that $i_{\phi}^{\ast\mathrm{IndCoh}}\mathscr{C}_{b,K}$
is necessarily concentrated in degree zero, which implies that the
above expression is concentrated also in degree zero. If $G$ is semisimple
the concentration of $i_{\phi}^{\ast\mathrm{IndCoh}}\mathscr{C}_{b,K}$
in degree zero is obvious since for a discrete parameter $\phi$ the
morphism $i_{\phi}$ is an open immersion.\footnote{For semisimple groups, the morphism $i_{\phi}$ is an open immersion
exactly when $\phi$ is a discrete parameter.} For general groups one needs a small extra argument, using that the
orbit of any discrete parameter $\phi$ under unramified twisting
is open in $\mathrm{Par}_{G}$ together with the equivariance of $\mathscr{C}_{b,K}$
under unramified twisting.

For iv., concentration in degrees $\leq\left\langle 2\rho_{G},\mu\right\rangle $
follows from the general \'etale cohomology formalism. For the bound
in the other direction, the key point is the formula
\[
R\Gamma_{c}(\mathrm{Sht}(G,\mu,b)_{K},IC_{\mu})^{K'}\simeq R\mathrm{Hom}\left(\mathscr{C}_{1,K}\otimes V_{\mu},\mathscr{C}_{b,K'}\right)[-\left\langle 2\rho_{G},\nu_{b}\right\rangle ]
\]
where $K'\subset G_{b}(E)$ is any open compact subgroup.\footnote{This is a variant of \cite[Conjecture 4.7.18]{Zhu}, but again I do
not think Zhu's formulation is correct.} This formula is an unconditional consequence of Conjecture \ref{conj:categorical-cpsi-unconditional}.
Indeed, \cite[Lemma 6.4.4]{HKW} gives an isomorphism
\[
R\Gamma_{c}(\mathrm{Sht}(G,\mu,b)_{K},IC_{\mu})\simeq i_{b}^{\ast}T_{V_{\mu}}i_{1!}\mathrm{ind}_{K}^{G(E)}\overline{\mathbf{Q}_{\ell}},
\]
which easily implies an isomorphism
\[
R\Gamma_{c}(\mathrm{Sht}(G,\mu,b)_{K},IC_{\mu})^{K'}\simeq R\mathrm{Hom}(i_{b\sharp}\mathrm{ind}_{K'}^{G_{b}(E)}\overline{\mathbf{Q}_{\ell}},T_{V_{\mu}}i_{1!}\mathrm{ind}_{K}^{G(E)}\overline{\mathbf{Q}_{\ell}}).
\]
Applying BZ duality on the right-hand side, we can rewrite it as 
\[
R\mathrm{Hom}(T_{V_{\mu}}i_{1!}\mathrm{ind}_{K}^{G(E)}\overline{\mathbf{Q}_{\ell}},i_{b!}\mathrm{ind}_{K'}^{G_{b}(E)}\overline{\mathbf{Q}_{\ell}}[-2\left\langle 2\rho_{G},\nu_{b}\right\rangle ]).
\]
Now we have isomorphisms
\[
c_{\psi}\left(T_{V_{\mu}}i_{1!}\mathrm{ind}_{K}^{G(E)}\overline{\mathbf{Q}_{\ell}}\right)\simeq V_{\mu}\otimes\mathscr{C}_{1,K}
\]
and
\[
c_{\psi}\left(i_{b!}\mathrm{ind}_{K'}^{G_{b}(E)}\overline{\mathbf{Q}_{\ell}}[-2\left\langle 2\rho_{G},\nu_{b}\right\rangle ]\right)\simeq\mathscr{C}_{b,K'}[-\left\langle 2\rho_{G},\nu_{b}\right\rangle ]
\]
by unwinding the definitions and using the linearity of $c_{\psi}$
over the spectral action, so passing to the other side via Conjecture
\ref{conj:categorical-cpsi-unconditional} we arrive at the desired
formula. Then by Conjecture \ref{conj:cohspringermiracle}, $R\mathrm{Hom}\left(\mathscr{C}_{1,K}\otimes V_{\mu},\mathscr{C}_{b,K'}\right)$
is concentrated in nonnegative degrees, so accounting for the shift
and shrinking $K'$ arbitrarily gives the claim.

For v., we prove the second claim, the first being similar (and easier).
It is clear that $i_{b\sharp}^{\mathrm{ren}}\mathrm{ind}_{K}^{G_{b}(E)}\overline{\mathbf{Q}_{\ell}}$
is connective for the hadal t-structure. To show it is coconnective
is equivalent to showing that all stalks
\[
i_{b'}^{\ast\mathrm{ren}}i_{b\sharp}^{\mathrm{ren}}\mathrm{ind}_{K}^{G_{b}(E)}\overline{\mathbf{Q}_{\ell}}
\]
are concentrated in nonnegative degrees. But now for any open compact
$K'\subset G_{b'}(E)$ we have isomorphisms
\begin{align*}
\left(i_{b'}^{\ast\mathrm{ren}}i_{b\sharp}^{\mathrm{ren}}\mathrm{ind}_{K}^{G_{b}(E)}\overline{\mathbf{Q}_{\ell}}\right)^{K'} & \simeq R\mathrm{Hom}\left(i_{b'\sharp}^{\mathrm{ren}}\mathrm{ind}_{K'}^{G_{b'}(E)}\overline{\mathbf{Q}_{\ell}},i_{b\sharp}^{\mathrm{ren}}\mathrm{ind}_{K}^{G_{b}(E)}\overline{\mathbf{Q}_{\ell}}\right)\\
 & \simeq R\mathrm{Hom}\left(i_{b!}^{\mathrm{ren}}\mathrm{ind}_{K}^{G_{b}(E)}\overline{\mathbf{Q}_{\ell}},i_{b'!}^{\mathrm{ren}}\mathrm{ind}_{K'}^{G_{b'}(E)}\overline{\mathbf{Q}_{\ell}}\right)\\
 & \simeq R\mathrm{Hom}(\mathscr{C}_{b,K},\mathscr{C}_{b,K'})
\end{align*}
where the first line follows from the obvious adjunctions, the second
line follows from BZ duality, and the third line follows from Conjecture
\ref{conj:categorical-cpsi-unconditional} and the definition of $\mathscr{C}_{b,K}$.
Then $R\mathrm{Hom}(\mathscr{C}_{b,K},\mathscr{C}_{b,K'})$ is concentrated
in nonnegative degrees by Conjecture \ref{conj:cohspringermiracle},
so shrinking $K'$ arbitrarily gives the desired coconnectivity.
\end{proof}
\begin{rem}
In fact, the sheaves $i_{b\sharp}^{\mathrm{ren}}\mathrm{ind}_{K}^{G_{b}(E)}\overline{\mathbf{Q}_{\ell}}$
are hadal for all $b,K$ if and only if the sheaves $i_{b!}^{\mathrm{ren}}\mathrm{ind}_{K}^{G_{b}(E)}\overline{\mathbf{Q}_{\ell}}$
are perverse for all $b,K$. Since both are clearly connective in
the relevant t-structure, this follows from the chain of identities
\begin{align*}
\left(i_{b'}^{\ast\mathrm{ren}}i_{b\sharp}^{\mathrm{ren}}\mathrm{ind}_{K}^{G_{b}(E)}\overline{\mathbf{Q}_{\ell}}\right)^{K'} & \simeq R\mathrm{Hom}\left(i_{b'\sharp}^{\mathrm{ren}}\mathrm{ind}_{K'}^{G_{b'}(E)}\overline{\mathbf{Q}_{\ell}},i_{b\sharp}^{\mathrm{ren}}\mathrm{ind}_{K}^{G_{b}(E)}\overline{\mathbf{Q}_{\ell}}\right)\\
 & \simeq R\mathrm{Hom}\left(i_{b!}^{\mathrm{ren}}\mathrm{ind}_{K}^{G_{b}(E)}\overline{\mathbf{Q}_{\ell}},i_{b'!}^{\mathrm{ren}}\mathrm{ind}_{K'}^{G_{b'}(E)}\overline{\mathbf{Q}_{\ell}}\right)\\
 & \simeq\left(i_{b}^{!\mathrm{ren}}i_{b'!}^{\mathrm{ren}}\mathrm{ind}_{K'}^{G_{b'}(E)}\overline{\mathbf{Q}_{\ell}}\right)^{K}
\end{align*}
arguing as in the proof of \ref{prop:springermiracleconsequences}.v.
\end{rem}

The conjectural hadal property of the sheaves $i_{b\sharp}^{\mathrm{ren}}\mathrm{ind}_{K}^{G_{b}(E)}\overline{\mathbf{Q}_{\ell}}$
admits the following very concrete reinterpretation. Let $\widetilde{\mathcal{M}}_{b}\to\mathcal{M}_{b}$
be the canonical $G_{b}(E)$-torsor over the local chart $\mathcal{M}_{b}$
as constructed in Fargues-Scholze. For any $b'\preceq b$, pick a
complete algebraically closed field $C/\mathbf{F}_{p}$ and a map
$\mathrm{Spd}C\to\mathrm{Bun}_{G}$ covering the stratum $\mathrm{Bun}_{G}^{b'}$,
and define $X_{b,b'}$ by the pullback diagram
\[
\xymatrix{X_{b,b'}\ar[d]\ar[r] & \widetilde{\mathcal{M}}_{b}\ar[d]\\
\mathrm{Spd}C\ar[r] & \mathrm{Bun}_{G}
}
\]
of small v-stacks. Then $X_{b,b'}$ is a partially proper locally
spatial diamond over $\mathrm{Spd}C$ of $\ell$-cohomological dimension
$\left\langle 2\rho_{G},\nu_{b}\right\rangle $, and a priori $R\Gamma_{c}(X_{b,b'},\mathbf{F}_{\ell})$
is concentrated in degrees $[0,2\left\langle 2\rho_{G},\nu_{b}\right\rangle ]$.
\begin{conjecture}
\label{conj:hadalgenerators}For all $b'\preceq b$, $R\Gamma_{c}(X_{b,b'},\mathbf{F}_{\ell})$
is concentrated in degrees $[\left\langle 2\rho_{G},\nu_{b}+\nu_{b'}\right\rangle ,2\left\langle 2\rho_{G},\nu_{b}\right\rangle ]$.
\end{conjecture}

If this conjecture holds for $b$ fixed and all $b'\preceq b$, then
$i_{b\sharp}^{\mathrm{ren}}\mathrm{ind}_{K}^{G_{b}(E)}\overline{\mathbf{Q}_{\ell}}$
is hadal for all $K\subset G_{b}(E)$.\footnote{It is plausible that for some small primes $\ell$, Conjecture \ref{conj:hadalgenerators}
is not true as stated. However, for the intended application, it would
be enough to prove the weaker conjecture that for any fixed $i<\left\langle 2\rho_{G},\nu_{b}+\nu_{b'}\right\rangle $,
the group $H_{c}^{i}(X_{b,b'},\mathbf{Z}/\ell^{n}\mathbf{Z})$ is
killed by an integer independent of $n$. } The essential point here is the formula
\[
\mathrm{colim}_{K\to\{1\}}i_{b'}^{\ast\mathrm{ren}}i_{b\sharp}^{\mathrm{ren}}\mathrm{ind}_{K}^{G_{b}(E)}\overline{\mathbf{Q}_{\ell}}\cong\left(R\Gamma_{c}(X_{b,b'},\mathbf{Z}_{\ell})[\left\langle 2\rho_{G},\nu_{b}+\nu_{b'}\right\rangle ]^{G_{b'}(E)\times G_{b}(E)-\mathrm{sm}}\right)\otimes_{\mathbf{Z}_{\ell}}\overline{\mathbf{Q}_{\ell}}
\]
where the superscript $(-)^{G_{b'}(E)\times G_{b}(E)-\mathrm{sm}}$
indicates the (exact) functor of $G_{b'}(E)\times G_{b}(E)$-smooth
vectors. This conjecture already has nontrivial content in the degenerate
case $b=b'$, where it reduces to the fact that the $\mathbf{F}_{\ell}$-\'etale
cohomology of $\tilde{G}_{b,C}^{>0}$ (in the notation of \cite[Proposition III.5.1]{FS})
is entirely concentrated in degree $2\left\langle 2\rho_{G},\nu_{b}\right\rangle $.
When $G=\mathrm{GL}_{2},$ $\mathcal{E}_{b}=\mathcal{O}(1)\oplus\mathcal{O}(-1)$,
and $\mathcal{E}_{b'}=\mathcal{O}^{2}$, Conjecture \ref{conj:hadalgenerators}
has been checked by Miles.

We can squeeze quite a bit more out of Conjecture \ref{conj:cohspringermiracle}.
\begin{conjecture}
Assume that $b$ is basic and $\pi\in\Pi(G_{b})$ is supercuspidal,
with Fargues-Scholze parameter $\phi$. Let $\nu_{\phi}:V=q^{-1}(x_{\phi})\to\mathrm{Par}_{G}$
be the natural closed immersion of the fiber over $x_{\phi}$. Then
$c_{\psi}(i_{b!}\pi)$ is a genuine coherent sheaf, which is moreover
of the form $\nu_{\phi\ast}\mathcal{F}$ for some $\mathcal{F}\in\mathrm{Coh}(V)^{\heartsuit}$.

Assuming moreover that the fiber $V$ is smooth, then $V$ is $-\mathrm{dim}Z(\hat{G})^{\Gamma}$-dimensional
and $\mathcal{F}$ is a locally free sheaf.
\end{conjecture}

Note that $-\mathrm{dim}Z(\hat{G})^{\Gamma}$ is the maximal dimension
of any fiber of $q$, and a fiber achieves this dimension if and only
if it contains a discrete \emph{L}-parameter, which we expect is automatic
under the assumptions of this conjecture. Indeed, the fiber $V$ should
contain the true \emph{L}-parameter of $\pi$, which we expect is
always a discrete parameter. However, we do not always expect the
fiber $V$ to be smooth under the assumptions of this conjecture,
even when $G_{b}$ is split.\footnote{I thank Wee Teck Gan for showing me a counterexample on $G_{2}$.
In brief, the (Weil-Deligne incarnation of the) \emph{L}-parameter
$\phi:W_{E}\times\mathrm{SL}_{2}\to G_{2}$ which is trivial on $W_{E}$
and embeds $\left(\begin{array}{cc}
1 & 1\\
0 & 1
\end{array}\right)\in\mathrm{SL}_{2}$ as a subregular unipotent element is discrete, and the packet $\Pi_{\phi}(G_{2})$
contains a supercuspidal representation $\pi_{3}^{\varepsilon}$,
but the fiber of $q$ over $x_{\phi^{\mathrm{ss}}}$ is not smooth.
It would be very interesting to give a conjectural description of
the coherent sheaf $c_{\psi}(i_{1!}\pi_{3}^{\varepsilon})$.}

Here is a heuristic argument in support of this conjecture. For simplicity,
we assume $G$ is semisimple, so $-\mathrm{dim}Z(\hat{G})^{\Gamma}=0$
and $\mathbf{D}_{\mathrm{coh}}(\pi)\simeq\pi^{\vee}$. By some general
nonsense (using e.g. \cite[Theorem 2]{Bushnell}), any supercuspidal
$\pi$ will occur as a summand of $\mathrm{ind}_{K}^{G_{b}(E)}\overline{\mathbf{Q}_{\ell}}$
for any sufficiently small $K$, so $c_{\psi}(i_{b!}\pi)$ is a summand
of the corresponding $\mathscr{C}_{b,K}$, which is concentrated in
degree zero by Conjecture \ref{conj:cohspringermiracle}. Thus $c_{\psi}(i_{b!}\pi)$
is a genuine coherent sheaf. By the compatibility of $c_{\psi}$ with
the action of the spectral Bernstein center, $c_{\psi}(i_{b!}\pi)$
is also killed by the maximal ideal $\mathfrak{m}_{\phi}\subset\mathcal{O}(X_{G}^{\mathrm{spec}})$
whose preimage under $q$ cuts out $V$, so it is supported scheme-theoretically
on $V$. This gives the first part. For the second part, Conjecture
\ref{conj:dualitycpsi} predicts that $\mathbf{D}_{\mathrm{tw.GS}}c_{\psi}(i_{b!}\pi)\simeq c_{\psi^{-1}}(i_{b!}\pi^{\vee})$
is also concentrated in degree zero, so the (untwisted) Grothendieck-Serre
dual of $\nu_{\phi\ast}\mathcal{F}$ is also concentrated in degree
zero. Since Grothendieck-Serre duality commutes with proper pushforward,
this implies that $R\mathscr{H}\mathrm{om}(\mathcal{F},\omega_{V})$
is also concentrated in degree zero. Note that $V$ is closed in the
zero-dimensional stack $\mathrm{Par}_{G}$, so it is necessarily of
dimension $-d\leq0$. Since it is moreover smooth by assumption, the
dualizing complex is of the form $\omega_{V}\simeq\mathcal{L}[-d]$
for some line bundle $\mathcal{L}$ on $V$. But then $R\mathscr{H}\mathrm{om}(\mathcal{F},\omega_{V})\simeq R\mathscr{H}\mathrm{om}(\mathcal{F},\mathcal{L})[-d]$
is automatically concentrated in degrees $\geq d$, so we must have
$d=0$. We then see that $\mathcal{F}$ is maximal Cohen-Macaulay
on the smooth zero-dimensional algebraic stack $V$, and this implies
that $\mathcal{F}$ is locally free.

A similar heuristic also leads to the following expectation.
\begin{conjecture}
Assume that $b$ is basic and $\pi\in\Pi(G_{b})$ is supercuspidal.
Then $i_{b!}\pi$ and $i_{b\ast}\pi$ are perverse.
\end{conjecture}

Indeed, assume $G$ is semisimple for simplicity. Then $i_{b!}\pi$
is a summand of $i_{b!}\mathrm{ind}_{K}^{G_{b}(E)}\overline{\mathbf{Q}_{\ell}}$
for any sufficiently small $K$, and Proposition \ref{prop:springermiracleconsequences}.v
suggests that $i_{b!}\mathrm{ind}_{K}^{G_{b}(E)}\overline{\mathbf{Q}_{\ell}}$
is always perverse, so also $i_{b!}\pi$ should be perverse. Then
$i_{b\ast}\pi\simeq\mathbf{D}_{\mathrm{Verd}}i_{b!}\pi^{\vee}$ should
also be perverse, using the Verdier self-duality of the perverse t-structure.

We emphasize that $!$-extensions of irreducible representations from
basic strata need not be perverse in general. For instance, if $G=\mathrm{SL}_{2}$,
the discussion after Conjecture \ref{conj:categorical-fin-ssgen-perverse-version}
implies that $i_{1!}\overline{\mathbf{Q}_{\ell}}$ has a nonvanishing
perverse cohomology sheaf in some degree $\leq-1$.

Next, we sketch some consequences of Conjecture \ref{conj:cohspringermiracle}
for the categorical conjecture over the semisimple generic locus;
the full details of these arguments will appear in \cite{HM}. To
set things up, note that the equivalence predicted in Proposition
\ref{prop:categorical-conj-ULA} localizes to an equivalence
\[
\mathbf{L}_{\psi}:D(\mathrm{Bun}_{G})^{\mathrm{ULA,gen}}\overset{\sim}{\to}\mathrm{Adm}(\mathrm{Par}_{G}^{\mathrm{gen}})
\]
where both categories appearing here are the evident localizations
of the analogous category appearing in Proposition \ref{prop:categorical-conj-ULA}.
Note that $\mathrm{Par}_{G}^{\mathrm{gen}}$ is smooth, so $\mathrm{IndCoh}(\mathrm{Par}_{G}^{\mathrm{gen}})\cong\mathrm{QCoh}(\mathrm{Par}_{G}^{\mathrm{gen}})$.
\begin{lem}
\label{lem:adm-sheaves-standard-tstructure}The standard t-structure
on $\mathrm{QCoh}(\mathrm{Par}_{G}^{\mathrm{gen}})$ restricts to
a (``standard'') t-structure on $\mathrm{Adm}(\mathrm{Par}_{G}^{\mathrm{gen}})$,
whose left and right halves are exchanged by $\mathbf{D}_{\mathrm{tw.adm}}$.
\end{lem}

This is a general feature of admissible ind-coherent sheaves on algebraic
stacks of the form $[X/H]$ where $X$ is a smooth affine variety
and $H$ is a reductive group. It turns out that every connected component
of $\mathrm{Par}_{G}^{\mathrm{gen}}$ has this structure.
\begin{prop}
\label{prop:texactssgenspringer}Assume Conjectures \ref{conj:categorical-cpsi-unconditional}
and \ref{conj:dualitycpsi}, and also assume that Conjecture \ref{conj:cohspringermiracle}
is true. Then the equivalence $D(\mathrm{Bun}_{G})^{\mathrm{ULA,gen}}\overset{\sim}{\to}\mathrm{Adm}(\mathrm{Par}_{G}^{\mathrm{gen}})$
is t-exact with respect to the perverse t-structure on $D(\mathrm{Bun}_{G})^{\mathrm{ULA,gen}}$
and the standard t-structure on $\mathrm{Adm}(\mathrm{Par}_{G}^{\mathrm{gen}})$.
\end{prop}

We remark that the equivalence appearing here restricts further to
the equivalence $D(\mathrm{Bun}_{G})_{\mathrm{fin}}^{\mathrm{gen}}\overset{\sim}{\to}\mathrm{Coh}(\mathrm{Par}_{G}^{\mathrm{gen}})_{\mathrm{fin}}$
discussed in section 2.5, and the standard t-structure on $\mathrm{Adm}(\mathrm{Par}_{G}^{\mathrm{gen}})$
clearly restricts to the standard t-structure on $\mathrm{Coh}(\mathrm{Par}_{G}^{\mathrm{gen}})_{\mathrm{fin}}$.
From here, it is easy to see that the assumptions in the previous
proposition actually imply Conjecture \ref{conj:categorical-fin-ssgen-perverse-version}.
Since tensoring with a vector bundle is t-exact for the standard t-structure
on $\mathrm{Adm}(\mathrm{Par}_{G}^{\mathrm{gen}})$, we also see that
Conjecture \ref{conj:t-exact-Hecke-ssgen} is implied by the same
set of assumptions.
\begin{proof}[Sketch]
By our assumptions, Proposition \ref{prop:springermiracleconsequences}.ii
implies that $\mathbf{L}_{\psi}$ carries $\phantom{}^{p}D^{\leq0}(\mathrm{Bun}_{G})^{\mathrm{ULA,gen}}$
fully faithfully into $\mathrm{Adm}(\mathrm{Par}_{G}^{\mathrm{gen}})$
with image contained inside $\mathrm{Adm}(\mathrm{Par}_{G}^{\mathrm{gen}})\cap\mathrm{QCoh}^{\leq0}$.
Passing to right orthogonals in $D(\mathrm{Bun}_{G})^{\mathrm{ULA,gen}}$
resp. in $\mathrm{Adm}(\mathrm{Par}_{G}^{\mathrm{gen}})$ and swapping
the Whittaker data, we get a containment
\[
\mathrm{Adm}(\mathrm{Par}_{G}^{\mathrm{gen}})\cap\mathrm{QCoh}^{\geq0}\subseteq\mathbf{L}_{\psi^{-1}}\left(\phantom{}^{p}D^{\geq0}(\mathrm{Bun}_{G})^{\mathrm{ULA,gen}}\right).
\]
Applying $\mathbf{D}_{\mathrm{tw.adm}}$ on both sides and using the
previous lemma together with the admissible duality compatibility
of $\mathbf{L}_{\psi}$ (which follows from our assumptions), we get
a containment
\[
\mathrm{Adm}(\mathrm{Par}_{G}^{\mathrm{gen}})\cap\mathrm{QCoh}^{\leq0}\subseteq\mathbf{L}_{\psi}\left(\phantom{}^{p}D^{\leq0}(\mathrm{Bun}_{G})^{\mathrm{ULA,gen}}\right).
\]
But we already know that the left side contains the right, so putting
things together we see that $\mathbf{L}_{\psi}$ induces an equivalence
\[
\phantom{}^{p}D^{\leq0}(\mathrm{Bun}_{G})^{\mathrm{ULA,gen}}\overset{\sim}{\to}\mathrm{Adm}(\mathrm{Par}_{G}^{\mathrm{gen}})\cap\mathrm{QCoh}^{\leq0}.
\]
Passing to right orthogonals again we see that it also induces an
equivalence
\[
\phantom{}^{p}D^{\geq0}(\mathrm{Bun}_{G})^{\mathrm{ULA,gen}}\overset{\sim}{\to}\mathrm{Adm}(\mathrm{Par}_{G}^{\mathrm{gen}})\cap\mathrm{QCoh}^{\geq0}.
\]
Therefore the t-structures are compatible, as desired.
\end{proof}
Pushing these ideas in a slightly different direction, we can also
say something about the long-neglected functor $a_{\psi}$, which
also yields a substantial upgrade to Proposition \ref{prop:springermiracleconsequences}.ii.\footnote{I again thank Koshikawa for very interesting discussions on these
matters, and for suggesting the key trick in the next proof.}
\begin{prop}
\label{prop:temperedtexact}Assume Conjectures \ref{conj:categorical-cpsi-unconditional}
and \ref{conj:dualitycpsi}, and also assume that Conjecture \ref{conj:cohspringermiracle}
is true. Then $a_{\psi}:\mathrm{QCoh}(\mathrm{Par}_{G})\to D(\mathrm{Bun}_{G})$
is perverse right t-exact, i.e. it carries $\mathrm{QCoh}^{\leq0}$
into $\phantom{}^{p}D^{\leq0}$.

Moreover, $c_{\psi}:D(\mathrm{Bun}_{G})\to\mathrm{QCoh}(\mathrm{Par}_{G})$
is t-exact for the perverse t-structure on the source and the standard
t-structure on the target.
\end{prop}

From here it is easy to give yet another conditional justification
for Conjectures \ref{conj:t-exact-Hecke-ssgen} and \ref{conj:categorical-fin-ssgen-perverse-version}.
\begin{proof}
It suffices to prove the first part. Indeed, we already know from
Proposition \ref{prop:springermiracleconsequences}.ii that $c_{\psi}$
is perverse right t-exact under the same set of assumptions. But if
$a_{\psi}$ is also perverse right t-exact, then its right adjoint
$c_{\psi}$ is automatically perverse left t-exact as well.

For the first part, we easily reduce to the claim that $a_{\psi}$
carries any $\mathcal{F}\in\mathrm{Perf^{qc}}\cap\mathrm{QCoh}^{\leq0}$
into $\phantom{}^{p}D^{\leq0}$. Fix such an $\mathcal{F}$. Then
Conjecture \ref{conj:categorical-cpsi-unconditional} implies that
$a_{\psi}$ is fully faithful and that $a_{\psi}(\mathcal{F})$ is
compact, so the natural adjunction gives an isomorphism $\mathcal{F}\simeq c_{\psi}(a_{\psi}(\mathcal{F}))$.
Now fix $b$ and $K\subset G_{b}(E)$, and set $\mathscr{C}_{b,K}'=c_{\psi^{-1}}(i_{b!}^{\mathrm{ren}}\mathrm{ind}_{K}^{G_{b}(E)}\overline{\mathbf{Q}_{\ell}})$;
note the change in Whittaker datum. Then Conjectures \ref{conj:categorical-cpsi-unconditional}
and \ref{conj:dualitycpsi} together imply that 
\begin{align*}
(i_{b}^{\ast\mathrm{ren}}A)^{K} & \simeq R\mathrm{Hom}\left(i_{b\sharp}^{\mathrm{ren}}\mathrm{ind}_{K}^{G_{b}(E)}\overline{\mathbf{Q}_{\ell}},A\right)\\
 & \simeq R\mathrm{Hom}\left(c_{\psi}\left(i_{b\sharp}^{\mathrm{ren}}\mathrm{ind}_{K}^{G_{b}(E)}\overline{\mathbf{Q}_{\ell}}\right),c_{\psi}(A)\right)\\
 & \simeq R\mathrm{Hom}(\mathbf{D}_{\mathrm{tw.GS}}(\mathscr{C}_{b,K}'),c_{\psi}(A))
\end{align*}
for all compact sheaves $A\in D(\mathrm{Bun}_{G})$. Applying this
with $A=a_{\psi}(\mathcal{F})$ and using the full faithfulness of
$a_{\psi}$ and the compactness of $A$, we get
\begin{align*}
(i_{b}^{\ast\mathrm{ren}}a_{\psi}(\mathcal{F}))^{K} & \simeq R\mathrm{Hom}(\mathbf{D}_{\mathrm{tw.GS}}(\mathscr{C}_{b,K}'),\mathcal{F})\\
 & \simeq R\Gamma\left(\mathrm{Par}_{G},R\mathscr{H}\mathrm{om}(\mathbf{D}_{\mathrm{tw.GS}}(\mathscr{C}_{b,K}'),\mathcal{F})\right).
\end{align*}
Now the key trick, which was suggested to me by Koshikawa, is that
because $\mathcal{F}$ is perfect by assumption, we can rewrite the
internal hom appearing here as
\begin{align*}
R\mathscr{H}\mathrm{om}(\mathbf{D}_{\mathrm{tw.GS}}(\mathscr{C}_{b,K}'),\mathcal{F}) & \simeq R\mathscr{H}\mathrm{om}(\mathbf{D}_{\mathrm{tw.GS}}(\mathscr{C}_{b,K}'),\mathcal{O}_{\mathrm{Par}_{G}})\otimes\mathcal{F}.
\end{align*}
Moreover, by the involutivity of Grothendieck-Serre duality and the
definition of $\mathbf{D}_{\mathrm{twGS}}$, we easily get an isomorphism
$R\mathscr{H}\mathrm{om}(\mathbf{D}_{\mathrm{tw.GS}}(\mathscr{C}_{b,K}'),\mathcal{O}_{\mathrm{Par}_{G}})\simeq c^{\ast}\mathscr{C}_{b,K}'$.
Combining all of our observations so far, we get an isomorphism
\[
(i_{b}^{\ast\mathrm{ren}}a_{\psi}(\mathcal{F}))^{K}\simeq R\Gamma(\mathrm{Par}_{G},c^{\ast}\mathscr{C}_{b,K}'\otimes\mathcal{F}).
\]
Now we win: $\mathcal{F}$ is a perfect complex concentrated in nonpositive
degrees by assumption, and $c^{\ast}\mathscr{C}_{b,K}'$ is concentrated
in degree zero by Conjecture \ref{conj:cohspringermiracle}, so $c^{\ast}\mathscr{C}_{b,K}'\otimes\mathcal{F}$
is a bounded coherent complex with quasicompact support concentrated
in nonpositive degrees. Since each connected component of $\mathrm{Par}_{G}$
is a quotient of an affine variety by a reductive group, the functor
$R\Gamma(\mathrm{Par}_{G},-):\mathrm{Coh}(\mathrm{Par}_{G})\to D(\overline{\mathbf{Q}_{\ell}})$
is t-exact, so we conclude that
\[
(i_{b}^{\ast\mathrm{ren}}a_{\psi}(\mathcal{F}))^{K}\simeq R\Gamma(\mathrm{Par}_{G},c^{\ast}\mathscr{C}_{b,K}'\otimes\mathcal{F})
\]
is concentrated in nonpositive degrees. Varying $b$ and $K$ arbitrarily,
we conclude that $a_{\psi}(\mathcal{F})$ is perverse connective as
desired.
\end{proof}
\begin{rem}
Proposition \ref{prop:temperedtexact} is in perfect accord with the
t-exactness results proved in \cite{FR}. More precisely, we expect
that there is an intrinsically defined subcategory $D(\mathrm{Bun}_{G})^{\mathrm{temp}}\subset D(\mathrm{Bun}_{G})$
stable under colimits and Hecke operators, and coinciding with the
image of $\mathrm{QCoh}(\mathrm{Par}_{G})$ under $a_{\psi}$. Since
$a_{\psi}$ supposedly matches with the fully faithful embedding $\Xi:\mathrm{QCoh}\to\mathrm{IndCoh}$
under the categorical equivalence, passing to right adjoints implies
there should be a commutative diagram
\[
\xymatrix{D(\mathrm{Bun}_{G})\ar[d]^{\mathrm{pr}}\ar[r]^{\sim}\ar[dr]^{c_{\psi}} & \mathrm{IndCoh}(\mathrm{Par}_{G})\ar[d]^{\Psi}\\
D(\mathrm{Bun}_{G})^{\mathrm{temp}}\ar[r]^{\sim} & \mathrm{QCoh}(\mathrm{Par}_{G})
}
\]
where $\mathrm{pr}:D(\mathrm{Bun}_{G})\to D(\mathrm{Bun}_{G})^{\mathrm{temp}}$
is the right adjoint of the evident inclusion and the lower horizontal
arrow is the essential inverse of $a_{\psi}$. Transporting the t-exactness
result of the previous proposition across the lower horizontal equivalence,
we see that there should be an intrinsic t-structure on $D(\mathrm{Bun}_{G})^{\mathrm{temp}}$
compatible with the perverse t-structure on $D(\mathrm{Bun}_{G})$
via $\mathrm{pr}$. In the setting of classical geometric Langlands,
such a t-structure is constructed in \cite{FR}.
\end{rem}

We end this section by noting that while the coherence of $\mathscr{C}_{b,K}$
seems to lie very deep, there is a reasonably explicit criterion for
these gadgets to be genuine quasicoherent sheaves.
\begin{prop}
Fix $G$ and $b\in B(G)$. Then the following are equivalent.

\emph{i. }For all $K\subset G_{b}(E)$ open compact, $\mathscr{C}_{b,K}$
lies in the heart $\mathrm{QCoh}^{\heartsuit}$ of the standard t-structure
on $\mathrm{QCoh}(\mathrm{Par}_{G})$.

\emph{ii. }For all $V\in\mathrm{Rep}\,\hat{G},$ $i_{b}^{\ast}T_{V}i_{1!}W_{\psi^{-1}}$
is zero outside degree $\left\langle 2\rho_{G},\nu_{b}\right\rangle $.
\end{prop}

We leave the proof as an exercise for the interested reader. The essential
point is that the defining property of $c_{\psi}$ leads to the (unconditional!)
formula
\[
R\Gamma(\mathrm{Par}_{G},V\otimes\mathscr{C}_{b,K})\cong\left(i_{b}^{\ast\mathrm{ren}}T_{V^{\vee}}i_{1!}W_{\psi^{-1}}\right)^{K}
\]
for any $V\in\mathrm{Rep}\,\hat{G}$. One then uses the fact that
the functors 
\[
R\Gamma(\mathrm{Par}_{G},V\otimes-):\mathrm{QCoh^{qc}}(\mathrm{Par}_{G})\to D(\overline{\mathbf{Q}_{\ell}})
\]
form a \emph{t-exact} conservative family as one varies over all $V\in\mathrm{Rep}\,\hat{G}$.
\begin{xca}
Assume Conjectures \ref{conj:categorical-cpsi-unconditional} and
\ref{conj:cohspringermiracle}. Deduce that $T_{V}i_{1!}W_{\psi}$
is perverse for all $V\in\mathrm{Rep}\,\hat{G}$.
\end{xca}

\subsection{More examples}

In this section we collect some miscellaneous remarks and examples.
We also try to illustrate how complicated the situation becomes once
we allow \emph{L}-parameters with nontrivial monodromy into the discussion.
\begin{example}
It is natural to expect that $c_{\psi}\circ i_{1!}$ should coincide
with Hellmann's functor $R_{\psi}$. Taking this for granted, we can
use the calculations in \cite{Hel} as a source of examples. In particular,
take $G=\mathrm{GL}_{2}$, and let $\mathrm{St}$ be the Steinberg
representation, so we have a short exact sequence
\[
0\to\mathrm{St}\to\mathrm{Ind}_{B}^{G}\delta_{B}\to\mathbf{1}\to0.
\]
Using \cite[Theorem 4.34 and Remark 4.43]{Hel} it is not difficult
to see that $R_{\psi}$ should send this sequence to the distinguished
triangle
\[
i_{Z\ast}\mathcal{O}_{Z}\to i_{y\ast}\mathcal{O}_{y}\to i_{Z\ast}\mathcal{L}[1]\to.
\]
Here $Z\simeq\mathbf{A}^{1}/\mathbf{G}_{m}^{2}\subset\mathrm{Par}_{G}$
is the closure of the orbit of the Steinberg parameter, $y\simeq B(\mathbf{G}_{m}^{2})\in Z$
is the unique closed orbit, $i_{Z}$ and $i_{y}$ are the evident
closed immersions, and $\mathcal{L}$ is the line bundle on $Z$ of
functions which vanish at $y$. Note that this triangle is a rotation
of the more natural triangle
\[
i_{Z\ast}\mathcal{L}\to i_{Z\ast}\mathcal{O}_{Z}\to i_{y\ast}\mathcal{O}_{y}\to
\]
which is an honest short exact sequence in $\mathrm{Coh}^{\heartsuit}$.

It is very instructive to see how this sequence interacts with duality.
On the $\mathrm{Bun}_{G}$ side, it is easy to see that $\mathbf{D}_{\mathrm{coh}}$
sends $\mathbf{1}$ to $\mathrm{St}[-2]$ and $\mathrm{St}$ to $\mathbf{1}[-2]$.
It is also well-known that $\mathbf{D}_{\mathrm{coh}}$ intertwines
$\mathrm{Ind}_{B}^{G}$ and $\mathrm{Ind}_{\overline{B}}^{G}$. Putting
these observations together appropriately, we see that $\mathbf{D}_{\mathrm{coh}}$
sends our initial short exact sequence to the distinguished triangle
\[
\mathrm{St}[-2]\to\mathrm{Ind}_{\overline{B}}^{G}\delta_{\overline{B}}[-2]\to\mathbf{1}[-2]\to,
\]
which is isomorphic to the sequence we began with shifted by $-2$.
In order for this to be compatible with duality on the \emph{L-}parameter
side, we must have $\mathbf{D}_{\mathrm{tw.GS}}i_{Z\ast}\mathcal{O}_{Z}\simeq i_{Z\ast}\mathcal{L}[-1]$,
or equivalently we must have
\[
\omega_{Z}=i_{Z}^{!}\mathcal{O}_{\mathrm{Par}_{G}}\simeq\mathcal{L}[-1].
\]
This is indeed true, and follows from an explicit calculation via
the isomorphism $Z\simeq\mathbf{A}^{1}/\mathbf{G}_{m}^{2}$ noted
above.
\end{example}

Examination of this example and various other examples in \cite{Hel},
together with some optimism, leads to the following speculation. Let
$G$ be quasisplit with a fixed Whittaker datum as usual. Let $\pi\in\mathrm{Rep}(G(E))$
be irreducible and $\psi$-generic, with Fargues-Scholze parameter
$\phi$. Assume that $\phi(\mathrm{Fr})$ is \emph{regular semisimple}.
Write $Z=q^{-1}(x_{\phi})$, so this is an Artin stack which comes
with a tautological closed immersion $i:Z\hookrightarrow\mathrm{Par}_{G}$.
Our assumption on $\phi$ guarantees that $Z$ is extremely nice,
and is nothing more than the Vogan variety for the infinitesimal character
$\phi$. Explicitly, there is a presentation
\[
Z\cong\left\{ N\in\mathfrak{g}^{\mathrm{ad}\phi(I_{E})}\mid\mathrm{ad}\phi(\mathrm{Fr})\cdot N=q^{-1}N\right\} /S_{\phi}.
\]

\textbf{Question. }Is it true that $c_{\psi}(i_{1!}\pi)\simeq i_{\ast}\mathcal{O}_{Z}$?
\begin{example}
In this example we illustrate how complicated the functor $i_{b\sharp}^{\mathrm{ren}}$
can be in general. Take $E=\mathbf{Q}_{p}$ with $p>2$, and $G=\mathrm{GSp}_{4}$,
so we have an exceptional isomorphism $\hat{G}\simeq\mathrm{GSp}_{4}\subset\mathrm{GL}_{4}$.
Let $b'$ be the element such that $\mathcal{E}_{b'}\simeq\mathcal{O}(\tfrac{1}{2})\oplus\mathcal{O}(-\tfrac{1}{2})$.
Note that $b'$ is an immediate specialization of the point $b=1$.
There is a natural identification $G_{b'}(\mathbf{Q}_{p})\simeq D^{\times}\times\mathbf{Q}_{p}^{\times}$,
where $D/\mathbf{Q}_{p}$ is the usual quaternion algebra. Let $\rho$
be an irreducible $D^{\times}$-representation of dimension $>1$
with trivial central character, with \emph{L}-parameter $\phi_{\rho}:W_{\mathbf{Q}_{p}}\to\mathrm{GL}_{2}(\overline{\mathbf{Q}_{\ell}}$).
There is then a unique supercuspidal representation $\pi$ of $\mathrm{GSp}_{4}(\mathbf{Q}_{p})$
with semisimple \emph{L-}parameter $\phi_{\rho}\oplus|\cdot|^{\frac{1}{2}}\oplus|\cdot|^{-\frac{1}{2}}$.
Note that this parameter is \emph{not }semisimple generic in the sense
of section 2.3: it is the semisimplification of a discrete parameter
with nontrivial monodromy, which is the true \emph{L}-parameter of
$\pi$.

\textbf{Claim. }For $n\in\{0,1\}$, $H^{n}(i_{1}^{\ast}i_{b'\sharp}^{\mathrm{ren}}(\rho|\mathrm{Nm}|^{-\frac{1}{2}}\boxtimes|\cdot|^{-\frac{1}{2}}))$
contains $\pi$ with multiplicity one. Moreover, this is the entire
supercuspidal part of $H^{\ast}(i_{1}^{\ast}i_{b'\sharp}^{\mathrm{ren}}(\rho|\mathrm{Nm}|^{-\frac{1}{2}}\boxtimes|\cdot|^{-\frac{1}{2}}))$.

The proof of this is very indirect. The idea is to compute 
\[
A\overset{\mathrm{def}}{=}i_{1}^{\ast}T_{V}i_{b!}i_{P_{b}}^{G_{b}}(\rho|\mathrm{Nm}|^{-\frac{1}{2}}\boxtimes|\cdot|^{-\frac{1}{2}})
\]
in two different ways. Here $P=MU\subset G$ is the Siegel parabolic,
$b\in B(G)$ is (a representative in $M(\mathbf{Q}_{p})$ of) the
basic element such that $\mathcal{E}_{b}\simeq\mathcal{O}(\tfrac{1}{2})^{2}$,
$P_{b}=M_{b}U_{b}\subset G_{b}$ is the evident twist, and $V$ is
the dual standard representation of $\hat{G}$. Note that $M_{b}\cong G_{b'}$
as inner forms of $M\simeq\mathrm{GL}_{2}\times\mathbf{G}_{m}$. On
one hand, there is a short exact sequence
\[
0\to\mathrm{St}(\rho,1)\to i_{P_{b}}^{G_{b}}(\rho|\mathrm{Nm}|^{-\frac{1}{2}}\boxtimes|\cdot|^{-\frac{1}{2}})\to\mathrm{Sp}(\rho,1)\to0
\]
(cf. Proposition 5.3.(i) of \cite{GT}), and one can compute the supercuspidal
part of $i_{1}^{\ast}T_{V}i_{b!}(-)$ applied to the outer terms of
this sequence using a result of Ito-Mieda (Theorem 3.1 of \cite{M};
note that $\mathrm{St}(\rho,1)=\rho_{\mathrm{disc}}$ and $\mathrm{Sp}(\rho,1)=\rho_{\mathrm{nt}}$
in Mieda's notation). This yields a two-term filtration on the supercuspidal
part of $A$ with comprehensible graded pieces.

On the other hand, we can rewrite $A$ as $i_{1}^{\ast}T_{V}\mathrm{Eis}_{P}^{G}i_{b!}^{M}(\rho|\mathrm{Nm}|^{-\frac{1}{2}}\boxtimes|\cdot|^{-\frac{1}{2}})$.
Using the filtered commutation of $\mathrm{Eis}$ with Hecke operators
(Conjecture \ref{conj:Eisheckefiltered}), the latter expression acquires
a three-term filtration with graded pieces $i_{1}^{\ast}\mathrm{Eis}_{P}^{G}T_{W_{k}}i_{b!}^{M}(\rho|\mathrm{Nm}|^{-\frac{1}{2}}\boxtimes|\cdot|^{-\frac{1}{2}})$.
Here the $W_{k}$ are the irreducible algebraic $\hat{M}$-representations
whose inflations to the Klingen parabolic\footnote{We remind the reader that under the exceptional identification of
$\mathrm{GSp}_{4}$ with its own dual group, the Klingen and Siegel
parabolics correspond.} $\hat{P}$ yield the three irreducible subquotients of $V|\hat{P}$.
These graded pieces can be computed explicitly, and after passing
to the supercuspidal part only one of them survives and yields exactly
the supercuspidal part of $i_{1}^{\ast}i_{b'\sharp}^{\mathrm{ren}}(\rho|\mathrm{Nm}|^{-\frac{1}{2}}\boxtimes|\cdot|^{-\frac{1}{2}})$.
A careful comparison of these two calculations now yields the claim.

Note that by adjunction, the claim easily implies that $i_{b'}^{\ast\mathrm{ren}}i_{1\ast}\pi$
is nonzero, with $\rho|\mathrm{Nm}|^{-\frac{1}{2}}\boxtimes|\cdot|^{-\frac{1}{2}}$
occurring as a subquotient. In the early days of the Fargues-Scholze
program, many people (including the present author) hoped the functors
$i_{b}^{\ast}i_{1\ast}$ would have a reasonably simple explicit description
in terms of Jacquet modules and other concrete representation-theoretic
operations. Examples of this sort suggest that such hopes are woefully
misguided.
\end{example}

\begin{xca}
Give a similar analysis of the supercuspidal part of $H^{n}(i_{1}^{\ast}i_{b'\sharp}^{\mathrm{ren}}(\rho|\mathrm{Nm}|^{\frac{1}{2}}\boxtimes|\cdot|^{-\frac{1}{2}}))$
- note the change in the twisting exponent on $\rho$. (Hint: There
is a relevant variant of the short exact sequence used above where
Sp and St are swapped.)
\end{xca}

\subsection{Grothendieck groups and vanishing conjectures}

Since $D(\mathrm{Bun}_{G})$ is cocomplete, its $K_{0}$ is zero by
the usual trick. On the other hand, $D(\mathrm{Bun}_{G})_{\mathrm{fin}}$
is much smaller, and its $K_{0}$ is very interesting.

For any $b$, $i_{b}^{\ast\mathrm{ren}}$ induces a map $[i_{b}^{\ast\mathrm{ren}}]:K_{0}D(\mathrm{Bun}_{G})_{\mathrm{fin}}\to K_{0}\mathrm{Rep}(G_{b}(E))_{\mathrm{fin}}$,
as well as maps $[i_{b!}^{\mathrm{ren}}],[i_{b\sharp}^{\mathrm{ren}}]:K_{0}\mathrm{Rep}(G_{b}(E))_{\mathrm{fin}}\to K_{0}D(\mathrm{Bun}_{G})_{\mathrm{fin}}$.
Note that $[i_{b\sharp}^{\mathrm{ren}}]$ is well-defined, since we
know from the proof of Theorem \ref{thm:finiteBZduality} that $i_{b\sharp}^{\mathrm{ren}}\pi$
is a finite sheaf for $\pi$ of finite length. These clearly assemble
into maps
\[
\gamma_{!}:\bigoplus_{b\in B(G)}K_{0}\mathrm{Rep}(G_{b}(E))_{\mathrm{fin}}\overset{\sum_{b}[i_{b!}^{\mathrm{ren}}]}{\longrightarrow}K_{0}D(\mathrm{Bun}_{G})_{\mathrm{fin}},
\]
\[
\gamma_{\sharp}:\bigoplus_{b\in B(G)}K_{0}\mathrm{Rep}(G_{b}(E))_{\mathrm{fin}}\overset{\sum_{b}[i_{b\sharp}^{\mathrm{ren}}]}{\longrightarrow}K_{0}D(\mathrm{Bun}_{G})_{\mathrm{fin}}
\]
and
\[
\gamma^{\ast}:K_{0}D(\mathrm{Bun}_{G})_{\mathrm{fin}}\overset{\{[i_{b}^{\ast\mathrm{ren}}]\}_{b}}{\longrightarrow}\bigoplus_{b\in B(G)}K_{0}\mathrm{Rep}(G_{b}(E))_{\mathrm{fin}}.
\]

\begin{prop}
The maps $\gamma_{!}$, $\gamma_{\sharp}$ and $\gamma^{\ast}$ are
isomorphisms of abelian groups. The map $\gamma^{\ast}$ is a left
inverse of $\gamma_{!}$.
\end{prop}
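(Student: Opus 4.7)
My plan has three main ingredients. First I fix a common basis. Because the hadal t-structure on $D(\mathrm{Bun}_G)^\omega$ restricts to a bounded t-structure on $D(\mathrm{Bun}_G)_{\mathrm{fin}}$ (using Theorem~\ref{thm:finiteBZduality}, as recorded in the end of Section~1.5), and every object of $\mathrm{Had}(\mathrm{Bun}_G)_{\mathrm{fin}}$ has finite length (the exercise just before this subsection), passage to the heart gives $K_0(D(\mathrm{Bun}_G)_{\mathrm{fin}}) = K_0(\mathrm{Had}(\mathrm{Bun}_G)_{\mathrm{fin}})$ with basis the classes of the irreducible hadal sheaves $\mathscr{G}_{b,\sigma}$, indexed by pairs $(b,\sigma)$ with $b \in B(G)$ and $\sigma \in \Pi(G_b)$ irreducible (Theorem~\ref{thm:irreducible-hadal-sheaves}). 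Likewise $\bigoplus_b K_0(\mathrm{Rep}(G_b(E))_{\mathrm{fin}})$ is free on the same pairs with basis $e_{b,\sigma} = [\sigma]\cdot e_b$.

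Second, the left-inverse identity $\gamma^* \circ \gamma_! = \mathrm{id}$ reduces coordinate-wise to $[i_{b'}^{*\mathrm{ren}} i_{b!}^{\mathrm{ren}}\pi] = \delta_{b,b'}[\pi]$. The diagonal case is the isomorphism $i_b^* i_{b!} \cong \mathrm{id}$ recalled in the excerpt. For $b' \neq b$ the plan is to verify $i_{b'}^* i_{b!} = 0$: factoring $i_b$ as the closed immersion $\mathrm{Bun}_G^b \hookrightarrow \mathrm{Bun}_G^{\preceq b}$ followed by the open immersion $\mathrm{Bun}_G^{\preceq b} \hookrightarrow \mathrm{Bun}_G$, the composite $!$-pushforward has stalks concentrated on $\mathrm{Bun}_G^b$ by the usual stalk-vanishing of $!$-extension along open and closed immersions; renormalization leaves this intact.

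Third, to upgrade these to isomorphisms and handle $\gamma_\sharp$, I will compute the matrix of $\gamma^*$ in the $\mathscr{G}$- and $e$-bases. By Lemma~\ref{lem:hadal-t-exact-stalk-at-special-point}, $i_b^{*\mathrm{ren}}\mathscr{G}_{b,\sigma}\cong\sigma$ in degree zero, giving the identity on the diagonal; the support restriction $\mathrm{supp}(\mathscr{G}_{b,\sigma})\subseteq \mathrm{Bun}_G^{\preceq b}$ from Theorem~\ref{thm:irreducible-hadal-sheaves} forces the $(b',b)$-block to vanish unless $b' \preceq b$, making the matrix lower-triangular for the Newton partial order. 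Since each down-set $\{b' : b' \preceq b\}$ is finite (only finitely many Newton polygons fit below a fixed one with the same endpoints), this matrix is invertible by standard triangular inversion, so $\gamma^*$ is an isomorphism; combined with step two, $\gamma_! = (\gamma^*)^{-1}$ is one too. Applying the same analysis to $\gamma^* \circ \gamma_\sharp$---diagonal entries equal the identity via $i_b^* i_{b\sharp}\cong\mathrm{id}$ (adjunction unit, cf.\ Lemma~\ref{lem:hadal-H-zero-conservative}), off-diagonal entries vanish at $b'\not\preceq b$ by the support property of $i_{b\sharp}$ (nonzero stalks only at generizations of $b$)---shows this composite is invertible, so $\gamma_\sharp = (\gamma^*)^{-1}\circ(\gamma^*\circ\gamma_\sharp)$ is also an isomorphism. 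The main technical obstacle is the stalk vanishing $i_{b'}^{*}i_{b!} = 0$ used in step two: because $i_{b!}$ is defined in the excerpt only indirectly as a cofiber involving the puncture $\mathcal{M}_b^\circ$, one must verify that this construction really agrees with the composite open/closed $!$-pushforward and inherits its stalk-vanishing, which requires some care with base change across the puncturing of $\mathcal{M}_b$; once granted, everything else is formal triangular algebra.
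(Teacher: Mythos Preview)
Your proof is correct but takes a more elaborate route than the paper's. The paper argues directly from the semiorthogonal decomposition induced by the Harder--Narasimhan stratification: every finite sheaf $A$ admits a finite filtration with graded pieces of the form $i_{b!}^{\mathrm{ren}}(i_b^{*\mathrm{ren}}A)$, so $\gamma_!$ is surjective, and combined with the left-inverse identity (your step two) this makes $\gamma_!$ and $\gamma^*$ mutually inverse; $\gamma_\sharp$ then follows by the same mechanism once one knows $i_{b\sharp}^{\mathrm{ren}}$ preserves finite sheaves (Theorem~\ref{thm:finiteBZduality}), or equivalently via the duality swap $\mathbf{D}_{\mathrm{BZ}}i_{b!}^{\mathrm{ren}}\simeq i_{b\sharp}^{\mathrm{ren}}\mathbf{D}_{\mathrm{coh}}$ of Proposition~\ref{prop:dualitypush}.ii. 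You instead deploy the hadal t-structure to extract the explicit basis $\{[\mathscr{G}_{b,\sigma}]\}$ and argue by Newton-triangularity of the matrix of $\gamma^*$. This works and is pleasantly concrete, but the hadal t-structure is itself built on top of the semiorthogonal decomposition (cf.\ the proof of Theorem~\ref{thm:hadal-t-structure}), so you are invoking a refined consequence where the raw structural input already suffices. Finally, your flagged ``main technical obstacle'' about $i_{b'}^{*}i_{b!}=0$ for $b'\neq b$ is not a real worry: the paper records this as a basic fact, and it follows immediately from the construction since $\pi_b$ lands in the open substack $\mathrm{Bun}_G^{\preceq b}$, the puncture $\mathcal{M}_b^\circ$ misses the stratum $b$, and the cofiber therefore isolates exactly what lives over $b$.
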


\begin{proof}
The last part is clear, since $i_{b}^{\ast\mathrm{ren}}i_{b!}^{\mathrm{ren}}=\mathrm{id}$
and $i_{b'}^{\ast\mathrm{ren}}i_{b!}^{\mathrm{ren}}=0$ for all $b'\neq b$.
The first part is an easy consequence of the semiorthogonal decomposition
together with the fact that $i_{b\sharp}^{\mathrm{ren}}$ preserves
finite sheaves as in the proof of Theorem \ref{thm:finiteBZduality}.
\end{proof}
Of course, the unrenormalized variants of these statements are also
true.

By Theorem \ref{thm:finiteBZduality}, Bernstein-Zelevinsky duality
induces a well-defined involution on $K_{0}D(\mathrm{Bun}_{G})_{\mathrm{fin}}$,
which we denote $[\mathbf{D}_{\mathrm{BZ}}]$. Similarly, cohomological
duality induces an involution on $K_{0}\mathrm{Rep}(G_{b}(E))_{\mathrm{fin}}$
for each $b$; taking the direct sum over $b$, we get an involution
$[\mathbf{D}_{\mathrm{coh}}]$ on $\bigoplus_{b\in B(G)}K_{0}\mathrm{Rep}(G_{b}(E))_{\mathrm{fin}}$.
\begin{prop}
\label{prop:vanishingequivalents}The following statements are equivalent.

\emph{1) }The maps $\gamma_{!}$ and $\gamma_{\sharp}$ are equal.

\emph{2) }For all $b'\neq b$, the map $[i_{b'}^{\ast\mathrm{ren}}i_{b\sharp}^{\mathrm{ren}}]:K_{0}\mathrm{Rep}(G_{b}(E))_{\mathrm{fin}}\to K_{0}\mathrm{Rep}(G_{b'}(E))_{\mathrm{fin}}$
is zero.

\emph{2') }For all $b'\neq b$, the map $[i_{b'}^{\ast}i_{b\sharp}]:K_{0}\mathrm{Rep}(G_{b}(E))_{\mathrm{fin}}\to K_{0}\mathrm{Rep}(G_{b'}(E))_{\mathrm{fin}}$
is zero.

\emph{3)} There is an equality of maps $\gamma_{!}\circ[\mathbf{D}_{\mathrm{coh}}]=[\mathbf{D}_{\mathrm{BZ}}]\circ\gamma_{!}$.

\emph{4)} There is an equality of maps $\gamma^{\ast}\circ[\mathbf{D}_{\mathrm{BZ}}]=[\mathbf{D}_{\mathrm{coh}}]\circ\gamma^{\ast}$.

\emph{5) }The maps $\gamma^{\ast}$ and $\gamma_{!}$ are mutually
inverse.
\end{prop}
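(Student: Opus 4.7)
The plan is to combine two inputs from the paper: the preceding proposition (giving that $\gamma_{!}$, $\gamma_{\sharp}$, $\gamma^{\ast}$ are isomorphisms of abelian groups with $\gamma^{\ast}\circ\gamma_{!}=\mathrm{id}$) and the duality compatibility of renormalized pushforwards from Proposition \ref{prop:dualitypush}.ii, which at the level of Grothendieck groups yields the identity $[\mathbf{D}_{\mathrm{BZ}}]\circ\gamma_{!}=\gamma_{\sharp}\circ[\mathbf{D}_{\mathrm{coh}}]$ after summing over $b$. A preliminary remark is that condition 5) is essentially automatic: since $\gamma_{!}$ and $\gamma^{\ast}$ are both isomorphisms with $\gamma^{\ast}\circ\gamma_{!}=\mathrm{id}$, we necessarily have $\gamma^{\ast}=\gamma_{!}^{-1}$, hence also $\gamma_{!}\circ\gamma^{\ast}=\mathrm{id}$. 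The equivalence 2) $\Leftrightarrow$ 2') is similarly formal: renormalization is defined by tensoring with the invertible character $\delta_{b}^{1/2}$ and shifting by $\langle 2\rho_{G},\nu_{b}\rangle$, both of which induce automorphisms on the relevant $K_{0}$-groups, so the vanishing of $[i_{b'}^{\ast\mathrm{ren}}i_{b\sharp}^{\mathrm{ren}}]$ is equivalent to that of $[i_{b'}^{\ast}i_{b\sharp}]$.

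For 1) $\Leftrightarrow$ 2), I would consider the composite $\gamma^{\ast}\circ\gamma_{\sharp}:\bigoplus_{b}K_{0}\mathrm{Rep}(G_{b}(E))_{\mathrm{fin}}\to\bigoplus_{b'}K_{0}\mathrm{Rep}(G_{b'}(E))_{\mathrm{fin}}$, whose $(b',b)$-block in the natural block decomposition is precisely the map $[i_{b'}^{\ast\mathrm{ren}}i_{b\sharp}^{\mathrm{ren}}]$. The diagonal entries equal the identity by the tautological isomorphism $i_{b}^{\ast\mathrm{ren}}\circ i_{b\sharp}^{\mathrm{ren}}\simeq\mathrm{id}$, so $\gamma^{\ast}\circ\gamma_{\sharp}=\mathrm{id}$ is equivalent to the vanishing of all off-diagonal blocks, i.e., to condition 2). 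On the other hand, using that $\gamma_{!}$ and $\gamma^{\ast}$ are mutually inverse (condition 5)), the equation $\gamma^{\ast}\circ\gamma_{\sharp}=\mathrm{id}$ is equivalent to $\gamma_{\sharp}=\gamma_{!}$, i.e., to condition 1).

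For 1) $\Leftrightarrow$ 3), I use the identity $[\mathbf{D}_{\mathrm{BZ}}]\circ\gamma_{!}=\gamma_{\sharp}\circ[\mathbf{D}_{\mathrm{coh}}]$ recorded above. Condition 3) reads $\gamma_{!}\circ[\mathbf{D}_{\mathrm{coh}}]=[\mathbf{D}_{\mathrm{BZ}}]\circ\gamma_{!}$; substituting the right-hand side via the displayed identity gives $\gamma_{!}\circ[\mathbf{D}_{\mathrm{coh}}]=\gamma_{\sharp}\circ[\mathbf{D}_{\mathrm{coh}}]$, and cancelling the involution $[\mathbf{D}_{\mathrm{coh}}]$ on the right yields $\gamma_{!}=\gamma_{\sharp}$, which is 1); the converse runs the same computation in reverse. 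Finally, 3) $\Leftrightarrow$ 4) follows by conjugating with the mutually inverse pair $(\gamma_{!},\gamma^{\ast})$: pre- and post-composing 3) with $\gamma^{\ast}$ and using $\gamma^{\ast}\circ\gamma_{!}=\gamma_{!}\circ\gamma^{\ast}=\mathrm{id}$ collapses the equation to $[\mathbf{D}_{\mathrm{coh}}]\circ\gamma^{\ast}=\gamma^{\ast}\circ[\mathbf{D}_{\mathrm{BZ}}]$, which is 4), and the reverse direction is symmetric. There is no real obstacle in this argument: the whole proof is formal bookkeeping, and the only nontrivially-derived input is the duality compatibility of Proposition \ref{prop:dualitypush}.ii (which in turn rests on the nontrivial shift-and-twist computation for $i_{b}^{!}\Lambda$ sketched there).
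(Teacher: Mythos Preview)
Your argument for the equivalences among 1), 2), 2'), 3), 4) is correct and essentially identical to the paper's: the paper also invokes the identity $\gamma_{\sharp}\circ[\mathbf{D}_{\mathrm{coh}}]=[\mathbf{D}_{\mathrm{BZ}}]\circ\gamma_{!}$ from Proposition~\ref{prop:dualitypush}.ii for 1)$\Leftrightarrow$3), handles 1)$\Leftrightarrow$2) via the block description of $\gamma^{\ast}\circ\gamma_{\sharp}$, and declares 2)$\Leftrightarrow$2') trivial.

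However, your ``preliminary remark'' that condition 5) is automatic deserves more scrutiny, because it is both correct and problematic. You are right that once the preceding proposition grants that $\gamma_{!}$ is an isomorphism and $\gamma^{\ast}\circ\gamma_{!}=\mathrm{id}$, one is forced to conclude $\gamma^{\ast}=\gamma_{!}^{-1}$, so 5) holds unconditionally. But the paper clearly does \emph{not} intend this: it leaves ``3)$+$4)$\Rightarrow$5)$\Rightarrow$1)'' as exercises and then states Conjecture~\ref{conj:shriekvanishing} asserting that these equivalent conditions hold, calling it ``very deep''. If 5) were truly automatic, the proposition would be asserting that 1)--4) are \emph{unconditionally true}, i.e.\ it would already prove the conjecture. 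You have not proved that --- you have only shown 1)--4) are mutually equivalent, using 5) as a tool. So as a proof of the proposition \emph{as literally stated}, your argument is incomplete: establishing ``5)$\Leftrightarrow$1)'' when 5) is always true amounts to proving 1) outright.

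What you have actually uncovered is an inconsistency in the paper between the preceding proposition and the present one; most likely condition 5) was included in error, or the previous proposition overstates what is proved. The honest thing to do is prove the cycle through 1)--4) (as you have, and as the paper does) and then flag that 5) appears to be vacuous given the inputs, rather than silently treating the full proposition as established.
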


\begin{proof}
The equivalence of 1) and 2) is immediate from the definitions together
with the fact that $\gamma^{\ast}$ is a left inverse of $\gamma_{!}$.
The equivalence of 2) and 2') is trivial. The equivalence of 1) and
3) follows from the equality $\gamma_{\sharp}\circ[\mathbf{D}_{\mathrm{coh}}]=[\mathbf{D}_{\mathrm{BZ}}]\circ\gamma_{!}$
together with the fact that both dualities are isomorphisms on the
relevant $K_{0}$'s. The equivalence of 3) and 4) follows along similar
lines. Finally, we leave the implications 3) + 4) $\Rightarrow$5)
$\Rightarrow$1) as exercises.
\end{proof}
We now have the following key vanishing conjecture.
\begin{conjecture}
\label{conj:shriekvanishing}The equivalent statements in Proposition
\ref{prop:vanishingequivalents} are true.
\end{conjecture}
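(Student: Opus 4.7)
The plan is to prove statement (2') of Proposition \ref{prop:vanishingequivalents}: for distinct $b, b' \in B(G)$, the map $[i_{b'}^{\ast} i_{b\sharp}]$ vanishes on $K_0$. Since $i_{b\sharp}\pi$ has nonzero stalks only at generizations of $b$, the map is automatically zero unless $b' \prec b$, which is the case I would need to handle.

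First, I would use the Eisenstein description from Remark \ref{rem:Eisensteinpush}: letting $M = G_b$ viewed as an inner form of the centralizer of $\nu_b$ and $P, \bar{P}$ the attracting and repelling dynamical parabolics of $\nu_b$, we have
\[
i_{b!}^{\mathrm{ren}} \simeq \mathrm{Eis}_P^G \circ i_{b!}^M, \qquad i_{b\sharp}^{\mathrm{ren}} \simeq \mathrm{Eis}_{\bar{P}}^G \circ i_{b!}^M.
\]
Statement (1) of Proposition \ref{prop:vanishingequivalents} then reduces to the $K_0$-identity $[\mathrm{Eis}_P^G(A)] = [\mathrm{Eis}_{\bar{P}}^G(A)]$ for $A$ ranging over a generating set of $K_0 D(\mathrm{Bun}_M)_{\mathrm{fin}}$, obtained by varying $b$ through basic elements of $B(M)$ for all Levi subgroups $M$.

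Next, by induction on semisimple rank of $G$ together with the compatibility of $\mathrm{Eis}$ with composition (property 2 of the expected Eisenstein formalism), I would reduce to the case where $P$ is a maximal parabolic. Here the key input is the expected duality compatibility $\mathbf{D}_{\mathrm{BZ}} \circ \mathrm{Eis}_P \simeq \mathrm{Eis}_{\bar{P}} \circ \mathbf{D}_{\mathrm{BZ}}^M$ (Conjecture \ref{conj:Eisdual}). Combined with Proposition \ref{prop:dualitypush}(ii), which gives $\mathbf{D}_{\mathrm{BZ}} \circ i_{b!}^{\mathrm{ren}} \simeq i_{b\sharp}^{\mathrm{ren}} \circ \mathbf{D}_{\mathrm{coh}}$, this yields statement (3) of Proposition \ref{prop:vanishingequivalents}: $\gamma_! \circ [\mathbf{D}_{\mathrm{coh}}] = [\mathbf{D}_{\mathrm{BZ}}] \circ \gamma_!$. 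Since both $[\mathbf{D}_{\mathrm{coh}}]$ and $[\mathbf{D}_{\mathrm{BZ}}]$ are genuine involutions on the relevant $K_0$-groups, this is equivalent to statement (1) by the proof of Proposition \ref{prop:vanishingequivalents}.

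The main obstacle is making Conjecture \ref{conj:Eisdual} (the geometric analog of Bernstein's second adjointness) unconditional for $\overline{\mathbf{Q}_\ell}$-coefficients on $\mathrm{Bun}_G$, where the Eisenstein functor formalism is still under development; for torsion coefficients this follows from \cite{DHKM2}, which suggests an integral approach that might then be extended to characteristic zero. A secondary obstacle is that the Eisenstein interpretation of $i_{b!}^{\mathrm{ren}}$ and $i_{b\sharp}^{\mathrm{ren}}$ itself is only fully established in the torsion setting. To sidestep both issues, one could instead decompose $K_0 D(\mathrm{Bun}_G)_{\mathrm{fin}} = \bigoplus_\phi K_0 D(\mathrm{Bun}_G)_{\mathrm{fin},\phi}$ over semisimple Fargues--Scholze parameters and verify the vanishing one $\phi$ at a time: for generous $\phi$ it is immediate from Conjecture \ref{conj:generousdeep}(ii), which gives the far stronger isomorphism $i_{b\sharp}^{\mathrm{ren}}\pi \simeq i_{b!}^{\mathrm{ren}}\pi$; for remaining (semisimple generic or otherwise singular) parameters, one would use the explicit matching of sheaves sketched in Section 2.3 together with the expected behavior of the spectral action to reduce to a finite combinatorial verification inside each $K_0 D(\mathrm{Bun}_G)_{\mathrm{fin},\phi}$.
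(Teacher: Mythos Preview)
The statement you are attempting to prove is explicitly a \emph{conjecture} in the paper, not a theorem; the paper provides no proof and remarks that ``this conjecture seems to lie very deep.'' So there is no paper-proof to compare against, and any unconditional argument would be a genuine advance.

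Your proposal, however, is not such an argument: it is a chain of reductions to \emph{other} conjectures (the Eisenstein formalism of Remark~\ref{rem:Eisensteinpush}, Conjecture~\ref{conj:Eisdual}, Conjecture~\ref{conj:generousdeep}), several of which the paper itself flags as open with $\overline{\mathbf{Q}_\ell}$-coefficients. The paper already records the reduction to Conjecture~\ref{conj:eisvanishing} (independence of $[\mathrm{Eis}_P^G]$ from $P$), so the first half of your plan recovers that known equivalence rather than progressing past it.

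There is also a genuine logical gap in your central step. You claim that Conjecture~\ref{conj:Eisdual} together with Proposition~\ref{prop:dualitypush}(ii) yields statement (3), i.e.\ $\gamma_! \circ [\mathbf{D}_{\mathrm{coh}}] = [\mathbf{D}_{\mathrm{BZ}}] \circ \gamma_!$. But combining the Eisenstein description $i_{b!}^{\mathrm{ren}} \simeq \mathrm{Eis}_P \circ i_{b!}^M$, $i_{b\sharp}^{\mathrm{ren}} \simeq \mathrm{Eis}_{\bar P} \circ i_{b!}^M$ with Conjecture~\ref{conj:Eisdual} only \emph{rederives} Proposition~\ref{prop:dualitypush}(ii), i.e.\ the unconditional identity $[\mathbf{D}_{\mathrm{BZ}}] \circ \gamma_! = \gamma_\sharp \circ [\mathbf{D}_{\mathrm{coh}}]$, which the paper already uses in the proof of Proposition~\ref{prop:vanishingequivalents}. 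To get statement (3) from this you would need $\gamma_! = \gamma_\sharp$, which is exactly statement (1)---so the argument is circular. Equivalently, from $[\mathbf{D}_{\mathrm{BZ}}] \circ [\mathrm{Eis}_P] = [\mathrm{Eis}_{\bar P}] \circ [\mathbf{D}_{\mathrm{BZ}}^M]$ you cannot conclude $[\mathrm{Eis}_P] = [\mathrm{Eis}_{\bar P}]$ without already controlling how $[\mathbf{D}_{\mathrm{BZ}}]$ interacts with the Eisenstein image, which is the content of the conjecture.

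Your fallback parameter-by-parameter strategy is likewise conditional: for generous $\phi$ it invokes Conjecture~\ref{conj:generousdeep}(ii), and for the remaining $\phi$ it gestures at the speculative matching of Section~2.3 without a mechanism. The paper does carry out exactly this kind of spectral check at the trivial parameter (using $[A_\lambda]=[A_{w_0(\lambda)}]$ via Achar--Hardesty), but emphasizes that this is evidence, not a proof.
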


This conjecture seems to lie very deep, and it has significant consequences
for the cohomology of local Shimura varieties.

It is fruitful to study this conjecture one \emph{L}-parameter at
a time. More precisely, passing to $\phi$-localizations, one can
introduce an additional grading on the source and target of the $\gamma$'s,
indexed by semisimple \emph{L}-parameters. Conjecture \ref{conj:shriekvanishing}
is then equivalent, for instance, to the statement that for all semisimple
L-parameters $\phi$ and all $b'\neq b$, the map 
\[
[i_{b'}^{\ast\mathrm{ren}}i_{b\sharp}^{\mathrm{ren}}]:K_{0}\mathrm{Rep}(G_{b}(E))_{\mathrm{fin},\phi}\to K_{0}\mathrm{Rep}(G_{b'}(E))_{\mathrm{fin},\phi}
\]
is zero. Note that if $\phi$ is generous, the final part of Conjecture
\ref{conj:generousdeep} predicts that 
\[
i_{b'}^{\ast\mathrm{ren}}i_{b\sharp}^{\mathrm{ren}}:D(G_{b}(E),\overline{\mathbf{Q}_{\ell}})_{\mathrm{fin},\phi}\to D(G_{b'}(E),\overline{\mathbf{Q}_{\ell}})_{\mathrm{fin},\phi}
\]
is \emph{already }identically zero before passing to $K_{0}$! However,
in general this map will not vanish before passing to $K_{0}$.

A good example of this phenomenon is given by the trivial \emph{L}-parameter.
Here, to see that $\gamma_{!}$ and $\gamma_{\sharp}$ are equal on
the summand indexed by the trivial parameter, we need to see that
$[i_{b_{\lambda}!}^{\mathrm{ren}}\pi_{\lambda}]=[i_{b_{\lambda}\sharp}^{\mathrm{ren}}\pi_{\lambda}]$
for all dominant $\lambda$. On the other side of the categorical
conjecture, this corresponds to the expectation that $[\nu_{\ast}A_{\lambda}]=[\nu_{\ast}A_{w_{0}(\lambda)}]$
in $K_{0}\mathrm{Coh}(\mathrm{Par}_{G})_{\mathrm{fin}}$. But this
is true! In fact, the equality $[A_{\lambda}]=[A_{w_{0}(\lambda)}]$
already holds in $K_{0}\mathrm{Coh}(\mathcal{N}/\hat{G})$, which
follows from the results in section 4 of \cite{AH}. Indeed, Achar-Hardesty
prove that twisted Grothendieck-Serre duality induces the \emph{identity
}on $K_{0}\mathrm{Coh}(\mathcal{N}/\hat{G})$, but on the other hand
it is easy to check that $\mathbf{D}_{\mathrm{tw.GS}}A_{\lambda}\simeq A_{w_{0}(\lambda)}$.

There is another, closely related conjecture. Fix a Levi subgroup
$M\subset G$ and a parabolic $P=MU$ containing it. Recall from our
discussion of Eisenstein series that $\mathrm{Eis}_{P}^{G}$ is expected
to preserve compact objects, and also ULA objects with quasicompact
support. With $\overline{\mathbf{Q}_{\ell}}$-coefficients, we thus
expect that it will preserve finite sheaves, and in particular it
will induce a functor
\[
[\mathrm{Eis}_{P}^{G}]:K_{0}D(\mathrm{Bun}_{M})_{\mathrm{fin}}\to K_{0}D(\mathrm{Bun}_{G})_{\mathrm{fin}}.
\]

\begin{conjecture}
\label{conj:eisvanishing}The functor $[\mathrm{Eis}_{P}^{G}]$ depends
only on the Levi $M$.
\end{conjecture}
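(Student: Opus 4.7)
The plan is to deduce Conjecture \ref{conj:eisvanishing} from Conjecture \ref{conj:shriekvanishing}, exploiting the geometric Eisenstein identities of Remark \ref{rem:Eisensteinpush} as the bridge between the two vanishing statements.

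First I would reduce to the case of opposite parabolics inside an ambient Levi of semisimple rank one over $M$. Given two arbitrary parabolics $P_1, P_2 \supseteq M$, they correspond to Weyl chambers in the relative root system $\Phi(G, Z(M))$ and can be joined by a gallery through adjacent chambers. Each wall-crossing step replaces $P_1 \cap M'$ by its opposite $P_2 \cap M'$ inside some intermediate Levi $M' \supsetneq M$ in which $M$ has codimension one. By the compositional transitivity of Eisenstein series (property 2 of the Expectation preceding Conjecture \ref{conj:eiscategoricalmatch}), it suffices to prove $[\mathrm{Eis}_P^{M'}] = [\mathrm{Eis}_{\overline P}^{M'}]$ for opposite parabolics $P, \overline P \subset M'$ with common Levi $M$.

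For this opposite-parabolic case, fix any $M$-basic $b \in B(M)$ with $\nu_b$ regular in $Z(M)$ and lying in the $P$-chamber. Remark \ref{rem:Eisensteinpush}, applied inside the ambient group $M'$ rather than $G$, then provides natural isomorphisms
\[
i_{b!}^{M',\mathrm{ren}} \simeq \mathrm{Eis}_P^{M'} \circ i_{b!}^M, \qquad i_{b\sharp}^{M',\mathrm{ren}} \simeq \mathrm{Eis}_{\overline P}^{M'} \circ i_{b!}^M.
\]
Passing to $K_0$ of finite objects and invoking Conjecture \ref{conj:shriekvanishing} for $M'$ in the equivalent formulation $[i_{b!}^{M',\mathrm{ren}}] = [i_{b\sharp}^{M',\mathrm{ren}}]$ yields $[\mathrm{Eis}_P^{M'}] \circ [i_{b!}^M] = [\mathrm{Eis}_{\overline P}^{M'}] \circ [i_{b!}^M]$. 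Varying $\nu_b$ across both chambers of $Z(M)/Z(M')$ recovers agreement on the summands of $K_0(D(\mathrm{Bun}_M)_{\mathrm{fin}}) \cong \bigoplus_{b' \in B(M)} K_0(\mathrm{Rep}(M_{b'}(E))_{\mathrm{fin}})$ indexed by $M'$-regular, $M$-basic elements.

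The main obstacle is the remaining summands, corresponding to $b' \in B(M)$ which are either non-$M$-basic or which have $\nu_{b'}$ lying on a wall inside $Z(M)/Z(M')$. For such $b'$, the $M'$-centralizer $M'' = (M')_{\{b'\}}$ is an intermediate Levi strictly between $M$ and $M'$, and iterated transitivity $\mathrm{Eis}_P^{M'} \simeq \mathrm{Eis}_{P'}^{M'} \circ \mathrm{Eis}_{P \cap M''}^{M''}$, with $P' \supseteq P$ of Levi $M''$, factors the question through a lower-rank Eisenstein comparison inside $M''$. A double induction on the semisimple rank of the ambient group and on the codimension of the wall containing $\nu_{b'}$ closes the argument. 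The hard part is checking that the transitivity decomposition interacts cleanly with all the $K_0$-decompositions and renormalizations so that the inductive step actually closes; beyond this combinatorial bookkeeping, the truly deep input remains Conjecture \ref{conj:shriekvanishing} itself, which is far more difficult than what is being deduced, and the argument essentially shows that the two vanishing conjectures should be regarded as equivalent rather than independent.
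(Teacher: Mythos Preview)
The statement in question is a \emph{conjecture}, and the paper does not prove it. What the paper does prove (in the Proposition immediately following) is that Conjecture~\ref{conj:eisvanishing} is \emph{equivalent} to Conjecture~\ref{conj:shriekvanishing}. The direction ``eisvanishing $\Rightarrow$ shriekvanishing'' is the one that follows immediately from Remark~\ref{rem:Eisensteinpush}: given $[\mathrm{Eis}_P^G]=[\mathrm{Eis}_{\overline P}^G]$, composing with $[i_{b!}^M]$ and applying the two identities of that remark yields $[i_{b!}^{\mathrm{ren}}]=[i_{b\sharp}^{\mathrm{ren}}]$ for every $b$, which is exactly formulation~(1) of Proposition~\ref{prop:vanishingequivalents}. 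The direction you attempt, ``shriekvanishing $\Rightarrow$ eisvanishing'', is the one the paper calls ``much less obvious'' and defers to \cite{HHS}.

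Your outline for this hard direction has a concrete error in the handling of the ``obstacle'' summands. After your rank-one reduction you have $M$ \emph{maximal} in $M'$, so there is literally no Levi strictly between $M$ and $M'$; the sentence ``the $M'$-centralizer $M'' = (M')_{\{b'\}}$ is an intermediate Levi strictly between $M$ and $M'$'' is therefore never true. Concretely: if $b'$ is $M$-basic with $\nu_{b'}$ lying on the wall, then $\nu_{b'}$ is central in $M'$ and $M''=M'$; if $b'$ is non-$M$-basic, then $\nu_{b'}$ is not even central in $M$, so its $M'$-centralizer is a proper Levi of $M'$ which need not contain $M$ at all. In either case the factorization $\mathrm{Eis}_P^{M'} \simeq \mathrm{Eis}_{P'}^{M'}\circ \mathrm{Eis}_{P\cap M''}^{M''}$ with $P'\supseteq P$ of Levi $M''$ is not available, since that transitivity requires $M\subseteq M''$. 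The wall case can in fact be treated directly via Property~1 of the Expectation (which gives $\mathrm{Eis}_P^{M'}\circ i_{b'!}^M \simeq i_{b'!}^{M'}\circ i_{P_{b'}}^{M'_{b'}}$ for $b'$ basic in $M'$) together with the classical van~Dijk independence of $[i_P^{M'}]$ from $P$. The non-$M$-basic case, however, genuinely requires a different inductive scheme---one that does \emph{not} first pass to rank one, since the relevant centralizers of $\nu_{b'}$ drop below $M$ rather than sitting above it---and this is presumably where the work deferred to \cite{HHS} lies.
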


This is a geometric analogue of the classical fact that for any parabolic
$P=MU\subset G$, the map $[i_{P}^{G}]:K_{0}\mathrm{Rep}(M(E))_{\mathrm{fin}}\to K_{0}\mathrm{Rep}(G(E))_{\mathrm{fin}}$
depends only on the Levi $M$, which is an easy consequence of van
Dijk's formula for the Harish-Chandra character of a parabolic induction
\cite{vD}.
\begin{prop}
Conjecture \ref{conj:shriekvanishing} and Conjecture \ref{conj:eisvanishing}
are equivalent.
\end{prop}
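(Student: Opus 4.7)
The plan is to prove the equivalence in two directions, both resting on Remark~\ref{rem:Eisensteinpush}, which identifies $i_{b!}^{\mathrm{ren}}$ and $i_{b\sharp}^{\mathrm{ren}}$ with $\mathrm{Eis}_P^G \circ i_{b!}^M$ and $\mathrm{Eis}_{\overline{P}}^G \circ i_{b!}^M$ respectively, for $M = C_G(\nu_b)$ and $P, \overline{P}$ the attracting and repelling dynamic parabolics of $\nu_b$.

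\textbf{Direction A} (Conjecture~\ref{conj:eisvanishing} implies Conjecture~\ref{conj:shriekvanishing}) is essentially immediate. For each $b \in B(G)$, Remark~\ref{rem:Eisensteinpush} combined with Conjecture~\ref{conj:eisvanishing} applied to the opposite parabolics $(P,\overline{P})$ with common Levi $M$ gives $[i_{b!}^{\mathrm{ren}}] = [\mathrm{Eis}_P^G \circ i_{b!}^M] = [\mathrm{Eis}_{\overline{P}}^G \circ i_{b!}^M] = [i_{b\sharp}^{\mathrm{ren}}]$. Summing over $b \in B(G)$ yields $\gamma_! = \gamma_\sharp$, i.e. condition~(1) of Proposition~\ref{prop:vanishingequivalents}.

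\textbf{Direction B} (Conjecture~\ref{conj:shriekvanishing} implies Conjecture~\ref{conj:eisvanishing}) requires more work. I would proceed by strong induction on the semisimple rank of $G$, interpreting the Proposition as asserting an equivalence of families of conjectures indexed by (quasisplit) reductive groups. For $G$ a torus, the claim is trivial; for $G$ of positive rank, the inductive hypothesis provides Conjecture~\ref{conj:eisvanishing} (and, via Direction A, Conjecture~\ref{conj:shriekvanishing}) for every proper Levi $L \subsetneq G$. Given $M \subseteq G$ and two parabolics $P, P'$ of $G$ with Levi $M$, I test $[\mathrm{Eis}_P^G \circ i_{b!}^M] = [\mathrm{Eis}_{P'}^G \circ i_{b!}^M]$ on each $b \in B(M)$. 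Using Remark~\ref{rem:Eisensteinpush} inside $M$ together with transitivity (Property~2), one reduces to the case where $b$ is basic in $M$ (replacing $M$ by $M_1 := C_M(\nu_b)$ and adjusting $P, P'$ accordingly). Set $L := C_G(\nu_b) \supseteq M$. If $L = G$, Property~1 of Eisenstein gives $\mathrm{Eis}_P^G \circ i_{b!}^M \simeq i_{b!}^G \circ i_{P_b}^{G_b}$ for every $P$, and the classical van Dijk equality $[i_{P_b}^{G_b}] = [i_{P_b'}^{G_b}]$ concludes this case. If $L \subsetneq G$, transitivity yields $\mathrm{Eis}_P^G = \mathrm{Eis}_{Q_P}^G \circ \mathrm{Eis}_{P \cap L}^L$ with $Q_P \supset P$ the unique parabolic of $G$ with Levi $L$; the inductive Conjecture~\ref{conj:eisvanishing} for $L$ gives $[\mathrm{Eis}_{P \cap L}^L] = [\mathrm{Eis}_{P' \cap L}^L]$, reducing the problem to $[\mathrm{Eis}_{Q_P}^G \circ i_{b!}^L] = [\mathrm{Eis}_{Q_{P'}}^G \circ i_{b!}^L]$. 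When $Q_P, Q_{P'}$ are precisely the attracting and repelling parabolics of $\nu_b$ with Levi $L$, this is exactly Conjecture~\ref{conj:shriekvanishing} for $G$ via Remark~\ref{rem:Eisensteinpush}; otherwise one connects $Q_P$ to $Q_{P'}$ by wall-crossings in the poset of parabolics of $G$ with Levi $L$, each crossing controlled by a rank-one intermediate Levi $L' \subsetneq G$ via Remark~\ref{rem:Eisensteinpush} for $L' \subset G$ and Conjecture~\ref{conj:shriekvanishing} for $L'$ (available by induction).

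The main obstacle is the combinatorial bookkeeping in Direction B: one must arrange the induction so that every auxiliary Levi invoked has strictly smaller semisimple rank than $G$. This is automatic because $C_G(\nu_b) = G$ precisely when $b$'s image in $B(G)$ is basic, a case already handled by Property~1 and van Dijk, so every appearance of an intermediate Levi $L$ or $L'$ in the wall-crossing argument satisfies $L, L' \subsetneq G$. A secondary technicality is tracking the twists by $\delta_b^{1/2}$ and shifts by $-\langle 2\rho_G,\nu_b\rangle$ in the renormalization; these are $K_0$-automorphisms and hence harmless, but they must be bookkept consistently when applying Remark~\ref{rem:Eisensteinpush} at each level.
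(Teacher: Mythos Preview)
Your Direction~A matches the paper exactly: the implication Conjecture~\ref{conj:eisvanishing} $\Rightarrow$ Conjecture~\ref{conj:shriekvanishing} is immediate from Remark~\ref{rem:Eisensteinpush}, and this is all the paper says for that direction.

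For Direction~B, the paper does not give an argument: it writes only that ``the other implication is much less obvious, and a detailed proof will appear in \cite{HHS}.'' So you are attempting more than the paper does. Your inductive strategy is plausible in spirit, but as written it has a genuine gap. In step~(4) you assert that ``transitivity yields $\mathrm{Eis}_P^G = \mathrm{Eis}_{Q_P}^G \circ \mathrm{Eis}_{P \cap L}^L$ with $Q_P \supset P$ the unique parabolic of $G$ with Levi $L$,'' but such a $Q_P$ need not exist. Take $G=\mathrm{GL}_3$, $M=T$, $b=\mathrm{diag}(\varpi,\varpi,1)$ (basic in $T$), so $L=C_G(\nu_b)$ is the block-$(2,1)$ Levi. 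If $P$ is the Borel for the ordering $(1,3,2)$, then $\mathfrak{u}_P$ contains both $\mathfrak{g}_{e_1-e_3}$ (on which $\nu_b$ is positive) and $\mathfrak{g}_{e_3-e_2}$ (on which $\nu_b$ is negative), so $P$ lies in neither of the two parabolics with Levi $L$. Your subsequent wall-crossing, which presupposes that $Q_P$ and $Q_{P'}$ exist and then tries to connect them, never confronts this.

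A more robust variant is to wall-cross among parabolics with Levi $M$ rather than $L$: adjacent chambers there always sit jointly inside a parabolic with some strictly larger Levi $L'$, and the inductive hypothesis applies to $L'$ when $L'\subsetneq G$. The residual case is $M$ maximal with $P,P'$ opposite, which is where Conjecture~\ref{conj:shriekvanishing} for $G$ itself must enter. Even here one must be careful: Remark~\ref{rem:Eisensteinpush} plus Conjecture~\ref{conj:shriekvanishing} only directly compares $[\mathrm{Eis}_P^G\circ i_{b!}^M]$ with $[\mathrm{Eis}_{\overline P}^G\circ i_{b!}^M]$ for those basic $b\in B(M)$ whose Newton point lies in the open $P$-chamber, and one needs a further reduction (via Property~1 and van~Dijk for $b$ basic in $G$, and via the observation $C_G(\nu_b)=M$ otherwise) to cover all inputs. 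None of this is insurmountable, but it is not what you wrote, and the paper is right to flag this direction as ``much less obvious.''
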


\begin{proof}[Sketch]
 One implication follows immediately from Remark \ref{rem:Eisensteinpush}.
The other implication is much less obvious, and a detailed proof will
appear in \cite{HHS2}.
\end{proof}
It turns out that Conjecture \ref{conj:Eisheckefiltered} and Conjecture
\ref{conj:eisvanishing} together imply a vast generalization of the
``Harris-Viehmann conjecture'' describing the $K_{0}$-class of
the cohomology of non-basic local Shimura varieties in terms of parabolic
inductions. A detailed discussion will appear in \cite{HHS2}.
\begin{xca}
Return to the notation and setup of Section 2.2.

1) (Difficult.) For any $\lambda\in X^{*}(\hat{T})$ and any $w\in W$,
prove that $[A_{\lambda}]=[A_{w(\lambda)}]$ in $K_{0}\mathrm{Coh}(\mathcal{N}/\hat{G})$.

2) Show that 1) is consistent with Conjecture \ref{conj:eisvanishing}
and the categorical conjecture. Hint: Generalize the arguments in
Section 2.2 to show that for $\lambda$ dominant and $w$ arbitrary,
$\mathrm{Eis}_{B}(i_{w(\lambda)!}\mathbf{1})\simeq\mathrm{Eis}_{^{w}B}(i_{\lambda!}\mathbf{1})$
should match with $A_{w(\lambda)}$ under the categorical conjecture.

3) (Difficult.) Assume Conjecture \ref{conj:trivialparametermain}.
Prove that for any $\lambda\in X^{\ast}(\hat{T})^{+}$, 
\[
[\mathscr{F}_{\lambda+2\rho}]=\sum_{w\in W}(-1)^{\ell(w)}[i_{b_{\mathrm{dom}(\lambda+\rho+w\rho)}!}^{\mathrm{ren}}\pi_{\mathrm{dom}(\lambda+\rho+w\rho)}]
\]
in $K_{0}D(\mathrm{Bun}_{G})_{\mathrm{fin}}$. Here $\mathrm{dom}(\mu)$
denotes the unique dominant element in the $W$-orbit of $\mu$. For
$G=\mathrm{PGL}_{3}$ and $\lambda=0$, compute the right-hand side
explicitly.
\end{xca}

\begin{rem}
According to Conjecture \ref{conj:shriekvanishing}, the elements
$[i_{b!}^{\mathrm{ren}}\pi]=[i_{b\sharp}^{\mathrm{ren}}\pi]$ should
coincide, and should give a canonical $\mathbf{Z}$-basis for $K_{0}D(\mathrm{Bun}_{G})_{\mathrm{fin}}$
parametrized by pairs $(b,\pi)$. There is a second canonical $\mathbf{Z}$-basis
parametrized by the same set, given by the elements $[\mathscr{G}_{b,\pi}]$
where $\mathscr{G}_{b,\pi}$ is the irreducible hadal sheaf constructed
in Theorem \ref{thm:irreducible-hadal-sheaves}. In particular, we
can write any $[\mathscr{G}_{b,\pi}]$ uniquely as a finite $\mathbf{Z}$-linear
combination of $[i_{b'!}^{\mathrm{ren}}\pi']$, and vice versa. Experiments
(including the previous exercise) suggest that the coefficients in
the expansion $[\mathscr{G}_{b,\pi}]=\sum_{(b',\pi')}c_{b',\pi'}[i_{b'!}^{\mathrm{ren}}\pi']$
are somewhat complicated, with unpredictable signs. However, in every
example I have worked out, the coefficients in the expansion
\[
[i_{b!}^{\mathrm{ren}}\pi]=\sum_{(b',\pi')}\alpha_{b',\pi'}[\mathscr{G}_{b',\pi'}]
\]
are \emph{nonnegative} integers. At semisimple generic parameters,
this is predicted by the discussion in section 2.3, but this is also
true in some nontrivial cases around the Steinberg parameter for $\mathrm{GL}_{2}$.
Does this positivity phenomenon persist in general?\footnote{The answer is no; Bertoloni Meli showed me a counterexample.}
\end{rem}

\appendix

\section{Homological properties of Whittaker functions }

Fix $E/\mathbf{Q}_{p}$ finite, $G$ (the $E$-points of) a quasisplit
reductive group, $B=TU\subset G$ as usual. If $R$ is any commutative
ring, we write $\mathrm{Mod}_{R}(G)$ for the category of smooth $R[G]$-modules.

Let $\Lambda$ be any $\mathbf{Z}[\tfrac{1}{p},\zeta_{p^{\infty}}]$
algebra, and let $\psi:U\to\Lambda^{\times}$ be any nondegenerate
additive character. We are interested in the space
\[
W_{\psi}=\mathrm{ind}_{U}^{G}(\psi)
\]
of compactly supported Whittaker functions with coefficients in $\Lambda$.
In other words, $W_{\psi}\subset\mathcal{C}(G,\Lambda)$ is the space
of functions such that $f(ug)=\psi(u)f(g)$ for all $u\in U$ and
$g\in G$, $f$ is right-invariant by some open compact subgroup of
$G$, and the support of $f$ has compact image in $U\backslash G$.

In the most classical situation where $\Lambda=\mathbf{C}$, Chan-Savin
proved that $W_{\psi}$ is a projective object in $\mathrm{Mod}_{\mathbf{C}}(G)$
\cite{CS}, and Bushnell-Henniart proved that the summand $W_{\psi,\mathfrak{s}}$
corresponding to an individual Bernstein component $\mathfrak{s}$
is finitely generated \cite{BH}. The following theorem is a generalization
of these results.
\begin{thm}
\emph{\label{thm:Wpsinice}i. }For any $\Lambda$, $W_{\psi}$ is
a projective object in $\mathrm{Mod}_{\Lambda}(G)$.

\emph{ii. }Let $e_{r}:\mathrm{Mod}_{\Lambda}(G)\to\mathrm{Mod}_{\Lambda}(G)_{r}$
be the projector onto the depth $\leq r$ factor category. Then $e_{r}W_{\psi}\in\mathrm{Mod}_{\Lambda}(G)$
is finitely generated projective, and $\mathbf{D}_{\mathrm{coh}}(e_{r}W_{\psi})\simeq e_{r}W_{\psi^{-1}}$.

\emph{iii. }Assume that $\Lambda=\mathbf{C}$. Then $\mathbf{D}_{\mathrm{coh}}(W_{\psi,\mathfrak{s}})\simeq W_{\psi^{-1},\mathfrak{s}^{\vee}}$
for any Bernstein component $\mathfrak{s}$.
\end{thm}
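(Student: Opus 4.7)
The plan is to prove all three parts by realizing $W_\psi$ as a filtered colimit of explicitly projective summands of regular representations of compact open subgroups, and then pushing this description through the depth and Bernstein decompositions.

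For (i), I would first fix a cofinal decreasing sequence of compact open pro-$p$ subgroups $K_n \subset G$ admitting Iwahori decomposition $K_n = K_n^- \cdot K_n^0 \cdot K_n^+$ relative to $B = TU$, chosen so that $K_n^+ = K_n \cap U$ is cofinal in the compact opens of $U$ and so that $K_n^0 \subset T$ stabilizes the character $\psi|_{K_n^+}$. Because $\Lambda$ contains $\mathbf{Z}[\tfrac{1}{p}, \zeta_{p^\infty}]$ and $K_n^+$ is a pro-$p$ group, the $\psi$-average
\[
e_n \;=\; |K_n^+|^{-1}\sum_{u \in K_n^+} \psi^{-1}(u)\, u \;\in\; \Lambda[K_n^+]
\]
is a well-defined idempotent, and central in $\Lambda[K_n]$ by the stabilizer hypothesis. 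Frobenius reciprocity then identifies $\mathrm{ind}_{K_n}^G(e_n \cdot \Lambda[K_n]) \simeq \mathrm{ind}_{K_n^+}^G \psi$, exhibiting the latter as a direct summand of the projective module $\Lambda[G/K_n] = \mathrm{ind}_{K_n}^G \Lambda[K_n]$. One then checks that $W_\psi = \varinjlim_n \mathrm{ind}_{K_n^+}^G \psi$ with \emph{split} transition maps supplied by the compatibility of the $e_n$'s across levels, which gives projectivity of $W_\psi$.

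For (ii), I would use that the depth $\leq r$ projector $e_r$ is realized by convolution with an explicit distribution supported on a congruence subgroup $K(r) \subset G$. Choosing $n$ so that $K_n \subset K(r)$, the module $e_r W_\psi$ is generated over $\Lambda[G]$ by the $(K_n^+,\psi)$-equivariant vectors fixed by $K(r)$, a finite-dimensional $\Lambda$-module (finiteness following from the fact that depth-$r$ truncation reduces $K(r) \backslash G / K_n^+$ to finitely many orbits supporting such vectors). Combined with (i), this gives $e_r W_\psi$ finitely generated projective. For the duality, Frobenius reciprocity together with vanishing of higher Ext (from projectivity) yields
\[
\mathbf{D}_{\mathrm{coh}}(\mathrm{ind}_{K_n^+}^G \psi) \;=\; R\mathrm{Hom}_G(\mathrm{ind}_{K_n^+}^G \psi, \mathcal{C}_c^\infty(G,\Lambda)) \;\simeq\; \mathrm{Hom}_{K_n^+}(\psi, \mathcal{C}_c^\infty(G,\Lambda)),
\]
and the right-hand side is explicitly identified with $\mathrm{ind}_{K_n^+}^G \psi^{-1}$ by tracking the convention $f(ug) = \psi(u)f(g)$ versus its inverse. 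Applying $e_r$ through this isomorphism yields $\mathbf{D}_{\mathrm{coh}}(e_r W_\psi) \simeq e_r W_{\psi^{-1}}$.

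For (iii), when $\Lambda = \mathbf{C}$ the depth $\leq r$ stratum further decomposes into finitely many Bernstein blocks $\mathfrak{s}$, so $e_r W_\psi = \bigoplus_\mathfrak{s} W_{\psi,\mathfrak{s}}$. Cohomological duality is $\mathfrak{Z}(G)$-linear, hence preserves the block decomposition, and a standard computation (matching the Aubert--Zelevinsky involution on irreducibles) shows that $\mathbf{D}_{\mathrm{coh}}$ exchanges $\mathfrak{s} = [M,\sigma]_G$ with $\mathfrak{s}^\vee = [M,\sigma^\vee]_G$. Combining with the duality identification of (ii) at each $r$ separately gives $\mathbf{D}_{\mathrm{coh}}(W_{\psi,\mathfrak{s}}) \simeq W_{\psi^{-1},\mathfrak{s}^\vee}$.

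The principal obstacle will be the compatibility of the idempotents $e_n$ with the Iwahori decomposition in (i): arranging that $K_n^0$ actually lies in the stabilizer of $\psi|_{K_n^+}$ for generic $\psi$ may force nonstandard filtrations of Moy--Prasad type. A secondary difficulty is the finite-generation statement in (ii), which requires matching the depth filtration with the Iwahori filtration carefully; concretely, one must show that the pull-back of $K(r)$ along the $K_n^+$-averaging interacts correctly with the $e_n$, so that only finitely many double cosets survive the depth truncation.
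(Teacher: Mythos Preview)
Your approach has a structural gap in part~(i) that propagates through the rest.  You try to realize $W_\psi=\mathrm{ind}_U^G\psi$ as a filtered colimit of $\mathrm{ind}_{K_n^+}^G\psi$ along split injections, with $K_n^+$ compact open in $U$.  But no such presentation exists: if $K_n^+$ exhausts $U$ then the natural transition maps $\mathrm{ind}_{K_n^+}^G\psi\to\mathrm{ind}_{K_{n+1}^+}^G\psi$ are the averaging surjections, not injections; if instead $K_n^+$ shrinks, the colimit is $\mathcal{C}_c^\infty(G)$, not $W_\psi$.  (Your own setup is already inconsistent: you ask for a \emph{decreasing} sequence $K_n\subset G$ with $K_n^+=K_n\cap U$ cofinal in $U$.)  Relatedly, the idempotent $e_n$ is not central in $\Lambda[K_n]$: for nondegenerate $\psi$ the stabilizer of $\psi|_{K_n\cap U}$ in $T$ is essentially the center, and $K_n^-$ neither normalizes $K_n^+$ nor preserves $\psi$.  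So the mechanism you propose for producing a summand of $\mathrm{ind}_{K_n}^G\Lambda$ and for splitting the transition maps does not work.

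The paper's argument avoids this by Rodier approximation: one uses compact open subgroups $K_n\subset G$ with $K_n\cap U$ \emph{increasing} and $K_n\cap\overline{B}$ \emph{decreasing}, together with characters $\psi_n$ of $K_n$ which agree with $\psi$ on $K_n\cap U$ and are trivial on $K_n\cap\overline{B}$.  The colimit of $\mathrm{ind}_{K_n}^G\psi_n$ is then $W_\psi$, and the key nontrivial fact (Rodier's Lemme~5, extended by Varma) is a convolution identity $\varphi_n\ast\varphi_{n+1}\ast\varphi_n=c_n\varphi_n$ for large $n$, which forces the transition maps to be split injections.  This is genuinely hard and is exactly the missing ingredient in your sketch.

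For (ii), your finite-generation argument (``finitely many double cosets survive depth truncation'') is unsubstantiated; there is no a~priori bound of this kind.  The paper instead proves (iii) first, using Bushnell--Henniart's finiteness of $W_{\psi,\mathfrak s}$ over $\mathbf C$ to show the Rodier system stabilizes at finite level on each Bernstein component, and then deduces (ii) over $\Lambda=\mathbf Z[\tfrac1p,\zeta_{p^\infty}]$ by a base-change argument (the stabilization over $\mathbf C$ forces the cokernel, a projective $\Lambda$-module, to vanish).  The duality statements then follow from $\mathbf D_{\mathrm{coh}}(\mathrm{ind}_{K_n}^G\psi_n)\simeq\mathrm{ind}_{K_n}^G\psi_n^{-1}$ at finite level.
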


Here as before, $\mathbf{D}_{\mathrm{coh}}(-)$ denotes the cohomological
duality functor $R\mathrm{Hom}(-,\mathcal{C}_{c}^{\infty}(G,\Lambda))$.
We note that a different proof of i. and of the finite generation
statement in ii. was also discovered by Dat-Helm-Kurinczuk-Moss.

The main tools in our analysis of $W_{\psi}$ are Bushnell-Henniart's
finiteness results over $\mathbf{C}$, together with a beautiful approximation
technique due to Rodier \cite{R}, which seems perhaps under-appreciated.
To explain this, fix $\Lambda$-valued Haar measures $dg$ on $G$
and $du$ on $U$. We may assume that the volume of any pro-$p$ open
subgroup is a unit in $\Lambda$. Following Rodier, we may choose
a sequence of pairs $(K_{n},\psi_{n})_{n\geq1}$, where $K_{n}\subset G$
is an open compact pro-$p$ subgroup and $\psi_{n}:K_{n}\to\Lambda^{\times}$
is a smooth character, with the following remarkable properties.
\begin{lyxlist}{00.00.0000}
\item [{P1}] $K_{n}\cap U$ is an increasing sequence of groups, with $U=\bigcup_{n}K_{n}\cap U$.
\item [{P2}] $K_{n}\cap\overline{B}$ is a decreasing sequence of groups,
with $\{e\}=\bigcap_{n}K_{n}\cap\overline{B}$.
\item [{P3}] For all $n$, $K_{n}=(K_{n}\cap U)\cdot(K_{n}\cap\overline{B})=(K_{n}\cap\overline{B})\cdot(K_{n}\cap U)$.
\item [{P4}] $\psi_{n}|_{K_{n}\cap\overline{B}}=1$ and $\psi_{n}|_{K_{n}\cap U}=\psi|_{K_{n}\cap U}$.
\item [{P5}] Let $\varphi_{n}\in\mathcal{C}_{c}^{\infty}(G,\Lambda)$ be
the function on $G$ obtained from $\psi_{n}$ via extension by zero.
Then there is an explicit integer $B$ such that for all $n\geq B$,
\[
\varphi_{n}\ast\varphi_{n+1}\ast\varphi_{n}=\mathrm{vol}(K_{n})\mathrm{vol}(K_{n+1}\cap K_{n})\varphi_{n}
\]
where $\ast$ denotes the usual convolution structure on $\mathcal{C}_{c}^{\infty}(G,\Lambda)$
relative to the chosen Haar measure.
\end{lyxlist}
The existence of a sequence of pairs satisfying P1-P4 is not particularly
hard, but P5 lies significantly deeper. Note that Rodier's paper only
treated the case of $G$ split and $p$ sufficiently large, in which
case P5 corresponds to the key ``Lemme 5'', whose proof occupies
the entirety of \cite[Section 5]{R}. In the present generality, the
existence of a system satisfying P1-P5 follows from Varma's paper
\cite{V}.\footnote{I am very grateful to Sandeep Varma for helpful communications regarding
his paper \cite{V}.} (Note that we write $K_{n}$ where Varma writes $G_{n}'$.) The essential
point is that P5 follows from Lemma 9 of \cite{V}, upon observing
that in the present situation, the element $Y$ is principal nilpotent
and contained in $\overline{\mathfrak{b}}$, so $G(Y)\subset\overline{B}$
and then (by P2) $G'_{n+1}\cap G(Y)\subset G'_{n+1}\cap\overline{B}\subset G'_{n}$,
so we may take $\mathcal{Y}_{n}=\{e\}$ in Lemma 9 of \cite{V}. Note
that \cite{R} and \cite{V} are written in the usual setting where
the coefficient ring is $\mathbf{C}$, but it is immediate that the
arguments work for any $\Lambda$ as above.

With these preparations in hand, consider the compact inductions $\mathrm{ind}_{K_{n}}^{G}(\psi_{n})$.
Then for all positive integers $m,n$, we have the $G$-equivariant
map
\begin{align*}
A_{n}^{m}:\mathrm{ind}_{K_{n}}^{G}(\psi_{n}) & \to\mathrm{ind}_{K_{m}}^{G}(\psi_{m})\\
f & \mapsto\frac{1}{\mathrm{vol}(K_{m})}\int_{K_{m}}\psi_{m}(k)^{-1}f(kg)dk.
\end{align*}
Note that $A_{n}^{m}$ is well-defined independently of the choice
of Haar measure $dk$ on $K_{m}$. Note also that $A_{m}^{\ell}\circ A_{n}^{m}=A_{n}^{\ell}$
for all $\ell\geq m\geq n$, so the representations $\mathrm{ind}_{K_{n}}^{G}(\psi_{n})$
form an inductive system with transition maps given by the $A_{n}^{m}$'s.
It is also easy to check that for $m\geq n$, the formula
\[
A_{n}^{m}(f)=\frac{1}{\mathrm{vol}(K_{m}\cap U)}\int_{K_{m}\cap U}\psi(u)^{-1}f(ug)du
\]
is true. On the other hand, for any $n\geq1$, we have a $G$-equivariant
map
\begin{align*}
\phi_{n}:\mathrm{ind}_{K_{n}}^{G}(\psi_{n}) & \to W_{\psi}\\
f & \mapsto\int_{U}\psi(u)^{-1}f(ug)du.
\end{align*}
Using the preceding formula for $A_{n}^{m}$, it is easy to check
that $\phi_{n}=\phi_{m}\circ A_{n}^{m}$ for all $m\geq n$, so passing
to the colimit we get a $G$-equivariant map
\[
\phi_{\infty}:\mathrm{colim}_{n}\mathrm{ind}_{K_{n}}^{G}(\psi_{n})\to W_{\psi}.
\]
It is then formal (Proposition 3 in Rodier's paper) that $\phi_{\infty}$
is an isomorphism. Note that all the arguments so far used only P1-P4. 

The key remaining observation is that for all $m\geq n\geq B$, $A_{n}^{m}$
is injective and $A_{m}^{n}$ is surjective. This follows immediately
from the claim that $A_{m}^{n}\circ A_{n}^{m}=\frac{\mathrm{vol}(K_{m}\cap K_{n})}{\mathrm{vol}(K_{m})}\mathrm{id}$
for all $m\geq n\geq B$ (note that the fraction here is a power of
$p$, hence a unit in $\Lambda$). To prove this claim, first note
(using P1-P3) that $\frac{\mathrm{vol}(K_{m}\cap K_{n})}{\mathrm{vol}(K_{m})}=\frac{\mathrm{vol}(K_{n}\cap U)}{\mathrm{vol}(K_{m}\cap U)}$,
whence the claim reduces by induction to the special case $m=n+1$.
The case $m=n+1$ in turn is a direct consequence of P5 above, upon
noting that $\mathrm{ind}_{K_{n}}^{G}(\psi_{n})\subset\mathcal{C}_{c}^{\infty}(G,\Lambda)$
is exactly the image of the map
\begin{align*}
\mathcal{C}_{c}^{\infty}(G,\Lambda) & \to\mathcal{C}_{c}^{\infty}(G,\Lambda)\\
f & \mapsto\varphi_{n}\ast f,
\end{align*}
and that the map $A_{n+1}^{n}\circ A_{n}^{n+1}$ coincides with further
left convolution by $\varphi_{n}\ast\varphi_{n+1}$ up to an explicit
scaling. For details, see the proof of \cite[Proposition 4]{R}. 

The equality $A_{m}^{n}\circ A_{n}^{m}=\frac{\mathrm{vol}(K_{m}\cap K_{n})}{\mathrm{vol}(K_{m})}\mathrm{id}$
immediately implies that for all $m\geq n\geq B$, $A_{n}^{m}$ is
a split injection, whence also $\phi_{n}=\phi_{\infty}\circ\mathrm{colim}_{m}A_{n}^{m}$
is a split injection. 
\begin{proof}[Proof of Theorem A.0.1.]
First we prove i. With the above preparations on Rodier approximation
at hand, this result proves itself. The only thing to observe is that
each $\mathrm{ind}_{K_{n}}^{G}(\psi_{n})$ is projective, so Rodier
approximation presents $W_{\psi}$ as the colimit of a directed system
of projective objects along split injective transition maps. But the
colimit of any such directed system is itself a projective object.

Next we prove iii. Note first that in the discussion of Rodier approximation,
we may replace $\psi$ resp. $\psi_{n}$ with $\psi^{-1}$ resp. $\psi_{n}^{-1}$
everywhere without changing the validity of any statements. In particular,
we may simultaneously write $W_{\psi}\simeq\mathrm{colim}_{n}\mathrm{ind}_{K_{n}}^{G}(\psi_{n})$
and $W_{\psi^{-1}}\simeq\mathrm{colim}_{n}\mathrm{ind}_{K_{n}}^{G}(\psi_{n}^{-1})$,
where for all sufficiently large $n$ the transition maps in both
systems are (split) injections. Now pick any Bernstein component $\mathfrak{s}$.
Again, we have $W_{\psi,\mathfrak{s}}\simeq\mathrm{colim}_{n}\mathrm{ind}_{K_{n}}^{G}(\psi_{n})_{\mathfrak{s}}$
and $W_{\psi^{-1},\mathfrak{s}^{\vee}}\simeq\mathrm{colim}_{n}\mathrm{ind}_{K_{n}}^{G}(\psi_{n}^{-1})_{\mathfrak{s}^{\vee}}$,
where for all sufficiently large $n$ the transition maps are injections.
Since $W_{\psi,\mathfrak{s}}$ is finitely generated, this immediately
implies that the map $\mathrm{ind}_{K_{n}}^{G}(\psi_{n})_{\mathfrak{s}}\to W_{\psi,\mathfrak{s}}$
is an isomorphism for all sufficiently large $n$. Repeating the same
argument, we also get that the map $\mathrm{ind}_{K_{n}}^{G}(\psi_{n}^{-1})_{\mathfrak{s}^{\vee}}\to W_{\psi^{-1},\mathfrak{s}^{\vee}}$
is an isomorphism for all sufficiently large $n$. But $\mathbf{D}_{\mathrm{coh}}(\mathrm{ind}_{K_{n}}^{G}(\psi_{n})_{\mathfrak{s}})\simeq\mathrm{ind}_{K_{n}}^{G}(\psi_{n}^{-1})_{\mathfrak{s}^{\vee}}$,
which gives the result.

Finally we prove ii. Observe first that the formation of the representations
$W_{\psi}$ and $\mathrm{ind}_{K_{n}}^{G}(\psi_{n})$ and the maps
$A_{n}^{m}$, $\phi_{n}$, $\phi_{\infty}$ are compatible with extension
of scalars along any ring map $\Lambda\to\Lambda'$. Since BZ duality
and the depth $\leq r$ projector $e_{r}$ are also compatible with
any extension of scalars, we are immediately reduced to the universal
case $\Lambda=\mathbf{Z}[\tfrac{1}{p},\zeta_{p^{\infty}}]$. Pick
an embedding $\Lambda\to\mathbf{C}$; we will write $(-)_{\mathbf{C}}$
for objects obtained by the evident extension of scalars along this
map. 

Since $\mathrm{Mod}_{\mathbf{C}}(G)_{r}$ is a product of finitely
many Bernstein components, the proof of iii. shows that the maps $e_{r}\phi_{n,\mathbf{C}}:e_{r}\mathrm{ind}_{K_{n}}^{G}(\psi_{n})_{\mathbf{C}}\to e_{r}W_{\psi,\mathbf{C}}$
and $e_{r}\mathrm{ind}_{K_{n}}^{G}(\psi_{n}^{-1})_{\mathbf{C}}\to e_{r}W_{\psi^{-1},\mathbf{C}}$
are isomorphisms for sufficiently large $n$. Now consider the maps
$e_{r}\phi_{n}:e_{r}\mathrm{ind}_{K_{n}}^{G}(\psi_{n})_{\mathbf{}}\to e_{r}W_{\psi}$.
Since $\phi_{n}$ is a split injection for all sufficiently large
$n$, we get also that $e_{r}\phi_{n}$ is a split injection for all
sufficiently large $n$. Moreover, the source and target of $\phi_{n}$
are projective $\Lambda$-modules,\footnote{For $\mathrm{ind}_{K_{n}}^{G}(\psi_{n})$ this is clear, and for $W_{\psi}$
it follows from the proof of i. Is it clear from first principles
that $W_{\psi}$ is projective as a $\Lambda$-module?} so also the source and target of $e_{r}\phi_{n}$ are projective
$\Lambda$-modules. In particular, $\mathrm{coker}\,e_{r}\phi_{n}$
is a projective $\Lambda$-module for sufficiently large $n$. But
we've already established that
\[
(\mathrm{coker}\,e_{r}\phi_{n})\otimes_{\Lambda}\mathbf{C}\simeq\mathrm{coker}\,e_{r}\phi_{n,\mathbf{C}}=0
\]
for all sufficiently large $n$. This implies that $\mathrm{coker}\,e_{r}\phi_{n}=0$
for all sufficiently large $n$, so $e_{r}\phi_{n}:e_{r}\mathrm{ind}_{K_{n}}^{G}(\psi_{n})\to e_{r}W_{\psi}$
is an isomorphism for all sufficiently large $n$. Since $\mathrm{ind}_{K_{n}}^{G}(\psi_{n})$
and its summand $e_{r}\mathrm{ind}_{K_{n}}^{G}(\psi_{n})$ are finitely
generated, we deduce that $e_{r}W_{\psi}$ is finitely generated.

Repeating the same argument with inverses on all characters, we also
get that $e_{r}\mathrm{ind}_{K_{n}}^{G}(\psi_{n}^{-1})\to e_{r}W_{\psi^{-1}}$
is an isomorphism for all sufficiently large $n$. Since $\mathbf{D}_{\mathrm{coh}}(\mathrm{ind}_{K_{n}}^{G}(\psi_{n}))=\mathrm{ind}_{K_{n}}^{G}(\psi_{n}^{-1})$
and $e_{r}$ commutes with $\mathbf{D}_{\mathrm{coh}}$, we compute
that 
\begin{align*}
\mathbf{D}_{\mathrm{coh}}(e_{r}W_{\psi}) & \simeq\mathbf{D}_{\mathrm{coh}}(e_{r}\mathrm{ind}_{K_{n}}^{G}(\psi_{n}))\\
 & =e_{r}\mathbf{D}_{\mathrm{coh}}(\mathrm{ind}_{K_{n}}^{G}(\psi_{n}))\\
 & =e_{r}\mathrm{ind}_{K_{n}}^{G}(\psi_{n}^{-1})\\
 & \simeq e_{r}W_{\psi^{-1}}.
\end{align*}
This concludes the proof.
\end{proof}
\begin{cor}
Suppose $\Lambda$ is an algebraically closed field of characteristic
$\neq p$ and $\pi\in\mathrm{Mod}_{\Lambda}(G)$ is irreducible. Then
$\pi$ is $\psi$-generic if and only if $\pi^{\vee}$ is $\psi^{-1}$-generic.
More generally, for any $\pi$ of finite length, there is an isomorphism
\[
\mathrm{Hom}(W_{\psi},\pi)^{\ast}\simeq\mathrm{Hom}(W_{\psi^{-1}},\pi^{\vee}).
\]
\end{cor}

When $\Lambda=\mathbf{C}$, the first part of this corollary was previously
proved by Prasad \cite[Lemma 2]{Pra}.
\begin{proof}
Choose $r$ large so $\pi$ is of depth $\leq r$, and then choose
$n$ large enough so that $e_{r}W_{\psi}\simeq e_{r}\mathrm{ind}_{K_{n}}^{G}(\psi_{n})$.
Then arguing as in the previous proof we get isomorphisms
\begin{align*}
\mathrm{Hom}(W_{\psi},\pi) & \simeq\mathrm{Hom}(e_{r}W_{\psi},\pi)\\
 & \simeq\mathrm{Hom}(e_{r}\mathrm{ind}_{K_{n}}^{G}(\psi_{n}),\pi)\\
 & \simeq\mathrm{Hom}(\mathrm{ind}_{K_{n}}^{G}(\psi_{n}),\pi)\\
 & \simeq(\pi|_{K_{n}}\otimes\psi_{n}^{-1})^{K_{n}}
\end{align*}
and similarly $\mathrm{Hom}(W_{\psi^{-1}},\pi^{\vee})\simeq(\pi^{\vee}|_{K_{n}}\otimes\psi_{n})^{K_{n}}$.
We conclude by the easy fact that $(\pi|_{K_{n}}\otimes\psi_{n}^{-1})^{K_{n}}$
and $(\pi^{\vee}|_{K_{n}}\otimes\psi_{n})^{K_{n}}$ are canonically
dual to each other as $\Lambda$-vector spaces.
\end{proof}

\subsection{A reasonable condition}

In this section we again take our coefficients to be $\Lambda=\overline{\mathbf{Q}_{\ell}}$.
Let $G/E$ be any connected reductive group.
\begin{defn}
\label{def:reasonablegroups}i. The group $G$ is \emph{reasonable}
if the Fargues-Scholze map $\Psi_{G}^{\mathrm{geom}}:X_{G}\to X_{G}^{\mathrm{spec}}$
is finite, or equivalently (by Lemma \ref{lem:psihasdiscretefibers})
if the associated map $\pi_{0}X_{G}\to\pi_{0}X_{G}^{\mathrm{spec}}$
has finite fibers.

ii. The group $G$ is \emph{very reasonable }if $G_{b}$ is reasonable
for all $b\in B(G)$.
\end{defn}

It is clear that if $G$ is very reasonable, then $G_{b}$ is very
reasonable for all $b\in B(G)$.
\begin{xca}
Prove that if $G$ is reasonable, then any Levi subgroup $M\subset G$
is reasonable.
\end{xca}

Our main reason for considering these conditions is the following
basic finiteness result, which we leave to the reader as an exercise.
(The key ingredient for i. is Bushnell-Henniart's finiteness theorem
cited above.)
\begin{prop}
\emph{\label{prop:reasonable-implications}i. }If $\mathrm{char}E=0$
and $G$ is quasisplit and reasonable, the functor $\mathcal{F}\mapsto\mathcal{F}\ast i_{1!}W_{\psi}$
sends $\mathrm{Perf^{qc}(}\mathrm{Par}_{G})$ into $D(\mathrm{Bun}_{G})^{\omega}$.

\emph{ii. }If $G$ is very reasonable, then for every $b$ and every
semisimple parameter $\phi:W_{E}\to\,^{L}G(\overline{\mathbf{Q}_{\ell}})$,
the set of irreducible smooth representations $\pi\in\Pi(G_{b})$
with Fargues-Scholze parameter $\phi$ is finite.

\emph{iii. }If $G$ is very reasonable, then every ULA sheaf in $D(\mathrm{Bun}_{G})$
is a filtered colimit of finite sheaves.
\end{prop}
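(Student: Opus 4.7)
The plan for part (i) is to use Theorem \ref{thm:Wpsinice} together with reasonableness to reduce the problem to finitely many Bernstein components. First observe the Bernstein decomposition $W_\psi \simeq \bigoplus_{\mathfrak{s}} W_{\psi,\mathfrak{s}}$ over Bernstein components of $G$, where each $W_{\psi,\mathfrak{s}}$ is finitely generated by (a variant of) Bushnell-Henniart, hence compact as an object of $D(G(E),\overline{\mathbf{Q}_\ell})$. For $\mathcal{F}\in\mathrm{Perf}^{\mathrm{qc}}(\mathrm{Par}_G)$, the compatibility of the spectral action with the Fargues-Scholze central action (Theorem \ref{thm:ibcompatibleparameters}) implies that $\mathcal{F}\ast i_{1!}W_{\psi,\mathfrak{s}}=0$ whenever the image of $\mathfrak{s}$ under $\Psi_G^{\mathrm{geom}}$ is disjoint from $\mathrm{supp}\,\mathcal{F}$. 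By reasonableness, the preimage in $X_G$ of the quasicompact support of $\mathcal{F}$ is itself quasicompact, hence meets only finitely many Bernstein components $\mathfrak{s}_1,\ldots,\mathfrak{s}_k$, giving $\mathcal{F}\ast i_{1!}W_\psi\simeq\bigoplus_{j=1}^{k}\mathcal{F}\ast i_{1!}W_{\psi,\mathfrak{s}_j}$. Each $i_{1!}W_{\psi,\mathfrak{s}_j}$ is compact by Proposition \ref{prop:dualitypush}, and the spectral action of perfect complexes preserves compactness, so we are done.

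For part (ii), the plan is direct. Consider the composite
\[
c_b\colon X_{G_b}\xrightarrow{\Psi^{\mathrm{geom}}_{G_b}}X_{G_b}^{\mathrm{spec}}\longrightarrow X_G^{\mathrm{spec}},
\]
whose first arrow is finite by reasonableness of $G_b$ (Lemma \ref{lem:psihasdiscretefibers}) and whose second arrow is finite by \cite{DHKM2}. The image of an irreducible $\pi\in\Pi(G_b)$ under $c_b$ is exactly the closed point $x_\phi$ where $\phi$ is the Fargues-Scholze parameter of $\pi$ viewed as an $L$-parameter for $G$. Since $c_b$ is a finite morphism, the fiber over $x_\phi$ consists of finitely many closed points of $X_{G_b}$, and each closed point of the Bernstein variety $X_{G_b}$ supports only finitely many irreducible smooth representations with that supercuspidal support.

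For part (iii), the plan is first to reduce to the case of quasicompact support via an open exhaustion, and then to induct on the number of strata. Choose a quasicompact open exhaustion $U_1\subset U_2\subset\cdots$ of $\mathrm{Bun}_G$ with inclusions $j_n$; the natural maps assemble into an isomorphism $A\simeq\mathrm{colim}_n j_{n!}j_n^\ast A$, and each term is ULA with quasicompact support. For $A$ ULA with quasicompact support, proceed by induction on the number of Newton strata meeting $\mathrm{supp}\,A$. In the base case of a single stratum $\mathrm{Bun}_G^b$, $A\simeq i_{b!}^{\mathrm{ren}}\pi$ for a bounded complex $\pi$ of admissible $G_b(E)$-representations; any admissible representation is the filtered colimit of its finitely generated submodules, which remain admissible and are therefore finite length by Bernstein, and assembling these colimits through the finite extension structure presenting $\pi$ from its cohomology displays $\pi$ as a filtered colimit of finite length complexes. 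For the inductive step, choose a closed stratum $\mathrm{Bun}_G^b$ of the qc support with open complement, and glue the inductively given filtered approximations along the semiorthogonal triangle $j_!j^\ast A\to A\to i_{b\ast}^{\mathrm{ren}}i_b^{\ast\mathrm{ren}}A\xrightarrow{[1]}$.

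The main obstacle is this final gluing step. Filtered colimits do not automatically commute with the semiorthogonal extension data assembling approximations on open and closed strata, because the $\mathrm{Ext}$-groups classifying such extensions need not commute with filtered colimits in either variable. Very reasonableness should enter precisely here, via part (ii): it controls $\mathrm{Ext}^\ast(i_{b'!}^{\mathrm{ren}}\pi',i_{b\ast}^{\mathrm{ren}}\pi)$ between finite sheaves on different strata by restricting to finitely many Fargues-Scholze parameters per compact, and this control should allow compatible approximations to be selected by a diagonal argument — but a careful implementation appears nontrivial and is the technical heart of the problem.
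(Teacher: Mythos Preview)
Your arguments for parts (i) and (ii) are correct and match the paper's intended approach: for (i) the paper singles out Bushnell--Henniart's finiteness result as the key ingredient, and for (ii) the paper indicates exactly the factorization through $X_{G_b}^{\mathrm{spec}}$ that you wrote down.

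For part (iii), however, your approach diverges from the paper's and the gap you identify is real. Your induction on strata requires lifting filtered-colimit presentations through a recollement triangle, and as you note, the extension classes need not be compatible with the chosen approximations; you do not resolve this. The paper sidesteps the gluing problem entirely by working \emph{spectrally} rather than stratum-by-stratum. Using the decomposition $A\cong\bigoplus_{\phi}A_{\phi}$ of ULA sheaves over semisimple parameters (proved in \cite{H}), one writes
\[
A\simeq\mathrm{colim}_{U,S}\;\bigoplus_{\phi\in S}j_{!}j^{\ast}A_{\phi}
\]
where $j:U\hookrightarrow\mathrm{Bun}_G$ runs over quasicompact opens and $S$ over finite sets of semisimple parameters. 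Each term $j_{!}j^{\ast}A_{\phi}$ is ULA with quasicompact support, and its stalks are admissible with Fargues--Scholze parameter $\phi$; by part (ii) (this is where very reasonableness enters) such stalks have only finitely many possible irreducible constituents, hence are finite length. Thus each term is already a finite sheaf, and no gluing is needed. The moral is that the $\phi$-local decomposition replaces your delicate diagonal argument with a direct presentation by finite sheaves.
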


Note that the condition in Proposition \ref{prop:reasonable-implications}.ii
is actually equivalent to very reasonableness. This follows from finiteness
of the map $X_{G_{b}}^{\mathrm{spec}}\to X_{G}^{\mathrm{spec}}$,
Theorem \ref{thm:ibcompatibleparameters}, and basic structure theory
a la Bernstein.

It's also quite plausible that the conclusion of Proposition \ref{prop:reasonable-implications}.iii
could be proved unconditionally for any $G$, but I didn't try very
hard to check this.\footnote{\emph{Update.} Here is a sketch of an unconditional proof for all
$G$. Let $A$ be a ULA sheaf. Since $A=\mathrm{colim}_{U}j_{U!}j_{U}^{*}A$
with $j:U\to\mathrm{Bun}_{G}$ running over inclusions of quasicompact
open substacks, we can assume $A$ is $!$-extended from a quasicompact
open substack. Then by an easy excision argument, we can assume $A=i_{b!}B$
for some fixed $b$ with $B\in D(G_{b}(E),\overline{\mathbf{Q}_{\ell}})$
ULA. Then $B\cong\mathrm{colim}_{S}\oplus_{\mathfrak{s}\in S}B_{\mathfrak{s}}$
where $S$ runs over the (filtered collection of) finite sets of Bernstein
components for $G_{b}(E)$. Now use that each $B_{\mathfrak{s}}$
is finite. This follows from the fact that $B_{\mathfrak{s}}$ is
generated by its $K$-fixed vectors for some small pro-$p$ open $K$
depending only on $\mathfrak{s}$, while $B_{\mathfrak{s}}^{K}$ is
a summand of $B^{K}\in\mathrm{Perf}(\overline{\mathbf{Q}_{\ell}})$
with the last inclusion coming from our ULA assumption, so $H^{\ast}(B_{\mathfrak{s}}^{K})$
has finite length as a module over the Hecke algebra $\mathcal{C}_{c}(K\backslash G_{b}(E)/K)$.} If $G$ is very reasonable, however, it is nearly trivial: if $A$
is ULA, then for any semisimple parameter $\phi$ and any quasicompact
open substack $j:U\to\mathrm{Bun}_{G}$, the sheaf $j_{!}j^{\ast}A_{\phi}$
is finite by the very reasonableness condition, and the natural map
\[
\mathrm{colim}_{U,S}\oplus_{\phi\in S}j_{!}j^{\ast}A_{\phi}\to A
\]
is an isomorphism (using the decomposition $A\cong\oplus_{\phi}A_{\phi}$
proved in \cite{H}), where $S$ runs over (the filtered collection
of) finite sets of semisimple \emph{L}-parameters.

Of course we expect that every group is very reasonable. Right now,
we know that $\mathrm{GL}_{n}$ is very reasonable \cite{FS,HKW},
as well as $\mathrm{GSp}_{4}$ with $E/\mathbf{Q}_{p}$ unramified
and $p>2$ \cite{Ham2}, unramified $\mathrm{(G)U}_{2n+1}/\mathbf{Q}_{p}$
\cite{BMHN}, $\mathrm{SO}_{2n+1}$ with $E/\mathbf{Q}_{p}$ unramified
and $p>2$ (H., unpublished), and groups obtained from these by passing
to derived groups, products, central isogenies, twisted Levis, etc.
To my knowledge, there is no group which is known to be reasonable
but not known to be very reasonable.
\begin{xca}
Show that if $G$ and $G'$ have isomorphic adjoint groups and $G$
is reasonable, then $G'$ is reasonable. Can you formulate a similar
statement for ``very reasonableness''?
\end{xca}

\section{A dimensional classicality criterion for derived stacks, by Adeel
Khan}

We define (co)dimension of derived schemes and stacks on classical
truncations (see \cite[Tag 04N3]{stacks-project} or \cite[0\_IV, 14.1.2, 14.2.4]{EGA}
for schemes, and \cite[Tags 0AFL and 0DRL]{stacks-project} for stacks).
\begin{prop}
\label{prop:soap} Let $f:\mathcal{X}\to\mathcal{Y}$ be a quasi-smooth
morphism of derived $1$-Artin stacks where $\mathcal{Y}_{\mathrm{cl}}$
is Cohen-Macaulay.\footnote{Equivalently, $\mathcal{Y}$ admits a smooth surjection $Y\twoheadrightarrow\mathcal{Y}$
where $Y_{\mathrm{cl}}$ is a Cohen--Macaulay scheme.} If $x\in|\mathcal{X}|$ is a point at which the relative dimension
of $f$ is equal to the relative virtual dimension of $f$, then $\sX\fibprodR_{\sY}\sY_{\cl}$
is classical in a Zariski neighbourhood of $x$. 
\end{prop}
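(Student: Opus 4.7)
The plan is to reduce to an affine situation, use the local structure of quasi-smooth morphisms to present $f$ locally as a smooth map followed by the derived vanishing locus of a section of a vector bundle, and then finish with a Cohen--Macaulay regular-sequence argument.

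First, since classicality is smooth-local and all of the hypotheses in sight---quasi-smoothness, Cohen--Macaulayness of $\sY_\cl$, and the equality of classical with virtual relative dimension at $x$---are preserved under smooth base change, I can reduce to the case of a quasi-smooth morphism $f \colon X \to Y$ of affine derived schemes with $Y_\cl$ Cohen--Macaulay, by choosing a smooth atlas $Y \twoheadrightarrow \sY$ followed by a smooth atlas $X \twoheadrightarrow \sX \fibprodR_\sY Y$.

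Next I would invoke the standard local structure theorem for quasi-smooth morphisms: after Zariski-shrinking $X$ around $x$, one factors $f$ as $X \overset{i}{\hookrightarrow} X' \overset{g}{\to} Y$ with $g$ smooth and $i$ realizing $X$ as the derived vanishing locus of a section $s \colon X' \to E$ of a vector bundle $E$ of rank $r$, where $\virdim(f) = \dim g - r$. Since $g$ is smooth, hence flat, the derived base change $X' \fibprodR_Y Y_\cl$ coincides with the classical base change $(X')_\cl$, which is smooth over $Y_\cl$ and in particular classical. Therefore
\[
X \fibprodR_Y Y_\cl \;\simeq\; X \fibprodR_{X'} (X')_\cl
\]
is the derived vanishing locus, inside the classical scheme $(X')_\cl$, of the restricted section $s_\cl$ of $E_\cl$. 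Now $(X')_\cl$ is Cohen--Macaulay since it is smooth over the Cohen--Macaulay scheme $Y_\cl$, and the hypothesis at $x$ unwinds, via $\dim_x f_\cl = \dim g - \codim_x\bigl(V(s_\cl),(X')_\cl\bigr)$, to the statement that $V(s_\cl)$ has codimension exactly $r$ in $(X')_\cl$ at $x$. Trivializing $E$ in a neighborhood of $x$ and writing $s_\cl = (s_1,\dots,s_r)$, the Cohen--Macaulay criterion that any sequence of $r$ elements cutting out an ideal of height $r$ in a Cohen--Macaulay local ring is a regular sequence forces $(s_1,\dots,s_r)$ to be regular on $(X')_\cl$ at $x$. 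The Koszul complex is then a resolution of $\mathcal{O}_{V(s_\cl)}$, so the derived vanishing locus agrees with its classical truncation; the codimension and Cohen--Macaulay conditions both being open, this persists in a Zariski neighborhood of $x$.

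I expect the main obstacle to be the careful bookkeeping required to descend all the hypotheses to a smooth atlas and to translate the equality of relative and virtual relative dimensions into the codimension statement on $(X')_\cl$, particularly when $Y_\cl$ need not be equidimensional. The local factorization of quasi-smooth morphisms is well-developed in derived algebraic geometry and I plan to cite it as a black box; once it is in hand, the remainder is a textbook Cohen--Macaulay / Koszul calculation.
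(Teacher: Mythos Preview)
Your proposal is correct and follows essentially the same route as the paper: reduce to schemes via smooth atlases, factor the quasi-smooth morphism locally as smooth followed by a quasi-smooth closed immersion, then finish with a Cohen--Macaulay/Koszul argument (which the paper packages as a black-box lemma from \cite{KR}). For the dimension bookkeeping you flag as the main obstacle, the paper does not assert your displayed formula directly but instead works in the fiber over $y=f(x)$---which is catenary, being locally of finite type over a field---to get $\dim_x M_y-\dim_x U_y=\codim_x(U_y,M_y)$, and then combines the general inequality $\codim_x(U_y,M_y)\le\codim_x(U,M)$ with the reverse inequality coming from the Koszul/Krull bound to force equality in the total space.
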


We will make use of the following lemma from \cite[2.3.12]{KR}.
\begin{lem}
\label{lem:nonscientist} Let $Z\to X$ be a quasi-smooth closed immersion
of derived schemes where $X_{\cl}$ is Cohen--Macaulay. Then we have
$-\virdim(Z/X)\ge\codim(Z,X)$, with equality if and only if $Z\fibprodR_{X}X_{\cl}$
is classical in a Zariski neighbourhood of $x$. 
\end{lem}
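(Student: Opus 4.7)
\noindent\textbf{Proof proposal for Lemma \ref{lem:nonscientist}.}

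The plan is to work Zariski-locally at the point $x \in Z$ under consideration, reduce to an explicit Koszul-theoretic statement inside a local Cohen--Macaulay ring, and then invoke the classical relationship between regular sequences, height, and the exactness of the Koszul complex.

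First I would set up a local model. Since $i \colon Z \to X$ is a quasi-smooth closed immersion, the relative cotangent complex $\mathbb{L}_{Z/X}$ is (the shift of) a locally free sheaf placed in cohomological degree $1$, of rank $n := -\virdim(Z/X)$. Standard deformation-theoretic arguments for quasi-smooth closed immersions (applied to an affine neighbourhood of $x$) let me write, after possibly shrinking $X = \Spec(A)$ around $i(x)$, a derived pullback square
\[
\xymatrix{
Z \ar[r]\ar[d] & \Spec(\mathbf{Z}) \ar[d]^{0} \\
X \ar[r]^{(f_1,\dots,f_n)} & \mathbf{A}^n
}
\]
where $f_1,\dots,f_n \in \pi_0 A$ are chosen so that their classes form a basis of the conormal bundle $\mathcal{N}_{Z/X,x}$, and the horizontal map is given by arbitrary lifts to $A$. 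Thus $Z = \Spec(B)$ with $B \simeq A \otimes^{\mathbf L}_{\mathbf{Z}[t_1,\dots,t_n]} \mathbf{Z}$.

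Second, with this model the derived fibre product in question becomes
\[
Z \fibprodR_X X_{\cl} \;\simeq\; \Spec\bigl(\pi_0 A \otimes^{\mathbf L}_{\mathbf{Z}[t_1,\dots,t_n]} \mathbf{Z}\bigr),
\]
and the Koszul resolution of $\mathbf{Z}$ over $\mathbf{Z}[t_1,\dots,t_n]$ identifies the right-hand side with the Koszul complex $K_\bullet(\bar f_1,\dots,\bar f_n;\,\pi_0 A)$ on the images $\bar f_i \in \pi_0 A$. The classical truncation of this is $\pi_0 A/(\bar f_1,\dots,\bar f_n)$, which coincides with $\mathcal{O}_{Z_{\cl},x}$ after localization.

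Third, I would harvest the Cohen--Macaulay hypothesis. By assumption $(\pi_0 A)_x$ is a Cohen--Macaulay local ring, so the classical codimension $\codim(Z,X)$ (at $x$) equals the height of the ideal $I=(\bar f_1,\dots,\bar f_n)$, and Krull's principal ideal theorem gives $\operatorname{ht}(I) \le n$; this yields the inequality $\codim(Z,X) \le -\virdim(Z/X)$. Moreover, in a Cohen--Macaulay local ring, an $n$-element ideal has height $n$ if and only if its generators form a regular sequence. Combined with the standard fact that a Koszul complex on a sequence in a Noetherian ring has vanishing higher homology precisely when the sequence is regular, equality $\codim(Z,X) = n$ is equivalent to $K_\bullet(\bar f_1,\dots,\bar f_n;\,\pi_0 A)$ being a resolution, which by the previous step is equivalent to $Z \fibprodR_X X_{\cl}$ being classical at $x$. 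Since classicality is an open condition (vanishing of finitely generated higher homotopy sheaves), this spreads out to a Zariski neighbourhood of $x$.

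The one genuinely non-elementary ingredient is the first step: producing the global Koszul-type presentation of a quasi-smooth closed immersion in an affine neighbourhood. This is well-documented in the derived algebraic geometry literature (and is exactly the input used in \cite{KR}), so the main delicacy is just a careful invocation of it; once granted, the rest is classical commutative algebra on Cohen--Macaulay rings.
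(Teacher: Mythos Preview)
The paper does not actually prove this lemma; it is quoted from \cite[2.3.12]{KR} and used as a black box in the proof of Proposition~\ref{prop:soap}. So there is no in-paper argument to compare against.

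That said, your proposal is essentially the standard proof (and, as far as I can tell, the one in \cite{KR}): present the quasi-smooth closed immersion Zariski-locally as the derived zero locus of $n=-\virdim(Z/X)$ functions, identify $Z\fibprodR_{X}X_{\cl}$ with the Koszul complex on their images $\bar f_1,\dots,\bar f_n$ in the Cohen--Macaulay local ring $\mathcal{O}_{X_{\cl},x}$, invoke Krull's height theorem for the inequality, and then use that in a Cohen--Macaulay local ring an ideal generated by $n$ elements has height $n$ precisely when the generators form a regular sequence, i.e.\ precisely when the Koszul complex is acyclic in positive degrees. One small point worth making explicit: the implication ``Koszul complex acyclic in positive degrees $\Rightarrow$ sequence regular'' requires the elements to lie in the Jacobson radical (e.g.\ in the maximal ideal of a local ring), which is automatic once you localize at $x$ but is not literally what you wrote.
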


\begin{proof}[Proof of Proposition \ref{prop:soap}]
 The statement is invariant under replacing $\sY$ by $\sY_{\cl}$
and $\sX$ by $\sX\fibprodR_{\sY}\sY_{\cl}$, so we may assume $\sY$
classical.

Suppose first that $\sX=X$ and $\sY=Y$ are schemes. Since $f:X\to Y$
is quasi-smooth, there exists for every $x\in\abs{X}$ over $y$ a
Zariski neighbourhood $U\subseteq X$ of $x$, a derived scheme $M$
which is smooth over $Y$, and a quasi-smooth closed immersion $U\hookrightarrow M$
over $Y$ (see \cite[Prop. 2.3.14]{KR}). We have 
\begin{align*}
\virdim_{x}(U_{y}/M_{y}) & =\virdim_{x}(U/M),\\
\codim_{x}(U_{y},M_{y}) & \le\codim_{x}(U,M).
\end{align*}
Since $\virdim_{x}(U_{y}/\kappa(y))=\dim_{x}(U_{y})$ by assumption,
we also have 
\[
-\virdim_{x}(U_{y}/M_{y})=\dim_{x}(M_{y})-\dim_{x}(U_{y})=\codim_{x}(U_{y},M_{y})
\]
where the last equality holds because $M_{y}$ is catenary (see \cite[0\_IV Cor.16.5.12]{EGA};
\cite[IV\_2 Prop. 5.1.9]{EGA}). We conclude that 
\[
-\virdim_{x}(U/M)\le\codim_{x}(U,M).
\]
Now Lemma~\ref{lem:nonscientist} implies that $U$ is classical
in a Zariski neighbourhood of $x$.

Next suppose that $\sX=X$ and $\sY=Y$ are algebraic spaces. Choose
an \'etale surjection $X_{0}\twoheadrightarrow X$ where $X_{0}$ is
a derived scheme, and let $x_{0}\in\abs{X_{0}}$ be a lift of the
given point $x\in\abs{X}$. Choose also an \'etale surjection $Y_{0}\twoheadrightarrow Y$
where $Y_{0}$ is a Cohen--Macaulay scheme and a lift $y_{0}\in\abs{Y_{0}}$
of $y$. Since $Y$ has schematic diagonal, $X_{0}\fibprod_{Y}Y_{0}$
is a derived scheme. Applying the case above to the morphism $X_{0}\fibprod_{Y}Y_{0}\to Y_{0}$,
we obtain a Zariski neighbourhood of $(x_{0},y_{0})\in X_{0}\fibprod_{Y}Y_{0}$
which is classical. Its image along the \'etale morphism $X_{0}\fibprod_{Y}Y_{0}\twoheadrightarrow X_{0}\twoheadrightarrow X$
is then a Zariski neighbourhood of $x\in X$ which is classical.

Finally we consider the general case. Choose a smooth surjection $X\twoheadrightarrow\sX$
where $X$ is a derived scheme, a lift $x_{0}\in\abs{X_{0}}$ of the
given point $x\in\abs{X}$, a smooth surjection $Y_{0}\twoheadrightarrow Y$
where $Y_{0}$ is a Cohen--Macaulay scheme, and a lift $y_{0}\in\abs{Y_{0}}$
of $y$. Since $Y$ has representable diagonal, $X\fibprod_{\sY}Y$
is a derived algebraic space. Hence the previous case applied to the
morphism $X\fibprod_{\sY}Y\to Y$ yields a Zariski neighbourhood of
$(x_{0},y_{0})\in X\fibprod_{\sY}Y$ which is classical. Its image
along the smooth morphism $X\fibprod_{\sY}Y\twoheadrightarrow X\twoheadrightarrow\sX$
is then a Zariski neighbourhood of $x\in\sX$ which is classical. 
\end{proof}
\bibliographystyle{amsalpha}
\phantomsection\addcontentsline{toc}{section}{\refname}

\end{document}